

\documentclass[brochure,12pt,french]{smfbourbaki}
        

\usepackage[T1]{fontenc}
\usepackage{lmodern,amssymb,bm}

\usepackage[matrix,arrow,cmtip]{xy}
\usepackage{babel}

\usepackage[utf8]{inputenc}
\usepackage[colorlinks=true, linkcolor=blue, citecolor=red, urlcolor=blue]{hyperref}


\usepackage[
backend=biber,
style=authoryear, 
citestyle=authoryear-comp,
maxnames=7,
sortcites=false 
]{biblatex}
\usepackage{csquotes}


\DefineBibliographyExtras{french}{\restorecommand\mkbibnamefamily}

\DeclareDelimFormat{nameyeardelim}{\addcomma\space}

\DeclareNameAlias{sortname}{given-family}

\renewcommand{\bibnamedash}{\leavevmode\raise3pt\hbox to3em{\hrulefill}\space}

\AtEveryBibitem{%
 \clearfield{issn} 
  \clearfield{isbn} 
  \clearfield{doi} 
  \clearlist{language} 
  \ifentrytype{online}{}{
  \ifentrytype{unpublished}{}{
   \clearfield{url}
 }
 }
}

\renewbibmacro{in:}{%
   \ifentrytype{article}{}{\printtext{\bibstring{in}\intitlepunct}}}

\DeclareFieldFormat[article,periodical,inreference]{number}{\mkbibparens{#1}}
\DeclareFieldFormat[article,periodical,inreference]{volume}{\mkbibbold{#1}}
\renewbibmacro*{volume+number+eid}{%
   \setunit*{\addcomma\space}
   \printfield{volume}%
  \setunit*{\adddot}
   \setunit*{\addthinspace}
   \printfield{number}%
    \setunit{\addcomma\space}%
    \printfield{eid}}

\DeclareFieldFormat[article,inbook,incollection]{title}{\enquote{#1}\addcomma} 


\addbibresource{Bib1201-Dospinescu.bib} 


\addressindent 75mm    

\date{Janvier 2023}
\bbkannee{75\textsuperscript{e} année, 2022--2023}  
\bbknumero{1201}                                      

\title{La conjecture du facteur direct}
\subtitle{d'après André et Bhatt}

\author{Gabriel Dospinescu}
\address{UMPA, ENS Lyon, CNRS}
\email{gabriel.dospinescu@ens-lyon.fr}

\newcommand{\coloneqq}{\mathrel{\mathop:}=}

\begin{document}

\maketitle
         
La conjecture du facteur direct (théorème~\ref{DSC} ci-dessous) est un énoncé d'algèbre
commutative presque aussi élémentaire que le Théorème de Fermat et qui est resté ouvert
pendant près de 50 ans: énoncée en $1969$ (cf. \textcite{H2} pour la version publi\'ee), elle a \'et\'e 
démontrée par André en 2016 (cf. \textcite{AndreDSC} pour la version publiée).
Cet énoncé fait partie d'un faisceau de conjectures, les {\og conjectures homologiques\fg}
dont la liste et les relations donnent un peu le tournis\footnote{ 
Voir le th\'eor\`eme~\ref{hconj} pour un condensé loin d'être exhaustif, ainsi que \textcite{H7,Rhom,H4}.}. 
En particulier, \textcite{H3,H4} avait prouvé que l'existence
de $A$-algèbres (ou m\^eme seulement de $A$-modules) de Cohen--Macaulay (voir \S~\ref{coma}), pour tout anneau local noethérien~$A$, 
impliquait la plupart de ces conjectures, par exemple celle du facteur direct. 
\textcite{HH1} avaient aussi montr\'e\footnote{L'existence de $A$-modules de Cohen--Macaulay pour $A$ d'\'egale caract\'eristique avait \'et\'e \'etablie bien avant, cf.\ \cite{H3}.} l'existence de telles $A$-alg\`ebres 
dans le cas d'égale caractéristique, i.e.\ quand $A$~contient un corps.


Le but de cet exposé est d'expliquer les techniques introduites par André 
dans ses trois articles monumentaux \parencite{AndreDSC,AndreIHES,AndreJAMS}, 
en particulier comment les espaces perfectoïdes introduits par \textcite{Scholzethese} 
permettent de construire 
des algèbres de Cohen--Macaulay pour les anneaux locaux noethériens d'inégale caractéristique
et donc de prouver la conjecture du facteur direct.\footnote{Que les espaces perfectoïdes soient un outil indispensable en théorie de Hodge $p$-adique ne faisait guère de doute après leurs premières applications spectaculaires \parencite{Scholzethese,ScholzeRAV,Scholzetorsion}. Qu'ils puissent aussi résoudre les conjectures homologiques était moins clair: la plupart de ces problèmes concernent des anneaux noethériens, propriété quasiment jamais satisfaite par les anneaux perfectoïdes.}
Les travaux récents et spectaculaires de  \textcite{Bhattabs} poussent encore plus 
loin les techniques perfectoïdes (via la théorie prismatique de  \textcite{BS} et la correspondance de Riemann--Hilbert $p$-adique de  \textcite{BL}) et établissent (théorème~\ref{Bhattamazing} ci-dessous) un 
analogue en inégale caractéristique d'un célèbre théorème de  \textcite{HH1} (en caractéristique positive), qui 
implique tous les résultats exposés ici, et bien plus. 

La preuve du résultat 
de Bhatt est un véritable tour de force, et tous les détails ne sont pas encore (à notre connaissance) disponibles, nous avons donc décidé de nous concentrer sur les articles d'André dans cet exposé, en fournissant des preuves complètes (autant que faire se peut) des résultats principaux des trois articles mentionnés ci-dessus, tout en utilisant des idées de  \textcite{BhattInv} pour simplifier certains arguments.\footnote{On trouvera deux preuves de l'existence de $A$-alg\`ebres de Cohen--Macaulay
dans ce rapport. Elles partagent un ingr\'edient fondamental, le lemme de platitude d'Andr\'e; l'une utilise 
le th\'eor\`eme de presque puret\'e de \textcite{Falp, Falal}, raffin\'e et \'etendu par \textcite{Scholzethese} et par \textcite{KL}, l'autre n'en fait pas usage.}
Le chemin que nous avons choisi pour arriver à la preuve de la conjecture du facteur direct 
n'est pas le plus court (la géodésique se trouve dans l'article 
 \cite{BhattInv}); il nous fera visiter des résultats plus puissants
 que la conjecture elle-même. Pour des applications de ces idées et techniques \`a la théorie (naissante) des singularités en caractéristique mixte nous renvoyons aux travaux de  \textcite{MaSchInv,MaSchDuke,Many} (entre autres) et pour de vastes généralisations des résultats présentés ici le lecteur pourra consulter le livre (de longueur presque infinie\dots{}) de  \textcite{GR}.

\smallskip
\noindent
\textbf{Convention:} Tous les anneaux sont supposés commutatifs et unitaires, 
et les morphismes d'anneaux sont unitaires. 
Si $I$ est un idéal d'un anneau $A$, on dit que $A$ est \emph{$I$-complet} 
si $A$ est séparé complet pour la topologie 
$I$-adique. Pour $a\in A$ on dit que 
$A$ est \emph{$a$-complet} si $A$ est $aA$-complet, et on note $A/a\coloneqq A/aA$.
On note $A[I]$ l'id\'eal de $A$ des éléments annulés par tous les éléments de $I$.
Si $a_1,\ldots, a_d$ sont des éléments de $A$, 
on note $(a_1,\ldots, a_d)=\sum_{i=1}^d a_i A$ l'idéal de 
$A$ qu'ils engendrent.

\smallskip
\noindent
\textbf{Remerciements:} Mes plus vifs remerciements vont \`a Yves Andr\'e, Bharghav Bhatt, Nicolas Bourbaki, K\k{e}stutis \v{C}esnavi\v{c}ius, Pierre Colmez, Luc Illusie, Wies{\l}awa Nizio{\l} et Olivier Ta\"{i}bi. Leurs commentaires et leurs suggestions ont permis au b\'eotien du sujet d'\'eviter bon nombre de pi\`eges et ont grandement am\'elior\'e le contenu et la lisibilit\'e de ce rapport. Je remercie tout particuli\`erement Yves Andr\'e pour sa disponibilit\'e, son enthousiasme et ses multiples remarques.

\section{Les multiples visages de la conjecture}

Le but de cette section est d'énoncer les principaux résultats d'algèbre commutative \og classique\fg{}\footnote{Les perfectoïdes n'apparaissent donc pas dans cette section. Le lecteur trouvera dans \textcite{AndreICM} un survol des preuves fait par le maître, et qui semble impossible à dépasser en terme de présentation.} démontrés par André et Bhatt, et d'expliquer les liens qu'ils entretiennent.

\subsection{Les anneaux réguliers, sources de problèmes}

Un anneau local noethérien $A$ de dimension $d$
  est dit \emph{régulier} si son unique idéal maximal $\mathfrak{m}$ est engendré par 
  $d$ éléments (c'est le nombre minimal possible de générateurs). Des exemples typiques de tels anneaux
  sont les anneaux locaux des variétés algébriques lisses sur un corps (ou sur un anneau de valuation discrète), ainsi que leurs complétés, par exemple 
  $K[[X_1,\ldots, X_n]]$ ($K$ étant un corps ou un anneau de valuation discrète), 
 mais aussi $\mathbf{Z}_p[[X, Y, Z]]/(p-X^5-Y^7-Z^9)$, etc.

 En dépit de leur définition très simple, les anneaux locaux réguliers sont une source inépuisable de problèmes délicats, et il 
  n'est pas facile d'établir même des propriétés très basiques 
  comme la stabilité de la régularité par localisation en un idéal premier (cela se déduit de l'interprétation homologique de la régularité fournie par le théorème de Serre), ce qui permet de globaliser\footnote{Un anneau noethérien est régulier si ses localisés en des idéaux premiers quelconques sont des anneaux locaux réguliers.} la notion de régularité. Il n'est pas difficile de montrer que  
tout anneau local régulier
   est intègre, mais il faut se fatiguer un peu pour montrer qu'il est
   normal\footnote{Autrement dit intégralement clos dans son corps des fractions.}, et bien plus pour montrer qu'il est même factoriel (théorème d'Auslander--Buchsbaum).
   
   Si $(A,\mathfrak{m})$ est un anneau local régulier, son 
   complété
   $\hat{A}=\varprojlim_{n} A/\mathfrak{m}^n$ est un anneau local régulier complet (pour la topologie $\mathfrak{m}$-adique), et le théorème 
   de structure de Cohen montre que $\hat{A}$ a l'une des formes suivantes, à isomorphisme près:
   
   $\bullet$ ou bien $V[[X_1,\ldots, X_n]]$ avec 
   $V$ un corps ou un anneau de valuation discrète complet et non ramifié (i.e.\ l'idéal maximal de 
   $V$ est engendré par un nombre premier $p$). On dira alors que $\hat{A}$ est \emph{non ramifié};
   
   $\bullet$ ou bien 
   $V[[X_1,\ldots, X_n]]/(p-f)$ pour un anneau de valuation discrète complet et non ramifié $V$, de caractéristique résiduelle $p$, et un élément 
   $f$ dans $(p,X_1,\ldots, X_n)^2$ mais pas dans $pV[[X_1,\ldots, X_n]]$ (on dira que $\hat{A}$ est \emph{ramifié} dans ce cas). 
  
    
    
      

\subsection{Énoncé de la conjecture du facteur direct}
La conjecture du facteur direct de  \textcite{H2}, à laquelle cet exposé est consacré, est l'énoncé suivant,
à l'air parfaitement innocent:

      \begin{theo}\label{DSC} 
   Toute extension finie d'un anneau régulier est scindée.
   \end{theo}
   
   Précisons l'énoncé: une \emph{extension d'anneaux}
   est un morphisme injectif d'anneaux $f\colon  A\to B$, elle est dite 
   \emph{finie} 
   si $f$ fait de $B$ un $A$-module de type fini, et \emph{scindée} si $A$ est un facteur direct du $A$-module $B$, autrement dit 
  s'il existe une application
  $A$-linéaire\footnote{On ne demande pas à $r$ d'être un morphisme d'anneaux.} $r\colon  B\to A$ telle que $r(f(a))=a$ pour tout $a\in A$. 
\medskip
  
\begin{rema}
\textcite{BhattInv} revisite et simplifie la preuve d'\textcite{AndreDSC}, ce qui lui permet 
d'établir la version dérivée suivante du théorème~\ref{DSC}, conjecturée par de Jong: si 
$A$ est un anneau régulier et si $f\colon  X\to {\rm Spec}(A)$ est un morphisme propre et surjectif, alors 
le morphisme $A\to {\rm R}\Gamma(X, \mathcal{O}_X)$ est scindé dans la catégorie dérivée ${\rm D}(A)$. 
Si $A$ est une $\mathbf{Q}$-algèbre cela se déduit des travaux de  \textcite{Kovacs}, le cas ${\rm car}(A)=p$ 
avait été traité par  \textcite{Bhattdspl}.
\end{rema}

  \textcite{H2} a démontré le théorème~\ref{DSC}   
 pour les anneaux réguliers contenant un corps, et a réduit, par un argument très indirect (cf.\ th\'eor\`eme~6.1 de \cite{H4}) le cas 
      général à celui d'un anneau local régulier complet, non ramifié, de corps résiduel algébriquement clos.
        Une avancée spectaculaire est due à \textcite{Heitmann} : il a démontré la conjecture quand  
    $\dim A=3$ (le cas $\dim A\leq 2$ est une conséquence de la formule d'Auslander--Buchsbaum). 

Le lien entre les techniques perfectoïdes (plus précisément les presque mathématiques et 
le théorème de presque pureté de  \textcite{Falp,Falal}) et la conjecture du facteur direct semble avoir été remarqué depuis un certain temps\footnote{Par exemple, voici ce que m'écrit Wies{\l}awa Nizio{\l}: \og in 2001 Lorenzo Ramero visited me in Utah and gave a talk at the Number Theory seminar on his work with Gabber and their attempt to prove the almost purity conjecture. Paul Roberts was in the audience and was really surprised by the similarity of almost math techniques with the recent proof by Heitmann of the direct summand conjecture in dim $3$. Heitmann worked in the almost setting and then at some point was able to descend to the usual setting (via some finiteness properties ?). Roberts got all excited about this and we had a seminar running for a semester on almost math and commutative algebra. It did not get anywhere because, of course, we did not have the almost purity in general at that time.\fg{}}, mais ce n'est qu'en $2014$ que  \textcite{Bhattds} a obtenu le premier résultat un peu général via ces techniques, en traitant
le cas où $B[\frac{1}{p}]$ est étale sur $A[\frac{1}{p}]$ 
(et même sous des hypothèses plus faibles). 
C'est ce cercle d'idées qui mènera à la preuve de la conjecture, 
mais il a fallu attendre les travaux d'\textcite{AndreDSC,AndreIHES}
pour traiter le cas général.

\begin{rema}\label{trivia} 

a)  Une extension finie $f\colon  A\to B$ d'anneaux noethériens
    est scindée si et seulement si l'extension induite $A_{\mathfrak{m}}\to B_{\mathfrak{m}}$ l'est pour tout idéal maximal 
   $\mathfrak{m}$ de $A$: l'existence d'un scindage équivaut à la surjectivité de 
     l'application\footnote{On note ${\rm Mod}_A$ la catégorie des $A$-modules.} $${\rm ev}_1\colon  {\rm Hom}_{{\rm Mod}_A}(B,A)\to A,\,\, r\mapsto r(1),$$ qui   
    peut se tester en localisant en tout idéal maximal $\mathfrak{m}$ de $A$, or ${\rm Hom}_{{\rm Mod}_A}(B,A)_{\mathfrak{m}}\simeq {\rm Hom}_{{\rm Mod}_{A_{\mathfrak{m}}}}(B_{\mathfrak{m}}, A_{\mathfrak{m}})$
    puisque $B$ est un $A$-module de présentation finie. De même, $f$ est scindée 
   si et seulement si l'extension $C\to C\otimes_A B$ l'est pour une extension fidèlement plate $C$ de $A$, car on peut tester la surjectivité de ${\rm ev}_1$ après changement de base à $C$.
  
b) La preuve du théorème~\ref{DSC} se ramène au cas d'une extension finie 
 $f\colon  A\to B$ avec~$A$ local régulier complet et $B$~intègre (donc local et complet). En effet, par a) on peut supposer que $A$~est local, puis complet, en utilisant l'extension fidèlement plate $A\to \widehat{A}$.  Si 
   $\wp$ est un idéal premier de $B$ tel que 
   $\dim (B/\wp)=\dim B$, alors $\dim A/(\wp\cap A)=\dim A$, puis $\wp\cap A=\{0\}$ (car $A$ est local et intègre), et tout scindage de l'extension finie $A\to B/\wp$ en fournit un pour $A\to B$. 

c) Si $A$ est une $\mathbf{Q}$-algèbre intègre et normale, alors toute extension finie
$f\colon  A\to B$ est scindée. En effet, comme dans b) on peut supposer que $B$ est intègre. Si 
$K$ et $L$ sont les corps des fractions de $A$ et de $B$, par normalité de $A$ la trace 
${\rm Tr}_{L/K}\colon  L\to K$ envoie $B$ dans 
$A$, et $\frac{1}{[L:K]}{\rm Tr}_{L/K}\colon  B\to A$ fournit un
scindage. Donc
pour les $\mathbf{Q}$-algèbres  le théorème~\ref{DSC} est trivial, et pas optimal.
    
    d) Si $\dim A\leq 2$ le théorème~\ref{DSC} se déduit de la formule d'Auslander--Buchsbaum. Si 
    $A$ est de caractéristique positive on dispose de toute une variété de preuves pas (trop) difficiles du théorème~\ref{DSC}, voir l'exemple $1.3$ de \textcite{Bhattdspl} pour une preuve cohomologique, et le paragraphe~$6.2$ de \textcite{H4} pour une preuve courte.
\end{rema}

\subsection{Fragmenteurs}

  Appelons \emph{fragmenteur} (\emph{splinter} en anglais) un anneau intègre $A$ tel que toute extension finie de $A$ soit scindée. Le théorème~\ref{DSC} affirme que 
   les anneaux réguliers sont fragmenteurs.
   Tout fragmenteur est normal\footnote{Si $x\in {\rm Frac}(A)$ est entier sur $A$, alors
  $A\to A[x]$ est une extension finie et elle n'est pas scindée si $x\notin A$, puisque toute rétraction $A$-linéaire $r\colon A[x]\to A$ doit envoyer $x$ sur lui-même (si $x=\frac{a}{b}$ avec $a,b\in A$ et $b\ne 0$, alors $a=r(a)=r(bx)=br(x)$).}, et la réciproque est vraie pour les 
   $\mathbf{Q}$-algèbres intègres (remarque~\ref{trivia}). 
 La situation est nettement plus compliquée en caractéristique positive ou mixte.  \textcite{HH1,HH2}  ont montré que les fragmenteurs noethériens de caractéristique positive et localement excellents\footnote{i.e.\ dont les localisés en tout idéal maximal sont excellents.} sont des anneaux de Cohen--Macaulay, et \textcite{Bhattabs} vient de montrer, dans son travail spectaculaire, que cela reste vrai en caractéristique mixte (toujours sous des hypothèses d'excellence). 
 
 Une source importante de fragmenteurs est la théorie des représentations des groupes (linéairement) réductifs: si un tel groupe $G$ agit sur une $k$-algèbre 
 $R$ qui est un anneau régulier ($k$ étant un corps), alors l'anneau des invariants $R^G$ est un fragmenteur (car l'inclusion 
 $R^G\to R$ est scindée, via l'opérateur de Reynolds, $R$ est un fragmenteur, et un facteur direct d'un fragmenteur en est encore un).

 Voici un exemple (dû à  \cite{H2}) d'anneau normal, de Cohen--Macaulay (même intersection complète), et non fragmenteur. Soient~$k$ un corps de caractéristique~$2$ et $R=k[X,Y,Z]/(X^3+Y^3+Z^2)=k[x,y,z]$. Le morphisme 
 $R\to k[U,V]$ envoyant $x,y,z$ sur $U^2, V^2, U^3+V^3$ est fini, injectif et non scindé: s'il était scindé on aurait $R\cap Ik[U,V]=I$ pour tout idéal~$I$ de~$R$, or $z\notin (x,y)$ et $U^3+V^3\in (U^2, V^2)$. 
     
     Un schéma~$S$ est dit \emph{fragmenteur} si pour tout morphisme fini surjectif 
    $f\colon  X\to S$ le morphisme $\mathcal{O}_S\to f_*\mathcal{O}_X$ est scindé dans la catégorie ${\rm Coh}(S)$ des faisceaux cohérents sur~$S$. On dit que $S$~est un \emph{$D$-fragmenteur} si pour tout morphisme propre surjectif $f\colon  X\to S$ le morphisme 
    $\mathcal{O}_S\to {\rm R}f_*\mathcal{O}_X$ est scindé dans $D({\rm Coh}(S))$.  \textcite{Bhattdspl} a montr\'e qu'un 
    $\mathbf{F}_p$-schéma noethérien est fragmenteur si et seulement s'il est $D$-fragmenteur. Pour comparer, pour un 
     $\mathbf{Q}$-schéma le caractère fragmenteur est (plus ou moins, i.e.\ sous des hypothèses faibles) équivalent à la normalité, alors que 
    le caractère $D$-fragmenteur est (plus ou moins) équivalent, par un théorème de  \textcite{Kovacs}, au fait que les singularités de 
    $S$ sont au pire rationnelles, cf.\ exemples~$1.1$ et~$1.2$ de \textcite{Bhattdspl}.

 Voir \textcite{AndreFiorot} et \textcite{Bhattdspl} pour plus de détails et d'exemples concernant 
  les fragmenteurs.

    \subsection{Scindage et pureté}

Un morphisme d'anneaux $f\colon  A\to B$ est dit \emph{pur} s'il est \emph{universellement injectif}, i.e.\ si 
le morphisme induit $C\to B\otimes_A C$ reste injectif pour toute $A$-algèbre $C$, auquel cas il reste injectif pour tout 
$A$-module $C$. Toute extension scindée est clairement pure.
Voici deux incarnations importantes de la notion de pureté:


 $\bullet$ d'un point de vue catégorique, un morphisme $f\colon  A\to B$ est pur si et seulement si 
le foncteur $(-)\otimes_A B\colon  {\rm Mod}_A\to {\rm Mod}_B$ est fidèle. Rappelons que 
$f$ est dit \emph{plat} (resp.\ \emph{fidèlement plat}) si le foncteur $(-)\otimes_A B$ est exact (resp.\ exact et fidèle). Ainsi tout morphisme fidèlement plat est
pur; 

$\bullet$  \textcite{Olivier} a montré qu'un morphisme 
 $f\colon  A\to B$ est pur si et seulement si la théorie de la descente fonctionne bien\footnote{Autrement dit 
 le foncteur envoyant $M\in {\rm Mod}_A$ sur $B\otimes_A M$ muni de sa donnée de descente canonique 
 induit une équivalence entre ${\rm Mod}_A$ et la catégorie des $B$-modules 
 $N$ munis d'un isomorphisme $N\otimes_A B\simeq B\otimes_A N$ de $B\otimes_A B$-modules vérifiant la condition usuelle de cocycle.}
  pour les $B$-modules. Comme il a été remarqué dans \textcite{AndreFiorot}, cela permet de voir les morphismes purs comme les recouvrements pour la topologie canonique\footnote{Il s'agit de la topologie de Grothendieck la plus fine pour laquelle tous les préfaisceaux représentables sont des faisceaux.} sur la catégorie des schémas affines, ce qui en fournit une interprétation géométrique. 
     
     La conjecture du facteur direct se réincarne (cf.~théorème~\ref{bigCM})
en un énoncé de pureté grâce au résultat suivant:
     
     \begin{prop}\label{pure}
   Toute extension finie et pure d'anneaux noethériens est scindée.
           \end{prop}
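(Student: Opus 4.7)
Le plan est de se ramener au cas où $A$ est local noethérien complet, puis d'appliquer la dualité de Matlis. D'abord, par la remarque~\ref{trivia}(a), l'existence d'une rétraction $A$-linéaire $r\colon B\to A$ se teste après localisation en chaque idéal maximal de $A$, puis après le changement de base fidèlement plat $A\to \hat{A}$; comme la pureté est préservée par tout changement de base, on reste à chaque étape en présence d'une extension finie pure. On se ramène donc au cas où $(A,\mathfrak{m})$ est noethérien local et complet.

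Soit $E=E_A(A/\mathfrak{m})$ l'enveloppe injective du corps résiduel, vue comme $A$-module. C'est un $A$-module injectif, et la complétude de $A$ fournit, via la dualité de Matlis, l'identification canonique ${\rm Hom}_A(E,E)\simeq A$. L'observation-clé est que la pureté de $f$, appliquée au $A$-module $E$, donne l'injectivité du morphisme $g\colon E\to E\otimes_A B$, $e\mapsto e\otimes 1$.

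On applique alors le foncteur exact ${\rm Hom}_A(-,E)$ à $g$, ce qui donne une surjection ${\rm Hom}_A(E\otimes_A B,E)\twoheadrightarrow {\rm Hom}_A(E,E)$. L'adjonction Hom-tenseur réécrit la source en ${\rm Hom}_A(B,{\rm Hom}_A(E,E))$, ce qui devient ${\rm Hom}_A(B,A)$ par Matlis, tandis que le but devient simplement $A$. En déroulant ces deux isomorphismes canoniques, la surjection obtenue coïncide avec l'évaluation ${\rm ev}_1\colon {\rm Hom}_A(B,A)\to A$, $r\mapsto r(1)$; tout antécédent de $1$ fournit la rétraction cherchée.

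Le seul point véritablement délicat est la vérification finale de compatibilité, à savoir que la composition de l'adjonction et de la dualité de Matlis produit effectivement ${\rm ev}_1$ (et non une autre évaluation); tout le reste — les réductions et l'utilisation de l'injectivité de $E$ — est formel. On notera que la finitude de $B$ sur $A$ ne sert que dans l'étape de réduction initiale (afin que la localisation et la complétion commutent à ${\rm Hom}_A(B,-)$); l'argument de Matlis lui-même s'appliquerait à tout $A$-module $B$.
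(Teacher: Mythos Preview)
Your proof is correct and follows the same approach as the paper: reduce to $A$ local complete, apply purity to the injective envelope $E$ of the residue field, then invoke Matlis duality ${\rm End}_A(E)\simeq A$. The paper phrases the key step slightly more concretely (extend ${\rm id}_E$ along the injection $E\hookrightarrow E\otimes_A B$ to some $u$, then $b\mapsto u(-\otimes b)$ gives the splitting), whereas you package the same step as the surjectivity of ${\rm ev}_1$ via ${\rm Hom}_A(-,E)$ and Hom--tensor adjunction; these are identical arguments.
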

     
     \begin{proof}  On peut supposer que $A$ est local et complet (remarque~\ref{trivia}). Si
$E$ est une enveloppe injective du corps résiduel de $A$, le morphisme 
$E\to E\otimes_A B$ est injectif par pureté. Puisque $E$ est un $A$-module injectif l'identité de 
   $E$ se prolonge en un morphisme de $A$-modules $u\colon  E\otimes_A B\to E$, d'où un morphisme $A$-linéaire 
     $B\to {\rm End}_A(E)$. Par dualité de Matlis on a ${\rm End}_A(E)\simeq A$, et la composée $B\to  {\rm End}_A(E)\simeq A$ est un scindage de $f$.
            \end{proof}
              
On peut utiliser les liens entre scindage et pureté pour obtenir des conséquences importantes du théorème~\ref{DSC}. Nous allons mentionner deux telles applications.
Si $f\colon  A\to B$ est une extension pure, alors $A\cap IB=I$ pour tout idéal $I$ de 
$A$ (cela ne fait que traduire l'injectivité du morphisme $A/I\to B/IB=B\otimes_A A/I$). Sous des hypothèses faibles
cette propriété de contraction d'idéaux caractérise la pureté:  
\textcite{H10}
a montré qu'une extension finie $A\to B$ d'un anneau noethérien intègre et normal est scindée si $A\cap IB=I$ pour tout idéal~$I$ de~$A$. La conjecture du facteur direct et le th\'eor\`eme suivant sont donc \'equivalents (une implication \'etant triviale, comme remarqu\'e ci-dessus).

        \begin{theo}
        Si $f\colon  A\to B$ est une extension finie d'un anneau régulier $A$, alors $A\cap IB=I$ pour tout idéal 
        $I$ de $A$.
        \end{theo}

    Pour la deuxième application, rappelons 
     qu'un morphisme d'anneaux $f\colon  A\to B$ \emph{descend la platitude} si pour tout $M\in {\rm Mod}_A$ la platitude sur $B$ de $B\otimes_A M$ force celle de $M$. Par exemple, toute extension pure descend la platitude \parencite{Olivier}. 
  \textcite{RG} ont montré, généralisant un théorème de Ferrand, que toute extension finie descend la platitude. Ils ont demandé (question 1.4.3, Seconde Partie de loc.cit.) si toute extension entière d'un anneau noethérien\footnote{L'hypothèse que $A$ soit noethérien n'est pas superflue!} $A$ descend la platitude, et ont aussi expliqué qu'il suffit de résoudre ce problème quand $A$ est régulier; or dans ce cas 
  le théorème~\ref{DSC} permet de conclure puisque 
  toute extension entière $A\to B$ est pure, en tant que colimite filtrante d'extensions finies, donc pures. 
  Donc la conjecture du facteur direct fournit une réponse positive à la question de Raynaud et Gruson. 
 \textcite{Ohi} a montré que les théorèmes~\ref{RG} et~\ref{DSC} sont en fait équivalents.
 
    \begin{theo}\label{RG}
  Toute extension entière d'un anneau noethérien descend la platitude. 
    \end{theo}

Si $A$ est un anneau et si $f\colon  M\to N$ et $g\colon  N\to P$ sont des morphismes de $A$-modules tels que 
  $g\circ f$ soit pur, alors $f$ est clairement pur. Le théorème~\ref{DSC} devient ainsi une conséquence directe du théorème fondamental suivant (qui sera discut\'e dans le \S~\ref{coma}) et de la proposition~\ref{pure}. 
    
  \begin{theo}\label{bigCM}
  Pour toute extension finie $A\to B$ d'un anneau régulier $A$ il existe un morphisme d'anneaux $B\to C$ tel que 
  $A\to C$ soit fidèlement plat.
  \end{theo}

\begin{rema}
\begin{enumerate}

\item Géométriquement, on peut (suivant \cite{AndreFiorot}) reformuler le théorème~\ref{DSC} (resp.~\ref{bigCM}) comme suit: si $Y$ est un schéma noethérien régulier, alors tout 
morphisme fini surjectif $f\colon  X\to Y$ est un recouvrement pour la topologie canonique (respectivement pour la topologie\footnote{Attention, on dit bien la topologie et non pas la pré-topologie (l'énoncé en question serait faux pour la pré-topologie fpqc). Un morphisme $f\colon X\to Y$ de sch\'emas affines est un recouvrement pour la topologie fpqc s'il existe un morphisme $X'\to X$ de sch\'emas affines tel que $X'\to Y$ soit fid\`element plat.} 
fpqc) sur la catégorie des schémas. 

\item Si l'on part d'une extension finie et pure $A\to B$, il est possible qu'un anneau $C$ comme dans le théorème 
\ref{bigCM} n'existe pas (même avec $A$ normal), i.e.\ la topologie fpqc est strictement plus faible que la topologie canonique. 
L'exemple~5.5 d'\textcite{AndreFiorot} 
  est particulièrement simple (à énoncer, pas à prouver\dots{}):  l'inclusion 
$A\coloneqq \mathbf{C}[X,Y]^{\mathbf{Z}/2\mathbf{Z}}\to B\coloneqq \mathbf{C}[X,Y]$ (l'élément non trivial de $\mathbf{Z}/2\mathbf{Z}$ agissant par $(X,Y)\mapsto (-X,-Y)$)
est pure (immédiat) mais pas un recouvrement fpqc (cela utilise les constructions de \textcite{RG} (1.4.1.1), les premiers à avoir fourni des contre-exemples).
\end{enumerate}
\end{rema}

   \subsection{Algèbres de Cohen--Macaulay pour les anneaux locaux noethériens}\label{coma}

  Contrairement à la conjecture du facteur direct, le théorème~\ref{bigCM} ci-dessus est très difficile
  même quand $A$ est une $\mathbf{Q}$-algèbre (toutes les preuves existantes passent par une réduction, via la technique des ultrafiltres, 
  au cas des anneaux de caractéristique positive, où le Frobenius fait des merveilles).
   Démontré par \textcite{HH1}
    quand $A$ contient un corps, et par \textcite{AndreDSC} (et récemment par \textcite{Bhattabs}) pour $A$ d'inégale caractéristique, ce théorème est une reformulation d'une conjecture de \textcite{H1} concernant l'existence d'une $A$-algèbre de Cohen--Macaulay pour tout 
  anneau local noethérien $A$. Pour expliquer le lien, nous devons faire quelques rappels. 
  
   Soit $(A, \mathfrak{m})$ un anneau local noethérien. Une suite $x=(x_1,\ldots, x_d)$ dans $\mathfrak{m}$ est un 
   \emph{système de paramètres} (ou \emph{suite sécante maximale}) si $d=\dim A$ et si
   $A/(x_1,\ldots, x_d)$ est de dimension $0$, autrement dit s'il existe $t\geq 1$ tel que 
   $\mathfrak{m}^t\subset (x_1,\ldots, x_d)$. D'autre part, une suite $z=(z_1,\ldots, z_k)$ d'éléments de 
   $A$ est une \emph{suite régulière} dans un $A$-module $M$ si 
   $M/(z_1,\ldots, z_k)M\ne 0$ et si la multiplication par $z_{i+1}$ est injective dans $M/(z_1,\ldots, z_i)M$ pour tout $0\leq i<k$. On dit que 
   $M$ est un $A$-\emph{module de Cohen--Macaulay} si 
  tout système de paramètres dans $A$ est une suite régulière dans $M$.
      On dit qu'une $A$-algèbre 
   $B$ est une $A$-\emph{algèbre de Cohen--Macaulay} si $B$ est un $A$-module de Cohen--Macaulay. 
   Aucune hypothèse de finitude n'étant imposée à $B$, il ne faut donc pas confondre\footnote{Les diverses appellations dans la
   littérature anglophone donnent un peu le tournis: ce que nous définissons ici correspond à une \emph{balanced (big) CM $A$-algebra}.} cette définition avec celle d'un anneau de Cohen--Macaulay qui se trouve être une $A$-algèbre\footnote{Le langage de l'algèbre commutative n'est pas vraiment commutatif\dots{}}. 

    Le résultat suivant (\cite[th\'eor\`eme 1.7]{BJ} et \cite{HH2}, 2.1.d) fait le lien avec le théorème~\ref{bigCM}. 
         
     \begin{prop}\label{CMstandard}
     
    Soit $(A, \mathfrak{m})$ un anneau local noethérien et soit $M$ un $A$-module.
    
    a) Si $A$ possède un système de paramètres qui est une suite régulière dans $M$, alors 
    le complété $\mathfrak{m}$-adique de $M$ est un $A$-module de Cohen--Macaulay.
    
    b) Si $A$ est régulier, un $A$-module est de Cohen--Macaulay si et seulement s'il est fidèlement plat sur $A$.
      
     \end{prop}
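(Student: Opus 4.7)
The plan is to treat the two assertions separately, with (a) being the more delicate since $M$ may be very large and thus resist the usual Nakayama-style reductions. For (a), let $x = (x_1, \ldots, x_d)$ denote the given $M$-regular system of parameters. Since $(x)$ is $\mathfrak{m}$-primary, the $\mathfrak{m}$-adic and $(x)$-adic topologies on $M$ coincide, so $\hat M = \varprojlim_n M/(x)^n M$. The first step is to verify that $x$ itself remains regular on $\hat M$: the $(x)$-adic associated graded of $M$ is the polynomial algebra $(M/(x)M)[T_1, \ldots, T_d]$ (this is precisely the content of $x$-regularity on $M$), and from this one reads off, via a Koszul-level inverse-limit argument, both that $\hat M/(x)\hat M = M/(x)M$ (hence nonzero) and that the Koszul homology of $x$ on $\hat M$ vanishes in positive degrees.

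The second, and main, step is to deduce regularity on $\hat M$ of an \emph{arbitrary} system of parameters $y = (y_1, \ldots, y_d)$. The underlying general principle is that an $\mathfrak{m}$-adically complete separated $A$-module admits one system of parameters as a regular sequence if and only if it admits every one. This principle rests on two complementary tools: the permutation of regular sequences in $\mathfrak{m}$ on a complete separated module (where $\mathfrak{m}$-adic separation is used substantively to kill residual obstructions), and a prime-avoidance replacement argument that interpolates between $x$ and $y$ by a finite chain of systems of parameters, each an elementary modification of its neighbour. The main obstacle is precisely this passage from \og some\fg{} to \og every\fg{} system of parameters, where the completeness hypothesis must be invoked non-trivially in the absence of finite generation.

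For (b), let $d = \dim A$ and choose $x = (x_1, \ldots, x_d)$ a regular system of parameters, so that $\mathfrak{m} = (x)$ and the Koszul complex $K_\bullet(x; A)$ is a free resolution of $k = A/\mathfrak{m}$ (both facts using the regularity of $A$). For any $A$-module $M$ one then has
\[
\mathrm{Tor}_i^A(k, M) = H_i(K_\bullet(x; M)).
\]
If $M$ is Cohen--Macaulay, then $x$ is $M$-regular, these Tor groups vanish in positive degrees, and the local criterion of flatness gives that $M$ is flat; faithfulness follows from $M/\mathfrak{m}M \neq 0$, itself a consequence of $M/(x)M \neq 0$ (built into the definition of regular sequence) combined with an inclusion $\mathfrak{m}^t \subseteq (x)$. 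Conversely, if $M$ is faithfully flat, any system of parameters $y$ is $A$-regular (a regular local ring being Cohen--Macaulay) and, upon tensoring the corresponding short exact sequences with the flat module $M$, remains $M$-regular; faithful flatness guarantees $M/\mathfrak{m}M \neq 0$, so $M$ is Cohen--Macaulay.
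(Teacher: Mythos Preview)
The paper does not prove this proposition; it simply cites \cite[théorème 1.7]{BJ} for (a) and \cite{HH2}, 2.1.d for (b) and moves on. So there is no in-text argument to compare against.

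That said, your outline for (a) follows precisely the Bartijn--Strooker route behind the cited reference: pass to the $(x)$-adic completion (which coincides with the $\mathfrak{m}$-adic one since $(x)$ is $\mathfrak{m}$-primary as an ideal of $A$), verify that $x$ remains regular on $\hat M$ via the associated-graded description $\mathrm{gr}_{(x)}(M)\simeq (M/(x)M)[T_1,\ldots,T_d]$, and then reach an arbitrary system of parameters $y$ by a chain of elementary one-coordinate replacements combined with permutability of regular sequences contained in $\mathfrak{m}$. You correctly flag that the delicate point is this last passage, where completeness of $\hat M$ (and not merely separation) is genuinely needed because $M$ is not assumed finitely generated; your proposal identifies this obstacle but does not actually carry out the verification, so as written it is an accurate plan rather than a complete proof.

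Your argument for (b) is correct and standard. One small simplification: when $A$ is regular and $x$ is a regular system of parameters, one has $\mathfrak{m}=(x)$ on the nose, so $M/(x)M=M/\mathfrak{m}M$ directly and the detour through an inclusion $\mathfrak{m}^t\subseteq (x)$ is unnecessary.
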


    Compte tenu de cette proposition, le théorème~\ref{bigCM} devient une conséquence\footnote{Voici l'argument, suivant \cite{AndreDSC}: 
    soit $\mathfrak{m}$ un id\'eal maximal de $A$, soit $\widehat{A}_{\mathfrak{m}}\coloneqq \varprojlim_{n} A_{\mathfrak{m}}/\mathfrak{m}^n A_{\mathfrak{m}}$ et soit 
    $\wp$ un id\'eal premier minimal de 
    $B\otimes_A \widehat{A}_{\mathfrak{m}}$ tel que $\widehat{A}_{\mathfrak{m}}\to B'\coloneqq (B\otimes_A \widehat{A}_{\mathfrak{m}})/\wp$ reste injectif et fini (cf.\ remarque~\ref{trivia}, point b)). Alors $B'$ est local, et on choisit
  une $B'$-alg\`ebre de Cohen--Macaulay $C(\mathfrak{m})$. Alors $C(\mathfrak{m})$ est fid\`element plate sur $\widehat{A}_{\mathfrak{m}}$ (proposition~\ref{CMstandard}). Comme $A$ est noeth\'erien, $C\coloneqq \prod_{\mathfrak{m}} C(\mathfrak{m})$ est plate sur 
  $A$, et m\^eme fid\`element plate puisque l'image de ${\rm Spec}(C)\to {\rm Spec}(A)$ est stable par g\'en\'erisation (par platitude) et ne contient pas de point ferm\'e par construction.}    
du résultat suivant, qui répond à la 
conjecture\footnote{À l'origine elle concernait l'existence de $A$-modules de
  Cohen--Macaulay : 
 \textcite{H1} professe un certain pessimisme 
concernant l'existence de $A$-algèbres de Cohen--Macaulay\dots{}}
    de Hochster mentionnée ci-dessus.
    \begin{theo}\label{existbig}
    Pour tout anneau local noethérien $A$ il existe une $A$-algèbre de Cohen--Macaulay.
    \end{theo}

  Les constructions d'André que l'on verra dans la suite de cet exposé font intervenir des anneaux perfectoïdes, et les 
 $A$-algèbres de Cohen--Macaulay qui en sortent ne sont presque jamais noethériennes.
  Ce n'est pas une grande surprise, puisque Hochster a déjà montré (th\'eor\`eme~6.1 de \cite{H1}) que si $A$ est un anneau local noethérien complet, normal, non Cohen--Macaulay, 
 et contenant $\mathbf{Q}$, alors $A$ n'admet pas de $A$-algèbre de Cohen--Macaulay noethérienne. 
  Voir aussi  
 \textcite{BhattVA} pour des obstructions cohomologiques à l'existence de \og petites algèbres de Cohen--Macaulay\fg{}\footnote{i.e.\ des algèbres de Cohen--Macaulay qui sont des extensions finies de l'anneau de base.} en caractéristique positive. 
 
 Le théorème~\ref{existbig} était connu pour des anneaux $A$ d'égale caractéristique ou
  quand $\dim A\leq 3$, grâce aux travaux de \textcite{HH1} et \textcite{H6}
  (le cas d'inégale caractéristique en dimension $3$ utilise de manière cruciale les travaux de  
  \cite{Heitmann}). Le cas restant est dû à  \textcite{AndreDSC} (voir le \S~\ref{clotabs} pour l'approche de Bhatt). Cependant, pour l'instant ces techniques ne semblent pas permettre des avancées vers une autre conjecture célèbre de Hochster\footnote{On m'informe que l'on conjecture d\'esormais le contraire...}: 
{\it tout anneau local noethérien complet $A$ possède un 
  $A$-module de Cohen--Macaulay de type fini}. 
Cette conjecture a un intérêt particulier puisqu'elle entraîne la conjecture de Serre (ouverte aussi depuis environ $50$ ans) 
  sur la (stricte) positivité des multiplicités d'intersection, mais en dehors du cas de la dimension $\leq 2$ très peu de choses sont connues.


\begin{rema}
 Dans une direction un peu diff\'erente mentionnons la conjecture de macaulayfication de Faltings: pour tout sch\'ema noeth\'erien quasi-excellent $X$ il existe un 
 sch\'ema de Cohen--Macaulay $\tilde{X}$ et un morphisme projectif birationnel $\pi\colon \tilde{X}\to X$ qui est un isomorphisme au-dessus du lieu (ouvert) de Cohen--Macaulay de $X$. Voir \textcite{Ces} pour une preuve, m\^eme sous des hypoth\`eses plus faibles. 
\end{rema}

      \subsection{Les conjectures homologiques}
      
      On trouvera d'excellentes présentations, par exemple dans \textcite{H7} et \textcite{Rhom}, de l'écheveau des \og conjectures homologiques\fg{} et de leurs diverses imbrications, nous nous contentons dans ce paragraphe
      de quelques extraits, qui découlent (par les travaux de \textcite{H4}, \textcite{PeskineSzpiro} et \textcite{EG}) de l'existence d'algèbres (ou même seulement de modules) de Cohen--Macaulay pour les anneaux locaux noethériens.

      \begin{theo}\label{hconj}
      
      Soit $(R, \mathfrak{m})$ un anneau local noethérien, soit $x\in R$ et soient $M,N$ des modules non nuls de type fini sur $R$.
      
     {\rm a)} {\rm (conjecture de M. Auslander)}
 Si $M$ est de dimension projective finie et sans $x$-torsion, alors $R$ est sans $x$-torsion.

     {\rm b)} {\rm (question de Bass)} Si $M$ est de dimension injective finie alors $R$ est Cohen--Macaulay. 
      
    {\rm  c)} {\rm (conjecture d'intersection de Peskine--Szpiro)}
    Si $M\otimes_R N$ est de longueur finie, alors $\dim N\leq {\rm pd}(M)$.
          
     {\rm d)} {\rm (\og improved new intersection conjecture\fg{})}
     Soit $C$ un complexe de $R$-modules libres de type fini, concentré en degrés $[0,d]$, et tel que $H_{>0}(C)$ soit de longueur finie. Si $H_0(C)$ poss\`ede un g\'en\'erateur minimal non nul $c$ tu\'e par une puissance de $\mathfrak{m}$, alors 
   $\dim R\leq d$. 
     
    {\rm e)}  {\rm (\og conjecture des syzygies d'Evans--Griffith\fg{})} Supposons que $R$ est int\`egre de Cohen--Macaulay. Si 
    $M$ est un $k$ième module de syzygies, de dimension projective finie et non libre, alors 
    $M$ est de rang $\geq k$.
          
                  \end{theo}

         Tous les résultats ci-dessus étaient connus à l'époque de l'article de \textcite{H1} quand 
      l'anneau local contient un corps 
     ou est de dimension $\leq 2$, et ouverts dans les cas restants.      
     Evans et Griffith avaient montré\footnote{Sans le dire\dots{} voir la section $2$ de \textcite{H4}.} que l'existence de modules de Cohen--Macaulay 
     implique le point d), qui implique e). 
     En combinant les travaux de  \textcite{H4} et \textcite{Dutta}, on montre que d) est équivalent à la conjecture du facteur direct, et il implique c). 
           Le point c) avait été démontré par  \textcite{PeskineSzpiro} en présence d'un corps, et par \textcite{Robint} en général. 
            Peskine et Szpiro ont montré que c) implique b) et a).

    \textcite[th\'eor\`eme 6.1]{H4} a montré  que la conjecture du facteur direct est équivalente 
à cette autre conjecture, la \emph{conjecture monomiale}, tout aussi charmante:
        
    \begin{theo}\label{Cmon}
    Si $x_1,\ldots, x_n$ est un système de paramètres d'un anneau local noethérien $(A,\mathfrak{m})$, alors 
  l'équation $$(x_1\cdots x_n)^k=y_1x_1^{k+1}+\cdots +y_nx_n^{k+1}$$
   n'a pas de solutions $(y_1,\ldots, y_n)\in A^n$ pour $k\geq 1$.
    \end{theo}

     Ce théorème découle facilement du théorème~\ref{existbig}, voici les grandes lignes de l'argument. Si 
     $x_1,\ldots, x_n$ est une suite régulière dans un $A$-module $M$ (pour un anneau $A$) et si
    $m_1,\ldots, m_n\in M$ vérifient $x_1m_1+\cdots+x_n m_n=0$ alors $m_i\in (x_1,\ldots, x_n)M$ pour tout $i$\footnote{Comme $x_n$ est non diviseur de z\'ero dans 
    $M/(x_1,\ldots, x_{n-1})M$ on a $m_n=\sum_{i=1}^{n-1} x_i m_i'$ pour certains $m_i'\in M$.
    On a donc $\sum_{i=1}^{n-1} x_i(m_i+x_nm_i')=0$, ce qui permet de conclure par r\'ecurrence.}. 
     On en déduit\footnote{Si tous les $a_i$ sont nuls, cela est \'evident, soit donc $i$ tel que $a_i\ne 0$ et posons $z=\prod_{j\ne i} x_j^{a_j}$ et $x'_j=x_j^{a_j+1}$ pour $j\ne i$. Comme $x_i^{a_i}(x_im_i-zm)+\sum_{j\ne i} x'_jm_j=0$ et $(x_1',\ldots, x_{i-1}', x_i^{a_i}, x_{i+1}',\ldots, x_n')$ est une suite r\'eguli\`ere, on obtient 
     $x_im_i-zm\in (x_1',\ldots, x_{i-1}', x_i^{a_i}, x_{i+1}',\ldots, x_n')M$, donc $zm\in 
     (x_1',\ldots, x_{i-1}', x_i, x_{i+1}',\ldots, x_n')M$. On conclut par r\'ecurrence sur le nombre de $a_j$ non nuls.}
     que si $a_i$ sont des entiers positifs et si $m,m_i\in M$ vérifient 
    $$x_1^{a_1}\cdots x_n^{a_n}m=x_1^{a_1+1}m_1+\cdots +x_n^{a_n+1} m_n,$$
    alors $m\in (x_1,\ldots, x_n)M$. En particulier, comme $M/(x_1,\ldots, x_n)M\ne 0$, on ne peut pas avoir 
    $x_1^{a_1}\cdots x_n^{a_n}\in (x_1^{a_1+1},\ldots, x_n^{a_n+1})$.
    
    \begin{rema}
    Le th\'eor\`eme ci-dessus est \`a comparer avec l'\'enonc\'e suivant, qui d\'ecoule du th\'eor\`eme de Brian\c{c}on-Skoda: si 
    $A$ est un anneau r\'egulier de dimension $\leq n$, alors $$(x_1\cdots x_{n+1})^n\in (x_1^{n+1},\ldots, x_{n+1}^{n+1})$$
    pour tous $x_1,\ldots, x_{n+1}\in A$ (ce n'est d\'ej\`a pas facile \`a d\'emontrer pour $n=2$!).
    \end{rema}
     
     \subsection{Algèbre dans la clôture intégrale absolue}\label{clotabs}
     
       Soit $A$ un anneau intègre, et fixons une clôture algébrique~$K^+$ du corps des fractions~$K$ de~$A$. La \emph{clôture intégrale 
       absolue}~$A^+$ de~$A$ est la clôture intégrale de~$A$ dans~$K^+$. Elle est bien définie à isomorphisme près, et faiblement fonctorielle en~$A$: tout morphisme $f\colon  A\to B$ d'anneaux intègres induit\footnote{Le cas d'une injection étant clair, supposons que $f$~est surjectif; on 
       peut trouver un idéal premier~$\mathfrak{q}$ de~$A^+$ au-dessus de $\wp\coloneqq \ker f$, et alors $A/\wp$ s'injecte dans $A^+/\mathfrak{q}$, ce dernier étant isomorphe à~$B^+$.}
        un morphisme $f^+\colon  A^+\to B^+$. 
        
        Pour voir le lien avec les conjectures discutées ci-dessus, mentionnons deux énoncés sympathiques. Soit 
        $(A, \mathfrak{m})$ un anneau local noethérien complet et intègre. 
        Si $\dim A\leq 2$ alors $A^+$ est une $A$-algèbre de Cohen--Macaulay (tout anneau noethérien intègre et normal de dimension $2$ est de Cohen--Macaulay). 
        \`A partir de la dimension $3$ (resp.\ $4$) et 
       en égale caractéristique nulle (resp.\ en inégale caract\'eristique) $A^+$ n'est plus une $A$-alg\`ebre de Cohen--Macaulay.

       Dans un véritable tour de force d'algèbre commutative\footnote{Voir 
  l'article de  \textcite{HL} pour une preuve plus simple.},  
  \textcite{HH1} ont démontré le résultat suivant, qui implique immédiatement le théorème~\ref{existbig} en caractéristique positive, et même une forme 
  plus forte, car la construction devient faiblement fonctorielle: 
  
  \begin{theo}
  Si $A$ est un anneau local noethérien intègre et excellent\footnote{Par exemple complet.} de caractéristique $p>0$, alors 
  $A^+$ est une $A$-algèbre de Cohen--Macaulay.
  \end{theo}
  
  Cette recette ne fonctionne plus si $A$ est une $\mathbf{Q}$-algèbre, et la preuve du théorème~\ref{existbig} (et la fonctorialité faible) passe par une réduction délicate à la caractéristique positive. 
    
    Nous finissons cette longue introduction avec le résultat spectaculaire suivant, 
dû à  \textcite{Bhattabs}, et qui fournit un analogue du théorème de Hochster--Huneke
    en inégale caractéristique. Il entraîne immédiatement le théorème~\ref{existbig} en inégale caractéristique, ainsi que la fonctorialité faible des 
    algèbres de Cohen--Macaulay. La preuve est très difficile et utilise toute la palette des développements en théorie de Hodge $p$-adique de ces dix dernières années, ainsi que la théorie prismatique de  \textcite{BS} . Elle mériterait sans doute un exposé à part entière\dots{} 
  
  \begin{theo}\label{Bhattamazing}
  Soit $(A, \mathfrak{m})$ un anneau local noethérien intègre et excellent de caractéristique résiduelle 
  $p>0$. Le complété $p$-adique de $A^+$ est une $A$-algèbre de Cohen--Macaulay.
    \end{theo}
  
  \begin{rema}
  \begin{enumerate}
  
  \item \textcite{H4} a démontré que 
   la conjecture du facteur direct est équivalente au sympathique énoncé suivant: pour tout 
   anneau local noethérien complet et intègre $(A, \mathfrak{m})$ le $A$-dual de~$A^+$ est non nul. 
   Si $A$~est de dimension~$n$, par dualité locale cela équivaut à 
   ${\rm H}^n_{\mathfrak{m}}(A^+)\ne \{0\}$. Cette non annulation se voit facilement à partir du théorème~\ref{DSC} (le point délicat est que l'on peut aller dans l'autre sens): 
   par le théorème de Cohen on peut supposer\footnote{Cela demande un petit peu de travail...} que 
   $A$~est aussi régulier, mais alors toute extension finie $A\to B$ dans~$A^+$ est scindée, donc le morphisme ${\rm H}^n_{\mathfrak{m}}(A)\to {\rm H}^n_{\mathfrak{m}}(B)$ est injectif, et il en est de même de 
      ${\rm H}^n_{\mathfrak{m}}(A)\to {\rm H}^n_{\mathfrak{m}}(A^+)$, ce qui permet de conclure puisque
      ${\rm H}^n_{\mathfrak{m}}(A)\ne \{0\}$. 
      
      \item Dans la situation du théorème~\ref{Bhattamazing}, on montre (c'est le coeur de l'article de \textcite{Bhattabs})
     que ${\rm H}^i_{\mathfrak{m}}(A^+/p)$ est nul pour $i<\dim (A/p)$ et ${\rm H}^i_{\mathfrak{m}}(A^+)$ est nul pour 
     $i<\dim A$. Même en dimension $3$ ce genre d'énoncé va beaucoup plus loin que ceux de  \textcite{Heitmann} (on passe d'une presque nullité à une vraie nullité!). Cela lui permet de montrer que si $A$ est un fragmenteur, alors $A$ est de Cohen--Macaulay: par le même argument que ci-dessus la flèche ${\rm H}^i_{\mathfrak{m}}(A/p)\to {\rm H}^i_{\mathfrak{m}}(A^+/p)$ est injective (car $A$ est un fragmenteur), donc ${\rm H}^i_{\mathfrak{m}}(A/p)$ est nul pour $i<\dim (A/p)$, et 
     $A$ est de Cohen--Macaulay. 
           
      \end{enumerate}
\end{rema}

 \section{Anneaux perfectoïdes: aspects algébriques}
 
   Le but de cette section est de mettre ensemble un certain nombre de résultats sur les anneaux perfectoïdes \og entiers\fg{}, qui sont éparpillés fa\c{c}on puzzle dans la littérature. Nous renvoyons le lecteur 
   aux articles fondamentaux de \textcite[section~3]{BMS1}, \textcite[sections~2 et~3]{BS}, \textcite[section~2]{CS}, au livre de \textcite{KL} et aux exposés de  \textcite{FonBourb} et \textcite{Morrow} dans ce Séminaire pour plus de détails.
 
    On fixe pour toute la suite un nombre premier $p$ et on note ${\rm Perf}_{\mathbf{F}_p}$ la catégorie des 
    $\mathbf{F}_p$-algèbres parfaites, i.e.\ celles dont le morphisme de Frobenius $\varphi\colon  x\mapsto x^p$ est un automorphisme (une telle algèbre est donc réduite). La réduction modulo~$p$ et 
    le foncteur 
     $W(-)$ des vecteurs de Witt $p$-typiques
          induisent des équivalences quasi-inverses entre ${\rm Perf}_{\mathbf{F}_p}$ et la catégorie des 
     anneaux $p$-complets, sans $p$-torsion, dont la réduction modulo~$p$ est parfaite.
    
    \subsection{Vecteurs de Witt}
             
     Pour tout $R\in {\rm Perf}_{\mathbf{F}_p}$ il existe une unique application multiplicative, \emph{mais pas forcément additive}
    $[\,\, \,\,]\colon  R\to W(R)$ telle que $[a]\equiv a\pmod p$ pour tout 
   $a\in R$. Tout $x\in W(R)$ s'écrit $x=\sum_{n\geq 0} [x_n]p^n$ pour une unique suite $(x_n)_{n\geq 0}$ dans $R$.
Il sera très utile de considérer $x$ comme une \og fonction holomorphe de la variable $p$\fg{}. 
 Il est donc tentant d'introduire la notation   
   $$x(0)\coloneqq x_0,\,\, x'(0)\coloneqq x_1.$$
  L'application $x\mapsto x(0)$ induit un isomorphisme d'anneaux $W(R)/p\simeq R$, et un calcul immédiat montre que 
  pour tous $x,y\in W(R)$ on a $$(xy)'(0)=x(0)y'(0)+y(0)x'(0), \eqno (1)$$
  ce qui explique la notation. L'optimisme dégagé par cette observation est tempéré par le fait que $(x+y)'(0)\ne x'(0)+y'(0)$ en général. Cependant la relation ci-dessus jouera un rôle important à plusieurs reprises.
  
  \begin{rema}\label{pder}
   L'anneau $W(R)$ est muni d'un relèvement du Frobenius $\varphi\colon  W(R)\to W(R)$, défini par $\varphi(\sum_{n\geq 0} [x_n]p^n)=\sum_{n\geq 0} [x_n^p]p^n$. L'application $\delta\colon  W(R)\to W(R)$ définie par $\delta(x)\coloneqq \frac{\varphi(x)-x^p}{p}$ est une $p$-dérivation (au sens de Buium) sur $W(R)$, i.e.\ elle vérifie
   $$\delta(xy)=x^p\delta(y)+y^p\delta(x)+p\delta(x)\delta(y),\,\, \delta(x+y)=\delta(x)+\delta(y)+\frac{x^p+y^p-(x+y)^p}{p}$$ pour $x,y\in W(R)$. Cette application 
      joue un rôle crucial dans \textcite{BS}. Notons que $x'(0)^p$ est simplement la réduction modulo $p$ de $\delta(x)$. 
       \end{rema}

   \subsection{Constructions de Fontaine}
    
    Rappelons rapidement quelques constructions fondamentales dues à Fontaine.

  \begin{defi}\label{ppuis}
  Un \emph{élément $p$-puissant} d'un anneau $A$ est une suite $(a_n)_{n\geq 0}$ d'éléments de 
  $A$ telle que $a_{n+1}^p=a_n$ pour tout $n$. On note $\varprojlim_{x\mapsto x^p} A$ l'ensemble des éléments 
  $p$-puissants de $A$. 
    On dira aussi, abusivement, qu'un élément $a\in A$ est $p$-puissant s'il est \emph{muni} d'une suite 
    $(a_n)_{n\geq 0}\in \varprojlim_{x\mapsto x^p} A$ telle que 
    $a_0=a$. 
     On écrira $a^{p^{-n}}$ ou $a^{1/p^n}$ au lieu de $a_n$ et on notera $(a^{p^{-\infty}})$ l'idéal de $A$ engendré par les $a_n$. 
  Noter que l'expression \og un élément $p$-puissant $a$ de $A$\fg{} contient implicitement la donnée d'une   
  suite de racines compatibles $(a^{p^{-n}})_{n\geq 0}$ de $a$. 

    \end{defi}      
        
    Par exemple, si $R\in {\rm Perf}_{\mathbf{F}_p}$ alors l'application $a\mapsto ([a^{1/p^n}])_{n\geq 0}$ induit une bijection entre $R$ et $\varprojlim_{x\mapsto x^p} W(R)$. 
  Plus généralement, soit $B$ un anneau $p$-complet.   
  La réduction modulo $p$ induit une bijection multiplicative de $\varprojlim_{x\mapsto x^p} B$ sur le \emph{basculé} $ B^{\flat}\coloneqq \varprojlim_{x\mapsto x^p} B/p$ de $B$, la $\mathbf{F}_p$-algèbre parfaite des suites $(x_n)_{n\geq 0}$ dans $B/p$
  telles que $ x_{n+1}^p=x_n$ pour tout $n$.
   L'inverse $\iota\colon  B^{\flat}\to \varprojlim_{x\mapsto x^p} B$
    est construit comme suit: si $(x_n)_{n\geq 0}\in B^{\flat}$ et si 
      $b_n\in B$ est un relèvement quelconque de $x_n$, alors la suite $(b_{n+k}^{p^k})_{k\geq 0}$ converge $p$-adiquement dans 
      $B$ vers un élément 
     $\tilde{b}_n$ qui ne dépend pas du choix des $b_n$, et $\iota((x_n)_{n\geq 0})=(\tilde{b}_n)_{n\geq 0}$. 
     Pour tout $x\in B^{\flat}$ on a 
     $$\iota(x)=(\sharp(x), \sharp(x^{1/p}),\ldots),$$
     où  $\sharp\colon  B^{\flat}\to B$ est 
           l'application multiplicative (mais pas additive en général)
      obtenue en composant $\iota$ avec 
la projection sur la première composante
      $\varprojlim_{x\mapsto x^p} B\to B$, i.e.\ $$\sharp(x_0,x_1,\ldots)=\tilde{b}_0=\lim_{k\to\infty} b_k^{p^k}.$$
       Un des grands classiques de la théorie de Fontaine est l'application 
       $$\theta_B\colon  W(B^{\flat})\to B,\,\, \theta_B(\sum_{n\geq 0} [a_n]p^n)=\sum_{n\geq 0} a_n^{\sharp} p^n.$$
       Il s'agit de l'unique morphisme d'anneaux tel que $\theta_B([b])=b^{\sharp}$ pour tout
      $b\in B^{\flat}$.

      Toutes ces constructions sont fonctorielles en l'anneau $p$-complet $B$
      et permettent de caractériser $W(R)$ (pour $R\in {\rm Perf}_{\mathbf{F}_p}$) dans la catégorie plus grande 
      des anneaux $p$-complets, comme suit. Soit $B$ un anneau $p$-complet et soit $f\colon  W(R)\to B$ un morphisme d'anneaux. On obtient un morphisme d'anneaux 
      $f^{\flat}\colon  R\to B^{\flat}$ en passant aux basculés, plus concrètement en 
      envoyant $a$ sur la suite des réductions modulo $p$ des  
      $f([a^{1/p^n}])$, $n\geq 0$. Réciproquement, un morphisme d'anneaux $g\colon  R\to B^{\flat}$ en induit un autre 
      $f\coloneqq \theta_B\circ W(g)\colon  W(R)\to B$, tel que $f^{\flat}=g$. On a donc 
      $$f(\sum_{n\geq 0} [a_n]p^n)=\sum_{n\geq 0} g(a_n)^{\sharp}p^n.$$
    On peut résumer cette discussion comme suit:
    
    \begin{prop}\label{adj} Soit $R\in {\rm Perf}_{\mathbf{F}_p}$.
    Les applications $f\mapsto f^{\flat}$ et $g\mapsto \theta_B\circ W(g)$ induisent une bijection fonctorielle en l'anneau $p$-complet $B$
    $${\rm Hom}(W(R), B)\simeq {\rm Hom}(R, B^{\flat}).$$
    \end{prop}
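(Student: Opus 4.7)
Le plan est d'expliciter les deux applications, puis de vérifier qu'elles sont inverses l'une de l'autre par des calculs directs, en exploitant la description très concrète des vecteurs de Witt $p$-typiques d'une algèbre parfaite donnée au début de la section.

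Premièrement, je vérifierais que $f\mapsto f^{\flat}$ est bien défini. Pour $a\in R$ et $n\geq 0$, on a l'identité $[a^{1/p^{n+1}}]^p=[a^{1/p^n}]$ dans $W(R)$ (conséquence de la multiplicativité des représentants de Teichmüller et du fait que $R$ est parfait). En appliquant $f$ et en réduisant modulo $p$, la suite $(f([a^{1/p^n}])\bmod p)_{n\geq 0}$ appartient bien à $B^{\flat}$; c'est donc par construction un élément de $\varprojlim_{x\mapsto x^p}B/p$. La multiplicativité de $f^{\flat}$ découle de celle des représentants de Teichmüller, l'additivité de l'identité $[a+b]\equiv [a]+[b]\pmod{p}$ dans $W(R)$. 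L'application inverse $g\mapsto \theta_B\circ W(g)$ est manifestement un morphisme d'anneaux, comme composée de $W(g)$ et du morphisme $\theta_B$.

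Deuxièmement, je montrerais que ces deux applications sont inverses l'une de l'autre. Pour $g\colon R\to B^{\flat}$ donné et $f\coloneqq \theta_B\circ W(g)$, on calcule $f([a])=\theta_B([g(a)])=g(a)^{\sharp}$ pour $a\in R$. Comme $g$ est un morphisme d'anneaux et $R$ parfait, $g(a^{1/p^n})=g(a)^{1/p^n}$ dans $B^{\flat}$, et un déroulé direct des définitions de $\sharp$ et de $\iota$ montre que $g(a)^{1/p^n}$ réduit modulo $p$ redonne la $n$-ième composante de $g(a)$; on obtient donc $f^{\flat}=g$. Inversement, partant de $f\colon W(R)\to B$, la même formule donne $(\theta_B\circ W(f^{\flat}))([a])=f^{\flat}(a)^{\sharp}=\lim_{k\to\infty}f([a^{1/p^k}])^{p^k}=\lim_k f([a])=f([a])$, c'est-à-dire que les deux morphismes coïncident sur les représentants de Teichmüller.

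Troisièmement, il faudra étendre cette coïncidence à tout $W(R)$. Les deux morphismes coïncident alors sur les sommes finies $\sum_{n=0}^N [a_n]p^n$ par linéarité, et comme $p$~est envoyé sur $p$ dans les deux cas, ils sont compatibles aux filtrations $p$-adiques. La $p$-complétude de $B$ (hypothèse) permet de passer à la limite et d'identifier les deux morphismes sur tout élément de $W(R)=\varprojlim_N W(R)/p^N$. La fonctorialité en $B$ résulte formellement de celle de $(-)^{\flat}$ et de $\theta$.

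Le point principal à surveiller est la vérification concrète que le passage $\sharp$ et la notation $W(g)$ se composent exactement de la manière annoncée dans la caractérisation \og $f(\sum[a_n]p^n)=\sum g(a_n)^{\sharp}p^n$\fg{}; une fois cette compatibilité établie par un calcul sur les représentants de Teichmüller, la conclusion se déroule sans obstacle, le reste étant de la comptabilité et un argument de densité $p$-adique.
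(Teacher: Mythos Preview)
Your proposal is correct and follows the same approach as the paper, which presents the argument in the paragraph just before the proposition (the proposition is stated as a summary of that discussion). You have simply made explicit the details the paper leaves implicit: the well-definedness of $f^{\flat}$, the verification $f^{\flat}=g$ via $f([a^{1/p^n}])=g(a^{1/p^n})^{\sharp}$, and the $p$-adic density argument to pass from Teichmüller representatives to all of $W(R)$.
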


    \subsection{Anneaux perfectoïdes}
       
       Soit $R\in {\rm Perf}_{\mathbf{F}_p}$. On dit que $\xi\in W(R)$ est \emph{distingué} si 
  $\xi'(0)$ est inversible dans $R$, autrement dit si $\xi=[\xi(0)]+pu$ avec $u$ inversible dans $W(R)$. Ainsi $\xi$ est un avatar d'une \og fonction holomorphe de la variable 
  $p$, biholomorphe au voisinage de $0$\fg{}. 
  
  \begin{rema}\label{derive} Soit $R\in {\rm Perf}_{\mathbf{F}_p}$ et soit $\xi\in W(R)$ un élément distingué.
  
  a) Pour tout morphisme $R\to S$ dans ${\rm Perf}_{\mathbf{F}_p}$ l'image de 
  $\xi$ reste distinguée dans $W(S)$. On la notera encore (abusivement) $\xi$. Si 
  $R$ est $\xi(0)$-complet, la relation $(1)$ montre que $u\xi$ est distingué dans $W(R)$ pour tout élément
  inversible $u$ de $W(R)$.
    
  b) L'élément $\xi$ n'est pas un diviseur de zéro dans $W(R)$: si $\xi x=0$ alors $\xi(0)x(0)=0$ et \og en prenant la dérivée en $0$\fg{}, i.e.\ en utilisant la relation $(1)$ on obtient 
  $\xi'(0)x(0)+\xi(0)x'(0)=0$. Ainsi $\xi'(0)x(0)^2=0$, et comme $\xi'(0)$ est inversible et $R$ est parfait, on a 
  $x(0)=0$. Donc $x=px_1$ pour un $x_1\in W(R)$, et $\xi x_1=0$, puis par itération  
  $x\in\cap_{n\geq 1} p^n W(R)=\{0\}$. Le même argument montre que si $a,x\in W(R)$ vérifient $a\mid \xi x$, alors 
  $x(0)^2\in (a(0)x'(0), a(0)x(0), a'(0)x(0))$. En particulier, si $p^2\mid \xi x$ alors $p\mid x$.
  \end{rema}
    
   Introduisons maintenant, suivant le point de vue prismatique de  \textcite{BS}, la classe la plus générale (à ce jour\dots{}) d'\emph{anneaux 
perfectoïdes}:

   \begin{defi}\label{perfi}
      Un anneau $A$ est dit \emph{perfectoïde} s'il est isomorphe à un anneau 
   $W(R)/(\xi)$ avec $R\in {\rm Perf}_{\mathbf{F}_p}$ et $\xi\in W(R)$ distingué. On note ${\rm Perf}$ la catégorie des anneaux perfectoïdes, les morphismes étant ceux d'anneaux.
   \end{defi}
   
   \begin{rema}\label{histoire}
Les premiers êtres du monde perfectoïde sont les corps perfectoïdes\footnote{Attention: un corps perfectoïde n'est pas la même chose qu'un anneau perfectoïde (au sens de la définition~\ref{perfi}) qui est aussi un corps. Tout n'est pas parfait dans le monde perfectoïde\dots{}}, par exemple le complété $\mathbf{C}_p$ d'une clôture algébrique de $\mathbf{Q}_p$. Puis ont vu le jour \parencite{Scholzethese} les algèbres de Banach perfectoïdes sur un tel corps (certaines étaient déjà bien présentes dans les articles de \textcite{CF} et \textcite{BC}\dots{}). \textcite{FonBourb} a introduit dans son exposé 
une classe plus large 
d'anneaux perfectoïdes. Enfin,  \textcite{BMS1} 
 introduisent la classe la plus générale 
 d'anneaux perfectoïdes, équivalente à celle introduite ci-dessus. Voir aussi la proposition~\ref{310} pour le lien avec les définitions plus anciennes (et plus restreintes).
   \end{rema}
   
   \begin{exem}\label{experf}
   \begin{enumerate}
  
  \item Il est évident que ${\rm Perf}_{\mathbf{F}_p}$ s'identifie à la sous-catégorie de 
    ${\rm Perf}$ des objets tués par $p$ (noter que $p\in W(R)$ est distingué). En particulier l'anneau nul, 
    $\mathbf{F}_p$, $\mathbf{F}_p[T^{1/p^{\infty}}]\coloneqq \varinjlim_{n} \mathbf{F}_p[T^{1/p^n}]$ sont perfectoïdes.
   
\item Soit $A=W(R)/(\xi)$, avec $R\in {\rm Perf}_{\mathbf{F}_p}$ et $\xi\in W(R)$ distingué. 
       L'image $\pi$ de $[\xi(0)^{1/p}]$ dans 
     $A$, munie de la suite de racines donnée par les images des $[\xi(0)^{1/p^{n+1}}]$ ($n\geq 0$), est un élément $p$-puissant (d\'efinition~\ref{ppuis}) de $A$. On a une égalité $(\pi^p)=(p)$ d'idéaux de $A$, puisque $\xi=\xi(0)+pu$ avec $u$ inversible. Ainsi non seulement $\mathbf{Z}_p$ n'est pas perfectoïde, il n'y a même pas de morphisme d'un anneau perfectoïde vers $\mathbf{Z}_p$. Noter que si $\xi(0)=0$ alors $\pi=0$ et $A\in {\rm Perf}_{\mathbf{F}_p}$.  
  
  \item Gardons le contexte ci-dessus. Le Frobenius $A/\pi\to A/\pi^p$ 
     s'identifie au Frobenius $R/\xi(0)^{1/p}\to R/\xi(0)$, et c'est un isomorphisme.  Ainsi pour tout anneau perfectoïde~$A$ il existe un élément 
       $p$-puissant $\pi\in A$
      tel que $(\pi^p)=(p)$ (la proposition~\ref{pcomplet} ci-dessous montre que  
      $A$~est $\pi$-complet) et tel que le Frobenius $A/\pi\to A/\pi^p$ soit un isomorphisme.
  Voir la proposition~\ref{310} pour une réciproque partielle.
        
\item Pour $R=\mathbf{F}_p[T^{1/p^{\infty}}]$ l'anneau $W(R)$ est isomorphe au complété 
   $p$-adique de $\mathbf{Z}_p[T^{1/p^{\infty}}]$ (réduire modulo $p$ pour s'en convaincre). L'élément 
   $\xi=[T]-p$ est distingué et $W(R)/(\xi)$ est isomorphe au complété 
   $p$-adique de $\mathbf{Z}_p[p^{1/p^{\infty}}]$.    
   Le même genre d'argument montre que si $A$ est un anneau perfectoïde, alors le complété $p$-adique $A\langle T^{1/p^{\infty}}\rangle$ de 
    $A[T^{1/p^{\infty}}]=\varinjlim_{n}A[T^{1/p^n}]$ l'est aussi.

\item Comme $W$ commute aux produits, on voit facilement qu'un produit quelconque d'anneaux perfectoïdes est perfectoïde. Il n'est pas vrai (cf.\ remarque~\ref{sen}), et ceci posera de sérieux soucis plus tard, qu'une limite projective (dans la catégorie des anneaux) d'anneaux perfectoïdes est perfectoïde.
 \end{enumerate}
   \end{exem}
   
   \subsection{Basculement}
   
     Le résultat suivant sera constamment utilisé par la suite.
            
    \begin{prop}\label{pcomplet}
    Tout anneau perfectoïde est $p$-complet et sa torsion $p$-primaire est tuée par $p$.
    \end{prop}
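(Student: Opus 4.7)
Fixons une présentation $A=W(R)/(\xi)$ avec $R\in{\rm Perf}_{\mathbf{F}_p}$ et $\xi\in W(R)$ distingué. La stratégie est d'extraire les deux assertions (compl\'etude $p$-adique de $A$ et annulation par $p$ de sa $p$-torsion) de la compl\'etude $p$-adique de $W(R)$ et du raffinement de l'injectivité de la multiplication par $\xi$ contenu dans la remarque~\ref{derive}(b), qui fournit en particulier l'implication cruciale $p^2\mid \xi z\Rightarrow p\mid z$ dans $W(R)$.

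Je commencerais par la borne sur la torsion. Soit $y\in A$ avec $p^n y=0$, et relevons $y$ en $\tilde y\in W(R)$, de sorte que $p^n\tilde y=\xi z$ pour un certain $z\in W(R)$. Pour $n\geq 2$, on a $p^2\mid \xi z$, donc $z=pz_1$ par la remarque~\ref{derive}(b); en simplifiant par $p$, ce qui est loisible puisque $W(R)$ est sans $p$-torsion, on obtient $p^{n-1}\tilde y=\xi z_1$. En itérant $n-1$ fois, on arrive à $p\tilde y=\xi z_{n-1}$, c'est-à-dire $py=0$ dans $A$. Ainsi $A[p^\infty]=A[p]$, et la $p$-torsion de $A$ est bien tuée par $p$.

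Pour la compl\'etude $p$-adique, je passerais par la compl\'etion d\'eriv\'ee. L'anneau $W(R)$ étant classiquement $p$-complet et sans $p$-torsion, il est d\'eriv\'e $p$-complet. La sous-cat\'egorie des modules d\'eriv\'es $p$-complets étant stable par conoyaux dans la cat\'egorie d\'eriv\'ee, et $A$ étant le conoyau de la multiplication par $\xi$ sur $W(R)$ (injective d'après la remarque~\ref{derive}(b)), on en d\'eduit que $A$ est lui-même d\'eriv\'e $p$-complet. Combin\'e à la borne sur la $p^\infty$-torsion \'etablie ci-dessus, un critère classique de compl\'etion d\'eriv\'ee garantit alors que $A$ est $p$-adiquement complet au sens usuel.

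Le point d\'elicat est indéniablement la borne sur la torsion: l'implication $p^2\mid \xi z\Rightarrow p\mid z$ est spécifique aux éléments distingués et échoue dès qu'on l'affaiblit en $p\mid\xi z\Rightarrow p\mid z$, cette dernière équivalant modulo $p$ à ce que $\xi(0)$ soit non diviseur de z\'ero dans $R$, ce qui peut très bien être faux (par exemple si $\xi(0)=0$). C'est donc la rigidit\'e fine extraite dans la remarque~\ref{derive}, elle-même cons\'equence du caractère parfait de $R$ et de l'inversibilit\'e de $\xi'(0)$, qui est au coeur de l'argument.
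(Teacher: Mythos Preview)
Your proof is correct. The bound on the $p$-torsion is identical to the paper's argument, both hinging on the implication $p^2\mid\xi z\Rightarrow p\mid z$ from remarque~\ref{derive}(b).

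For the $p$-completeness, you take the derived route: $W(R)$ is derived $p$-complete, hence so is the two-term complex $[W(R)\xrightarrow{\xi}W(R)]$, which is quasi-isomorphic to $A$ since $\xi$ is a non-zero-divisor; combined with the bounded $p$-torsion, this yields classical $p$-completeness. The paper actually mentions exactly this argument in a footnote (``une mani\`ere savante est de remarquer que $A$ est $p$-complet au sens d\'eriv\'e\dots'') but opts instead for an elementary direct proof: it shows that $(\xi)$ is $p$-adically closed in $W(R)$ by establishing the identity $(\xi)\cap p^{n+1}W(R)=p^n\bigl((\xi)\cap pW(R)\bigr)$, which follows by induction from the same torsion calculation. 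Your approach is cleaner and more conceptual but imports the machinery of derived completion; the paper's is self-contained but requires unwinding what ``$(\xi)$ is closed'' buys for the completeness of the quotient.
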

    
    \begin{proof}
    On peut supposer que l'anneau est de la forme $A=W(R)/(\xi)$ avec $R\in {\rm Perf}_{\mathbf{F}_p}$ et $\xi\in W(R)$ distingué. 
   Pour montrer que $p$ annule $A[p^{\infty}]$
            il suffit de montrer que $A[p^2]=A[p]$, autrement dit que si $x\in W(R)$ vérifie $\xi\mid p^2x$ alors 
      $\xi\mid px$, ce qui se déduit de la dernière phrase de la remarque~\ref{derive}.
                Montrons ensuite que $A$ est $p$-complet\footnote{Une manière savante est de remarquer que $A$ est $p$-complet au sens dérivé et 
       sa torsion $p$-primaire étant bornée le mot \og dérivé\fg{} est superflu\dots{}}.        
       Il suffit de voir que $(\xi)\subset W(R)$ est fermé pour la topologie $p$-adique. Mais on vient d'établir l'égalité  
 $(\xi)\cap p^2W(R)=p((\xi)\cap pW(R))$, et une récurrence immédiate donne
       $(\xi)\cap p^{n+1}W(R)=p^n((\xi)\cap pW(R))$.
    \end{proof}

     Soit $R\in {\rm Perf}_{\mathbf{F}_p}$ et soit $\xi\in W(R)$ un élément distingué. En posant $A=W(R)/(\xi)$, 
    l'anneau $A/p$ s'identifie à $R/\xi(0)$, en particulier \emph{le Frobenius est surjectif sur $A/p$}. Cela implique, par réduction modulo $p$ et le caractère 
     $p$-complet des anneaux en présence, la surjectivité de l'application de Fontaine $\theta_A\colon  W(A^{\flat})\to A$. Pour étudier son noyau on commence par décrire $A^{\flat}$.
     Puisque $R$ est parfait et $A/p\simeq R/\xi(0)$, les morphismes $x\mapsto x^{p^n}$ induisent un isomorphisme entre les systèmes projectifs
     $\{A/p\}$ et $\{R/\xi(0)^{p^n}\}$, les transitions 
     étant induites par le Frobenius pour le premier et les projections canoniques pour le second. Donc 
     $A^{\flat}=\varprojlim_{x\mapsto x^p} A/p$ est isomorphe à la $\xi(0)$-complétion $\hat{R}$ de $R$, en particulier \emph{$A^{\flat}$ est $\xi(0)$-complet}. Pour aller plus loin nous avons besoin du: 
     
     \begin{lemm}
      Le morphisme $R\to \hat{R}$ induit un isomorphisme 
     $A\simeq W(\hat{R})/(\xi)$.
     \end{lemm}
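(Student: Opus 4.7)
Le plan est de construire une rétraction $\beta: W(\hat{R})/(\xi) \to A$ en utilisant l'homomorphisme $\theta$ de Fontaine pour $A$, puis de montrer que l'application naturelle $\alpha: W(R)/(\xi) \to W(\hat{R})/(\xi)$ est surjective par approximation $p$-adique successive; la combinaison fournira l'isomorphisme. Remarquons que $\alpha$ est bien définie car $\hat{R}$ est lui-même une $\mathbf{F}_p$-algèbre parfaite (le Frobenius $\varphi$ est un homéomorphisme pour la topologie $\xi(0)$-adique puisque $\varphi(\xi(0)^n R) = \xi(0)^{np} R$, et de même pour $\varphi^{-1}$), et l'image de $\xi$ dans $W(\hat{R})$ reste distinguée (remarque~\ref{derive} a)).

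Pour la rétraction, j'appliquerais la proposition~\ref{adj} à l'identité $\hat{R} \to A^{\flat}$ (en utilisant l'identification $A^{\flat} \simeq \hat{R}$ obtenue juste avant l'énoncé), ce qui produit un morphisme $\theta_A: W(\hat{R}) \to A$. Par fonctorialité de l'adjonction, la projection canonique $q: W(R) \to A$ et la composée $W(R) \to W(\hat{R}) \xrightarrow{\theta_A} A$ correspondent toutes les deux à la même application $R \to \hat{R} = A^\flat$ (l'application canonique de complétion) sous la bijection $\mathrm{Hom}(W(R), A) \simeq \mathrm{Hom}(R, A^\flat)$, et sont donc égales. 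En particulier $\xi \in \ker \theta_A$, donc $\theta_A$ se factorise en $\beta: W(\hat{R})/(\xi) \to A$ vérifiant $\beta \circ \alpha = \mathrm{id}_A$; ceci montre que $\alpha$ est une injection scindée.

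Pour la surjectivité de $\alpha$, modulo $(\xi, p)$ les deux anneaux $W(R)$ et $W(\hat{R})$ se projettent sur $R/\xi(0)$ (car $\xi \equiv \xi(0) \bmod p$ et $\hat{R}/\xi(0) = R/\xi(0)$). Étant donné $y \in W(\hat{R})$, on peut donc écrire $y = x_0 + \xi a_0 + p b_0$ avec $x_0 \in W(R)$ et $a_0, b_0 \in W(\hat{R})$. En itérant ce procédé sur $b_0, b_1, \ldots$, on obtient pour tout $N$
$$y = \sum_{n=0}^{N} p^n x_n + \xi\Bigl(\sum_{n=0}^{N} p^n a_n\Bigr) + p^{N+1} b_N,$$
et la $p$-complétude de $W(R)$ et $W(\hat{R})$ permet de passer à la limite pour obtenir $y = x + \xi z$ avec $x \in W(R)$ et $z \in W(\hat{R})$. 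Donc $\alpha$ est surjective, et combinée avec l'injectivité scindée on conclut que $\alpha$ est un isomorphisme.

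La difficulté principale réside dans la vérification de la compatibilité sous la proposition~\ref{adj} lors de la première étape -- une fois que les deux morphismes $W(R) \to A$ sont identifiés, le reste n'est que de la comptabilité $p$-adique. Une alternative serait de démontrer directement que $\ker \theta_A = (\xi) W(\hat{R})$, mais l'approche \og rétraction plus surjectivité\fg{} semble plus naturelle étant donnés les outils déjà à disposition.
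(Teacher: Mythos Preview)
Votre preuve est correcte, mais emprunte un chemin différent de celui du papier. Le papier procède en deux lignes: puisque $A$ et $W(\hat{R})/(\xi)$ sont $p$-complets (proposition~\ref{pcomplet}), il suffit de vérifier l'isomorphisme modulo~$p^n$ pour tout~$n$; or comme $\xi - [\xi(0)] = pu$ avec $u$~inversible, les idéaux $(p^n,\xi)$ et $([\xi(0)]^n,\xi)$ coïncident, et l'on est ramené à $W(R)/[\xi(0)]^n \simeq W(\hat{R})/[\xi(0)]^n$, qui découle de $R/\xi(0)^n \simeq \hat{R}/\xi(0)^n$ (via les coordonnées de Teichmüller). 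C'est donc un calcul direct sur les idéaux, sans invoquer ni l'adjonction ni~$\theta_A$.

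Votre approche \og{}rétraction via $\theta_A$ puis surjectivité par approximation $p$-adique\fg{} est plus longue mais a un mérite: elle anticipe la phrase qui suit immédiatement le lemme dans le papier, où l'on identifie $\theta_A\colon W(A^\flat)\to A$ à la projection canonique $W(\hat{R})\to W(\hat{R})/(\xi)$ \emph{via} le lemme et l'adjonction. Vous établissez essentiellement cette identification en même temps que le lemme, ce qui est une réorganisation légitime. Notez cependant que l'observation clé $(p^n,\xi)=([\xi(0)]^n,\xi)$ du papier est plus incisive que votre argument d'approximation successive (qui n'utilise que $(p,\xi)\supset([\xi(0)],\xi)$, donc modulo~$p$ seulement), et resservira ailleurs. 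Un détail: vous invoquez la \og{}fonctorialité de l'adjonction\fg{} en la variable parfaite~$R$, alors que la proposition~\ref{adj} ne l'énonce explicitement qu'en~$B$; cette fonctorialité en~$R$ est immédiate à partir de la formule pour $f\mapsto f^\flat$, mais mérite une phrase.
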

     
     \begin{proof}
     Puisque $A$ et $W(\hat{R})/(\xi)$ sont $p$-complets (proposition~\ref{pcomplet}), il suffit de voir que 
     le morphisme induit $A/p^n\to W(\hat{R})/(\xi, p^n)$ est un isomorphisme pour tout $n\geq 1$. Mais 
     $\xi-[\xi(0)]$ engendre le même idéal que $p$, donc $$A/p^n\simeq W(R)/(p^n, \xi)=W(R)/([\xi(0)]^n, \xi)\simeq (W(R)/[\xi(0)]^n)/(\xi).$$
          Comme $R/\xi(0)^n\simeq \hat{R}/\xi(0)^n$, le morphisme $W(R)/[\xi(0)]^n\to W(\hat{R})/[\xi(0)]^n$ est un isomorphisme, ce qui finit la preuve (en reprenant la chaîne d'isomorphismes ci-dessus avec~$R$ remplacé par~$\hat{R}$).
                              \end{proof}
                              
                           L'adjonction entre basculement et vecteurs de Witt (proposition~\ref{adj}) 
     montre que le morphisme 
          $\theta_A\colon  W(A^{\flat})\to A$ s'identifie, via les isomorphismes $A^{\flat}\simeq \hat{R}$ et $A\simeq W(\hat{R})/(\xi)$, 
           à la projection canonique 
          $W(\hat{R})\to W(\hat{R})/(\xi)$, en particulier son noyau est engendré par~$\xi$. On obtient ainsi l'isomorphisme 
          $$\theta_A\colon  W(A^{\flat})/(\xi)\simeq A, \eqno(2)$$
         d'où le premier résultat fondamental de la théorie:
              
          \begin{theo}\label{tilt}
          Si $A$ est un anneau perfectoïde et si $\xi\in W(A^{\flat})$ est distingué et engendre $\ker(\theta_A)$, alors 
           la catégorie des 
          $A$-algèbres perfectoïdes est équivalente, via $B\mapsto B^{\flat}$ et $R\mapsto W(R)/(\xi)$, à celle des 
          $A^{\flat}$-algèbres parfaites $\xi(0)$-complètes. 
          \end{theo}

   \begin{proof}
   Il suffit de montrer que $\theta_B\colon  W(B^{\flat})/(\xi)\simeq B$ et que 
   $B^{\flat}$ est $\xi(0)$-complète
    pour toute $A$-algèbre perfectoïde $B$.
   On vient de voir que  
          $\theta_B$ est surjective et qu'il existe un élément distingué $\xi_1\in W(B^{\flat})$ tel que $\ker(\theta_B)=(\xi_1)$, $B^{\flat}$ étant $\xi_1(0)$-complet. Comme $B$ est une $A$-algèbre, la fonctorialité de la construction de l'application~$\theta$ montre que 
          $\xi\in \ker(\theta_B)$, donc $\xi=\xi_1v$ pour un $v\in W(B^{\flat})$. En \og dérivant\fg{} on obtient (cf.\ relation $(1)$)
          $\xi'(0)=\xi_1'(0)v(0)+\xi_1(0)v'(0)$, donc $\xi_1'(0)v(0)=\xi'(0)-\xi_1(0)v'(0)$ est inversible (puisque $\xi'(0)$ l'est et que 
          $B^{\flat}$ est $\xi_1(0)$-complet). Comme $\xi_1'(0)$ est inversible, $v(0)$ l'est tout autant et donc il en est de même de $v$, et 
          $(\xi)=(\xi_1)$. En particulier $B^{\flat}$ est aussi $\xi(0)$-complète, ce qui finit la preuve.
   \end{proof}
                                          
                    La propriété de stabilité suivante jouera un rôle crucial par la suite.
                 
              \begin{coro}\label{tensor}
              Soit $A$ un anneau perfectoïde.
 Si $B$ et $C$ sont des $A$-algèbres perfectoïdes, alors le complété $p$-adique 
      $B\widehat{\otimes}_A C$ de $B\otimes_A C$ est un anneau perfectoïde. Plus précisément, si 
      $\xi$ engendre le noyau de $\theta_A\colon  W(A^{\flat})\to A$, alors
      $$B\widehat{\otimes}_A C\simeq W(B^{\flat}\otimes_{A^{\flat}} C^{\flat})/(\xi).$$
              \end{coro}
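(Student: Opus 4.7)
The plan is to show that $D \coloneqq W(B^\flat \otimes_{A^\flat} C^\flat)/(\xi)$ represents the pushout of $B \leftarrow A \to C$ in the category of $p$-complete rings, which is by definition $B \widehat{\otimes}_A C$. Set $R \coloneqq B^\flat \otimes_{A^\flat} C^\flat$: this is itself a perfect $\mathbf{F}_p$-algebra, because the Frobenius on $R$ sends $b \otimes c$ to $b^p \otimes c^p$ and admits the explicit two-sided inverse $b \otimes c \mapsto b^{1/p} \otimes c^{1/p}$. That inverse is well defined since extraction of $p$-th roots is additive in any perfect $\mathbf{F}_p$-algebra, and since the perfectness of the base $A^\flat$ ensures that $a^{1/p} \in A^\flat$, so $a^{1/p}$ may freely cross the tensor. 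The image of $\xi$ in $W(R)$ then remains distinguished (Remark~\ref{derive}.a), so $D$ is perfectoid by Definition~\ref{perfi}, and in particular $p$-complete (Proposition~\ref{pcomplet}). The $A^\flat$-algebra maps $B^\flat \to R$ and $C^\flat \to R$ produce, via $W(-)$ and the presentations in equation~$(2)$, canonical $A$-algebra maps $B \to D$ and $C \to D$, giving a natural $A$-algebra morphism $\Phi\colon B \widehat{\otimes}_A C \to D$.

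To prove $\Phi$ is an isomorphism, I verify the universal property of $D$ in $p$-complete $A$-algebras. For any such algebra $E$, Proposition~\ref{adj} yields a natural bijection $\mathrm{Hom}_A(B, E) \simeq \mathrm{Hom}_{A^\flat}(B^\flat, E^\flat)$: an $A$-algebra map $B \to E$ amounts to a map $W(B^\flat) \to E$ killing $\xi$, and when $E$ is already an $A$-algebra the vanishing of $\xi$ in $E$ makes this condition automatic. The same applies to $C$. Combining with the universal property of the tensor product $R$ on the tilted side, and reapplying Proposition~\ref{adj} to $W(R) \to E$, one obtains the chain of natural bijections
$$\mathrm{Hom}_A(B \widehat{\otimes}_A C, E) \simeq \mathrm{Hom}_{A^\flat}(R, E^\flat) \simeq \mathrm{Hom}_A(D, E),$$
whence $\Phi$ is an isomorphism by Yoneda.

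The main obstacle is the very first step: checking that the ordinary (uncompleted) tensor product $R$ is already perfect. Surjectivity of Frobenius on $R$ is formal, but injectivity---equivalently, reducedness of $R$---is not: over a non-perfect base, a tensor product of perfect rings may well fail to be reduced. The perfectness of the base $A^\flat$ is precisely what saves us, by making the candidate inverse $b \otimes c \mapsto b^{1/p} \otimes c^{1/p}$ of Frobenius well defined, and this is the delicate input that allows the tilting-based identification of perfectoid tensor products without any additional perfection or completion on the tilted side.
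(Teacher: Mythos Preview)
Your argument is correct and follows essentially the same route as the paper: both show that $R=B^\flat\otimes_{A^\flat}C^\flat$ is perfect, that $D=W(R)/(\xi)$ is perfectoid and $p$-complete, and then verify via the adjunction of Proposition~\ref{adj} that $D$ and $B\widehat{\otimes}_A C$ corepresent the same functor on $p$-complete rings (the paper tests against all $p$-complete rings, you against $p$-complete $A$-algebras, which comes to the same thing). Your final paragraph overstates the difficulty of the perfectness of $R$: the paper dispatches it in one line, and indeed the explicit inverse $b\otimes c\mapsto b^{1/p}\otimes c^{1/p}$ you wrote down is the entire argument; this is standard rather than the delicate step.
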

                    
                    \begin{proof}
                    La $A^{\flat}$-algèbre
    $R\coloneqq B^{\flat}\otimes_{A^{\flat}} C^{\flat}$ est parfaite puisque 
    $A^{\flat}, B^{\flat}, C^{\flat}$ le sont. Comme $\xi$ reste distingué dans $W(R)$, 
    l'anneau $T=W(R)/(\xi)$ est perfectoïde, donc $p$-complet. Les morphismes 
    $B^{\flat}\to R$ et $C^{\flat}\to R$ combinés avec les isomorphismes 
    canoniques (th\'eor\`eme~\ref{tilt}) $W(A^{\flat})/(\xi)\simeq A,\, W(B^{\flat})/(\xi)\simeq B,\,W(C^{\flat})/(\xi)\simeq C$
     induisent un morphisme $B\otimes_A C\to T$, qui se prolonge en un morphisme $B\widehat{\otimes}_A C\to T$ puisque $T$ est $p$-complet. Pour montrer que c'est un isomorphisme il suffit de voir qu'il induit une bijection ${\rm Hom}(T, S)\to {\rm Hom}(B\widehat{\otimes}_A C, S)$ pour tout anneau $p$-complet $S$. Comme 
     \begin{multline*}
       {\rm Hom}(B\widehat{\otimes}_A C, S)={\rm Hom}(B\otimes_A C, S)=\\{\rm Hom}\Bigl(\frac{W(B^{\flat})}{(\xi)}, S\Bigr)\times_{{\rm Hom}\bigl(\frac{W(A^{\flat})}{(\xi)}, S\bigr)}
      {\rm Hom}\Bigl(\frac{W(C^{\flat})}{(\xi)}, S\Bigr),
     \end{multline*}
      le résultat suit formellement de l'adjonction fournie par la proposition~\ref{adj}. 
      \end{proof}

       \subsection{Anneaux perfectoïdes et Frobenius}\label{Frob}

       Le résultat fondamental suivant 
      (lemmes 3.9 et 3.10 de \textcite{BMS1})
      fournit 
      le critère le plus simple pour tester le caractère perfectoïde d'un anneau, sous des hypothèses faibles. Cela permet de fabriquer des tas d'anneaux
           perfectoïdes sans être obligé de fournir une présentation $W(R)/(\xi)$, et fait aussi le lien avec les définitions plus anciennes
           (cf.\ remarque~\ref{histoire}). 


      \begin{prop}\label{310}
      Soit $A$ un anneau contenant un non diviseur de zéro $\pi\in A$ tel que 
      $\pi^p\mid p$, $A$ est $\pi$-complet et le Frobenius induit un isomorphisme $A/\pi\simeq A/\pi^p$. Alors 
      $A$ est perfectoïde. 
      \end{prop}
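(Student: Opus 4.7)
L'objectif est de présenter $A$ sous la forme $W(R)/(\xi)$ avec $R \in {\rm Perf}_{\mathbf{F}_p}$ et $\xi$ distingué; on prendra $R := A^\flat$ et on analysera le morphisme de Fontaine $\theta_A$. Une première étape consiste à vérifier que $A$ est $p$-complet, ce qui doit découler de la $\pi$-complétude combinée avec l'inclusion $p \in (\pi^p)$ et avec la structure supplémentaire apportée par l'isomorphisme de Frobenius (et justifiera ensuite les définitions de $A^\flat$ et de $\theta_A$).

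Le Frobenius sur $A/\pi$ se factorise comme la composition de l'isomorphisme hypothétique $A/\pi \simeq A/\pi^p$ avec la surjection naturelle $A/\pi^p \twoheadrightarrow A/\pi$, donc il est surjectif; ainsi $R' := \varprojlim_{x \mapsto x^p} A/\pi$ est une $\mathbf{F}_p$-algèbre parfaite. On établit ensuite l'identification $A^\flat \simeq R'$ à l'aide de la $\pi$-complétude et de l'inclusion $(p) \subseteq (\pi^p)$: la convergence rapide des puissances $p^k$-ièmes absorbe l'écart entre les congruences modulo $\pi$ et modulo $p$. En itérant la bijectivité du Frobenius, on construit une suite $\pi = \pi_0, \pi_1, \pi_2, \ldots$ dans $A$ vérifiant $\pi_{n+1}^p \equiv \pi_n \pmod{\pi^p}$; sa réduction modulo $\pi$ fournit un élément $\varpi \in R' \simeq A^\flat$ tel que $\varpi^\sharp \in A$ engendre le même idéal que $\pi$.

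On pose alors $\xi := [\varpi^p] - pu \in W(A^\flat)$ pour un élément $u$ approprié, de sorte que $\xi \in \ker \theta_A$; le caractère distingué de $\xi$ équivaut à l'inversibilité de la réduction de $u$ modulo $p$, i.e.\ à celle du quotient $p/(\varpi^\sharp)^p \in A$. L'obstacle principal anticipé est précisément cette inversibilité: il faudra exploiter l'injectivité (et non seulement la surjectivité) du Frobenius $A/\pi \simeq A/\pi^p$, quitte à remplacer $\pi$ par $\varpi^\sharp$ ou par un autre pseudo-uniformisant mieux adapté issu de la construction. Une fois $\xi$ distingué obtenu, l'isomorphisme $W(A^\flat)/(\xi) \simeq A$ en résulte: la surjectivité vient de celle de $A^\flat \to A/p$ (via $R' \simeq A^\flat$ et la surjectivité du Frobenius sur $A/\pi$) combinée à la $p$-complétude des deux côtés; l'injectivité se réduit à vérifier $\ker \theta_A \subseteq (\xi)$ modulo $p$, puis se relève grâce à la non-divisibilité par zéro de $\xi$ (remarque~\ref{derive}).
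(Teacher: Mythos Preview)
Your overall strategy---show that $\theta_A$ is surjective, produce $\varpi\in A^\flat$ with $(\varpi^\sharp)=(\pi)$, then exhibit a distinguished generator of $\ker\theta_A$---matches the paper's. The gap lies in the shape of your candidate $\xi$.

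You set $\xi=[\varpi^p]-pu$ with $u$ chosen so that $\xi\in\ker\theta_A$, which forces $\theta_A(u)=(\varpi^\sharp)^p/p$; you then identify the main obstacle as the invertibility of $p/(\varpi^\sharp)^p$ in $A$. But the hypotheses give only $\pi^p\mid p$, not the reverse divisibility, so such a $u$ need not exist, and the invertibility you hope to squeeze out of the injectivity of Frobenius is in fact false in general. Two examples: if $A$ is the $\pi$-completion of $\mathbf{F}_p[\pi^{1/p^\infty}]$ (perfect, hence perfectoid) then $p=0$ and $p/(\varpi^\sharp)^p=0$ is not a unit; in mixed characteristic, take $A=\widehat{\mathbf{Z}_p[p^{1/p^\infty}]}$ with $\pi=p^{1/p^2}$, so that $\pi^p=p^{1/p}$ divides $p$ but $p/\pi^p=p^{1-1/p}$ is not invertible. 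In both cases all the hypotheses of the proposition hold.

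The paper's fix is to reverse the roles of $p$ and $[\varpi^p]$ in $\xi$: write $p=\pi^p\cdot\theta_A(-x)$ for some $x\in W(A^\flat)$ (this uses only $\pi^p\mid p$ and the surjectivity of $\theta_A$) and set $\xi=p+x[\varpi]^p$. A direct computation then gives $\xi'(0)=1+x'(0)\varpi^p$, which is a unit as soon as $A^\flat$ is $\varpi$-complete---no condition on $p/\pi^p$ is needed. The point is that the ``$p$-part'' of $\xi$ should be exactly $p$ (contributing the $1$ in $\xi'(0)$), while the Teichm\"uller part should be a \emph{multiple} of $[\varpi^p]$ rather than $[\varpi^p]$ itself (contributing only an element of $(\varpi^p)$, hence a harmless perturbation). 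Your plan for the final step (surjectivity of $W(A^\flat)/(\xi)\to A$ and $\ker\theta_A\subset(\xi)$) is fine once $\xi$ is corrected.
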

      
      \begin{proof}       Notons que $A$ est $p$-complet car $\pi^p\mid p$ et $A$ est $\pi$-complet. Pour montrer la surjectivité 
      de $\theta_A\colon  W(A^{\flat})\to A$ il suffit donc de montrer celle du Frobenius sur 
      $A/p$. Pour tout $x\in A$ la surjectivité du Frobenius modulo $\pi^p$ et la $\pi$-complétude de $A$ permettent d'écrire 
      $x=x_0^p+\pi^p x_1^p+\pi^{2p} x_2^p+\cdots$ pour certains $x_n\in A$, et alors $x\equiv (x_0+\pi x_1+\pi^2 x_2+\cdots)^p\pmod p$.
      
           Ensuite, on construit un élément distingué dans le noyau de $\theta\coloneqq \theta_{A}$. 
   La surjectivité du Frobenius $A/\pi\to A/\pi^p$ et la $\pi$-complétude de $A$ fournissent\footnote{Prendre une suite $(a_n)_{n\geq 0}$ dans $A$ telle que 
      $a_0=\pi$ et $a_{n+1}^p\equiv a_n\pmod {\pi^p}$ et poser $u=\lim_{n\to\infty} a_n^{p^n}$, la suite $(a_n^{p^n})_{n\geq 0}$ étant de Cauchy 
   pour la topologie $\pi$-adique.} un élément $p$-puissant (d\'efinition~\ref{ppuis}) $u\in A$ tel que $\pi\equiv u\pmod {\pi^p}$. Comme $A$ est $\pi$-complet, 
      $\pi$ et $u$ engendrent le même idéal de $A$, donc on peut remplacer $\pi$ par $u$ et supposer que $\pi=f^{\sharp}$ pour un $f\in A^{\flat}$. 
    Comme $\theta$ est surjective et $\pi^p\mid p$, il existe 
    $x\in W(A^{\flat})$ tel que $p=\pi^p \theta(-x)=\theta(-x[f]^p)$. En posant $\xi=p+x[f]^p$, on a $\xi'(0)=1+x'(0)f^p$. 
    On vérifie ensuite que $A^{\flat}$ est $f$-complet, donc $\xi$ est distingué, et que 
    $\ker(\theta)=(\xi)$. 
         \end{proof}
      
      \begin{rema} \label{sen} On déduit facilement de la proposition~\ref{310} les résultats suivants: 
      
      \begin{enumerate}
            
      \item Soit $A$ un anneau intègre, sans $p$-torsion, $p$-adiquement séparé et tel que $p\in {\rm Rad}(A)$. Alors le complété $p$-adique 
      $\widehat{A^+}$ de la clôture intégrale absolue $A^+$ de $A$ est perfectoïde. 
      

      \item L'anneau $\mathbf{Z}_p$ n'est pas perfectoïde (exemple~\ref{experf}), mais le théorème d'Ax--Sen--Tate l'exhibe 
    comme l'anneau des invariants de $\widehat{\mathbf{Z}_p^+}$ sous l'action du groupe de Galois absolu de 
      $\mathbf{Q}_p$. On voit donc qu'une limite projective d'anneaux perfectoïdes ne l'est plus forcément (un autre exemple 
      est fourni par les sous-anneaux $\widehat{\mathbf{Z}_p[\mu_{p^{\infty}}]}$ et $\widehat{\mathbf{Z}_p[p^{1/p^{\infty}}]}$ de $\widehat{\mathbf{Z}_p^+}$, qui sont  
      perfectoïdes, mais dont l'intersection, \'egale \`a $\mathbf{Z}_p$, ne l'est pas). 
  \end{enumerate}
      \end{rema}
      
      \begin{rema}
      Le lien entre anneaux perfectoïdes et Frobenius est rendu plus clair par la th\'eorie prismatique de \textcite{BS}. Soit $A$ un anneau 
      muni d'une $p$-d\'erivation $\delta$ (cf.\ remarque~\ref{pder}), et soit $\varphi(x)=x^p+p\delta(x)$ le rel\`evement du Frobenius induit par 
      $\delta$. Si $I$ est un id\'eal de $A$, on dit que la paire $(A,I)$ est un 
       \emph{prisme} si $I$ d\'efinit un diviseur de Cartier dans ${\rm Spec}(A)$, $A$ est $(p,I)$-complet (au sens d\'eriv\'e) et 
      $p\in I+\varphi(I)A$. Bhatt et Scholze montrent (th\'eor\`eme $3.10$ de loc.cit.) que la cat\'egorie des prismes parfaits (i.e.\ pour lesquels $\varphi\colon A\to A$ est un isomorphisme) est \'equivalente \`a celle des anneaux perfectoïdes, via les foncteurs $(A,I)\mapsto A/I$ et $R\mapsto (W(R^{\flat}), \ker(\theta_R))$.
      \end{rema}

      \subsection{Anneaux perfectoïdes et $p$-clôtures intégrales}\label{pclot}
      
     Les articles 
de \textcite{Rob} et d'\textcite{AndreIHES}  mettent en avant une relation fondamentale entre la notion de $p$-clôture intégrale (ou simplement $p$-clôture, pour raccourcir)  
et celle d'anneau perfectoïde.

       \begin{defi}\label{pclos}
Soit $A$ un sous-anneau d'un anneau $B$. On dit que $A$ est 
       \emph{$p$-clos dans $B$} si 
pour tout $x\in B$ vérifiant $x^p\in A$ on a $x\in A$. Il existe un plus petit sous-anneau 
$p$-clos de $B$ contenant $A$, que l'on appelle la \emph{$p$-clôture de $A$ dans $B$}. 
\end{defi}

 Les observations suivantes sont dues à  \textcite{Rob}.
  On fixe dans la suite de ce paragraphe un anneau $A$ muni d'un non diviseur de zéro $\pi$ tel que
  $\pi^p\mid p$. On note $\varphi\colon  A/\pi\to A/\pi^p$ le Frobenius. 

\begin{prop}\label{Andrepclos}
a) La 
$p$-clôture de $A$ dans $A[\frac{1}{\pi}]$ est 
  $$R=\Bigl\{x\in A\Bigl[\frac{1}{\pi}\Bigr]\,|\, \exists n\geq 0, x^{p^n}\in A\Bigr\}.$$

  b)  L'anneau $A$ est $p$-clos dans $A[\frac{1}{\pi}]$ si et seulement si $\varphi\colon  A/\pi\to A/\pi^p$ est injectif.

  \end{prop}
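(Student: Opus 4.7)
Je traiterais d'abord (b), qui est un dévissage direct des définitions. Supposons $A$ $p$-clos dans $A[\frac{1}{\pi}]$ et prenons $x\in A$ avec $\varphi(\bar x)=0$, i.e.\ $x^p=\pi^p y$ pour un $y\in A$. Alors $(x/\pi)^p=y\in A$ et $x/\pi\in A[\frac{1}{\pi}]$, donc par $p$-clôture $x/\pi\in A$, d'où $x\in\pi A$, et $\varphi$ est injectif. Réciproquement, supposons $\varphi$ injectif et prenons $x\in A[\frac{1}{\pi}]$ avec $x^p\in A$. Écrivons $x=a/\pi^k$ avec $k\geq 0$ minimal. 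Si $k\geq 1$, alors $a^p=\pi^{kp}x^p\in\pi^p A$, donc $\varphi(\bar a)=0$, donc $a\in\pi A$ par injectivité, ce qui contredit la minimalité de $k$. On a donc $k=0$ et $x\in A$.

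Pour (a), je séparerais les deux inclusions. Notons $A'$ la $p$-clôture de $A$ dans $A[\frac{1}{\pi}]$. L'inclusion $R\subset A'$ est immédiate par itération : si $x^{p^n}\in A$, alors $x^{p^n}\in A'$, et $A'$ étant $p$-clos dans $A[\frac{1}{\pi}]$, sa racine $p$-ième $x^{p^{n-1}}$ (déjà présente dans $A[\frac{1}{\pi}]$) appartient aussi à $A'$ ; en itérant $n$ fois on obtient $x\in A'$. Pour l'inclusion réciproque, par minimalité de $A'$, il suffit de vérifier que $R$ est lui-même un sous-anneau $p$-clos de $A[\frac{1}{\pi}]$ contenant $A$. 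L'inclusion $A\subset R$, la stabilité par $0, 1$, par passage à l'opposé, par multiplication (via $(xy)^{p^n}=x^{p^n}y^{p^n}$) et le caractère $p$-clos (via $(x^p)^{p^n}=x^{p^{n+1}}$) sont formels.

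L'obstacle principal est la stabilité de $R$ par addition. Étant donnés $x,y\in R$ avec $x^{p^n}, y^{p^n}\in A$ (quitte à prendre le plus grand des deux exposants), je chercherais à produire un entier $m$ tel que $(x+y)^{p^m}\in A$, en développant $(x+y)^{p^m}$ par le binôme et en analysant les termes croisés $\binom{p^m}{i}x^iy^{p^m-i}$ à l'aide de la formule de Kummer $v_p\bigl(\binom{p^m}{i}\bigr)=m-v_p(i)$ combinée à la relation $p=\pi^p u$ ($u\in A$) fournie par l'hypothèse $\pi^p\mid p$, afin de convertir les facteurs $p$ en facteurs $\pi^p$. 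La difficulté véritable est que la borne naïve $\pi^{kp^m}$ sur le $\pi$-dénominateur de $x^iy^{p^m-i}$ (où $x=a/\pi^k$, $y=b/\pi^k$) semble trop grande pour être absorbée par les puissances de $p$ disponibles ; il faudra vraisemblablement amorcer l'argument en utilisant (b) — appliqué non pas à $A$ mais à un anneau intermédiaire bien choisi — pour extraire des informations de divisibilité $\pi$-adique plus fines sur $a, b$ à partir de $a^{p^n}, b^{p^n}\in \pi^{kp^n}A$ avant que le développement binomial ne se boucle.
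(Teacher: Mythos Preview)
Ta preuve de (b) est correcte et coïncide avec celle de l'article (ton argument de minimalité est une reformulation de l'itération qu'emploie l'auteur).

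Pour (a), tu identifies correctement le squelette de la preuve et la seule difficulté réelle (la stabilité de $R$ par addition), et tu vises à juste titre un développement binomial. Mais ta \og difficulté véritable\fg{} finale est un faux diagnostic : l'argument binomial se boucle directement, sans aucun recours à (b). Le point que tu manques est que le $\pi$-dénominateur de $x^i y^{p^m-i}$ n'est pas $\pi^{kp^m}$ une fois qu'on exploite $x^{p^n}, y^{p^n}\in A$.

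Concrètement, fixe $n$ et $k$ tels que $x^{p^n}, y^{p^n}, \pi^k x, \pi^k y\in A$, et pose $N=n+2kp^n$. Dans $\binom{p^N}{i}x^{p^N-i}y^i$, distingue deux cas. Si $p^n\mid i$, alors $x^{p^N-i}$ et $y^i$ sont des puissances de $x^{p^n}, y^{p^n}\in A$, donc le terme est déjà dans $A$. Si $p^n\nmid i$, écris $x^{p^N-i}=(x^{p^n})^a x^u$ et $y^i=(y^{p^n})^b y^v$ avec $0\le u,v<p^n$ ; les facteurs $(x^{p^n})^a,(y^{p^n})^b$ sont dans $A$, et $\pi^{k(u+v)}x^u y^v\in A$ avec $k(u+v)<2kp^n$. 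Le $\pi$-dénominateur est donc borné par $\pi^{2kp^n}$, \emph{indépendamment de $N$}. Côté numérateur, $v_p(i)<n$ donne $v_p\binom{p^N}{i}\ge N-n+1>N-n=2kp^n$, et $\pi^p\mid p$ convertit cela en $\pi^{2kp^n}\mid\binom{p^N}{i}$, ce qui absorbe le dénominateur.

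Ainsi $(x+y)^{p^N}\in A$ et ta suggestion d'invoquer (b) sur un anneau intermédiaire est superflue.
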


\begin{proof} a) La seule difficulté est de montrer que 
$R$ est bien un sous-anneau de $A[\frac{1}{\pi}]$, et plus précisément qu'il est stable par addition. 
Prenons $x,y\in R$ et $n,k\geq 1$ tels que 
  $x^{p^n}, y^{p^n},\pi^kx, \pi^k y$ soient tous dans $A$, et montrons qu'en posant 
  $N=2p^nk+n$ on a $(x+y)^{p^N}\in A$, ce qui permettra de conclure que $x+y\in R$. Il suffit de voir que 
  $\binom{p^N}{i}x^{p^N-i}y^i\in A$ pour tout $0\leq i\leq p^N$. Cela est évident si $p^n\mid i$, supposons donc que ce n'est pas le cas. Puisque $v_p(i)<n$ on a $p^{N-n}\mid \binom{p^N}{i}$, donc $\pi^{2p^n k}\mid \binom{p^N}{i}$. Comme $x^{p^n}, y^{p^n}\in A$, il suffit de voir que 
  $\pi^{2p^n k}x^uy^v\in A$ pour $0\leq u,v<p^n$, ce qui est clair puisque $\pi^k x, \pi^k y\in A$. 
  
  b)  Il est clair que $\varphi$ est injectif si $A$ est $p$-clos dans $A[\frac{1}{\pi}]$. Dans l'autre sens, soit
  $x\in A[\frac{1}{\pi}]$ tel que $x^p\in A$ et soit $n\geq 1$ tel que $\pi^n x\in A$. Alors $(\pi^n x)^p\in \pi^p A$, donc 
  $\pi^n x\in \pi A$ et $\pi^{n-1}x\in A$. En itérant on obtient $x\in A$.
  \end{proof}

   Nous aurons besoin du résultat suivant dans la preuve du théorème~\ref{nopresque}. 
  
\begin{prop}\label{p-closed} Supposons que le Frobenius $A/\pi\to A/\pi^p$ est un isomorphisme, que $g\in A$ et~$\pi$ sont $p$-puissants (déf.~\ref{ppuis}) et que $g$~est non diviseur de zéro modulo~$\pi$. Pour tout $n\geq 1$ 
la $p$-clôture de $A[\frac{g}{\pi^n}]$ dans $A[\frac{1}{\pi}]$ est 
$A[\frac{g^{1/p^j}}{\pi^{n/p^j}}\,, j\geq 0]$, et son 
 complété $\pi$-adique est perfectoïde.
\end{prop}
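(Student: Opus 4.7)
\medskip

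\noindent\textbf{Plan de preuve.}
Posons $\pi_j := \pi^{1/p^j}$ et $g_j := g^{1/p^j}$ pour les racines provenant des structures $p$-puissantes, puis $s_j := g_j/\pi_j^n \in A[\tfrac{1}{\pi}]$, de sorte que $s_{j+1}^p = s_j$ et $s_j^{p^j} = g/\pi^n$. Notons $B := A[s_j\,:\, j \geq 0]$ et $R$ la $p$-cl\^oture de $A[g/\pi^n]$ dans $A[\tfrac{1}{\pi}]$. L'inclusion $B \subseteq R$ est imm\'ediate par la proposition~\ref{Andrepclos}~a), puisque $s_j^{p^j} \in A[g/\pi^n]$. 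L'objectif sera d'\'etablir simultan\'ement l'\'egalit\'e $B = R$ et le caract\`ere perfectoïde de $\hat B$.

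L'id\'ee centrale est d'appliquer la proposition~\ref{310} \`a $\hat B$: il faut v\'erifier que $\pi$ est non diviseur de z\'ero dans $\hat B$, que $\pi^p \mid p$ (h\'erit\'e de $A$), que $\hat B$ est $\pi$-complet (par construction) et que le Frobenius $\hat B/\pi \to \hat B/\pi^p$ est un isomorphisme. L'absence de $\pi$-torsion dans $\hat B$ d\'ecoule du fait que $B \subseteq A[\tfrac{1}{\pi}]$ est sans $\pi$-torsion, via un argument standard sur le syst\`eme inverse \`a transitions surjectives $\{B/\pi^k\}$. Comme $\hat B/\pi^k \simeq B/\pi^k$ pour tout $k \geq 1$, la quatri\`eme condition \'equivaut \`a la bijectivit\'e du Frobenius $\varphi\colon B/\pi \to B/\pi^p$; par la proposition~\ref{Andrepclos}~b), l'injectivit\'e de $\varphi$ \'equivaut au caract\`ere $p$-clos de $B$ dans $A[\tfrac{1}{\pi}]$, ce qui, combin\'e \`a l'inclusion $B \subseteq R$ d\'ej\`a obtenue, entra\^ine $B = R$. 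Tout se r\'eduit donc \`a prouver la bijectivit\'e de~$\varphi$.

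La surjectivit\'e est ais\'ee: un \'el\'ement de $B/\pi^p$ se rel\`eve en $P(s_j) = \sum_k a_k s_j^k$ pour un certain $j$ et des $a_k \in A$; la surjectivit\'e du Frobenius $A/\pi \to A/\pi^p$ fournit des $a'_k \in A$ tels que $(a'_k)^p \equiv a_k \pmod{\pi^p}$, et la relation $s_j = s_{j+1}^p$ donne
\[
\Bigl(\sum_k a'_k\, s_{j+1}^k\Bigr)^{\!p} \equiv \sum_k (a'_k)^p\, s_{j+1}^{pk} \equiv \sum_k a_k\, s_{j+1}^{pk} = P(s_j) \pmod{\pi^p},
\]
les termes crois\'es du bin\^ome disparaissant gr\^ace \`a l'inclusion $p \in \pi^p A$.

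L'injectivit\'e de $\varphi$ est l'obstacle principal. Le plan sera d'analyser les sous-alg\`ebres $A_j := A[s_j]$ via la pr\'esentation $A_j \simeq A[X]/(\pi_j^n X - g_j)$: l'hypoth\`ese \og$g$ non diviseur de z\'ero modulo $\pi$\fg{} se propage, via l'injectivit\'e du Frobenius sur $A/\pi$, en \og$g_j$ non diviseur de z\'ero modulo $\pi_j$\fg{}, ce qui interdit toute $\pi$-torsion dans cette pr\'esentation. La relation $s_{j+1}^p = s_j$ reliant les $A_j$ successifs permet alors de ramener l'injectivit\'e dans la colimite $B/\pi$ \`a une v\'erification finie \`a l'int\'erieur d'un $A_j$. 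Une approche alternative, peut-\^etre plus conceptuelle, consiste \`a construire d'abord le candidat perfectoïde via l'\'equivalence de basculement (th\'eor\`eme~\ref{tilt}): on forme la sous-alg\`ebre parfaite de $\hat A^\flat[1/\pi^\flat]$ engendr\'ee par les $(g^\flat)^{1/p^j}/(\pi^\flat)^{n/p^j}$, on la compl\`ete $\pi^\flat$-adiquement, puis on d\'ebascule pour obtenir une $\hat A$-alg\`ebre perfectoïde $C$; un argument de propri\'et\'e universelle identifie alors $C$ avec $\hat B$ et fournit gratuitement l'isomorphisme de Frobenius voulu.
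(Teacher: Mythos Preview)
Your reduction is exactly the paper's: set $C = A[s_j : j\geq 0]$ (your $B$), observe $C$ lies in the $p$-closure, and reduce everything to the bijectivity of the Frobenius $\varphi_C\colon C/\pi \to C/\pi^p$; injectivity gives $p$-closedness (hence $C=R$) via proposition~\ref{Andrepclos}, and bijectivity gives perfectoidness of $\widehat C$ via proposition~\ref{310}. Your surjectivity argument is fine and is, unpacked, what the paper's presentation yields.

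The gap is in injectivity. Your proposal to ``ramener l'injectivit\'e dans la colimite \`a une v\'erification finie \`a l'int\'erieur d'un $A_j$'' cannot mean that $\varphi_{A_j}\colon A_j/\pi\to A_j/\pi^p$ is injective at finite level: it is \emph{not}. Indeed $s_{j+1}\notin A_j$ while $s_{j+1}^p=s_j\in A_j$, so $A_j$ is not $p$-clos in $A[\tfrac{1}{\pi}]$, and by proposition~\ref{Andrepclos}~b) its Frobenius is not injective. So whatever ``v\'erification finie'' you have in mind must genuinely exploit the passage to the colimit, and you have not said how.

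The paper's device is to use a \emph{single} presentation of the colimit rather than of each $A_j$ separately. Writing $u_j=\pi^{n/p^j}X^{1/p^j}-g^{1/p^j}\in A[X^{1/p^j}]$, the hypothesis on $g$ and remark~\ref{nondiv} give compatible isomorphisms $A_j\simeq A[X^{1/p^j}]/(u_j)$, whence $C\simeq B/I$ with $B=A[X^{1/p^\infty}]$ and $I=(u_0,u_1,\dots)$. Now $\varphi_B\colon B/\pi\to B/\pi^p$ is bijective (since $\varphi_A$ is and the $X^{1/p^j}$ are already a compatible $p$-power system), and the congruence $u_{j+1}^p\equiv u_j\pmod{\pi^p B}$ shows that $\varphi_B$ carries the image of $I$ in $B/\pi$ \emph{onto} the image of $I$ in $B/\pi^p$. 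Bijectivity of $\varphi_{B/I}=\varphi_C$ follows at once. This is the missing idea: your presentations $A_j\simeq A[X]/(\pi_j^nX-g_j)$ use a fresh variable at each level and do not assemble into a Frobenius-compatible ideal; the paper's choice $X^{1/p^j}$ does.

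Your tilting alternative could be made to work, but it is a detour: once you have the presentation above, the Frobenius argument is three lines and avoids invoking theorem~\ref{tilt} altogether.
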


 \begin{proof}  L'algèbre $C\coloneqq A[\frac{g^{1/p^j}}{\pi^{n/p^j}}\,, j\geq 0]$ est la réunion croissante de ses sous-algèbres $A[\frac{g^{1/p^j}}{\pi^{n/p^j}}]$, et clairement contenue dans la $p$-clôture de $A[\frac{g}{\pi^n}]$ dans $A[\frac{1}{\pi}]$. 
 
 Notons $\varphi_R\colon  R/\pi\to R/\pi^p$ le Frobenius
 d'une $A$-algèbre $R$. Il suffit de montrer que 
 $\varphi_C$ est bijectif: le complété $\pi$-adique de $C$ sera perfectoïde par la proposition~\ref{310}, et 
  $C$ sera $p$-close dans $C[\frac{1}{\pi}]=A[\frac{1}{\pi}]$ (proposition~\ref{Andrepclos}), donc 
égale à la $p$-clôture de $A[\frac{g}{\pi^n}]$ dans $A[\frac{1}{\pi}]$. 

  Posons $u_j=\pi^{n/p^j}X^{1/p^j}-g^{1/p^j}\in A[X^{1/p^j}]$. 
   Puisque $g$ n'est pas un diviseur de zéro modulo $\pi$, la remarque~\ref{nondiv} ci-dessous fournit des isomorphismes  $A[\frac{g^{1/p^j}}{\pi^{n/p^j}}]\simeq A[X^{1/p^j}]/(u_j)$ compatibles avec la variation de $j$, d'où un isomorphisme
 $C\simeq B/I$ avec $B=A[X^{1/p^{\infty}}]$ et $I=(u_0, u_1,\dots{})$. Puisque 
 $\varphi_{A}$ est bijectif, il en est de même de $\varphi_{B}$. En utilisant les congruences 
   $u_{j+1}^p\equiv u_j\pmod{\pi^p B}$, on en déduit facilement que $\varphi_{B/I}$ est un isomorphisme. 
 \end{proof}

\begin{rema}\label{nondiv}
Soient $R$ un anneau et $r\in R$ non diviseur de zéro. Si $s\in R$ est non diviseur de zéro modulo 
$r$, alors le morphisme naturel $R[X]/(rX-s)\to R[\frac{s}{r}]\subset R[\frac{1}{r}]$ est un isomorphisme (exercice!).
\end{rema}

\subsection{Deux résultats techniques importants}
          
   Nous travaillerons souvent avec des anneaux sans $p$-torsion, et le caractère perfectoïde d'un anneau a le bon goût de 
   se propager à son quotient maximal sans $p$-torsion:
      
      \begin{prop}\label{perfide} 
      Si $A\simeq W(R)/(\xi)$ (avec $R\in {\rm Perf}_{\mathbf{F}_p}$ et $\xi$ distingué) est un 
   anneau perfectoïde alors $A/A[p^{\infty}]\simeq W(R/{\rm Ann}(\xi(0)))/\xi$ l'est aussi.
          \end{prop}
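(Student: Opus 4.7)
The plan is to reduce the statement to identifying $A/A[p]$ with $\bar{A}\coloneqq W(R/I)/\xi$, where $I={\rm Ann}_R(\xi(0))$; this reduction is legitimate because $A[p^\infty]=A[p]$ by Proposition~\ref{pcomplet}. Before anything else I would verify that $\bar{A}$ makes sense as a perfectoid ring: the ideal $I$ is stable under Frobenius and its inverse (if $a\xi(0)=0$, then $a^p\xi(0)=a^{p-1}\cdot a\xi(0)=0$; conversely $(a^{1/p}\xi(0)^{1/p})^p=a\xi(0)=0$ combined with the reducedness of $R$ yields $a^{1/p}\xi(0)^{1/p}=0$, whence $a^{1/p}\xi(0)=(a^{1/p}\xi(0)^{1/p})\cdot \xi(0)^{(p-1)/p}=0$). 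So $R/I\in{\rm Perf}_{\mathbf{F}_p}$, and since $\xi'(0)\in R^{\times}$ stays invertible in $R/I$, the image $\bar{\xi}$ of $\xi$ in $W(R/I)$ remains distinguished.

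The quotient $R\to R/I$ induces a surjection $\pi\colon A\to \bar{A}$. The Teichmüller expansion identifies $\ker(W(R)\to W(R/I))$ with $K\coloneqq\{\sum_{n\ge 0}[x_n]p^n: x_n\in I\}$, so $\ker\pi$ is the image of $K$ in $A$. One half of the desired equality $\ker\pi=A[p]$ amounts to showing that $\bar{A}$ is $p$-torsion-free. For this I would first observe that $\bar{\xi}(0)$ is a non-zero-divisor in $R/I$: if $\bar{a}\bar{\xi}(0)=0$ then $a\xi(0)\in I$, hence $a\xi(0)^2=0$, which gives $(a\xi(0))^2=0$ and then $a\xi(0)=0$ by reducedness. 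Given this, the argument of Proposition~\ref{pcomplet} transports verbatim and shows that $\bar{A}$ has no $p$-torsion; consequently $A[p]\subseteq\ker\pi$.

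The heart of the proof, and the step I expect to be most delicate, is the reverse inclusion $\ker\pi\subseteq A[p]$, i.e.\ $pK\subseteq \xi W(R)$. Since $\xi=[\xi(0)]+pu$ with $u\in W(R)^{\times}$, one has $p\equiv -u^{-1}[\xi(0)]\pmod{\xi W(R)}$, and multiplicativity of the Teichmüller lift gives $p[a]\equiv -u^{-1}[\xi(0)a]=0$ in $A$ for every $a\in I$. The point requiring care is extending from Teichmüller lifts to all of $K$: writing $y=\sum_n[a_n]p^n\in K$ with $a_n\in I$, I would decompose $py=\sum_n p^n(p[a_n])$, note that every term lies in $\xi W(R)$ by the previous computation, and invoke the $p$-adic closedness of $\xi W(R)$ in $W(R)$ (which is exactly the $p$-completeness of $A$ established in Proposition~\ref{pcomplet}) to pass to the limit. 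Combined with the previous step this yields $\ker\pi=A[p]$ and the claimed isomorphism $A/A[p^\infty]\simeq W(R/I)/\xi$.
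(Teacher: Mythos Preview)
Your argument is correct and follows essentially the same route as the paper: both establish $\ker(A\to W(R/I)/\xi)=A[p]$ via the key identity $[\xi(0)]y=0$ for every $y$ in $K=\ker(W(R)\to W(R/I))$, then rewrite $p$ as $u^{-1}(\xi-[\xi(0)])$. Two cosmetic remarks: invoking Proposition~\ref{pcomplet} for the $p$-torsion-freeness of $\bar A$ is slightly off (that result only bounds the $p$-torsion), the actual input being your observation that $\bar\xi(0)$ is a non-zero-divisor in $R/I$, which lets the argument of Remarque~\ref{derive} go through one step further; and the closedness detour for $pK\subset\xi W(R)$ is unnecessary, since $[\xi(0)]y=0$ holds for \emph{all} $y\in K$ (not just Teichm\"uller lifts) by multiplicativity of $[\cdot]$, giving $py=u^{-1}\xi y\in(\xi)$ in one stroke---this is exactly how the paper handles the corresponding step.
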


\begin{proof}
 Rappelons que $A[p^{\infty}]=A[p]$ (proposition~\ref{pcomplet}). 
 Écrivons $\xi=[\xi(0)]+pu$, avec $u$ inversible dans $W(R)$ et
 notons que   
   $S\coloneqq R/{\rm Ann}(\xi(0))$ est parfait (en effet, si $x^p\xi(0)=0$ alors $(x\xi(0))^p=0$, donc $x\xi(0)=0$). 
 La projection canonique $f\colon  R\to S$ induit un morphisme $f\colon  W(R)\to W(S)$. Si 
 $a\in A[p]$ se relève en $x\in W(R)$, il existe $y\in W(R)$ tel que $px=\xi y$. On a $f(y)\in pW(S)$ puisque 
 $\xi(0)y(0)=0$ (donc $f(y(0))=0$). On en déduit que $f(x)\in (\xi)W(S)$ et que le morphisme $A\to W(S)/(\xi)$ se factorise en un morphisme $A/A[p]\to W(S)/(\xi)$, clairement surjectif.
  Il est aussi injectif: si $a\in A$ se relève en $x\in W(R)$ et a une image nulle dans $W(S)/(\xi)$, alors $f(x)=\xi y$ pour un $y\in W(S)$. Soit 
      $z\in W(R)$ un relèvement de $y$, alors $[\xi(0)](x-\xi z)=0$, puis $(\xi-pu)(x-\xi z)=0$ et $\xi\mid px$, donc $a\in A[p]$.                   
       \end{proof}
       
       Le résultat suivant sera systématiquement utilisé par la suite. Sa preuve est assez astucieuse. 
             
      \begin{prop}\label{reduced}
      Tout anneau perfectoïde $A$ est réduit et, pour tout élément $p$-puissant (déf.~\ref{ppuis}) $a\in A$, la torsion $a$-primaire $A[a^{\infty}]$ de $A$ est tuée par 
$(a^{p^{-\infty}})$.
      \end{prop}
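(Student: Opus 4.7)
Je commencerais par observer que la seconde assertion découle formellement de la première. Soient $a \in A$ un élément $p$-puissant et $b \in A[a^{\infty}]$, avec $a^N b = 0$. Pour tout $k \geq 1$ on a
\[
(a^{1/p^k}\, b)^{Np^k} \;=\; a^N\, b^{Np^k} \;=\; (a^N b)\cdot b^{Np^k-1} \;=\; 0,
\]
donc $a^{1/p^k} b$ est nilpotent. La réduction de $A$ entraînerait alors $a^{1/p^k} b = 0$ pour tout $k$, et comme les $a^{1/p^k}$ engendrent $(a^{p^{-\infty}})$, l'idéal $(a^{p^{-\infty}})$ tuerait $A[a^{\infty}]$.

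Le coeur de la preuve est donc la réduction de $A$. J'écrirais $A = W(R)/(\xi)$ avec $R = A^{\flat} \in {\rm Perf}_{\mathbf{F}_p}$ et $\xi = [\xi(0)] + pu$ distingué ($u \in W(R)^{\times}$). Soit $a \in A$ avec $a^n = 0$, relevé en $x \in W(R)$, ce qui fournit $y \in W(R)$ tel que $x^n = \xi y$; le but est de montrer $x \in (\xi) W(R)$. L'ingrédient clé serait la relation $(1)$ appliquée à l'identité $x^{n+1} = \xi \cdot xy$. Calculer la \og dérivée en $0$\fg{} de chaque membre donne, par récurrence sur $n$ via la règle du produit $(1)$,
\[
(n+1)\, x(0)^n\, x'(0) \;=\; \xi(0)\bigl(x(0) y'(0) + y(0) x'(0)\bigr) + \xi'(0)\, x(0) y(0)
\]
dans $R$. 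En y reportant $x(0)^n = \xi(0) y(0)$ (projection de $x^n = \xi y$ modulo $p$), l'égalité se condense en $\xi'(0)\cdot x(0) y(0) \in \xi(0) R$, et puisque $\xi'(0)$ est inversible on conclut $x(0) y(0) \in \xi(0) R$, donc $x(0)^{n+1} = \xi(0)\cdot x(0) y(0) \in \xi(0)^2\, R$. L'extraction des racines $p^k$-ièmes dans le parfait $R$ fournit alors $x(0)^{(n+1)/p^k} \in (\xi(0)^{2/p^k})\, R$ pour tout $k \geq 0$, ce qui traduit une divisibilité \og fractionnaire\fg{} de $x(0)$ par $\xi(0)$.

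Le point délicat, et c'est là que réside le caractère \emph{astucieux} annoncé, est de remonter cette cascade de divisibilités modulo $p$ en une égalité $x = \xi z$ dans $W(R)$. Le plan serait de relever d'abord la divisibilité modulo $p$ en écrivant $x = \xi [r_0] + p z_1$ pour $r_0 \in R$ vérifiant $x(0) = \xi(0) r_0$, d'injecter cette expression dans $x^n = \xi y$ pour obtenir une relation de la forme $\xi \mid p^n z_1^n$ dans $W(R)$, puis d'itérer en exploitant la propriété de \og quasi-régularité\fg{} de $\xi$ établie dans la remarque~\ref{derive}~b) (si $p^2 \mid \xi w$ alors $p \mid w$), la $p$-complétude de $W(R)$, ainsi que la perfection de $R$ pour extraire de plus en plus de facteurs $\xi$. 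L'obstacle principal que j'anticipe est précisément le contrôle effectif de cette descente $p$-adique: le passage d'une divisibilité \og presque\fg{} (modulo toutes les racines $\xi(0)^{1/p^k}$) à une divisibilité exacte dans $W(R)$ repose de manière essentielle sur la forme distinguée de $\xi$ et n'admet aucun analogue pour un anneau plus général que les anneaux perfectoïdes.
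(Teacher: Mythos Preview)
Ton argument pour déduire la seconde assertion de la première est correct et identique en substance à celui du papier.

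Pour la réduction de $A$, ta computation de dérivée est correcte et aboutit bien à $x(0)^{n+1}\in\xi(0)^2 R$. Mais il y a un vrai trou juste après: cette information ne permet \emph{pas} d'écrire $x(0)=\xi(0)r_0$, qui est pourtant le point de départ de ton itération. Dans l'anneau parfait $R=\mathbf{F}_p[T^{1/p^{\infty}}]$ avec $\xi(0)=T$, l'élément $x(0)=T^{2/(n+1)}$ (pour $n\geq 2$) vérifie $x(0)^{n+1}=T^2\in T^2R$ mais $x(0)\notin TR$. Les divisibilités fractionnaires $x(0)^{(n+1)/p^k}\in\xi(0)^{2/p^k}R$ que tu extrais sont donc trop faibles pour amorcer la descente $p$-adique que tu esquisses. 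L'obstacle que tu anticipes (\og remonter à $x=\xi z$\fg{}) se situe en fait une étape plus tôt: tu n'as même pas $x(0)\in\xi(0)R$.

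La preuve du papier contourne complètement ce calcul dans $W(R)$ et procède en deux temps. D'abord, si $A$ est sans $p$-torsion: pour $a\in A$ avec $a^p=0$, l'isomorphisme de Frobenius $A/\pi\simeq A/\pi^p$ donne $a\in\pi A$, disons $a=\pi b$; comme $(\pi^p)=(p)$ et $A$ est sans $p$-torsion, $b^p=0$; en itérant, $a\in\bigcap_n\pi^nA=\{0\}$. Ensuite, dans le cas général, on sait par la proposition~\ref{perfide} que $A/A[p]$ est perfectoïde sans $p$-torsion (donc réduit par le cas précédent), et $A/(\pi^{p^{-\infty}})$ est parfait donc réduit; il suffit alors de montrer que le morphisme $A\to A/A[p]\times A/(\pi^{p^{-\infty}})$ est injectif, ce qui se fait par un calcul de dérivée analogue au tien mais appliqué à la relation $px=\xi y$ (qui provient de $a\in A[p]$) plutôt qu'à $x^n=\xi y$. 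Cette décomposition évite entièrement le problème d'extraction de racines que tu rencontres.
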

      
      \begin{proof}
Le second point est une conséquence formelle du premier: si $a^n x=0$ pour un $x\in A$, alors $(a^{n/p}x)^p=0$, donc $a^{n/p}x=0$ puisque $A$ est réduit. Soit $\pi\in A$ un élément $p$-puissant tel que $(\pi^p)=(p)$ (exemple~\ref{experf}).
Supposons d'abord que $A$ est sans $p$-torsion. Si 
      $a\in A$ vérifie $a^p=0$, l'isomorphisme $A/\pi\simeq A/\pi^p$ montre que $a=\pi b$ pour un
      $b\in A$. Comme $(\pi^p)=(p)$ et $A$ est sans $p$-torsion, on a $b^p=0$. Par itération cela force   
      $a\in \cap_{n} \pi^n A=\cap_{n} p^n A=\{0\}$, ce qui permet de conclure. 
      
      Dans le cas général, par la proposition~\ref{perfide} et le caractère réduit (même parfait!) de $A/(\pi^{p^{-\infty}})$, il suffit de prouver 
      l'injectivité  
   du morphisme naturel $A\to A/A[p]\times A/(\pi^{p^{-\infty}})$. Soit $a\in A[p]\cap (\pi^{p^{-\infty}})$ et soit $x\in W(R)$ un représentant de $a$. Comme 
   $a\in (\pi^{p^{-\infty}})$, on a 
   $x(0)\in (\xi(0)^{p^{-\infty}})$. Soit $R=A^{\flat}$, donc $A\simeq W(R)/(\xi)$. Puisque $pa=0$, il existe $y\in W(R)$ tel que $px=\xi y$. On a donc 
      $\xi(0)y(0)=0$ et $\xi'(0)y(0)+\xi(0)y'(0)=x(0)$, donc 
          $y(0)\in (\xi(0)^{p^{-\infty}})$. Mais $\xi(0)\in {\rm Ann}(y(0))$ et $R$ est parfait, donc $(\xi(0)^{p^{-\infty}})\subset {\rm Ann}(y(0))$, ce qui force $y(0)^2=0$, puis $y(0)=0$. On a donc $\xi\mid x$ et $a=0$. 
      \end{proof}

           

       \section{Algèbres perfectoïdes: aspects analytiques}\label{perf}
       
       On fixe dans cette section (qui emprunte les notations de la précédente) un nombre premier~$p$ (d'où une notion d'anneau perfectoïde), ainsi qu'un corps~$K$ muni 
  d'une valeur absolue non archimédienne $\lvert\cdot\rvert\colon  K\to \mathbf{R}_{\geq 0}$, dont
 la boule unité~$K^0$ 
  est un anneau perfectoïde. Comme $K^0$~est $p$-complet (proposition~\ref{pcomplet}), on a 
  $p\in {\rm Rad}(K^0)$ et donc $|p|<1$. On fixe 
   un élément $\pi\in K^0$ comme suit:
   
   $\bullet$ si ${\rm car}(K)=p$ on prend n'importe quel élément non nul $\pi\in K^0$ tel que 
  $|\pi|<1$.
  
  $\bullet$ sinon, on choisit 
  un générateur distingué $\xi\in W((K^0)^{\flat})$ de $\ker(\theta_{K^0})$ et on note 
  $$\pi=\theta_{K^0}([\xi(0)^{1/p}])=(\xi(0)^{1/p})^{\sharp}\in K^0.$$ Ainsi $\pi^p K^0=pK^0$ (exemple~\ref{experf}), donc $|\pi|<1$. L'élément~$\pi$ muni du système de racines $(\pi^{1/p^n}\coloneqq 
  (\xi(0)^{1/p^{n+1}})^{\sharp})_{n\geq 0}$ est $p$-puissant (d\'efinition~\ref{ppuis}). 
  
  Dans les deux cas,
  $K^0/\pi\simeq K^0/\pi^p$ via le Frobenius, $\pi$ est $p$-puissant et $|\pi|<1$.

        Si
    $A$ est une $K^0$-algèbre plate (i.e.\ sans $\pi$-torsion) on note
    $$A_*=\pi^{-1/p^{\infty}}A\coloneqq \{f\in A[\frac{1}{\pi}]\,|\,\, \pi^{1/p^n}f\in A\,\, \text{pour tout}\,\, n\geq 0\},$$
    une sous $K^0$-algèbre de $A[\frac{1}{\pi}]$ contenant $A$ et contenue dans $\pi^{-1}A$. 
    
    \begin{rema}\label{almost} On voit facilement que  
    $(A_*)_*=A_*$ et que si $A$ est $\pi$-complète (respectivement $p$-close (d\'efinition~\ref{pclos}) dans $A[\frac{1}{\pi}]$), alors $A_*$ l'est aussi. Si $(A_i)_{i\in I}$ est un système projectif de 
    $K^0$-algèbres plates, alors $(\varprojlim_{i\in I} A_i)_*\simeq \varprojlim_{i\in I} (A_i)_*$.
\end{rema}

\subsection{Algèbres de Banach uniformes}\label{algunif}

 On note ${\rm Ban}_K$ la catégorie des \emph{$K$-algèbres de Banach}, i.e.\ celle des $K$-algèbres $A$ munies d'une norme ultramétrique
 $\lvert\cdot\rvert \colon  A\to \mathbf{R}_{\geq 0}$ sur le $K$-espace vectoriel $A$ telle que $|ab|\leq |a||b|$ pour tous $a,b\in A$ (et $|1|=1$ si $A\ne 0$) et qui fait de $A$ un 
 espace métrique complet pour la distance induite. Les morphismes dans ${\rm Ban}_K$ sont les applications lipschitziennes qui sont aussi des morphismes de $K$-algèbres.
 
   Berkovich\footnote{On pourrait (ou devrait\dots{}) remplacer les espaces de Berkovich par ceux de Huber dans ce qui suit. Nous avons fait ce choix pour épargner au lecteur les multiples définitions intervenant dans la théorie de Huber, qui ne joueront pas de rôle sérieux dans cet exposé.} a construit un foncteur de ${\rm Ban}_K$ vers les espaces topologiques compacts, en associant à 
   $(A, \lvert\cdot\rvert )\in {\rm Ban}_K$ son \emph{spectre de Berkovich} $\mathcal{M}(A)$, i.e.\ l'ensemble des semi-normes multiplicatives\footnote{On demande donc que $x(f+g)\leq \max(x(f), x(g)), x(fg)=x(f)x(g)$ et $x(1)=1$ si $A\ne 0$.}
      $x\colon  A\to \mathbf{R}_{\geq 0}$ telles que $x(f)\leq |f|$ pour tout $f\in A$, muni de la topologie la plus faible rendant continues les évaluations $x\mapsto x(f)$ pour
   $f\in A$. On écrira $|f(x)|$ au lieu de 
   $x(f)$.    
   
   Soit $(A, \lvert\cdot\rvert )\in {\rm Ban}_K$. 
     Le sous-ensemble 
   $$A^0=\{f\in A\, |\,\, \sup_{n\geq 1} |f^n|<\infty\}$$
est une sous $K^0$-algèbre de $A$, contenant la boule unité de $A$, en particulier $A^0[\frac{1}{\pi}]=A$. On écrira $A_*^0$ au lieu de $(A^0)_*$. Si l'on note $$|f|_{\rm sp}=\lim_{n\to\infty}|f^n|^{1/n}=\inf_{n\geq 1} |f^n|^{1/n}$$ 
   la \emph{semi-norme spectrale} d'un élément $f\in A$, on dispose des formules fondamentales\footnote{   La première est la \emph{formule du rayon spectral} de Berkovich; la seconde 
    découle facilement des inclusions évidentes 
    $\{f\in A\, |\, |f|_{\rm sp}< 1\}\subset A^0\subset \{f\in A\, |\, |f|_{\rm sp}\leq 1\}$.
    }
     $$|f|_{\rm sp}=\max_{x\in \mathcal{M}(A)} |f(x)|\,\,\text{et}\,\, A_*^0=\{f\in A\, |\, |f|_{\rm sp}\leq 1\}.$$

   On dit que 
   $(A, \lvert\cdot\rvert )\in {\rm Ban}_K$ est \emph{uniforme} si $A^0$ est une partie bornée de $A$. Cela arrive si et seulement si $\lvert\cdot\rvert $ est équivalente à $\lvert\cdot\rvert _{\rm sp}$ (utiliser les inclusions $\{f\in A\, |\, |f|_{\rm sp}< 1\}\subset A^0\subset \{f\in A\, |\, |f|_{\rm sp}\leq 1\}$), auquel cas\footnote{Pour la première égalité noter que si
   $f\in A_*^0$ alors $f^{p^n}$ reste dans le borné $p^{-1}A^0$ pour tout $n$.} 
   $$A^0=A_*^0=\{f\in A\,|\, |f|_{\rm sp}\leq 1\}=\{f\in A\,|\, |f(x)|\leq 1,\,\, \text{pour tout}\,\, x\in \mathcal{M}(A)\} \eqno(3).$$ 
    On note 
   ${\rm Ban}^u_K$ la sous-catégorie pleine de ${\rm Ban}_K$ des $K$-algèbres de Banach uniformes.

    On dit que $A$ est \emph{spectrale} si $|f|=|f|_{\rm sp}$ pour tout $f\in A$, ce qui arrive si et seulement si $A^0$ est la boule unité de $A$, auquel cas 
    $A$ est uniforme.    
 
     \subsection{Dictionnaire analyse-algèbre, interprétation catégorique}\label{dico}
     
     Le lecteur trouvera un dictionnaire exhaustif analyse-algèbre dans le paragraphe $2.3$ de l'article
     d'\textcite{AndreIHES}.
     
     \begin{prop}\label{unif}

     Soit $(A, \lvert\cdot\rvert )\in {\rm Ban}_K^u$. La $K^0$-algèbre 
     $A^0$ est plate, $\pi$-complète et $p$-close (déf.~\ref{pclos}) dans 
     $A=A^0[\frac{1}{\pi}]$. Ainsi le Frobenius $A^0/\pi\to A^0/\pi^p$ est injectif.
          \end{prop}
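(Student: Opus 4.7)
The plan is to read off all three assertions directly from the characterization $A^0 = \{f \in A : |f|_{\mathrm{sp}} \leq 1\}$ supplied by equation~$(3)$ for uniform Banach algebras, using little more than the multiplicativity and $K$-homogeneity of the spectral semi-norm. Flatness is immediate: $A$ is a $K$-vector space and $\pi \in K^{\times}$, so multiplication by $\pi$ is bijective on $A$ and a fortiori injective on the subgroup $A^0 \subset A$.

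For $\pi$-completeness, I would first establish the equality $\pi^n A^0 = \{f \in A : |f|_{\mathrm{sp}} \leq |\pi|^n\}$. The inclusion $\subset$ is a direct consequence of $|\pi^n g|_{\mathrm{sp}} = |\pi|^n |g|_{\mathrm{sp}}$; conversely, if $|f|_{\mathrm{sp}} \leq |\pi|^n$, then $g \coloneqq f/\pi^n \in A$ satisfies $|g|_{\mathrm{sp}} \leq 1$, hence $g \in A^0$ by equation~$(3)$, so $f \in \pi^n A^0$. Consequently the $\pi$-adic topology on $A^0$ coincides with the topology induced by $|\cdot|_{\mathrm{sp}}$, which by uniformity is equivalent on $A$ to the original Banach norm $|\cdot|$. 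Since $A^0$ is a closed ball in the Banach space $(A, |\cdot|)$, it is complete, and $\bigcap_n \pi^n A^0 = \{f \in A : |f|_{\mathrm{sp}} = 0\} = \{0\}$ yields separatedness.

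For $p$-closedness in $A = A^0[\frac{1}{\pi}]$, take $f \in A$ with $f^p \in A^0$; multiplicativity of $|\cdot|_{\mathrm{sp}}$ gives $|f|_{\mathrm{sp}}^p = |f^p|_{\mathrm{sp}} \leq 1$, whence $|f|_{\mathrm{sp}} \leq 1$ and $f \in A^0$ by equation~$(3)$. The injectivity of the Frobenius $A^0/\pi \to A^0/\pi^p$ is then Proposition~\ref{Andrepclos}~b) applied to the pair $(A^0, \pi)$: the relation $\pi^p K^0 = p K^0$ built into the setup yields $\pi^p \mid p$ in $A^0$, and $\pi$ is a non-zero-divisor there by the flatness just established.

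No real obstacle arises: once equation~$(3)$ is available the argument is purely formal, and the only point meriting a moment of attention is that uniformity is exactly what is needed to identify $A^0$ with the closed unit ball for $|\cdot|_{\mathrm{sp}}$, which is the hypothesis that powers every step.
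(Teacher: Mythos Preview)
Your proof is correct and follows exactly the same approach as the paper: everything is read off from equation~$(3)$, i.e.\ the identification $A^0=\{f\in A:|f|_{\rm sp}\le 1\}$ for uniform algebras. The paper's own proof is a single sentence declaring flatness and $\pi$-completeness evident and pointing at~$(3)$ for $p$-closedness; you have simply supplied the details the paper omits, and your derivation of the Frobenius injectivity from Proposition~\ref{Andrepclos}~b) is likewise the intended one.
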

     
     \begin{proof}
     Le seul point non évident est le fait que $A^0$ est $p$-close, mais il suffit de contempler l'\'egalit\'e $(3)$ pour s'en convaincre.
     \end{proof}
     
       Soit $\mathcal{C}$ la catégorie des $K^0$-algèbres plates et $\pi$-compl\`etes 
     $S$, telles que $S$ soit $p$-close dans $S[\frac{1}{\pi}]$ (d\'efinition~\ref{pclos}) et $S_*=S$. Pour 
     $S\in\mathcal{C}$ 
     et $f\in A\coloneqq S[\frac{1}{\pi}]$ on pose
       $$|f|\coloneqq \inf\{|k|\,|\,\, k\in K\setminus \{0\},\, \frac{f}{k}\in S\}.$$
       Alors\footnote{Le fait que $S$ est plate, $\pi$-complète et vérifie 
     $S=S_*$ est suffisant pour montrer que $\lvert\cdot\rvert $ est une norme de $K$-algèbre de Banach sur 
     $A$, de boule unité $S$.
     On déduit de la relation $S=\{f\in A|\, f^p\in S\}$ que $|f^p|=|f|^p$, puis que 
     $|f|=\lim_{n\to\infty}|f^{p^n}|^{1/p^n}=|f|_{\rm sp}$ pour tout $f\in A$, donc $A$ est bien spectrale.}
     $(A, \lvert\cdot\rvert )$ devient 
      une 
      $K$-algèbre de Banach spectrale (donc uniforme) telle que $A^0=S$.
     On obtient ainsi:
     
     \begin{prop}
     Le foncteur $(A, \lvert\cdot\rvert )\mapsto A^0$ induit une équivalence de catégories 
     $${\rm Ban}_K^u\simeq \mathcal{C}.$$ 
     \end{prop}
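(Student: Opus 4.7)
The plan is to construct an explicit quasi-inverse functor and verify that the two compositions give natural isomorphisms. The quasi-inverse $S\mapsto (S[\frac{1}{\pi}],|\cdot|_S)$ is the one sketched in the discussion just above the statement, where $|\cdot|_S$ denotes the gauge norm of $S$. The forward functor $A\mapsto A^0$ lands in $\mathcal{C}$ by Proposition~\ref{unif} (flatness, $\pi$-completeness, $p$-closure) together with the identification $A^0=A_*^0=(A^0)_*$ provided by equation~(3); it is functorial because any Lipschitz $K$-algebra morphism $f\colon A\to B$ satisfies $|f(a)^n|=|f(a^n)|\leq C|a^n|$, so $f(A^0)\subset B^0$, and $f$ restricts to a $K^0$-algebra morphism of unit balls.

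The first substantive check is that the inverse functor is well-defined. Given $S\in\mathcal{C}$, one must verify that $A=S[\frac{1}{\pi}]$ with the gauge norm is a $K$-algebra Banach norm whose unit ball is exactly $S$ (this uses flatness, $\pi$-completeness, and $S_*=S$), and then that $A$ is spectral, i.e.\ $|\cdot|=|\cdot|_{\rm sp}$. The spectrality amounts to the identity $|f^p|=|f|^p$, which translates the $p$-closure equality $S=\{f\in A:f^p\in S\}$ into a norm statement; passing to the limit then yields $|f|=\lim_{n}|f^{p^n}|^{1/p^n}=|f|_{\rm sp}$. Once this is in hand, $A$ is uniform with $A^0=S$ by equation~(3), so the functor lands in ${\rm Ban}_K^u$. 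Functoriality is immediate, since a $K^0$-morphism $S\to T$ extends to a $K$-morphism $S[\frac{1}{\pi}]\to T[\frac{1}{\pi}]$ that is contractive for the two gauge norms.

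One composition is then tautological: for $S\in\mathcal{C}$, the construction gives $(S[\frac{1}{\pi}])^0=S$. For the other composition, start with $A\in{\rm Ban}_K^u$ and form $A^0[\frac{1}{\pi}]$ equipped with the gauge norm of $A^0$. As a $K$-algebra it coincides with $A$: uniformity makes $A^0$ bounded, so for every $f\in A$ there is $n\geq 0$ with $|\pi^n f|\leq 1$, hence $\pi^n f\in A^0$ and $f\in A^0[\frac{1}{\pi}]$. By equation~(3), $A^0=\{f\in A: |f|_{\rm sp}\leq 1\}$, so the gauge of $A^0$ coincides with $|\cdot|_{\rm sp}$ (up to a bounded multiplicative factor when $|K^{\times}|$ is not dense in $\mathbf{R}_{>0}$). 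Since $A$ is uniform, this is equivalent to the original norm, so the identity map is bicontinuous and provides an isomorphism in ${\rm Ban}_K^u$; naturality in $A$ is clear. Full faithfulness then follows formally, since any $K^0$-morphism $A^0\to B^0$ extends uniquely to a continuous $K$-algebra morphism $A=A^0[\frac{1}{\pi}]\to B^0[\frac{1}{\pi}]=B$.

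The main obstacle is the spectrality verification in the second paragraph: checking $|f^p|=|f|^p$ for the gauge norm of an arbitrary $S\in\mathcal{C}$. Every other step is bookkeeping with the definitions and the uniformity/$p$-closure dictionary already set up in \S~\ref{dico}; this one identity is where the hypothesis that $S$ be $p$-close in $S[\frac{1}{\pi}]$ enters essentially, and it is precisely the mechanism that converts the algebraic notion of $p$-closure on the $\mathcal{C}$-side into the analytic notion of spectrality (hence uniformity) on the ${\rm Ban}_K^u$-side.
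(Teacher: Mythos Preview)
Your proposal is correct and follows essentially the same approach as the paper: the paper's proof is contained in the construction and footnote immediately preceding the proposition, which builds the quasi-inverse via the gauge norm on $S[\frac{1}{\pi}]$ and establishes spectrality from the identity $|f^p|=|f|^p$ (deduced from $p$-closure), exactly as you do. Your write-up is more explicit about the bookkeeping (functoriality, the two compositions), but the key idea is identical.
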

     
     La remarque~\ref{almost} montre que 
               $\mathcal{C}$ possède toutes les petites limites (qui se calculent comme dans la catégorie des $K^0$-algèbres) et toutes les colimites filtrantes: si
     $(S_i)_{i\in I}$ est un système inductif filtrant dans $\mathcal{C}$, sa colimite dans $\mathcal{C}$ est 
     $S_*$, où $S$ est le complété $\pi$-adique de la colimite dans la catégorie des
     anneaux $\varinjlim_{i\in I} S_i$.
     
     Ainsi la catégorie  ${\rm Ban}^u_K$ possède toutes les petites limites, et aussi toutes les colimites filtrantes. 
     L'inclusion de 
  ${\rm Ban}_K^u$ dans ${\rm Ban}_K$ admet un adjoint à gauche $A\mapsto A^u$, l'uniformis\'e 
  $A^u$ de $A$ étant le séparé complété de 
  $A$ pour la semi-norme spectrale. Comme 
  ${\rm Ban}_K$ possède des pushouts (donnés par le produit tensoriel complété), 
  il en est de même de ${\rm Ban}_K^u$: si $A\to B$ et $A\to C$ sont des morphismes dans 
  ${\rm Ban}^u_K$, le pushout correspondant est $B\widehat{\otimes}^u_{A} C\coloneqq (B\widehat{\otimes}_A C)^u$. Le calcul de 
  $(B\widehat{\otimes}^u_{A} C)^0$ n'est pas aisé, mais on verra qu'il est (presque) faisable dans le monde parfait des algèbres de Banach 
  perfectoïdes.
  
  \subsection{Algèbres de Banach perfectoïdes}\label{An}
  
   Il convient de garder en tête qu'une \emph{$K$-algèbre de Banach perfectoïde} n'est \emph{pas} la même chose qu'une $K$-algèbre 
    qui est un anneau perfectoïde: 
  
  \begin{defi} Une \emph{$K$-algèbre de Banach perfectoïde} est un objet 
  $(A,\lvert\cdot\rvert )$ de ${\rm Ban}_K^u$ tel que $A^0$ soit un anneau perfectoïde. On note 
  ${\rm Perf}_K$ la sous-catégorie pleine de ${\rm Ban}_K^u$ dont les objets sont les 
  $K$-algèbres de Banach perfectoïdes.
  \end{defi}
  
  Vérifions la compatibilité avec la définition originelle dans \textcite{Scholzethese}:

  \begin{prop}\label{banperf}
 Une $K^0$-algèbre plate et $\pi$-complète $B$ 
  est un anneau perfectoïde si et seulement si le Frobenius 
  $B/\pi\to B/\pi^p$ est un isomorphisme, auquel cas $B_*$ est un anneau perfectoïde et $B$ est $p$-close dans 
  $B[\frac{1}{\pi}]$ (déf.~\ref{pclos}).
   En particulier $A\in {\rm Ban}_K^u$ est dans ${\rm Perf}_K$ si et seulement si le Frobenius 
  $A^0/\pi\to A^0/\pi^p$ est un isomorphisme ou, de manière équivalente\footnote{Puisque $A$ est uniforme, $A^0$ est $p$-close dans $A$, donc le Frobenius 
  $A^0/\pi\to A^0/\pi^p$ est injectif.}, surjectif. 

     \end{prop}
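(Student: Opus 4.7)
L'énoncé contient d'abord une équivalence, puis deux conséquences (perfectoïdité de $B_*$ et $p$-caractère clos de $B$ dans $B[\frac{1}{\pi}]$), et enfin un cas particulier pour $A \in {\rm Ban}_K^u$. Le sens \emph{perfectoïde $\Rightarrow$ Frobenius isomorphisme} est déjà enregistré dans l'exemple~\ref{experf}(3): écrivant $B \simeq W(B^{\flat})/(\xi_B)$, l'élément $\pi$ provenant de $K^0$ s'identifie à $\theta_B([\xi_B(0)^{1/p}])$ (par fonctorialité du basculement, le générateur distingué $\xi_{K^0}$ de $\ker \theta_{K^0}$ s'envoie sur un générateur distingué de $\ker \theta_B$, que l'on peut prendre pour $\xi_B$), et le Frobenius $B/\pi \to B/\pi^p$ s'identifie alors au Frobenius $B^{\flat}/\xi_B(0)^{1/p} \to B^{\flat}/\xi_B(0)$, qui est un isomorphisme puisque $B^{\flat}$ est parfait. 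Pour la réciproque, on applique directement la proposition~\ref{310} à $B$ muni de l'élément $\pi$: $\pi$ est non diviseur de zéro par la platitude, la relation $(\pi^p) = (p)$ dans $K^0$ fournit $\pi^p \mid p$ dans $B$, le caractère $\pi$-complet est supposé, et l'hypothèse sur le Frobenius conclut.

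Le $p$-caractère clos de $B$ dans $B[\frac{1}{\pi}]$ résulte alors de la proposition~\ref{Andrepclos}(b), l'injectivité du Frobenius $B/\pi \to B/\pi^p$ étant assurée. Pour montrer que $B_*$ est perfectoïde, on applique à nouveau la proposition~\ref{310}: la remarque~\ref{almost} fournit le caractère $\pi$-complet de $B_*$ ainsi que son $p$-caractère clos dans $B_*[\frac{1}{\pi}] = B[\frac{1}{\pi}]$, tandis que la platitude sur $K^0$ et $\pi^p \mid p$ sont héritées. L'injectivité du Frobenius $B_*/\pi \to B_*/\pi^p$ découle alors du $p$-caractère clos, via la proposition~\ref{Andrepclos}(b). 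Le point délicat, principal obstacle de la preuve, est la \emph{surjectivité} de ce Frobenius. Étant donné $x \in B_*$, on a $\pi x \in B$ et la surjectivité du Frobenius sur $B$ donne $\pi x = a^p + \pi^p b$ avec $a, b \in B$. Comme $a^p/\pi = x - \pi^{p-1} b \in B_*$ et que $B_*$ est $p$-clos dans $B[\frac{1}{\pi}]$, on obtient $c \coloneqq a/\pi^{1/p} \in B_*$, d'où $x = c^p + \pi^{p-1} b$. On décompose alors $b = (a')^p + \pi^p b'$ par le même procédé, et l'on pose $a'' \coloneqq \pi^{(p-1)/p} a' \in B$ (licite puisque $\pi$ est $p$-puissant), de sorte que $(a'')^p = \pi^{p-1} (a')^p$. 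Le développement binomial de $(c + a'')^p = c^p + (a'')^p + \sum_{i=1}^{p-1}\binom{p}{i} c^i (a'')^{p-i}$, combiné avec $p \in \pi^p K^0$ pour absorber les termes croisés dans $\pi^p B_*$, et avec $\pi^{2p-1} b' \in \pi^p B \subset \pi^p B_*$, fournit finalement $x = (c + a'')^p + \pi^p z$ pour un $z \in B_*$.

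Pour le cas particulier $A \in {\rm Ban}_K^u$, la proposition~\ref{unif} assure que $A^0$ est $K^0$-plate, $\pi$-complète et $p$-close dans $A = A^0[\frac{1}{\pi}]$. L'équivalence établie ci-dessus, appliquée à $A^0$, donne l'équivalence annoncée. Le $p$-caractère clos de $A^0$ entraîne via la proposition~\ref{Andrepclos}(b) que le Frobenius $A^0/\pi \to A^0/\pi^p$ est automatiquement injectif, d'où l'équivalence entre isomorphisme et surjectivité annoncée en note.
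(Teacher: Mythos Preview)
Your proof is correct and follows essentially the same approach as the paper's. The only difference lies in the surjectivity of the Frobenius on $B_*$: the paper first records that $x=c^p+\pi^{p-1}b$ (with $c\in B_*$, $b\in B\subset B_*$) yields $B_*=B_*^p+\pi B_*$ and then invokes ``par itération et en utilisant $\pi^p\mid p$'' to reach $B_*=B_*^p+\pi^p B_*$, whereas you unfold the computation explicitly by decomposing $b$ once more in $B$ and combining via the binomial; this is a minor variant of the same idea, slightly more direct since the remainder $\pi^{p-1}b$ is already close to $\pi^p B_*$.
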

  
  \begin{proof} On peut supposer que 
  ${\rm car}(K)=0$. 
  Si $B$ est un anneau perfectoïde, le théorème~\ref{tilt} montre que $B\simeq W(B^{\flat})/(\xi)$. Puisque $\pi=\theta_{K^0}([\xi(0)^{1/p}])$, le Frobenius $B/\pi\to B/\pi^p$ est un isomorphisme et donc (proposition~\ref{Andrepclos}) 
  $B$ est $p$-close dans $B[\frac{1}{\pi}]$. La réciproque découle de la proposition~\ref{310}.
  Supposons que $B$ est un anneau perfectoïde. Comme $B_*$ est plate et $\pi$-complète, il suffit de voir (d'après ce que l'on vient de faire) que 
  le Frobenius $B_*/\pi\to B_*/\pi^p$ est un isomorphisme. Mais $B$ étant $p$-close dans $B[\frac{1}{\pi}]$, 
  $B_*$ l'est dans $B_*[\frac{1}{\pi}]$, donc $B_*/\pi\to B_*/\pi^p$ est injectif. Soit $x\in B_*$, alors $\pi x\in B$ donc $\pi x=a^p+\pi^p b$ avec 
  $a,b\in A$. Puisque $x=(\pi^{-1/p} a)^p+\pi^{p-1}b$ et $B_*$ est $p$-close dans $B[\frac{1}{\pi}]$, on a $\pi^{-1/p}a\in B_*$, donc 
  $B_*=B_*^p+\pi B_*$. Par itération et en utilisant le fait que $\pi^p\mid p$, on obtient $B_*=B_*^p+\pi^p B_*$, ce qui permet de conclure.
  \end{proof}

  \begin{rema}\label{perfcat}
  
  \begin{enumerate}
  
  \item Le caractère perfectoïde de $B_*$ n'est pas totalement gratuit: si 
  $A\in  {\rm Perf}_K$ et si $g\in A^0$ n'est pas un diviseur de zéro, il n'est pas connu (cf.\ la question~3.5.1 d'\textcite{AndreIHES}) si 
   l'algèbre $g^{-1/p^{\infty}}A=\cap_{n\geq 1} g^{-1/p^n}A\subset A[\frac{1}{g}]$ est encore dans ${\rm Perf}_K$.
   Cette algèbre interviendra naturellement par la suite (cf.\ théorème~\ref{Riem} par exemple).
  
\item Supposons que ${\rm car}(K)=0$ et posons $K^{\flat}\coloneqq (K^0)^{\flat}[\frac{1}{\xi(0)}]$, un corps perfectoïde de caractéristique $p$. 
  En combinant la proposition ci-dessus et le théorème~\ref{tilt}, on obtient facilement l'équivalence de catégories 
  ${\rm Perf}_K\simeq {\rm Perf}_{K^{\flat}}$ de  \textcite{Scholzethese}, induite par les foncteurs $A\mapsto A^{\flat}\coloneqq (A^0)^{\flat}[\frac{1}{\xi(0)}]$ et $R\mapsto W(R)[\frac{1}{\pi}]/(\xi)$ (cette approche est celle de  \textcite{FonBourb} et du livre de  \textcite{KL}).
  
\item La catégorie ${\rm Perf}_K$ possède toutes les colimites filtrantes, qui sont les mêmes que celles dans 
     ${\rm Ban}_K^u$: si $(B_i)_{i\in I}$ est un système inductif filtrant dans ${\rm Perf}_K$ alors 
     $A\coloneqq (\widehat{\varinjlim_{i\in I} B_i^0})_*$ est perfectoïde (proposition~\ref{banperf}).
L'équivalence ${\rm Perf}_K\simeq {\rm Perf}_{K^{\flat}}$ et la description explicite\footnote{Il s'agit de la sous-catégorie pleine de ${\rm Ban}_{K^{\flat}}^u$ des $K^{\flat}$-algèbres de Banach parfaites.} de ${\rm Perf}_{K^{\flat}}$
montrent que ${\rm Perf}_K$ possède aussi toutes les petites limites, \emph{mais si ${\rm car}(K)=0$ elles ne sont pas les mêmes que celles dans 
${\rm Ban}^u_K$, et cela même pour les limites finies.} Voir les exemples
prophylactiques $3.8.2$ et $3.8.5$ d'\textcite{AndreIHES}. Le point $1)$ est un autre exemple 
  de difficulté posée par la différence entre les limites dans ${\rm Ban}_K^u$ et celles dans 
  ${\rm Perf}_K$ quand ${\rm car}(K)=0$.
\end{enumerate}

   \end{rema}

    \subsection{Le lemme d'approximation de Scholze}\label{appro}
    
    Nous aurons besoin du
     résultat technique mais fondamental ci-dessous.
 La preuve originelle (\cite{Scholzethese} dans un contexte un peu différent) est assez technique, voir aussi la preuve du lemme~2.3.1 dans \textcite{CS}, qui reprend un argument de Kedlaya.  
  
   Si $A$ est un anneau et $\pi\in A$, on note 
  ${\rm Spa}_{\pi}(A)$ l'ensemble des valeurs absolues\footnote{On demande que $|ab|=|a||b|$, $|a+b|\leq \max(|a|,|b|)$, 
  pour $a,b\in A$, $|0|=0$ et $|1|=1$ si $A\ne \{0\}$.} $\lvert\cdot\rvert \colon  A\to \Gamma\cup \{0\}$, où $\Gamma$ est un groupe abélien totalement ordonné (noté multiplicativement), 
telles que $|f|\leq 1$ pour tout $f\in A$ et $\lim_{n\to\infty}|\pi^n|=0$. Si $\pi$ est non diviseur de zéro on note 
${\rm Spa}(A[\frac{1}{\pi}], A)$ le sous-ensemble des $\lvert\cdot\rvert \in {\rm Spa}_{\pi}(A)$ qui se prolongent en une valeur absolue sur 
$A[\frac{1}{\pi}]$.

 \begin{theo}[lemme d'approximation]\label{approx}
 
 Soit $A$ un anneau perfectoïde et soit 
  $\pi\in A$ tel que $A$ soit $\pi$-complet et $\pi^p\mid p$. Pour tous
  $f\in A$ et $n\in\mathbf{N}$, il existe  
$g\in A$ $p$-puissant (déf.~\ref{ppuis}) tel que 
  $$|f-g|\leq |p|\cdot \max(|g|, |\pi|^n),\,\,\text{pour tout}\,\, \lvert\cdot\rvert \in {\rm Spa}_{\pi}(A);$$
  en particulier $|f|\leq |\pi|^n$ si et seulement si $|g|\leq |\pi|^n$.
  \end{theo}

  \begin{prop}\label{topnilp} Soient $A$ un anneau, $\pi\in A$, non diviseur de zéro et $x\in A[\frac{1}{\pi}]$.
  
  a)  Si  $|x|<1$ pour tout
 $|\cdot |\in {\rm Spa}(A[\frac{1}{\pi}], A)$, alors 
     il existe $n\geq 1$ tel que $x^n\in \pi A$.
     
     b) Si  $y\in A[\frac{1}{\pi}]$ vérifie
       $|x-y|<1$ pour tout $\lvert\cdot\rvert \in {\rm Spa}(A[\frac{1}{\pi}]=A[x][\frac{1}{\pi}], A[x])$, alors $y$ est dans
         la $p$-clôture (déf.~\ref{pclos}) de $A[x]$ dans $A[\frac{1}{\pi}]$.
      \end{prop}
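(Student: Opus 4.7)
Le plan est de prouver (a) par un argument contrapositif construisant une valuation ad hoc via le lemme de Zorn, puis d'en déduire (b) en appliquant (a) à l'anneau de base $A[x]$ et en utilisant la proposition~\ref{Andrepclos}.

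Pour (a), supposons $x^n \notin \pi A$ pour tout $n \geq 1$ et cherchons un élément de ${\rm Spa}(A[\frac{1}{\pi}], A)$ où $|x| \geq 1$. En écrivant $x = a/\pi^k$ avec $a \in A$, l'hypothèse se réécrit $a^n \notin \pi^{kn+1} A$ pour tout $n$; en particulier $a$ n'est pas nilpotent. L'outil est la $A$-algèbre $B \coloneqq A[U]/(aU - \pi^k)$: toute valuation $\lvert\cdot\rvert$ sur $B$ vérifiant $|U| \leq 1$ et $|\pi| < 1$ donne, via la relation $aU = \pi^k$, $|a| \geq |\pi|^k$, d'où $|x| \geq 1$. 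L'étape algébrique clé est la non-nilpotence de $\pi$ dans $B$: si $\pi^N = (aU - \pi^k) \sum_{j=0}^d c_j U^j$, la comparaison des coefficients fournit $c_0 = -\pi^{N-k}$, puis par la récurrence $\pi^k c_j = a c_{j-1}$, $c_j = -a^j \pi^{N-(j+1)k}$; le coefficient de $U^{d+1}$ donne $a c_d = 0$, soit $a^{d+1} \pi^{N-(d+1)k} = 0$, d'où $a^{d+1} = 0$ puisque $\pi$ est non diviseur de zéro, contradiction.

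Il reste à construire la valuation: on choisira un idéal premier $\mathfrak{p}$ de $B$ tel que $\pi \notin \mathfrak{p}$ et que $\pi$ ne soit pas inversible dans $B/\mathfrak{p}$, puis un anneau de valuation $V$ du corps des fractions de $B/\mathfrak{p}$ dominant un localisé en un idéal premier contenant $\pi$ (lemme de Zorn). La valuation associée satisfera $|\pi|_V < 1$, $|U|_V \leq 1$, d'où $|x|_V \geq 1$, et sa restriction à $A$ fournira l'élément voulu de ${\rm Spa}(A[\frac{1}{\pi}], A)$. L'obstacle principal réside précisément ici: garantir simultanément la non-nilpotence de $\pi$ dans $B$, l'existence d'un tel $\mathfrak{p}$, et la domination par une valuation laissant $\pi$ non inversible demande de décortiquer la structure idéale de $B$ (avec un passage éventuel par un quotient approprié si $\pi$ devient inversible modulo certains premiers).

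Pour (b), on applique (a) à l'anneau $A[x]$ (dans lequel $\pi$ reste non diviseur de zéro puisque $A[x] \subseteq A[\frac{1}{\pi}]$) et à $y - x$. L'hypothèse donne $(y-x)^n \in \pi A[x]$ pour un certain $n \geq 1$, disons $(y-x)^n = \pi z$ avec $z \in A[x]$. Comme $A[\frac{1}{\pi}] = A[x][\frac{1}{\pi}]$, on écrit $y - x = u/\pi^s$ avec $u \in A[x]$, $s \geq 0$. Pour tout $m$ tel que $p^m = qn + r$ avec $0 \leq r < n$,
$$(y-x)^{p^m} = (\pi z)^q \cdot \frac{u^r}{\pi^{sr}} = \pi^{q-sr} z^q u^r,$$
et pour $m$ assez grand $q \geq sr$, donc cette expression appartient à $A[x]$. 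La proposition~\ref{Andrepclos} appliquée à $A[x]$ place alors $y - x$, et donc $y = x + (y-x)$, dans la $p$-clôture de $A[x]$ dans $A[\frac{1}{\pi}]$. Ainsi (b) se réduit à un court calcul une fois (a) disponible.
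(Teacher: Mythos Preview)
Your deduction of (b) from (a) is correct and in fact more explicit than the paper's one-line footnote. One small point: Proposition~\ref{Andrepclos}, which you cite, carries the hypothesis $\pi^p\mid p$, not assumed here; but you only use the trivial implication ``$z^{p^m}\in A[x]\Rightarrow z$ lies in the $p$-closure of $A[x]$'', which holds directly from the definition of the $p$-closure.

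For (a) the paper gives no argument, citing Lemma~2.3.2 of \v{C}esnavi\v{c}ius--Scholze (the result goes back to Huber). Your strategy via $B=A[U]/(aU-\pi^k)$ is along the right lines, and the non-nilpotence of $\pi$ in $B$ is correctly established --- with the minor caveat that your closed form $c_j=-a^j\pi^{N-(j+1)k}$ presupposes $N\geq(j+1)k$; what the recursion actually yields is $\pi^{jk}c_j=-a^j\pi^{N-k}$, and combining this with $ac_d=0$ still forces $a^{d+1}\pi^{N-k}=0$, hence $a^{d+1}=0$. But the step you flag as the \emph{obstacle principal} is a genuine gap, not just bookkeeping. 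Non-nilpotence and non-invertibility of $\pi$ in $B$ do not by themselves produce the required chain $\mathfrak{p}\subsetneq\mathfrak{q}$ with $\pi\in\mathfrak{q}\setminus\mathfrak{p}$: the standard ``take $\mathfrak{p}$ minimal inside $\mathfrak{q}$'' move needs $\pi$ to be a non-zero-divisor in $B$, and this can fail. For instance, with $A=\mathbf{Z}[t]/(t^2-p^3)$, $\pi=p$, $a=t$, $k=2$ (so $x=t/p^2$, and one checks $x^n\notin pA$ for all $n$), one has $p\cdot p(pU-t)=0$ in $B$ while $p(pU-t)\neq 0$. One cure is to replace $B$ by its image $B''$ in $B[1/\pi]\simeq A[1/(\pi a)]$, where $\pi$ \emph{is} a non-zero-divisor; but one must then show $\pi$ is not a unit in $B''=A'[\pi^k/a]$ (with $A'=A/(a^\infty\text{-torsion})$), and this genuinely requires the full hypothesis $a^n\notin\pi^{kn+1}A$ rather than the mere non-nilpotence of $a$ that your computation extracts.
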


       \begin{proof} Le point a) est standard,
 voir le lemme 2.3.2 de \textcite{CS} pour la preuve. Le b) s'en déduit\footnote{Par a) il existe 
       $d\geq 0$ tel que $(x-y)^{p^d}\in \pi A[x]$.}.     \end{proof}
 
\subsection{Presque rappels de presque algèbre}\label{alpresque}

 On trouve dans le livre de \textcite{GRal} un exposé exhaustif de 
 la théorie des presque mathématiques\footnote{Qui semble être une manière d'écrire des $o(1)$ sans l'avouer\dots{}} de Faltings, et un excellent résumé des points essentiels
 dans la section $1$ de l'article d'\textcite{AndreIHES}.
 
    On fixe un anneau $V$ muni d'un élément $p$-puissant (d\'efinition~\ref{ppuis}) $g$, d'où un idéal idempotent 
    $\mathfrak{m}=(g^{p^{-\infty}})$ (on dira aussi, simplement et abusivement,
    que l'on se place \emph{dans le cadre $g^{1/p^{\infty}}$}). 
     La proposition $2.1.7$ de \textcite{GRal} montre que 
    $\mathfrak{m}\otimes_V \mathfrak{m}$ est un $V$-module plat (cela est évident si $V$ est sans $g$-torsion), donc les résultats de \textcite{GRal} s'appliquent.

 \begin{defi}
  Soit $f\colon  M\to N$ un morphisme de $V$-modules. On dit que 
 
 $\bullet$  $M$ est \emph{presque nul} si 
 $\mathfrak{m}M=0$; 
 
 $\bullet$ $f$ est \emph{presque injectif (resp.\ presque surjectif, resp.\ un presque isomorphisme)} 
 si $\ker f$ (resp.\ ${\rm coker} f$, resp.\ les deux) est presque nul;

 $\bullet$ $M$ est \emph{presque plat} si $M\otimes_V N\to M\otimes_V P$ est presque injectif pour tout morphisme injectif 
 $N\to P$ de $V$-modules, ce qui équivaut à la presque nullité de 
  ${\rm Tor}_i^A(M, N)$ pour tous $i>0$ et $N\in {\rm Mod}_V$. On définit de manière analogue la notion de module 
 \emph{presque projectif};
 
 $\bullet$ $M$ est \emph{presque fidèlement plat} si 
 $M$ est presque plat et si la presque nullité de $M\otimes_V N$ entraîne celle de $N$ pour tout $N\in {\rm Mod}_V$;
 
 $\bullet$ $M$ est \emph{presque de type fini} si pour 
 tout $n$ il existe un sous $V$-module de type fini $N\subset M$ tel que $g^{1/p^n}M\subset N$. 
  
 \end{defi}
   
    Si $W$ est une $V$-algèbre, $g$ reste $p$-puissant dans $W$, donc on peut appliquer les définitions 
    ci-dessus aux $W$-modules, d'où une notion de $W$-module presque plat, etc. 
    Cette remarque est utilisée dans la définition suivante:
    
    \begin{defi}
     Soit $g\colon  A\to B$ un morphisme de $V$-algèbres. On dit que 
     
     $\bullet$ 
     $g$ est \emph{presque étale} si $B$ est un $A$-module presque plat 
     et un 
     $B\otimes_A B$-module (via la multiplication $B\otimes_A B\to B$) presque projectif;

    $\bullet$ $g$ est \emph{presque fini étale} si $g$ est presque étale et si $B$ est un $A$-module presque de type fini et presque projectif.  

\end{defi}

  \subsection{Miracles perfectoïdes}\label{christmas}
  
    Les résultats suivants de  \textcite{Scholzethese} sont pour le moins surprenants: leurs analogues dans le monde des algèbres affinoïdes usuelles sont totalement faux. Dans le monde perfectoïde c'est (presque) tous les jours le printemps \dots{} 
   
  \begin{theo}\label{coprod} Soit $A\in {\rm Perf}_K$ et soient $A\to B$ et $A\to C$ deux morphismes dans ${\rm Perf}_K$. Alors 
  $B\widehat{\otimes}_A C\in {\rm Perf}_K$ et le morphisme naturel\footnote{La complétion à gauche est $\pi$-adique.}
  $$B^0\widehat{\otimes}_{A^0} C^0\to  (B\widehat{\otimes}_A C)^0$$
  est un presque isomorphisme (dans le cadre $\pi^{1/p^{\infty}}$). 
  \end{theo}
  
  \begin{proof} On peut supposer que ${\rm car}(K)=0$.
   Les anneaux $R=B^0\widehat{\otimes}_{A^0} C^0$ et 
     $S\coloneqq R/R[p^{\infty}]$ sont
   perfectoïdes (corollaire~\ref{tensor} et proposition~\ref{perfide}). La proposition~\ref{banperf} montre que 
   $S_*\in \mathcal{C}$ (cf.\ \S~\ref{dico}), donc
      $T\coloneqq S[1/\pi]=S_*[1/\pi]$ a une structure canonique de $K$-algèbre de Banach spectrale telle que $T^0=S_*$, et on a $T\in {\rm Perf}_K$.
   Comme $B$ et $C$ sont uniformes, les morphismes évidents $B^0\to R\to S$ et $C^0\to R\to S$
   s'étendent en des morphismes (dans 
   ${\rm Ban}_K$) $B\to T$ et $C\to T$, d'où un morphisme
   $B\widehat{\otimes}_A C\to T$. On construit un inverse comme suit. Soit $X$ la boule unité de $B\widehat{\otimes}_A C$.
   Comme $B$ et $C$ sont uniformes, il existe 
   $c\geq 1$ tel que les images de $B^0$ et $C^0$ dans $B\widehat{\otimes}_A C$ soient contenues dans $p^{-c}X$. Le morphisme induit $S\to B\widehat{\otimes}_A C$ a une image contenue dans $p^{-2c}X$ et s'étend ainsi en un morphisme
   $T\to B\widehat{\otimes}_A C$, inverse du morphisme $B\widehat{\otimes}_A C\to T$. 
   
   On en déduit que 
   $B\widehat{\otimes}_A C\in {\rm Perf}_K$ et que 
   $(B\widehat{\otimes}_A C)^0$ est isomorphe à $T^0=S_*$. Le morphisme $R\to S\to S_*$ est un 
  presque isomorphisme, puisque l'idéal $(\pi^{p^{-\infty}})$ annule le noyau et le conoyau 
   de $R\to S$ (proposition~\ref{reduced}) et aussi de $S\to S_*$.
  \end{proof}

    Si $B\in {\rm Ban}_K$ et si $f_1,\ldots,f_n,g\in B$ engendrent l'idéal unité on dispose d'une $B$-algèbre de Banach universelle
     $B\langle \frac{f_1,\ldots, f_n}{g}\rangle$ dans laquelle $g$ est inversible et les $\frac{f_i}{g}$ sont à puissances bornées. Explicitement, 
          $B\langle \frac{f_1,\ldots, f_n}{g}\rangle$ est le quotient de l'algèbre de Banach $B\langle T_1,\ldots, T_n\rangle=B\widehat{\otimes}_K K\langle T_1,\ldots, T_n\rangle$ par \emph{l'adhérence} de l'idéal engendré par les $gT_i-f_i$.
            
 \begin{theo}\label{oplus} Soit $A\in {\rm Perf}_K$ et 
  soient $f_1,\ldots, f_n, g\in A$ qui engendrent l'idéal unité de $A$.
  
  a) On a $ A\langle \frac{f_1,\ldots, f_n}{g}\rangle\in {\rm Perf}_K$.
  
  b) Si $f_1,\ldots, f_n, g$ sont $p$-puissants (déf.~\ref{ppuis}) dans $A^0$ et si 
  $A^0\langle (\frac{f_1}{g})^{1/p^{\infty}},\ldots, (\frac{f_n}{g})^{1/p^{\infty}}\rangle$ est le complété 
  $\pi$-adique de $A^0[\frac{f_i^{1/p^n}}{g^{1/p^n}},\,\, n\geq 0]$, alors 
   le morphisme naturel
  $$A^0\Bigl\langle \bigl(\frac{f_1}{g}\bigr)^{1/p^{\infty}},\ldots, \bigl(\frac{f_n}{g}\bigr)^{1/p^{\infty}}\Bigr\rangle\to   A\bigl\langle \frac{f_1,\ldots, f_n}{g}\bigr\rangle^0$$
  est un presque isomorphisme (dans le cadre $\pi^{1/p^{\infty}}$). 
  \end{theo}
  
  \begin{proof} Voir la section $6$ de \textcite{Scholzethese} pour les détails. Le très joli argument de \og réduction au cas universel\fg{} ci-dessous 
 est dû à  \textcite{AndreIHES}. 
  Le point a) se déduit du point b) et du théorème~\ref{approx}. Puisque $f_i,g\in A^0$ sont $p$-puissants, on dispose d'un morphisme $S\coloneqq K^0\langle T_1^{1/p^{\infty}},\ldots, T_n^{1/p^{\infty}}, U^{1/p^{\infty}}\rangle[\frac{1}{\pi}]\to A$ dans 
  ${\rm Ban}_K$, envoyant $T^{1/p^j}$ sur $f_i^{1/p^j}$ et $U^{1/p^j}$ sur $g^{1/p^j}$.
 Soit $N\geq 1$ tel que $\pi^N\in (f_1,\ldots, f_n, g)\subset A^0$. Alors $A\widehat{\otimes}_S S\langle \frac{\pi^N, T_1,\ldots, T_n}{U}\rangle $ et $A\langle \frac{f_1,\ldots, f_n}{g}\rangle$ partagent la même propriété universelle, donc ces algèbres de Banach sont isomorphes. 
  Le théorème~\ref{coprod} ramène donc la preuve au \og cas universel\fg{}, i.e.\ 
à vérifier que $S, S\langle \frac{\pi^N, T_1,\ldots, T_n}{U}\rangle \in {\rm Perf}_K$
  et que les morphismes  $K^0\langle T_1^{1/p^{\infty}},\ldots, T_n^{1/p^{\infty}}, U^{1/p^{\infty}}\rangle \to S^0$ et  
  $S^0\langle (\frac{T_1}{U})^{1/p^{\infty}},\ldots, (\frac{T_n}{U})^{1/p^{\infty}},  (\frac{\pi^N}{U})^{1/p^{\infty}}\rangle\to S\langle \frac{\pi^N, T_1,\ldots, T_n}{U}\rangle^0$
   sont des presque isomorphismes, ce qui se fait 
   par un calcul direct (assez pénible\dots{}).     \end{proof}
 
   Le résultat fondamental suivant, sur lequel tout repose dans la section~\ref{Aby}, est le \emph{théorème de presque pureté} de Faltings, étendu par  \textcite{Scholzethese} et  \textcite{KL}. Voir la section $7$ de \textcite{Scholzethese} pour la preuve (fort délicate) et le paragraphe $3.4$ d'\textcite{AndreIHES} pour des compléments.
   
   \begin{theo}\label{FalSch}
   Soit $(A, \lvert\cdot\rvert )\in {\rm Perf}_K$. Pour toute
   $A$-algèbre finie étale $B$ il existe une norme de $K$-algèbre de Banach
   $|\lvert\cdot\rvert |$ sur $B$ telle que le morphisme $(A, \lvert\cdot\rvert )\to (B, |\lvert\cdot\rvert |)$ soit continu,  
   $B\in {\rm Perf}_K$ et $B^0$ soit presque fini étale de $A^0$ (dans le cadre 
   $\pi^{1/p^{\infty}}$). Si $A\to B$ est injectif, $B^0$ est presque fidèlement plate sur $A^0$.
   \end{theo}

\section{Le lemme de platitude d'André}

    Cette section est consacrée au premier pilier des travaux d'André, son lemme de platitude, un énoncé fondamental qui, loin d'être une platitude, sera systématiquement utilisé dans la preuve des résultats principaux de ce rapport. On trouvera des raffinements et des généralisations, ainsi que des applications spectaculaires dans les travaux de  \textcite{BS},  \textcite{CS}, \textcite{Dine} et dans le livre de \textcite{GR}. En particulier, on trouvera dans le paragraphe $7.3$ de l'article de \textcite{BS} une preuve purement alg\'ebrique (mais pas facile), qui n'utilise pas la théorie des espaces perfectoïdes. 
    
   On garde les notations et les conventions introduites au début de la section~\ref{perf}.

    \subsection{Le lemme de presque platitude}
    
     Le problème qui nous occupera dans ce paragraphe est le suivant:
    on se donne 
   $A\in {\rm Perf}_K$ et  
   $g\in A^0$, et on cherche à construire une $A$-algèbre $B\in {\rm Perf}_K$ dans laquelle 
   $g$ est $p$-puissant (d\'efinition~\ref{ppuis}) et telle que $B^0/\pi$ soit presque fidèlement plate sur $A^0/\pi$ (dans le cadre 
   $\pi^{1/p^{\infty}}$ (\S~\ref{alpresque})). 
   
   L'idée naïve est de regarder la 
   $A$-algèbre de Banach universelle munie d'un système compatible de $p^{\infty}$-racines de 
   $g$, i.e.\ $A\langle T^{1/p^{\infty}}\rangle/\overline{(T-g)}$. L'algèbre $A\langle T^{1/p^{\infty}}\rangle$ est perfectoïde
   (noter que $A\langle T^{1/p^{\infty}}\rangle^0=A^0\langle T^{1/p^{\infty}}\rangle$ et utiliser la remarque~\ref{experf}), donc uniforme, mais 
   en général l'uniformité ne se propage pas aux quotients. On insiste et on la remplace par son uniformisé (\S~\ref{dico})
   $$A\langle g^{1/p^{\infty}}\rangle\coloneqq \left(A\langle T^{1/p^{\infty}}\rangle/\overline{(T-g)}\right)^u.$$
 Le résultat un peu miraculeux d'\textcite{AndreDSC}\footnote{L'énoncé suivant est tiré de l'article de  \textcite{BhattInv}.}  est que cela résout notre problème:   
   
\begin{theo}\label{fflemma}  On a $A\langle g^{1/p^{\infty}}\rangle\in {\rm Perf}_K$ et 
 $A\langle g^{1/p^{\infty}}\rangle^0/\pi$ est presque fidèlement plate sur $A^0/\pi$ (dans le cadre $\pi^{1/p^{\infty}}$).
\end{theo}

  On a un énoncé analogue modulo $\pi^d$ pour tout $d\geq 1$, se déduisant formellement du théorème. 
  La preuve se fait en deux étapes, détaillées ci-dessous: 
  on relie $A\langle g^{1/p^{\infty}}\rangle$ à des localisés de l'algèbre  perfectoïde $A\langle T^{1/p^{\infty}}\rangle$, puis on analyse ces localisés en utilisant de manière intensive les résultats des \S~\ref{appro} et~\ref{christmas}.
  
  \subsubsection{Une autre description de $A\langle g^{1/p^{\infty}}\rangle$}
  
  Soient 
  $B\in {\rm Ban}^u_K$, $f\in B$ et $I=\overline{fB}\subset B$.
   Il convient de voir l'algèbre 
   $(B/I)^u\in {\rm Ban}^u_K$ comme celle des fonctions analytiques 
  sur le fermé Zariski $V(I)$ de $\mathcal{M}(B)$. Comme $x\in \mathcal{M}(B)$ est annulé par 
  $f$ si et seulement si 
  $|f(x)|\leq |\pi^n(x)|$ pour tout $n\geq 1$, $(B/I)^u$ n'est rien d'autre que 
   la limite inductive $\varinjlim_{n} B_n$ (dans la catégorie ${\rm Ban}_K^u$, cf.\ \S~\ref{dico})
  des algèbres 
  $$B_n\coloneqq (B\langle f/\pi^n\rangle)^u.$$
  En effet, pour $S\in {\rm Ban}^u_K$ la donnée d'un morphisme (dans ${\rm Ban}_K$) de $\varinjlim_{n} B_n$ dans $S$ équivaut à celle d'un morphisme $\varphi\colon  B\to S$
  tel que $\varphi(f/\pi^n)\in S^0$ pour tout $n\geq 1$, or cela signifie précisément que 
  $\varphi$ se factorise par $(B/I)^u$ (puisque $S$ est uniforme). 
  
  Prenons maintenant
  $B=A\langle T^{1/p^{\infty}}\rangle$ et $f=T-g$ dans la discussion ci-dessus. Puisque 
  $B$ est perfectoïde, chacune des algèbres $B\langle f/\pi^n\rangle$ l'est (th\'eor\`eme~\ref{oplus}), donc 
 $B_n=B\langle f/\pi^n\rangle$ et
 $C\coloneqq A\langle g^{1/p^{\infty}}\rangle\in {\rm Perf}_K$, en tant que colimite filtrante (dans ${\rm Ban}_K^u$) d'algèbres 
 perfectoïdes (remarque~\ref{perfcat}). Comme 
 $C^0=(\widehat{\varinjlim_{n} B_n^0})_*$ est presque 
 isomorphe à $\widehat{\varinjlim_{n} B_n^0}$, l'algèbre 
  $C^0/\pi$ est presque isomorphe à 
  $\varinjlim_{n} B_n^0/\pi$. Il suffit donc de voir que chaque 
  $B_n^0/\pi$ est presque fidèlement plate sur $A^0/\pi$. On fixe par la suite $n\geq 1$.

  \subsubsection{Analyse des localisés de $A\langle T^{1/p^{\infty}}\rangle$ et fin de la preuve}

   Par le lemme d'approximation (th\'eor\`eme~\ref{approx}) il existe 
   un élément $p$-puissant 
   $h\in B^0$ tel que $|h-f|\leq |p|\cdot \max(|h|, |\pi^n|)$ pour tout 
   $\lvert\cdot\rvert \in {\rm Spa}_{\pi}(B^0)$. Comme 
   $|\frac{h-f}{\pi}|<1$ pour tout $\lvert\cdot\rvert \in {\rm Spa}(B^0[\frac{1}{\pi}], B^0)$ et
   $B^0$ est $p$-close (d\'efinition~\ref{pclos}) dans $B^0[\frac{1}{\pi}]$ (proposition~\ref{unif}), 
   la proposition~\ref{topnilp} montre que 
   $$h\equiv f\pmod {\pi B^0}.$$
   L'inégalité $|h-f|\leq |p|\cdot \max(|h|, |\pi^n|)$ montre aussi que 
   $B_n\simeq B\langle \frac{h}{\pi^n}\rangle$, et on déduit du 
  théorème~\ref{oplus} que 
  $B_n^0/\pi$ est presque isomorphe à $\varinjlim_{j} C_j/\pi$, avec 
 $C_j= B^0[ \frac{h^{1/p^j}}{\pi^{n/p^j}}]\subset B$. Nous allons montrer 
 que chaque $C_j/\pi$ est fidèlement plate sur $A^0/\pi$, ce qui permettra de conclure.
 Par dévissage il suffit de montrer que $C_j/\pi^{1/p^j}$ l'est sur $A^0/\pi^{1/p^j}$.

 On a $B^0=A^0\langle T^{1/p^{\infty}}\rangle$, donc $B^0/\pi\simeq (A^0/\pi)[T^{1/p^{\infty}}]$, en particulier 
 $B^0/\pi^{1/p^j}\simeq B^0/\pi$ via $x\mapsto x^{p^j}$ (puisqu'il en est de même du morphisme 
 $A^0/\pi^{1/p^j}\to A^0/\pi$, cf.\ proposition~\ref{banperf})
 et  
 $f=T-g$ n'est pas un diviseur de zéro modulo $\pi$. 
 Puisque $h\equiv f\pmod {\pi B^0}$, il en est de même de $h$ et la remarque~\ref{nondiv} fournit 
un isomorphisme $C_j\simeq B^0[X^{1/p^j}]/(u_j)$, avec
$u_j=\pi^{n/p^j}X^{1/p^j}-h^{1/p^j}$.
Ainsi le morphisme $x\mapsto x^{p^j}$ et la congruence $h\equiv f\pmod {\pi B^0}$ induisent des isomorphismes 
$$C_j/\pi^{1/p^j}\simeq B^0[X^{1/p^j}]/(\pi^{1/p^j}, h^{1/p^j})\simeq $$
$$ B^0[X]/(\pi, h)\simeq B^0[X]/(\pi, f)\simeq ((A^0/\pi)[T^{1/p^{\infty}}]/(T-g))[X],$$ exhibant $C_j/\pi^{1/p^j}$ comme une colimite filtrante de 
 $A^0/\pi^{1/p^j}\simeq A^0/\pi$-modules libres de type fini, ce qui permet de conclure.
\subsection{Le lemme de platitude}

    Le résultat suivant de  \textcite{GR} et  \textcite{BS}
  est une vaste généralisation et un raffinement du théorème~\ref{fflemma}. Il a des nombreuses applications, par exemple à l'étude des sous-espaces Zariski fermés d'un espace perfectoïde, à la cohomologie prismatique \parencite{BS}, etc.

\begin{theo}[lemme de platitude d'Andr\'e]
Soit $A$ un anneau perfectoïde. Il existe une $A$-algèbre perfectoïde $B$ telle que 
$B/p$ soit fidèlement plate sur $A/p$ et telle que tout $x\in B$ soit $p$-puissant (déf.~\ref{ppuis}). On peut même
choisir $B$ telle que tout polynôme unitaire $P\in B[X]$ possède une racine dans $B$.
\end{theo}

  Nous ferons usage seulement\footnote{On pourrait même se dispenser de ce paragraphe et utiliser seulement le théorème~\ref{fflemma}.} du théorème~\ref{nopresque} ci-dessous, qui entraîne celui ci-dessus par une itération un peu pénible (cf.\ la preuve du th\'eor\`eme $2.3.4$ de \textcite{CS}, pour les détails). 
 Pour la preuve, on suit la présentation par  \textcite{CS} des arguments de Gabber et Ramero. Essentiellement, le rôle des localisés dans la preuve du théorème~\ref{fflemma} est pris par les $p$-clôtures intégrales (d\'efinition~\ref{pclos}).

   \begin{theo}\label{nopresque}
                          Soit $R$ un anneau muni d'un élément $p$-puissant (déf.~\ref{ppuis}) $\pi$, non diviseur de zéro, tel que 
                          $\pi^p\mid p$ et $R/\pi\simeq R/\pi^p$ via le Frobenius. 
                          Soit $P\in R[X]\setminus R$ unitaire et soit $B$ la $p$-clôture (déf.~\ref{pclos}) de $R[X^{1/p^{\infty}}]/P$ dans $(R[X^{1/p^{\infty}}]/P)[1/\pi]$. La $\pi$-complétion de $B$ est perfectoïde, sans $\pi$-torsion, et $B/\pi$ est fidèlement plate sur $R/\pi$.                           \end{theo}

 Notons que, par construction, $P$ possède une racine $p$-puissante dans $B$, et que $R[X^{1/p^{\infty}}]/P$ est sans $\pi$-torsion car $P$ est unitaire, donc non diviseur de zéro modulo $\pi$.

\begin{proof} Soit $A=R[X^{1/p^{\infty}}]$. Comme le Frobenius 
$A/\pi\to A/\pi^p$ est un isomorphisme, $A$ est $p$-close dans $A[\frac{1}{\pi}]$ et le complété $\pi$-adique 
$\hat{A}$ est perfectoïde (proposition~\ref{310}). 
 Soit $I=PA\left[\frac{1}{\pi}\right]$ et soit $u\colon  A[\frac{1}{\pi}]\to (A/P)[\frac{1}{\pi}]\simeq A[\frac{1}{\pi}]/I$ la projection canonique. 

     Pour $f\in A$ et $n\geq 1$, notons $C_{n,f}$ la $p$-clôture de $A[\frac{f}{\pi^n}]$ dans 
$A[\frac{1}{\pi}]$, et $C_{\infty,f}=\cup_{n\geq 1} C_{n,f}$. Alors 
$I=\bigcup_{n\geq 1} \frac{P}{\pi^n} A=\bigcup_{n\geq 1} \frac{P}{\pi^n} C_{\infty, P}$ est un idéal 
$\pi$-divisible de $C_{\infty, P}$, et
   la projection $u$ induit un isomorphisme 
$$C_{\infty, P}/I\simeq B,$$
et donc des isomorphismes
$B/\pi^n\simeq C_{\infty, P}/\pi^n$ pour tout $n\geq 1$.
En effet, un
 élément $x\in A[\frac{1}{\pi}]$ vérifie 
 $u(x)\in B$ si et seulement s'il existe 
   $d\geq 0$ tel que $x^{p^d}\in A+I$ (proposition~\ref{Andrepclos}), ce qui revient à dire que $x\in C_{\infty, P}$.
 Il suffit donc de montrer que le complété $\pi$-adique de $C_{n,P}$ est perfectoïde et que 
     $C_{n,P}/\pi$ est fidèlement plat sur $R/\pi$ pour tout $n\geq 1$.

  Puisque $\hat{A}$ est perfectoïde (proposition~\ref{310}), le théorème~\ref{approx} fournit un élément $p$-puissant 
       $g\in \widehat{A}$ tel que $|P-g|\leq |p|\max(|g|, |\pi^n|)$ pour tout $|\cdot |\in {\rm Spa}_{\pi}(\widehat{A})\simeq {\rm Spa}_{\pi}(A)$. Soit 
       $(q_j)_{j\geq 0}$ une suite dans $A$ telle que
       $q_j\equiv g^{1/p^j}\pmod {\pi^{n+p} \widehat{A}}$ pour tout $j\geq 0$. On a 
       $$|P-q_0|\leq |\pi^p|\max(|q_0|, |\pi^n|),\,\, \text{pour tout}\, |\cdot |\in {\rm Spa}_{\pi}(A). \eqno(4).$$

 \begin{lemm}\label{epsi}
 Il existe $d\geq 1$ tel que $q_j^{p^j}\equiv P\pmod {\pi^{1/p^d} A}$ pour tout $j$, et 
 $q_j$ n'est pas un diviseur de zéro modulo $\pi$.
 \end{lemm}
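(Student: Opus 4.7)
Le plan est de ramener la première assertion à une comparaison entre $q_0$ et $P$ (via le passage intermédiaire par $g \in \hat{A}$), puis de transformer l'estimée valuative~(4) en une véritable divisibilité dans $A$ lui-même (et non dans son complété $\hat{A}$) grâce à la $p$-clôture; la seconde assertion est ensuite obtenue en exploitant le caractère unitaire de $P$.

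D'abord, écrivons $q_j = g^{1/p^j} + \pi^{n+p} z_j$ avec $z_j \in \hat{A}$. Le développement binomial $q_j^{p^j} - g = \sum_{k=1}^{p^j} \binom{p^j}{k}(g^{1/p^j})^{p^j-k}\pi^{(n+p)k} z_j^k$ fait apparaître un facteur $\pi^{n+p}$ dans chaque terme, donc $q_j^{p^j} \equiv g \equiv q_0 \pmod{\pi^{n+p}\hat{A}}$. Comme $q_j^{p^j} - q_0 \in A$ et que l'isomorphisme canonique $A/\pi^{n+p} \simeq \hat{A}/\pi^{n+p}\hat{A}$ (définition de la complétion $\pi$-adique, $\pi$ étant non diviseur de zéro dans $A$) donne $A \cap \pi^{n+p}\hat{A} = \pi^{n+p} A$, on obtient $q_j^{p^j} \equiv q_0 \pmod{\pi^{n+p} A}$. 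Il suffit donc de montrer que $q_0 - P \in \pi^{1/p^d} A$ pour un certain $d \geq 1$.

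Or l'inégalité~(4) fournit $|q_0 - P| \leq |\pi^p|$ pour tout $|\cdot| \in {\rm Spa}_\pi(A)$ (car $\max(|q_0|, |\pi^n|) \leq 1$), donc l'élément $\alpha \coloneqq (q_0 - P)/\pi^{p-1} \in A[\frac{1}{\pi}]$ vérifie $|\alpha| \leq |\pi| < 1$ sur tout ${\rm Spa}(A[\frac{1}{\pi}], A)$. La proposition~\ref{topnilp} fournit alors $N \geq 1$ tel que $\alpha^N \in \pi A$; par suite $(q_0 - P)^N \in \pi A$, et en multipliant par $(q_0 - P)^{p^d - N}$ pour $d$ tel que $p^d \geq N$, on obtient $(q_0 - P)^{p^d} \in \pi A$. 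Posons $\beta \coloneqq (q_0 - P)/\pi^{1/p^d} \in A[\frac{1}{\pi}]$: on a $\beta^{p^d} = (q_0 - P)^{p^d}/\pi \in A$. Or le Frobenius $A/\pi \to A/\pi^p$ est un isomorphisme (immédiat pour $A = R[X^{1/p^{\infty}}]$ au vu des hypothèses sur $R$), donc $A$ est $p$-close dans $A[\frac{1}{\pi}]$ (proposition~\ref{Andrepclos}); une récurrence immédiate sur cette propriété donne alors $\beta \in A$, c'est-à-dire $q_0 - P \in \pi^{1/p^d} A$. En combinant avec l'étape précédente, $q_j^{p^j} \equiv P \pmod{\pi^{1/p^d} A}$ pour tout $j \geq 0$.

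Pour la non-nullité de $q_j$ modulo $\pi$, observons qu'un élément d'un anneau commutatif est non diviseur de zéro si et seulement si une (ou toute) puissance de cet élément l'est; il suffit donc de vérifier cela pour $q_j^{p^j}$. Écrivons $q_j^{p^j} = P + \epsilon$ avec $\epsilon \in \pi^{1/p^d} A$: la classe $\bar{\epsilon}$ de $\epsilon$ dans $A/\pi$ vérifie $\bar{\epsilon}^{p^d} = 0$ car $(\pi^{1/p^d})^{p^d} = \pi$. Si $q_j^{p^j} s \in \pi A$ pour un $s \in A$, on en tire $Ps \equiv -\bar{\epsilon}s$ dans $A/\pi$, puis par itération $P^{p^d} s \equiv (-\bar{\epsilon})^{p^d} s = 0$. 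Or $P$ est unitaire en $X$ de degré $D \geq 1$, donc encore unitaire (de degré $p^n D$) vu comme polynôme en $X^{1/p^n}$ dans $R/\pi[X^{1/p^n}]$, et par conséquent non diviseur de zéro dans chaque $R/\pi[X^{1/p^n}]$; passant à la colimite, $P$ est non diviseur de zéro dans $A/\pi = \bigcup_n R/\pi[X^{1/p^n}]$, et il en va de même de $P^{p^d}$, d'où $s \in \pi A$.

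L'obstacle principal réside dans le passage de l'estimation valuative~(4), qui ne contrôle $q_0 - P$ qu'en semi-norme spectrale, à une véritable divisibilité dans le \emph{non-complété} $A$: la proposition~\ref{topnilp} ne fournit une divisibilité qu'après élévation à une puissance, et c'est précisément la $p$-clôture de $A$ dans $A[\frac{1}{\pi}]$ qui permet d'extraire une racine $p^d$-ième et d'obtenir la divisibilité par $\pi^{1/p^d}$ dans $A$ lui-même, sans sortir du monde (non perfectoïde) de $A$.
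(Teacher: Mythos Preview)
Your argument is correct and follows the same route as the paper's proof: relation~(4) combined with proposition~\ref{topnilp} and the $p$-closure of $A$ in $A[\frac{1}{\pi}]$ yields $q_0-P\in\pi^{1/p^d}A$, and the congruence $q_j^{p^j}\equiv q_0\pmod{\pi A}$ (via $A/\pi\simeq\hat A/\pi$) finishes the first claim; the second claim is the observation that a non-zero-divisor plus a nilpotent is still a non-zero-divisor, applied to $\bar P$ in $A/\pi$. Your write-up is simply a more detailed unpacking of the paper's two-sentence proof.
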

 
 \begin{proof} La relation $(4)$ et la proposition~\ref{topnilp} montrent qu'il existe $d\geq 1$ tel que $(P-q_0)^{p^d}\in \pi A$, donc $P-q_0\in \pi^{1/p^d}A$ (car $A$ est $p$-clos dans $A[\frac{1}{\pi}]$). On conclut en remarquant que $q_j^{p^j}\equiv g\equiv q_0\pmod {\pi \hat{A}}$ et que $P$ n'est pas un diviseur de zéro modulo~$\pi$.       
\end{proof}

 \begin{lemm}\label{crucial}
  On a $C_{n, P}=\cup_{j\geq 0} A[\frac{q_j}{\pi^{n/p^j}}]$ et sa $\pi$-complétion est perfectoïde.

   \end{lemm}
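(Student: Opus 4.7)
The plan is to adapt the proof of Proposition~\ref{p-closed}, letting the sequence $(q_j)$ stand in for a genuine compatible system of $p^j$-th roots. Concretely, set $B' \coloneqq \bigcup_{j\geq 0} A[q_j/\pi^{n/p^j}] \subset A[1/\pi]$; I would first present $B'$ as a suitable quotient of $A[X^{1/p^\infty}]$, use the Frobenius criterion of Proposition~\ref{310} to obtain that its $\pi$-completion is perfectoïde, and finally identify $B'$ with $C_{n,P}$ via the approximation result of Proposition~\ref{topnilp}\,b).

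The construction step starts by deriving from $q_j \equiv g^{1/p^j} \pmod{\pi^{n+p}\widehat A}$ the compatibility $q_{j+1}^p \equiv q_j \pmod{\pi^{n+p}A}$ (using $\pi$-separatedness of $A$ inside $\widehat A$): this forces $A[q_j/\pi^{n/p^j}] \subset A[q_{j+1}/\pi^{n/p^{j+1}}]$ because $(q_{j+1}/\pi^{n/p^{j+1}})^p$ differs from $q_j/\pi^{n/p^j}$ by an element of $\pi^p A$, so $B'$ is an honest filtered union. Lemma~\ref{epsi} ensures $q_j$ is not a zero-divisor modulo $\pi$, and $p$-closedness of $A$ in $A[1/\pi]$ upgrades this to non-zero-divisor modulo $\pi^{n/p^j}$; thus Remark~\ref{nondiv} identifies $A[q_j/\pi^{n/p^j}] \simeq A[X^{1/p^j}]/v_j$ with $v_j \coloneqq \pi^{n/p^j}X^{1/p^j} - q_j$, giving $B' \simeq A[X^{1/p^\infty}]/(v_0, v_1,\dots)$. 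Direct expansion of $v_{j+1}^p$ combined with $q_{j+1}^p \equiv q_j \pmod{\pi^{n+p}}$ and $\pi^p\mid p$ yields $v_{j+1}^p \equiv v_j \pmod{\pi^p A[X^{1/p^\infty}]}$, so the Frobenius argument from Proposition~\ref{p-closed} applies verbatim and shows that $B'/\pi \to B'/\pi^p$ is bijective. Hence the $\pi$-completion of $B'$ is perfectoïde by Proposition~\ref{310}, and $B'$ is $p$-close in $A[1/\pi]$ by Proposition~\ref{Andrepclos}\,b).

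To conclude $C_{n,P} = B'$ I would prove each inclusion by approximation. For $A[P/\pi^n]\subset B'$: combining $q_j^{p^j}\equiv g \pmod{\pi^{n+p}\widehat A}$ with the bound $|P-g|\leq |p|\max(|g|,|\pi^n|)$ from Theorem~\ref{approx}, one shows that $(q_j/\pi^{n/p^j})^{p^j} = q_j^{p^j}/\pi^n$ approximates $P/\pi^n$ well enough that, after raising to a suitable $p$-power, the hypothesis of Proposition~\ref{topnilp}\,b) is met and $P/\pi^n$ lands in the $p$-clôture of $A[q_j/\pi^{n/p^j}]$; by the $p$-closedness just established, this $p$-clôture coincides with $A[q_j/\pi^{n/p^j}]\subset B'$. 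The reverse inclusion $B'\subset C_{n,P}$ is symmetric, writing high $p$-powers of $q_j/\pi^{n/p^j}$ in terms of $P/\pi^n$ modulo a $\pi$-small correction. The main delicate point I expect is precisely this valuation bookkeeping: the available bound is controlled by $\max(|q_j^{p^j}|,|\pi^n|)$ rather than purely by $|\pi|$-powers, so the strict inequality demanded by Proposition~\ref{topnilp}\,b) is not immediate for the raw element $P/\pi^n$, and one has to raise to an appropriate $p^N$-power and use $p$-closedness of both $B'$ and $C_{n,P}$ to absorb the $\pi^{-n}$ factor cleanly.
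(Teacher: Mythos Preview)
Your overall strategy is sound, but there is a genuine gap in the presentation step. You claim that $B' \simeq A[X^{1/p^{\infty}}]/(v_0,v_1,\dots)$ with $v_j=\pi^{n/p^j}X^{1/p^j}-q_j$ (here $X$ must be a \emph{fresh} variable, since $A=R[X^{1/p^{\infty}}]$ already uses $X$; call it $Y$). The individual isomorphisms $A[q_j/\pi^{n/p^j}]\simeq A[Y^{1/p^j}]/(v_j)$ from Remark~\ref{nondiv} are fine, but they are \emph{not compatible} with the inclusions $A[x_j]\hookrightarrow A[x_{j+1}]$. Indeed, a ring homomorphism $A[Y^{1/p^{\infty}}]\to B'$ sending $Y^{1/p^j}\mapsto x_j$ would force $x_{j+1}^p=x_j$, whereas you only have $x_{j+1}^p\equiv x_j\pmod{\pi^p A}$. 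So the colimit presentation does not exist, and you cannot invoke the quotient argument of Proposition~\ref{p-closed} \og verbatim\fg{}.

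The paper repairs exactly this: it passes to $\widehat A$, where the genuine compatible system $g^{1/p^j}$ lives, and forms $S=\widehat A[y_0,y_1,\dots]$ with $y_j=g^{1/p^j}/\pi^{n/p^j}$. Since $y_{j+1}^p=y_j$ on the nose, Proposition~\ref{p-closed} applies to $S$ directly. The bridge back to $T=B'$ is the observation that $x_j-y_j\in\pi\widehat A$, which (together with $A/\pi\simeq\widehat A/\pi$ and Remark~\ref{nondiv}) yields $A[x_j]/\pi\simeq\widehat A[y_j]/\pi$ compatibly in $j$, hence $T/\pi\simeq S/\pi$; the Frobenius isomorphism for $S$ then transfers to $T$. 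This comparison with a \og model over $\widehat A$\fg{} is the missing idea in your argument. (If you prefer to stay over $A$, you would need to argue that the presentation you want holds modulo $\pi^p$, which is true and suffices for the Frobenius computation, but this is essentially the same manoeuvre in disguise.)

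For the identification $C_{n,P}=B'$, your plan is workable but unnecessarily heavy. The paper factors through $C_{n,q_0}$: on one hand $T\subset C_{n,q_0}$ because $x_{j+1}^p-x_j\in\pi A$ exhibits each $x_j$ in the $p$-clôture of $A[x_0]$; on the other hand $T$ is $p$-closed once the Frobenius isomorphism is known, so $T=C_{n,q_0}$. Then $C_{n,q_0}=C_{n,P}$ follows in one stroke from relation~(4) and Proposition~\ref{topnilp}\,b), since for $\lvert\cdot\rvert\in{\rm Spa}(A[1/\pi],A[q_0/\pi^n])$ one has $\lvert P/\pi^n-q_0/\pi^n\rvert\le\lvert\pi^p\rvert\max(\lvert q_0/\pi^n\rvert,1)\le\lvert\pi^p\rvert<1$, and symmetrically. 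This avoids the valuation bookkeeping you anticipated.
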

   \begin{proof} 
      Notons $x_j=\frac{q_j}{\pi^{n/p^j}}\in A[\frac{1}{\pi}]$ et $y_j=\frac{g^{1/p^j}}{\pi^{n/p^j}}\in \widehat{A}[\frac{1}{\pi}]$. Alors $x_{j+1}^p- x_j\in{\pi A}$
      (puisque  $q_{j+1}^p-q_j\in\pi^{n+p}A$) et $y_{j+1}^p=y_j$, donc  
          $T\coloneqq A[x_0,x_1,\ldots]\subset A[\frac{1}{\pi}]$ (resp.\ $S\coloneqq \widehat{A}[y_0,y_1,\ldots]\subset  \widehat{A}[\frac{1}{\pi}]$) 
      est la réunion croissante des sous-algèbres $A[x_j]$ (resp.\ $\widehat{A}[y_j]$), et 
      $T\subset C_{n, q_0}$. 
         Comme 
       $x_j-y_j\in \pi \hat{A}$ et $q_j$ n'est pas un diviseur de zéro modulo $\pi$ (lemme~\ref{epsi}), on a des isomorphismes 
       naturels $$A[x_j]/\pi\simeq A[X]/(\pi, \pi^{n/p^j}X-q_j)\simeq \widehat{A}[X]/(\pi, \pi^{n/p^j}X-g^{1/p^j})\simeq \widehat{A}[y_j]/\pi,$$  qui se propagent en un isomorphisme
          $T/\pi\simeq S/\pi$. Mais $S$ est perfectoïde
 (proposition~\ref{p-closed}), donc le Frobenius $T/\pi^{1/p}\to T/\pi$ est un isomorphisme. Il s'ensuit que 
 $T$ est $p$-close dans $T[\frac{1}{\pi}]=A[\frac{1}{\pi}]$, donc $T=C_{n, q_0}$, et que 
 le complété $\pi$-adique de $T$ est perfectoïde. On conclut en remarquant que $C_{n,P}=C_{n, q_0}$, grâce à la relation $(4)$ et au point b) de la proposition~\ref{topnilp}.
                            \end{proof}
 
  Pour finir la preuve du théorème~\ref{nopresque} il suffit de voir que chaque $$A[q_j/\pi^{n/p^j}]\simeq R[T, X^{1/p^{\infty}}]/(\pi^{n/p^j}T-q_j)$$ est fidèlement plate sur $R$ modulo $\pi$. Compte tenu du lemme~\ref{epsi}, il suffit de recopier la fin de la preuve du théorème~\ref{fflemma}.
  \end{proof}
  

  \section{Le lemme d'Abhyankar perfectoïde d'André}\label{Aby}

 Cette section est consacrée à la preuve de l'autre pilier de la stratégie d'André: son difficile lemme d'Abhyankar perfectoïde \parencite{AndreIHES}. Rappelons le contexte classique: on se donne une extension finie et plate $B$ d'un 
 anneau local régulier $A$ d'inégale caractéristique $(0,p)$. On suppose que l'extension est ramifiée le long d'un diviseur à croisements normaux défini par une équation $f=0$, avec $f$ multiple de $p$, et que les indices de ramification sont premiers à $p$. On peut alors rendre l'extension 
 étale en adjoignant des racines de $f$ d'ordre divisible par tous les indices de ramification, puis en passant à la clôture intégrale (mais \emph{sans inverser $f$}).
 Dans le cadre perfectoïde les hypothèses concernant la ramification sont superflues, mais la conclusion reste valable seulement dans le cadre $f^{1/p^{\infty}}$ des presque mathématiques (\S~\ref{alpresque}). Le lecteur trouvera des g\'en\'eralisations et raffinements importants dans le livre de \textcite{GR} (\S~$16.9)$ et dans l'article de \textcite{BS} (\S~$10.2$).

  On peut voir le lemme d'Abhyankar perfectoïde comme une généralisation au cas ramifié du théorème de presque pureté de Faltings, Scholze et Kedlaya--Liu (th\'eor\`eme~\ref{FalSch}). Ce théorème jouera d'ailleurs un rôle capital dans la preuve.
    
 On garde les notations du début de la section~\ref{perf}. 
  
  \subsection{Le théorème d'extension de Riemann}
 
  Le résultat suivant d'André\footnote{La démonstration, ainsi qu'une version plus forte du théorème principal, qui sera cruciale par la suite, sont empruntées de  \textcite{BhattInv}.} (inspiré par ceux de la section II de \textcite{Scholzetorsion}) est une version perfectoïde d'un théorème
  de  \textcite{Barten} (lui-même une version rigide analytique du théorème classique d'extension de Riemann pour les variétés complexes): si 
  $X$ est un affinoïde rigide analytique \emph{normal}, toute fonction analytique bornée sur le complémentaire d'un fermé analytique 
  nulle part dense de $X$ s'étend de manière unique en une fonction analytique bornée sur $X$. 
  On verra que dans le monde perfectoïde l'hypothèse de normalité est superflue.

Soit $B\in {\rm Perf}_K$ et soit $g\in B^0$ un élément $p$-puissant (d\'efinition~\ref{ppuis}), non diviseur de z\'ero.
  Soit $B_n=B\langle \frac{\pi^n}{g}\rangle\in {\rm Perf}_K$ l'algèbre perfectoïde (th\'eor\`eme~\ref{oplus}) des fonctions analytiques sur le lieu 
  $|\pi^n|\leq |g|$ de $\mathcal{M}(B)$, et soit $B_{\infty}=\varprojlim_{n} B_n$ la 
  $K$-algèbre de Fréchet des fonctions analytiques sur le lieu 
 de non-annulation de $g$. On note 
 $$g^{-1/p^{\infty}} B^0\coloneqq \bigcap_{n\geq 1} g^{-1/p^n} B^0$$
 le sous-anneau des $f\in B[\frac{1}{g}]$ tels que $g^{1/p^n}f\in B^0$ pour tout $n\geq 1$. 
  
 
 \begin{theo}\label{Riem}
 
 a) Le sous-anneau 
 $g^{-1/p^{\infty}} B^0$ de $B[\frac{1}{g}]$
 est la clôture intégrale complète\footnote{C'est-à-dire l'ensemble des
 $f\in B[\frac{1}{g}]$ dont les puissances sont contenues dans un sous $B^0$-module de type fini de $B[\frac{1}{g}]$.} de $B^0$ dans 
 $B[\frac{1}{g}]$, et $B[\frac{1}{g}]$ est intégralement clos dans 
 $B_{\infty}$.
 
 b) Le morphisme $B[\frac{1}{g}]\to B_{\infty}$ est injectif et 
  l'image de $g^{-1/p^{\infty}} B^0$ est
 $B_{\infty}^{0}\coloneqq \varprojlim_{n} B_n^0$. \end{theo}

 L'injectivité du morphisme $B[\frac{1}{g}]\to B_{\infty}$ se ramène à celle de $B\to B_{\infty}$. Si $f\in B$ a une image nulle dans 
 $B_{\infty}$, alors $fg$ est nulle sur $\mathcal{M}(B)$, donc de norme spectrale nulle; comme $B$ est uniforme, on a $fg=0$ puis $f=0$. 
 Soit $C$ la clôture intégrale complète de $B^0$ dans $B[\frac{1}{g}]$. Si 
 $f\in g^{-1/p^{\infty}}B^0$ alors $f^n\in g^{-1}B^0$ pour tout $n$, donc $f\in C$. 
 Dans l'autre sens, soit $f\in C$. Il existe $N$ tel que 
 $f^n\in \frac{1}{(\pi g)^N} B^0$ pour tout $n\geq 1$. Comme
 $B^0$ est $p$-close (d\'efinition~\ref{pclos}) dans $B$ (proposition~\ref{unif}), on a 
 $(\pi g)^{N/p^k}f\in B^0$ pour tout $k\geq 1$, donc $\pi^{1/p^{\ell}} g^{N/p^k}f\in B^0$ pour tous 
 $k,\ell\geq 1$. Ainsi $g^{N/p^k}f\in B^0_*=B^0$ pour tout $k$ et $f\in g^{-1/p^{\infty}}B^0$, montrant bien que 
 $C=g^{-1/p^{\infty}}B^0$. 
  
  Si $f\in B_{\infty}$ est entier sur $B[\frac{1}{g}]$ alors il existe $N$ tel que $(\pi g)^Nf$ soit entier sur 
  $B^0$. Mais un élément $x$ de $B_n$ entier sur $B^0$ est dans $B_n^0$ car $B_n$ est uniforme (en effet, les puissances de $x$
  restent dans un sous $B_n^0$-module de type fini de $B_n$, qui est borné dans~$B_n$ par uniformité).
Donc $(\pi g)^N f\in B_{\infty}^0$. Pour finir la preuve du théorème~\ref{Riem} il suffit donc de montrer (et c'est le coeur de l'affaire) que $B[\frac{1}{g}]\to B_{\infty}$
  identifie $g^{-1/p^{\infty}}B^0$ et $B_{\infty}^0$. Comme $B^0=B^0_*$, il suffit de vérifier que $(\pi g)^{1/p^k} f\in B^0$ pour tous $f\in B_{\infty}^0$ et $k\geq 1$. Cela découle par passage à la limite du résultat plus fin suivant, dû à  \textcite{BhattInv}, et qui demande quelques préliminaires.
  
  On se place 
 dans le cadre 
  $(\pi g)^{1/p^{\infty}}$ des presque mathématiques (\S~\ref{alpresque}). 
 On dit qu'un système projectif $\{M_n\}_{n\geq 1}$ de $B^0$-modules est \emph{presque nul} si 
 pour tous $k\geq 0$ et $n\geq 1$ il existe $m\geq n$ tel que $(\pi g)^{1/p^k}$ annule l'image de 
 $M_m$ dans $M_n$. Il est équivalent de dire que pour tout $k\geq 1$ le morphisme naturel 
 $\{M_n[(\pi g)^{1/p^k}]\}_{n\geq 1}\to \{M_n\}_{n\geq 1}$ est un isomorphisme de pro-$B^0$-modules. 
 On dira qu'un morphisme 
 $f\colon  \{M_n\}_{n\geq 1}\to \{N_n\}_{n\geq 1}$ de pro-$B^0$-modules est un \emph{presque isomorphisme} si 
 le noyau et le conoyau de $f$ sont des systèmes projectifs presque nuls. 

  \begin{theo}\label{bhatt}
  Pour tout $d\geq 1$ le morphisme de systèmes projectifs 
  $$\{B^0/\pi^d B^0\}_{n\geq 1}\to \{B_n^0/\pi^d B_n^0\}_{n\geq 1}$$
 est un presque isomorphisme dans le cadre $(\pi g)^{1/p^{\infty}}$.
     \end{theo}
  
  \begin{proof} Par dévissage on peut supposer que $d=1$. Supposons d'abord que 
  $g$ n'est pas un diviseur de zéro modulo $\pi$. Alors chaque 
  $f_n\colon  B^0/\pi B^0\to B_n^0/\pi B_n^0$ est injectif: si $F\in B^0$ a une image $\pi G$ dans $B_n^0$, avec 
  $G\in B_n^0$, alors $u\coloneqq \frac{Fg}{\pi }\in B$ satisfait \`a \footnote{En effet, soit $x\in \mathcal{M}(B)$, alors ou bien 
  $|\pi(x)|^n\leq |g(x)|$, auquel cas $|u(x)|=|G(x)|\cdot |g(x)|\leq 1\cdot 1=1$, ou bien $|\pi(x)|^n>|g(x)|$, auquel cas
  $|u(x)|\leq |\pi(x)|^{n-1}|F(x)|\leq 1\cdot 1=1$.}  $|u|_{\rm sp}\leq 1$, donc 
 $Fg\in \pi B^0$, puis $F\in \pi B^0$.
    
  Ensuite, fixons $k,n\geq 1$ et montrons que $(\pi g)^{1/p^k}$ annule l'image de ${\rm coker}(f_{n+p^{k}})$ dans ${\rm coker}(f_n)$, ce qui permettra de conclure dans le cas où $g$ n'est pas un diviseur de zéro modulo $\pi$.
 Soit $i_n\colon  B^0\to B_n^0$ le morphisme canonique. 
  On veut montrer que si $f\in B_{n+p^{k}}^0$ a une image $F$ dans $B_n^0$, alors $(\pi g)^{1/p^k}F\in \pi B_n^0+i_n(B^0)$. En posant
  $S=B^0[ (\frac{\pi^{n+p^{k}}}{g})^{1/p^j}\,, j\geq 0]\subset B_{n+p^{k}}^0$, 
  le morphisme naturel $\widehat{S}\coloneqq \varprojlim_{d} S/\pi^d\to B_{n+p^{k}}^0$ est un presque isomorphisme dans le cadre $\pi^{1/p^{\infty}}$ (th\'eor\`eme~\ref{oplus}), donc 
  $\pi^{1/p^k}f\in S+\pi B_{n+p^{k}}^0$. Il suffit donc de montrer que 
  $g^{1/p^k} H\in \pi B_n^0+i_n(B^0)$ quand $H$ est l'image de $h=(\frac{\pi^{n+p^{k}}}{g})^e$ pour un $e\in \mathbf{Z}_{\geq 0}[1/p]$ arbitraire, mais cela est clair puisque $\pi^{p^k e}$ est un multiple de $\pi$ pour $e\geq 1/p^k$ et
  $$g^{1/p^k}H=\begin{cases} i_n(g^{\frac{1}{p^k}-e} \pi^{(n+p^k)e})\in i_n(B^0),\,\, \text{si}\,\, e<1/p^k \\ i_n(g^{1/p^k})\pi^{p^k e} (\frac{\pi^n}{g})^{e}\in \pi B_n^0,\,\, \text{sinon}.\end{cases}$$

   Il reste à s'affranchir de l'hypothèse sur $g$. Comme $g\in B^0$ est $p$-puissant, on dispose d'un morphisme
   $R\coloneqq K\langle T^{1/p^{\infty}}\rangle \to B$, envoyant $T^{1/p^j}$ sur $g^{1/p^j}$, et 
   $B_n\simeq B\widehat{\otimes}_R R_n$, avec $R_n=R\langle \frac{\pi^n}{T}\rangle$. Le théorème~\ref{coprod}
    fournit un presque isomorphisme 
    $$B^0/\pi\otimes_{R^0/\pi} R_n^0/\pi\simeq B_n^0/\pi,$$ et comme $T$ n'est pas diviseur de zéro modulo $\pi$ dans 
    $R^0$, ce qui précède montre que le morphisme $\{R^0/\pi\}\to \{R_n^0/\pi\}_{n\geq 1}$ est un presque isomorphisme de systèmes projectifs, ce qui permet de conclure facilement que 
    $\{B^0/\pi\}\to \{B_n^0/\pi\}_{n\geq 1}$ est un presque isomorphisme.
     \end{proof}
  
  \subsection{Le lemme d'Abhyankar perfectoïde}
  
  On garde les notations introduites juste avant le théorème~\ref{Riem}.
  Soit 
  $C$ une algèbre finie étale sur $B[1/g]$. \emph{Si $A\to B$ est un morphisme d'anneaux, on note 
  ${\rm fi}(A,B)\subset B$ la fermeture 
  intégrale de $A$ dans $B$.} Posons 
  $$\tilde{C}^0={\rm fi}(g^{-1/p^{\infty}}B^0, C).$$
     On verra (proposition~\ref{trickyyy}) que l'algèbre $\tilde{C}\coloneqq \tilde{C}^0[\frac{1}{p}]={\rm fi}(g^{-1/p^{\infty}}B, C)$ possède une structure naturelle de $K$-algèbre de Banach uniforme pour laquelle 
  $\tilde{C}^0=(\tilde{C})^0$ (en particulier $\tilde{C}^0$ est $p$-complète).
  
  \emph{On suppose que $K$ est de caractéristique nulle et 
on se place dans le cadre $(\pi g)^{1/p^{\infty}}$ des presque mathématiques (\S~\ref{alpresque}).}
  L'un des résultats principaux de l'article d'\textcite{AndreIHES} s'énonce (rappelons que $\xi$ est un générateur distingué de $\ker(\theta_{K^0})$):

  \begin{theo}[lemme d'Abhyankar perfectoïde]\label{downtownabbey}
    \hspace*{0cm}\par
 a) Le morphisme $\theta\colon  W((\tilde{C}^0)^{\flat})/\xi\to \tilde{C}^0$ est injectif et un presque isomorphisme.
 En particulier, le
  Frobenius est presque surjectif sur $\tilde{C}^0/p$.
 
 b) Pour tout $n\geq 1$ la $B^0/p^n$-algèbre $\tilde{C}^0/p^n$ est presque finie étale, et elle est presque fidèlement plate si 
 $C$ l'est sur $B[\frac{1}{g}]$. 
 
  \end{theo}
  
  \begin{rema}
  Contrairement au théorème de presque pureté, il n'est pas connu si 
  $\tilde{C}^0$ est presque de type fini sur $B^0$. Si c'était le cas, on pourrait en déduire que $\tilde{C}^0$ est presque finie étale sur 
  $B^0$. On peut montrer (proposition $5.2.3$ d'\textcite{AndreIHES}) que ce dernier énoncé est tout de même vrai après inversion de $g$.
  \end{rema}
  
  La preuve occupe les prochains paragraphes et fait jouer un rôle important aux algèbres 
    $$C_n\coloneqq C\otimes_{B[\frac{1}{g}]} B_n,\,\, C_{\infty}\coloneqq \varprojlim_{n} C_n,\,\, C_{\infty}^0\coloneqq \varprojlim_{n} C_n^0.$$ 
    On dispose d'isomorphismes canoniques\footnote{Le premier est dû au fait que $C_{n+1}$ est un $B_{n+1}$-module projectif de type fini.}
        $$C_{n+1}\widehat{\otimes}_{B_{n+1}} B_n\simeq C_{n+1}\otimes_{B_{n+1}} B_n= (C\otimes_{B[\frac{1}{g}]} B_{n+1})\otimes_{B_{n+1}} B_n\simeq C\otimes_{B[\frac{1}{g}]} B_n=C_n.$$

   \begin{rema}\label{almostpure}
   
   a) Comme $C$ est localement libre de rang fini sur $B[\frac{1}{g}]$, le morphisme injectif 
  (théorème~\ref{Riem})
    $B[\frac{1}{g}]\to B_{\infty}$ induit une injection $C\to C\otimes_{B[\frac{1}{g}]} B_{\infty}$, et le morphisme naturel
  $C\otimes_{B[\frac{1}{g}]} B_{\infty}\to C_{\infty}$ est un isomorphisme. On peut donc identifier $C$ à une sous-algèbre de 
  $C_{\infty}$, via $c\mapsto (c\otimes 1)_{n\geq 1}$.

  b)   Puisque $C_n$ est finie étale sur $B_n\in {\rm Perf}_K$, on a $C_n\in {\rm Perf}_K$
  et $C_n^0$ est presque finie étale sur $B_n^0$
(théorème de presque pureté~\ref{FalSch}).  Les isomorphismes $C_{n+1}\widehat{\otimes}_{B_{n+1}} B_n\simeq C_n$
          combinés au théorème~\ref{coprod} montrent que le morphisme naturel $C_{n+1}^0\widehat{\otimes}_{B_{n+1}^0} B_n^0\to C_n^0$ est un presque isomorphisme (à priori dans le cadre $\pi^{1/p^{\infty}}$, mais donc aussi dans le cadre 
          $(\pi g)^{1/p^{\infty}}$).

\end{rema}
        
    \subsubsection{Identification de la clôture intégrale}
    
    Dans ce paragraphe on montre le résultat suivant. 
    
    \begin{prop}\label{trickyyy}
    L'inclusion de $C$ dans $C_{\infty}$ identifie $\tilde{C}^0$ \`a 
    $C_{\infty}^0$. 
    \end{prop}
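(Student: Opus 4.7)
The plan is to prove $\tilde C^0 = C_\infty^0$ inside $C_\infty$ by establishing both inclusions separately.

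For the easy direction $\tilde C^0 \subseteq C_\infty^0$: an element $x \in \tilde C^0$ is integral over $g^{-1/p^\infty}B^0$, which by Theorem~\ref{Riem}(b) is identified with $B_\infty^0 = \varprojlim_n B_n^0$. Hence each image $x_n \in C_n$ is integral over $B_n^0$, and since $C_n$ is a uniform Banach $K$-algebra, the finitely generated $B_n^0$-submodule $B_n^0[x_n]$ is bounded in $C_n$, forcing $x_n \in C_n^0$. Taking the inverse limit gives $x \in C_\infty^0$.

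For the converse $C_\infty^0 \subseteq \tilde C^0$, the strategy is to exploit the perfect trace pairing coming from the étaleness of $C/B[1/g]$. After Zariski-localizing on ${\rm Spec}\,B[1/g]$---both $\tilde C^0$ and $C_\infty^0$ behave sheaf-theoretically---I may assume $C$ is a free $B[1/g]$-module of rank $r$; fix a basis $(e_i)_{1 \le i \le r}$ of $C$ together with its trace-dual basis $(e_i^*) \subset C$. For $x \in C_\infty^0$, writing uniquely $x = \sum_i e_i b_i$ with $b_i \in B_\infty$, one has $b_i = {\rm Tr}_{C_\infty/B_\infty}(e_i^* x)$.

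The crux is to show each $b_i \in B[1/g]$, whence $x \in C$. The transition maps $B_{n+1} \to B_n$ (restriction of analytic functions to smaller domains $|g| \ge |\pi|^n$) are norm-contractive, hence so are $C_{n+1} \to C_n$, giving a uniform bound $|e_i^*|_{C_n} \le M$. Combined with $|x_n|_{C_n} \le 1$ and the uniformly bounded operator norm of ${\rm Tr}_{C_n/B_n}$ (controlled by the fixed discriminant of $(e_i)$), this yields a uniform bound $|b_{i,n}|_{B_n} \le M'$. Choosing $k \ge 0$ with $|\pi|^k M' \le 1$ forces $\pi^k b_i \in \varprojlim_n B_n^0 = B_\infty^0 = g^{-1/p^\infty}B^0 \subseteq B[1/g]$ by Theorem~\ref{Riem}(b); since $\pi$ is a unit in $B[1/g]$ (as $B$ is a $K$-algebra), $b_i \in B[1/g]$ and hence $x \in C$.

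Finally, to check $x$ is integral over $g^{-1/p^\infty}B^0$: the characteristic polynomials $\chi_n(T) \in B_n[T]$ of multiplication by $x_n$ on $C_n$ are compatible under base change, assembling to a monic $\chi(T) \in B_\infty[T]$ of degree $r$ with $\chi(x) = 0$. A similar uniform boundedness argument on the coefficients of $\chi_n$ (alternating traces of wedge powers of $m_{x_n}$)---or equivalently invoking the almost purity theorem~\ref{FalSch} to see that $C_n^0$ is almost finite étale over $B_n^0$, so the traces land almost in $B_n^0$---places $\chi$ in $B_\infty^0[T] = g^{-1/p^\infty}B^0[T]$, exhibiting $x \in \tilde C^0$. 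The main obstacle is securing the uniform-in-$n$ control of these trace computations, which rests on the contractivity of the Fréchet transitions $C_{n+1} \to C_n$ and on the fact that the basis $(e_i)$ and its discriminant are fixed elements of $C$ and $B[1/g]$.
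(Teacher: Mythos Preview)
Your easy inclusion $\tilde C^0 \subseteq C_\infty^0$ and your characteristic-polynomial argument for integrality over $B_\infty^0$ are correct and essentially match the paper's Lemma~\ref{key} and Lemma~\ref{charpol}. The gap is in your argument for $C_\infty^0 \subseteq C$.

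You claim that contractivity of the transitions $C_{n+1}\to C_n$ yields a uniform bound $|e_i^*|_{C_n}\le M$. But contractivity gives the inequality in the \emph{wrong} direction: it says $|e_i^*|_{C_n}\le |e_i^*|_{C_{n+1}}$, so the sequence of norms is \emph{increasing} in $n$, and there is no reason it should be bounded. Concretely, the dual basis vectors $e_i^*\in C$ typically involve $g^{-1}$ (they live over $B[1/g]$, not over $B$), and on the domain $\{|g|\ge |\pi|^n\}$ one has $|g^{-1}|_{B_n}=|\pi|^{-n}\to\infty$. So $|e_i^*|_{C_n}$ will in general blow up like $|\pi|^{-cn}$, and your bound $|b_{i,n}|_{B_n}\le M'$ collapses. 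The Zariski-localization step is also not justified (why do $\tilde C^0$ and $C_\infty^0$ sheafify correctly?), but this is secondary to the norm issue.

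The paper sidesteps all analytic control by a purely algebraic argument (Lemma~\ref{equal}): it uses that $B[1/g]$ is integrally closed in $B_\infty$ (part of Theorem~\ref{Riem}), together with the fact that for an \'etale extension $B[1/g]\to C$ the formation of integral closure commutes with base change, to obtain
\[
{\rm fi}(C,\, C\otimes_{B[1/g]} B_\infty)=C\otimes_{B[1/g]}{\rm fi}(B[1/g],B_\infty)=C.
\]
Combined with $C_\infty^0={\rm fi}(B_\infty^0,C_\infty)$ and $B_\infty^0[1/(pg)]=B[1/g]$, this gives $C_\infty^0[1/(pg)]=C$ and then $\tilde C^0={\rm fi}(B_\infty^0,C)=C_\infty^0$. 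No uniform-in-$n$ estimate is needed.
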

    
    Ainsi $\tilde{C}\coloneqq \tilde{C}^0[\frac{1}{p}]$ s'identifie à la limite uniforme 
    des algèbres de Banach uniformes $C_n$ et $(\tilde{C})^0=\tilde{C}^0$.
        La preuve de la proposition demande quelques préliminaires. 
   
   \begin{lemm}\label{key}
   On a   $C_{\infty}^0={\rm fi}(B_{\infty}^0, C_{\infty})$.
   \end{lemm}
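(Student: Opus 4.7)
Le plan consiste à prouver les deux inclusions. L'inclusion ${\rm fi}(B_\infty^0, C_\infty) \subseteq C_\infty^0$ est relativement directe, via les semi-normes spectrales: si $f = (f_n)_n \in C_\infty$ satisfait une relation unitaire $f^d + a_{d-1} f^{d-1} + \cdots + a_0 = 0$ avec $a_i = (a_{i,n})_n \in B_\infty^0$, chaque composante $f_n \in C_n$ vérifie la relation correspondante à coefficients $a_{i,n} \in B_n^0$. En un point $x \in \mathcal{M}(C_n)$ au-dessus de $y \in \mathcal{M}(B_n)$ on a $|a_{i,n}(x)| = |a_{i,n}(y)| \leq 1$, et l'argument non-archimédien standard force alors $|f_n(x)| \leq 1$. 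Comme $C_n \in {\rm Perf}_K$ (théorème~\ref{oplus}) est uniforme, cela donne $f_n \in C_n^0$ par la caractérisation~$(3)$, et donc $f \in C_\infty^0$.

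Pour l'inclusion inverse, on construit explicitement une équation unitaire à coefficients dans $B_\infty^0$ via les polynômes caractéristiques. Puisque $C$ est finie étale sur $B[\frac{1}{g}]$, chaque $C_n$ est un $B_n$-module fini localement libre, et la multiplication par $f_n$ est un endomorphisme $B_n$-linéaire de $C_n$ dont le polynôme caractéristique $P_n(X) \in B_n[X]$ est unitaire (de degré égal au rang) et vérifie $P_n(f_n) = 0$ par Cayley--Hamilton. Les coefficients de $P_n$ appartiennent à $B_n^0$: localement, là où $C_n$ est libre de rang $d$, ces coefficients sont les fonctions symétriques élémentaires des valeurs $f_n(x)$ aux $d$ points de $\mathcal{M}(C_n)$ au-dessus de $y \in \mathcal{M}(B_n)$; l'inégalité $|f_n(x)| \leq 1$ (conséquence de $f_n \in C_n^0$) fournit une semi-norme spectrale $\leq 1$ pour chaque coefficient sur $\mathcal{M}(B_n)$, d'où l'appartenance à $B_n^0$ par~$(3)$.

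Pour assembler, on observe que la transition $C_{n+1} \to C_n$ coïncide avec le changement de base algébrique usuel $C_{n+1} \otimes_{B_{n+1}} B_n \to C_n$ (le $B_{n+1}$-module $C_{n+1}$ étant fini, produit tensoriel complété et produit tensoriel usuel coïncident). Le polynôme caractéristique étant compatible au changement de base, l'image de $P_{n+1}$ dans $B_n^0[X]$ est $P_n$, et la famille $(P_n)_n$ définit un polynôme unitaire $P \in \varprojlim_n B_n^0[X] = B_\infty^0[X]$ tel que $P(f) = 0$ dans $C_\infty$. Ainsi $f$ est entier sur $B_\infty^0$.

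Le point délicat, lorsque le rang de $C$ sur $B[\frac{1}{g}]$ n'est pas globalement constant, est d'assurer la compatibilité des $P_n$ d'un niveau à l'autre: la fonction rang étant localement constante et se tirant en arrière de manière cohérente le long de la tour, on se ramène en localisant sur les strates (ouvertes-fermées) de rang constant, ce qui rend légitime l'identification $\varprojlim_n B_n^0[X]_{\leq d} = B_\infty^0[X]_{\leq d}$ pour chaque valeur du rang.
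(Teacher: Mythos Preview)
Votre preuve est correcte et suit la même démarche que le papier: uniformité de $C_n$ pour l'inclusion ${\rm fi}(B_\infty^0,C_\infty)\subset C_\infty^0$, polynômes caractéristiques et Cayley--Hamilton pour la réciproque (le papier encapsule l'appartenance des coefficients à $B_n^0$ dans le lemme~\ref{charpol}, qui rectifie au passage votre description un peu rapide des coefficients comme \og{}valeurs aux $d$ points au-dessus de $y$\fg{} --- il peut y avoir moins de $d$ points et il faut passer par les conjugués dans une clôture algébrique du corps résiduel). Une correction de référence: $C_n\in{\rm Perf}_K$ découle du théorème de presque pureté~\ref{FalSch} appliqué à l'extension finie étale $B_n\to C_n$ (cf.\ remarque~\ref{almostpure}), et non du théorème~\ref{oplus} qui ne concerne que $B_n$.
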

   
   \begin{proof} Si $x=(x_n)_{n\geq 0}\in {\rm fi}(B_{\infty}^0, C_{\infty})$, alors $x_n$ est entier sur $B_n^0$, donc ses puissances restent dans un 
   sous $C_n^0$-module de type fini de $C_n$, et  
    $x_n\in C_n^0$ car $C_n$ est uniforme. Ainsi $x\in C_{\infty}^0$. 
  Pour l'autre inclusion, soit $x=(x_n)_{n\geq 0}\in C_{\infty}^0$ et soit $\chi_n\in B_n[X]$ le polynôme caractéristique de la multiplication par $x_n$ dans $C_n$. Puisque $C_{n+1}\otimes_{B_{n+1}} B_n\simeq C_n$, il existe $\chi\in B_{\infty}[X]$ induisant tous les 
  $\chi_n$. Par Cayley--Hamilton, on a $\chi_n(x_n)=0$ pour tout $n$, donc 
  $\chi(x)=0$. Le lemme~\ref{charpol} ci-dessous montre que  $\chi_n\in B_n^0[X]$ pour tout $n$, donc  
  $\chi\in B_{\infty}^0[X]$ et $x\in {\rm fi}(B_{\infty}^0, C_{\infty})$, ce qui finit la preuve.
           \end{proof}

  \begin{lemm}\label{charpol}
  Soit $R\to S$ un morphisme fini étale de $K$-algèbres de Banach uniformes. Pour tout 
  $s\in S^0$ le polynôme caractéristique de $s$ est à coefficients dans $R^0$.
  \end{lemm}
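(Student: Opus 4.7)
Le plan est de tester l'appartenance à $R^0$ point par point sur le spectre de Berkovich, en exploitant le fait qu'un morphisme étale a un comportement essentiellement galoisien au niveau des fibres.

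Rappelons (formule $(3)$) que $R^0 = \{f \in R : |f(x)| \leq 1 \text{ pour tout } x \in \mathcal{M}(R)\}$. Écrivons $\chi_s(X) = X^n + a_{n-1} X^{n-1} + \cdots + a_0 \in R[X]$. Il s'agit donc de montrer que $|a_i(x)| \leq 1$ pour tout $x \in \mathcal{M}(R)$ et tout $i$. Pour cela, fixons $x \in \mathcal{M}(R)$ et notons $\mathcal{H}(x)$ le corps résiduel complété en $x$, de sorte que l'évaluation $R \to \mathcal{H}(x)$ est un morphisme continu de $K$-algèbres de Banach. Comme $\chi_s$ se comporte bien par changement de base, le polynôme $\chi_s(x) \in \mathcal{H}(x)[X]$ n'est autre que le polynôme caractéristique de la multiplication par $s \otimes 1$ dans la $\mathcal{H}(x)$-algèbre $S_x \coloneqq S \otimes_R \mathcal{H}(x)$ (ou plutôt son complété, mais ils coïncident car $S$ est finie projective sur $R$).

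Puisque $R \to S$ est fini étale, $S_x$ est une $\mathcal{H}(x)$-algèbre finie étale, donc se décompose en un produit fini $S_x \simeq \prod_j L_j$ de corps, chaque $L_j$ étant une extension finie séparable de $\mathcal{H}(x)$. Chacun de ces $L_j$ est naturellement un corps non archimédien: la valuation non archimédienne de $\mathcal{H}(x)$ s'étend de manière unique à chaque $L_j$ (théorème d'extension des valuations sur un corps complet). L'inclusion $S \to S_x \to L_j$ définit une semi-norme multiplicative sur $S$ dominée par $\lvert\cdot\rvert$, donc un point $y_j \in \mathcal{M}(S)$ au-dessus de $x$. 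Comme $s \in S^0$ et $S$ est uniforme, $|s(y_j)| \leq 1$ pour tout $j$, ce qui signifie que l'image $s_j$ de $s$ dans $L_j$ est de valeur absolue $\leq 1$.

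Maintenant, sur une clôture algébrique $\overline{\mathcal{H}(x)}$, le polynôme caractéristique se factorise: $\chi_s(x)(X) = \prod_j \prod_{\sigma : L_j \hookrightarrow \overline{\mathcal{H}(x)}} (X - \sigma(s_j))$. Pour chaque plongement $\sigma$, la valeur absolue $|\sigma(s_j)|$ (pour l'unique prolongement à $\overline{\mathcal{H}(x)}$) est égale à $|s_j|_{L_j} \leq 1$ par unicité du prolongement de la valuation. Ainsi toutes les racines de $\chi_s(x)$ dans $\overline{\mathcal{H}(x)}$ sont de valeur absolue $\leq 1$, et par la formule exprimant les fonctions symétriques élémentaires en les racines (et le caractère non archimédien de la valuation), on obtient $|a_i(x)| \leq 1$ pour tout $i$, ce qui conclut.

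Le point délicat à ne pas bâcler sera l'étape de \emph{transfert} de la condition $|s(y_j)| \leq 1$ (semi-norme sur $S$ provenant du point $y_j \in \mathcal{M}(S)$) à la condition $|s_j|_{L_j} \leq 1$ (valeur absolue effective dans le corps $L_j$). Cela repose sur le fait standard qu'un morphisme continu d'une algèbre de Banach vers un corps non archimédien complet dont la composition avec la valeur absolue est multiplicative définit un point du spectre de Berkovich, et réciproquement tout point de $\mathcal{M}(S)$ se factorise par un tel corps résiduel (ici $L_j$). Une fois ce dictionnaire établi, la majoration des coefficients à partir de celle des racines est un classique des valuations non archimédiennes.
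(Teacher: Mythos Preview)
Votre preuve est correcte et suit la même stratégie que celle du texte: on se ramène, via un point $x \in \mathcal{M}(R)$ et son corps résiduel complété $\mathcal{H}(x)$, au cas où $R$ est un corps valué complet, $S$ devenant alors un produit fini d'extensions finies. Le texte se contente à ce stade d'un laconique \og le résultat s'en déduit\fg{}, là où vous explicitez l'argument via les conjugués de $s$ et les fonctions symétriques élémentaires des racines.
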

  
  \begin{proof} Soient $a_0,\ldots, a_n$ les coefficients de ce polynôme. 
  On veut montrer que $\max_{0\leq i\leq n} |a_i|_{\rm sp}\leq 1$, ou encore (cf.\ \S~\ref{algunif}) 
  que $\max_{0\leq i\leq n} |a_i(x)|\leq 1$ pour tout $x\in \mathcal{M}(S)$. 
  Si $L$ est le corps résiduel complété de $x$, 
  on veut montrer que les $a_i(x)$ appartiennent à $L^0$. 
  On peut remplacer $R\to S$ par $L\to S\otimes_R L$ et donc supposer que 
  $R=L$, auquel cas  
  $S$ est un produit d'extensions finies de $R$; le r\'esultat s'en d\'eduit.
  \end{proof}

           \begin{lemm}\label{equal}
           On a $C=C_{\infty}^0[\frac{1}{pg}]$ à l'intérieur de 
           $C_{\infty}$.
                  \end{lemm}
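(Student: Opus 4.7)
Mon plan est de prouver les deux inclusions séparément. Pour $C \subset C_\infty^0[\frac{1}{pg}]$, je procéderais par un chassage standard des dénominateurs. Comme $C$ est fini sur $R \coloneqq B[\frac{1}{g}] = B^0[\frac{1}{pg}]$ (utilisant $\pi^p K^0 = pK^0$ pour voir $B^0[\frac{1}{p}] = B^0[\frac{1}{\pi}] = B$), tout $c \in C$ satisfait une équation monique $c^n + a_{n-1} c^{n-1} + \cdots + a_0 = 0$ avec $a_i \in R$. En choisissant $\lambda \coloneqq p^k g^\ell \in B^0$ assez grand, on rend tous les $\lambda a_i$ dans $B^0$; la substitution usuelle $y = \lambda c$ transforme l'équation en une équation monique pour $y$ à coefficients $\lambda^i a_{n-i} \in B^0$ (pour $i\geq 1$, puisque $\lambda \in B^0$), montrant que $\lambda c$ est entier sur $B^0 \subset B_\infty^0$. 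Par le lemme~\ref{key}, $\lambda c \in {\rm fi}(B_\infty^0, C_\infty) = C_\infty^0$, d'où $c \in C_\infty^0[\frac{1}{pg}]$.

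Pour l'inclusion inverse, puisque $\frac{1}{pg} \in C$, il suffit d'établir $C_\infty^0 \subset C$, et ici je m'appuierais sur l'accouplement de trace. Comme $C$ est finie étale sur $R$, la trace ${\rm Tr}\colon C \otimes_R C \to R$ est un accouplement parfait, fournissant $C \simeq {\rm Hom}_R(C, R)$; par changement de base le long de l'injection $R \hookrightarrow B_\infty$ (théorème~\ref{Riem} a)), l'accouplement induit sur $C_\infty/B_\infty$ reste parfait, et le lemme~\ref{charpol} (appliqué à chaque $C_n/B_n$ et passé à la limite projective) assure que ${\rm Tr}$ envoie $C_\infty^0$ dans $B_\infty^0$. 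Fixant $x \in C_\infty^0$ et $c \in C$ quelconque, la première étape donne $\lambda = p^k g^\ell$ avec $\lambda c \in C_\infty^0$; par conséquent $x \lambda c \in C_\infty^0$ (c'est un sous-anneau) et ${\rm Tr}(x \lambda c) \in B_\infty^0$. En divisant par $\lambda$ et en invoquant l'identification $B_\infty^0[\frac{1}{pg}] = B[\frac{1}{g}] = R$ (conséquence directe de $B_\infty^0 = g^{-1/p^\infty} B^0 \subset B[\frac{1}{g}]$ via le théorème~\ref{Riem} b)), on obtient ${\rm Tr}(xc) \in R$.

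La fonctionnelle $R$-linéaire $c \mapsto {\rm Tr}(xc)$ sur $C$ appartient donc à ${\rm Hom}_R(C, R) \simeq C$, et correspond à un unique $y \in C$ vérifiant ${\rm Tr}((x-y)c) = 0$ pour tout $c \in C$. En étendant par $B_\infty$-linéarité dans la deuxième variable et en utilisant l'isomorphisme $C_\infty = C \otimes_R B_\infty$ (remarque~\ref{almostpure}), cette annulation se propage à $C_\infty$ tout entier, et la non-dégénérescence de la trace sur l'extension finie étale $C_\infty/B_\infty$ force $x = y \in C$. L'obstacle principal est essentiellement bureaucratique: il s'agit de vérifier proprement les identifications $B_\infty^0[\frac{1}{pg}] = R$ et ${\rm Tr}(C_\infty^0) \subset B_\infty^0$, toutes deux conséquences des résultats préliminaires (lemmes~\ref{key} et~\ref{charpol}, théorème~\ref{Riem}); une fois ces points acquis, l'argument de trace rend l'inclusion quasi formelle.
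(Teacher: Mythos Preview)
Your proof is correct, and for the first inclusion it coincides with the paper's argument. For the reverse inclusion, however, you take a genuinely different route. The paper argues as follows: since $C_\infty^0 = {\rm fi}(B_\infty^0, C_\infty)$ (lemme~\ref{key}) and $B_\infty^0[\frac{1}{pg}] = B[\frac{1}{g}]$ (théorème~\ref{Riem}), one has $C_\infty^0[\frac{1}{pg}] \subset {\rm fi}(B[\frac{1}{g}], C_\infty) = {\rm fi}(C, C_\infty)$; then the general fact that integral closure commutes with étale base change gives ${\rm fi}(C, C \otimes_{B[1/g]} B_\infty) = C \otimes_{B[1/g]} {\rm fi}(B[\frac{1}{g}], B_\infty)$, and the latter equals $C$ because $B[\frac{1}{g}]$ is integrally closed in $B_\infty$ (théorème~\ref{Riem}). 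Your trace argument is essentially a direct, hands-on verification of this commutation in the present situation: the perfect trace pairing on a finite étale extension is precisely what makes the étale integral-closure statement work, and you exploit it explicitly rather than invoking the abstract result. The paper's approach is more conceptual and identifies the two ingredients (étaleness of $C/B[\frac{1}{g}]$ and the Riemann extension theorem for $B$) more cleanly; yours is more self-contained and avoids citing the commutation lemma, at the cost of a slightly longer computation.
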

   
        \begin{proof}
        Notons $T=C_{\infty}^0[\frac{1}{pg}]$. Comme $C$ est entier sur $B[\frac{1}{g}]=B^0[\frac{1}{pg}]$, pour tout $x\in C$ il existe 
        $n\geq 0$ tel que $(pg)^nx$ soit entier sur $B^0\subset B_{\infty}^0$, et donc dans ${\rm fi}(B_{\infty}^0, C_{\infty})=C_{\infty}^0$ (lemme~\ref{key}). Ainsi 
        $C\subset T$.
             Montrons que $T\subset C$.        
        Par le lemme~\ref{key} et l'isomorphisme  (th\'eor\`eme~\ref{Riem}) $B[\frac{1}{g}]\simeq B_{\infty}^0[\frac{1}{pg}]$ on a 
        $T\subset {\rm fi}(B[\frac{1}{g}], C\otimes_{B[\frac{1}{g}]} B_{\infty})$. Puisque le morphisme $B[\frac{1}{g}]\to C$ est entier on a 
        ${\rm fi}(B[\frac{1}{g}], C\otimes_{B[\frac{1}{g}]} B_{\infty})={\rm fi}(C, C\otimes_{B[\frac{1}{g}]} B_{\infty})$. Comme 
        $B[\frac{1}{g}]\to C$ est étale, le morphisme naturel $ C\otimes_{B[\frac{1}{g}]} {\rm fi}(B[\frac{1}{g}], B_{\infty})\to {\rm fi}(C, C\otimes_{B[\frac{1}{g}]} B_{\infty})$ est un isomorphisme. 
        Enfin, ${\rm fi}(B[\frac{1}{g}], B_{\infty})=B[\frac{1}{g}]$ par le théorème~\ref{Riem}, donc $T\subset C$. 
        \end{proof}
        
        On finit la preuve de la proposition~\ref{trickyyy} en utilisant les lemmes~\ref{key} et~\ref{equal} et l'égalité (dans $B_{\infty}$, cf.\ th\'eor\`eme~\ref{Riem}) $g^{-1/p^{\infty}}B^0=B_{\infty}^0$: 
        $$\tilde{C}^0={\rm fi}(g^{-1/p^{\infty}} B^0, C)={\rm fi}(B_{\infty}^0, C_{\infty}^0[\frac{1}{pg}])=C_{\infty}^0.$$
           
         Le lemme~\ref{equal} a une autre conséquence importante: 
        
        \begin{coro}\label{loc}
        Le morphisme $f\colon  C_{\infty}^0[\frac{1}{p}]\widehat{\otimes}_B B_n\to C_n$ induit par la projection canonique $C_{\infty}^0[\frac{1}{p}]\to C_n$ est un isomorphisme.
        \end{coro}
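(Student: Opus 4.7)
Mon plan repose sur l'observation clé que $g$ est inversible dans $B_n$. En effet, $\mathcal{M}(B_n)$ est le lieu $\{|g|\geq |\pi^n|\}\subset \mathcal{M}(B)$, donc $|g(x)|\geq |\pi(x)|^n>0$ pour tout $x\in \mathcal{M}(B_n)$; en particulier $|g^{-1}|_{\rm sp}\leq |\pi|^{-n}$, et $g^{-1}\in B_n$. Une fois ce point acquis, l'isomorphisme cherché devrait découler formellement de la relation $C=\tilde{C}[1/g]$ fournie par le lemme~\ref{equal} (où $\tilde{C}\coloneqq C_{\infty}^0[\frac{1}{p}]$), puisque l'inversion de $g$ du côté $\tilde{C}$ devient redondante après produit tensoriel avec $B_n$.

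Voici la cascade d'identifications que j'écrirais. Par le lemme~\ref{equal}, l'inclusion $\tilde{C}\subset C$ devient un isomorphisme après inversion de $g$. Le produit tensoriel algébrique $\tilde{C}\otimes_B B_n$ étant naturellement une $B_n$-algèbre (et $g$ étant déjà inversible dans $B_n$), cet anneau coïncide avec sa localisation en $g$, d'où des isomorphismes canoniques
\[
\tilde{C}\otimes_B B_n \simeq (\tilde{C}\otimes_B B_n)\Bigl[\frac{1}{g}\Bigr] \simeq \tilde{C}\Bigl[\frac{1}{g}\Bigr]\otimes_B B_n = C\otimes_B B_n.
\]
Puisque $g$ est inversible dans $B_n$, la flèche $B\to B_n$ se factorise par $B[\frac{1}{g}]$, ce qui donne $C\otimes_B B_n \simeq C\otimes_{B[\frac{1}{g}]} B_n = C_n$.

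Il reste à vérifier que le produit tensoriel complété coïncide avec le produit tensoriel algébrique, c'est-à-dire que la flèche $\tilde{C}\otimes_B B_n\to \tilde{C}\widehat{\otimes}_B B_n$ est déjà un isomorphisme. Mais $C$ est finie étale sur $B[\frac{1}{g}]$, donc $C_n=C\otimes_{B[\frac{1}{g}]} B_n$ est un $B_n$-module projectif de type fini, qui est automatiquement une $B_n$-algèbre de Banach complète (pour la topologie induite); en particulier elle est $p$-complète (et $\pi$-complète), et donc le produit tensoriel algébrique n'a plus besoin d'être complété. On obtient ainsi $\tilde{C}\widehat{\otimes}_B B_n\simeq C_n$, et il est immédiat que cet isomorphisme coïncide avec le morphisme $f$ induit par la projection $C_{\infty}^0\to C_n^0$.

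Le seul point qui demande un peu de soin est d'être sûr que la notation $\widehat{\otimes}_B$ dans l'énoncé fait bien référence à la complétion $p$-adique (ou $\pi$-adique) du produit tensoriel algébrique, et que la structure de $B_n$-module de Banach sur $C_n$ (héritée de la finitude étale) coïncide avec celle qu'elle hérite comme algèbre perfectoïde (remarque~\ref{almostpure}); ce dernier point, déjà implicite dans la construction des $C_n$, garantit que les deux structures analytiques sont compatibles et rend l'argument complet.
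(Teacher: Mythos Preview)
Votre approche est essentiellement celle de l'article : les deux preuves établissent d'abord l'isomorphisme algébrique $C_{\infty}^0[\frac{1}{p}]\otimes_B B_n \simeq C_n$ via le lemme~\ref{equal} et l'inversibilité de $g$ dans $B_n$, exactement comme vous le faites. La différence réside dans le passage du produit tensoriel algébrique au produit tensoriel complété, et c'est là que votre argument présente une lacune.

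Vous écrivez que, puisque $C_n$ est une $B_n$-algèbre de Banach complète (en tant que $B_n$-module projectif de type fini), \og le produit tensoriel algébrique n'a plus besoin d'être complété\fg{}. Mais $\widehat{\otimes}_B$ désigne ici le produit tensoriel complété dans ${\rm Ban}_K$ (où $p$ est inversible), et non la complétion $p$-adique --- votre hésitation finale sur ce point est précisément le symptôme du problème. La complétion est prise pour la semi-norme tensorielle sur $C_{\infty}^0[\frac{1}{p}]\otimes_B B_n$, et le fait que l'anneau sous-jacent porte \emph{une certaine} norme complète (celle de $C_n$) ne garantit pas que la semi-norme tensorielle soit elle-même complète : il faudrait comparer ces deux topologies via votre identification algébrique, ce que vous ne faites pas. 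L'article contourne cette comparaison : il utilise l'isomorphisme algébrique pour produire un morphisme $h\colon C_n\to C_{\infty}^0[\frac{1}{p}]\widehat{\otimes}_B B_n$, observe que $h$ est automatiquement continu puisque $C_n$ est projectif de type fini sur $B_n$, vérifie $f\circ h = {\rm id}$ directement, puis conclut que $h\circ f = {\rm id}$ car cette égalité est vraie sur l'image dense du produit tensoriel algébrique et les deux applications sont continues. C'est cet argument de densité et continuité qui manque à votre preuve.
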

        
        \begin{proof}
         On a (lemme~\ref{equal}) $$C_n=C\otimes_{B[\frac{1}{g}]} B_n= C_{\infty}^0[\frac{1}{pg}]\otimes_{B[\frac{1}{g}]} B_n\simeq C_{\infty}^0[\frac{1}{p}]\otimes_B B_n,$$ d'où un morphisme $h\colon  C_n\to C_{\infty}^0[\frac{1}{p}]\widehat{\otimes}_B B_n$ de $B_n$-modules (automatiquement continu puisque $C_n$~est projectif de type fini sur~$B_n$), tel que  $f\circ h$ soit l'identité. 
              On vérifie que $h\circ f$ est l'identité sur 
      l'image de $C_{\infty}^0[\frac{1}{p}]\otimes_B B_n$ dans $ C_{\infty}^0[\frac{1}{p}]\widehat{\otimes}_B B_n$, donc aussi sur 
      $ C_{\infty}^0[\frac{1}{p}]\widehat{\otimes}_B B_n$ par continuité et densité. Donc $f$~est un 
      isomorphisme.
\end{proof}
      
      \subsubsection{Etude du système projectif $\{C_n^0/\pi^d\}$ et fin de la preuve}
      La différence entre les limites dans  
      ${\rm Ban}^u_K$ et celles dans 
      ${\rm Perf}_K$ (remarque~\ref{perfcat}) se fera pleinement sentir dans ce paragraphe.

        Rappelons que $\xi$ est un générateur distingué de $\ker(\theta_{K^0})$ et que 
          $\pi$ est l'image de $[\xi(0)^{1/p}]$. En posant 
           $R_n=(C_n^0)^{\flat}$, 
           on obtient (th\'eor\`eme~\ref{tilt}) des isomorphismes $\theta_{C_n^0}\colon  C_n^0\simeq W(R_n)/\xi$ compatibles avec la variation de $n$. Comme $C_n^0$ est sans 
           $p$-torsion, les $R_n$ n'ont pas de $\xi(0)$-torsion, donc 
            $R=\varprojlim_{n} R_n\simeq (C_{\infty}^0)^{\flat}\in {\rm Perf}_{\mathbf{F}_p}$ n'en a pas non plus et  
            $S\coloneqq W(R)/\xi$ est une $K^0$-algèbre perfectoïde plate. 
      
      \begin{prop}\label{pain}
     Les morphismes naturels
       $C_{\infty}^0/\pi\to \varprojlim_{n} (C_n^0/\pi)$ et $S\to C_{\infty}^0$
      sont des presque isomorphismes dans le cadre $(\pi g)^{1/p^{\infty}}$, et $S\to C_{\infty}^0$ est injectif.
            \end{prop}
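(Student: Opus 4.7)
My plan is to handle the three assertions (the two almost isomorphisms and the injectivity) in sequence, establishing the mod-$\pi$ comparison first and lifting from there.

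\emph{Step 1: the almost isomorphism $C_\infty^0/\pi \to \varprojlim_n C_n^0/\pi$.} Each $C_n^0$ is $\pi$-torsion-free, being the unit ball of a uniform Banach algebra over $K$ (Proposition~\ref{unif}). Applying $\varprojlim_n$ to the short exact sequence $0 \to C_n^0 \xrightarrow{\pi} C_n^0 \to C_n^0/\pi \to 0$ yields a six-term sequence in which $C_\infty^0/\pi \to \varprojlim_n C_n^0/\pi$ is automatically injective with cokernel $(\varprojlim^1_n C_n^0)[\pi]$. It suffices to prove that $\{C_n^0\}$ is \emph{presque Mittag-Leffler} in the $(\pi g)^{1/p^\infty}$ framework, i.e., that for every $n$ and every $k \geq 0$ there exists $m \geq n$ such that $(\pi g)^{1/p^k}$ kills the image of $C_{m'}^0 \to C_n^0$ modulo the image of $C_m^0 \to C_n^0$ for all $m' \geq m$. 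The inputs are Theorem~\ref{bhatt} (giving the analogous statement for $\{B_n^0/\pi\}$) and the almost base-change $C_{n+1}^0 \widehat{\otimes}_{B_{n+1}^0} B_n^0 \simeq C_n^0$ from Remark~\ref{almostpure}b. Almost purity (Theorem~\ref{FalSch}) renders $C_n^0$ presque fini projectif over $B_n^0$, so this base change is compatible with images, transferring the Mittag-Leffler property from the $B$-side to the $C$-side.

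\emph{Step 2: $S \to C_\infty^0$ is an almost isomorphism, and injective.} The $p$-analogue of Step~1 (obtained by dévissage, using $\pi^p K^0 = pK^0$) gives $R = (C_\infty^0)^\flat \simeq \varprojlim_n R_n$, and the map $\theta\colon S = W(R)/\xi \to C_\infty^0$ then arises from Proposition~\ref{adj} applied to the $p$-complete ring $C_\infty^0$. Modulo $\pi$, one has $S/\pi \simeq R/\xi(0)^{1/p}$ (since $\pi = [\xi(0)^{1/p}]^\sharp$, cf.\ Example~\ref{experf}) and, by Step~1 combined with the individual identifications $C_n^0/\pi \simeq R_n/\xi(0)^{1/p}$, an almost isomorphism $C_\infty^0/\pi \simeq \varprojlim_n R_n/\xi(0)^{1/p}$. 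The mod-$\pi$ comparison thus reduces to $R/\xi(0)^{1/p} \to \varprojlim_n R_n/\xi(0)^{1/p}$ being an almost isomorphism, which is the tilt of Step~1 applied to the perfect $\mathbf{F}_p$-algebras $R_n/\xi(0)^{1/p}$. Both $S$ and $C_\infty^0$ are $\pi$-complete ($S$ by Proposition~\ref{pcomplet}, $C_\infty^0$ as a limit of $\pi$-complete rings), so the mod-$\pi$ almost isomorphism lifts to a full almost isomorphism by dévissage. For injectivity, let $K = \ker\theta$; the $\pi$-torsion-freeness of $S$ forces $K \cap \pi^n S = \pi^n K$, and the mod-$\pi$ almost iso gives $(\pi g)^{1/p^j} K \subset \pi K$ for every $j$. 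Iterating $p^j$ times yields $\pi g \cdot K \subset \pi^{p^j} S$, so $\pi$-adic separatedness of $S$ forces $gK = 0$. Reducedness of $S$ (Proposition~\ref{reduced}) combined with the $p$-puissance of $g$ then yields $(g^{p^{-\infty}}) K = 0$, and a concluding check against the structure $R = \varprojlim R_n$ (noting that $g$ is invertible in each $C_n$, hence a non-zero-divisor in $C_n^0$ and in $C_\infty^0$) rules out this torsion, giving $K = 0$.

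\emph{Main obstacle.} The technical heart is the Mittag-Leffler transfer in Step~1: one must simultaneously control the almost base-change of Remark~\ref{almostpure}, the almost projectivity of $C_n^0$ over $B_n^0$ provided by almost purity, and the enlargement of the almost framework from $\pi^{1/p^\infty}$ (in which several inputs are naturally stated) to $(\pi g)^{1/p^\infty}$ (which is imposed by the localization geometry of $B_n = B\langle\pi^n/g\rangle$). Once Step~1 is established, Step~2 is an essentially formal unwinding of tilting, dévissage, and the algebraic structure of $S$.
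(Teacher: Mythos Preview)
Your architecture matches the paper's and you have correctly located the technical heart in the Mittag--Leffler transfer. There are, however, two genuine gaps.

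\textbf{Step 1.} You reduce to the almost Mittag--Leffler property for the \emph{integral} pro-system $\{C_n^0\}$, citing Theorem~\ref{bhatt}. But Theorem~\ref{bhatt} only controls $\{B_n^0/\pi^d\}$ for each fixed $d$, and the dévissage in its proof shows that the index $m$ one must go to for a given $(n,k)$ grows linearly with $d$. There is no way to upgrade ``almost ML modulo $\pi^d$ for every $d$'' to ``almost ML integrally'' without such uniformity, so $R^1\varprojlim_n C_n^0$ is out of reach by this route. The paper proceeds differently: it shows only that $\{C_n^0/\pi\}$ is almost ML (hence $R^1\varprojlim_n(C_n^0/\pi)$ is almost zero), deduces that each transition $\varprojlim_n(C_n^0/\pi^d)\to\varprojlim_n(C_n^0/\pi^{d-1})$ is almost surjective with cokernel embedding in the \emph{same} almost-zero module for all $d$, and then exploits the double inverse limit $C_\infty^0=\varprojlim_d\varprojlim_n(C_n^0/\pi^d)$ to reach $\varprojlim_n(C_n^0/\pi)$.

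\textbf{Step 2, injectivity.} Your argument correctly reaches $(g^{1/p^\infty})K=0$, but the last sentence does not close: the non-zero-divisor status of $g$ in $C_\infty^0$ says nothing about $g$-torsion in $S$, which is where $K$ lives, and you have not shown $S$ is $g$-torsion-free. The paper's route is shorter: modulo $\pi$, the map identifies (after composing with the injection $C_\infty^0/\pi\hookrightarrow\varprojlim_n(C_n^0/\pi)$) with the natural map $R/\xi(0)^{1/p}\to\varprojlim_n(R_n/\xi(0)^{1/p})$, which is injective simply because each $R_n$ has no $\xi(0)$-torsion; injectivity then lifts by $\pi$-torsion-freeness and $\pi$-separatedness of $S$. (Even more directly: since $W$ commutes with inverse limits and $\xi$ is a non-zero-divisor in each $W(R_n)$, the map $W(R)/\xi\to\varprojlim_n(W(R_n)/\xi)=C_\infty^0$ is visibly injective.)

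Finally, your ``tilt of Step~1'' is too compressed. The paper does not rerun Step~1 formally on the tilted side; it uses perfection of the $R_n$ to transport, via Frobenius, the almost surjectivity of $\varprojlim_n(C_n^0/\pi^p)\to\varprojlim_n(C_n^0/\pi)$ established in the first paragraph to the statement $\varprojlim_n(R_n/\xi(0))\to\varprojlim_n(R_n/\xi(0)^{1/p})$, and then iterates over powers of $\xi(0)$.
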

      
      \begin{proof} 
      Le morphisme naturel  
          $\varprojlim_{n} (C_n^0/\pi^d)\to \varprojlim_{n} (C_n^0/\pi^e)$ est presque surjectif pour tous $d>e$:
          par $K^0$-platitude des $C_n$ il suffit de voir que $R^1\varprojlim_{n} (C_n^0/\pi)$ est presque nul; or 
          le système projectif $\{B_n^0/\pi\}_{n\geq 1}$ est presque Mittag-Leffler\footnote{Un système projectif $\{M_n\}_{n\geq 1}$ de $B^0$-modules 
          est dit presque Mittag-Leffler si pour tous $k,n$ il existe $m\geq n$ tel que $(\pi g)^{1/p^k} {\rm Im}(M_m\to M_n)\subset {\rm Im}(M_{t}\to M_n)$ pour tout $t\geq m$.}
           (conséquence directe du théorème~\ref{bhatt}) et le presque isomorphisme 
          $C_{n+1}^0/\pi \otimes_{B_{n+1}^0/\pi} B_n^0/\pi\to C_n^0/\pi$ (remarque~\ref{almostpure}) montre qu'il en est de même du système projectif 
          $\{C_n^0/\pi\}_{n\geq 1}$.

         Ainsi les transitions du système projectif $\{\varprojlim_{n} (C_n^0/\pi^d)\}_{d\geq 1}$ sont presque surjectives, ayant pour conséquence la presque 
          surjectivité du morphisme $$C_{\infty}^0\simeq \varprojlim_{d} (\varprojlim_{n} (C_n^0/\pi^d))\to \varprojlim_{n} (C_n^0/\pi).$$ 
          Puisque $C_{\infty}^0/\pi\to\varprojlim_{n} (C_n^0/\pi)$ est clairement injectif, c'est un presque isomorphisme.

                              Pour montrer que $S\to C_{\infty}^0$ est injectif et un presque isomorphisme 
                               il suffit de le vérifier modulo $\pi$ (les 
                              deux algèbres étant plates sur 
          $K^0$ et $\pi$-complètes). Mais $S/\pi\simeq R/\xi(0)^{1/p}$, $C_n^0/\pi\simeq R_n/\xi(0)^{1/p}$ et nous avons vu que 
          $C_{\infty}^0/\pi\to \varprojlim_{n} (C_n^0/\pi)$ 
          est injectif et un presque isomorphisme. Il suffit donc de vérifier que  
          $R/\xi(0)^{1/p}\to \varprojlim_{n} (R_n/\xi(0)^{1/p})$ est injectif et un presque isomorphisme. L'injectivité est claire puisque les $R_n$ n'ont pas de 
          $\xi(0)^{1/p}$-torsion. Le fait que c'est un presque isomorphisme découle (comme ci-dessus, par
           $\xi(0)$-complétude des 
          $R_n$) de la presque surjectivité des applications canoniques $\varprojlim_{n} (R_n/\xi(0)^{p^k})\to \varprojlim_{n} (R_n/\xi(0)^{p^{k-1}})$. Celle-ci
         s'obtient en utilisant la perfection des $R_n$ et en appliquant successivement 
          Frobenius au morphisme presque surjectif 
          (cf.\ premier paragraphe) $\varprojlim_{n} (C_n^0/\pi^p)\to  \varprojlim_{n} (C_n^0/\pi)$, qui s'identifie à
          $\varprojlim_{n} (R_n/\xi(0))\to \varprojlim_{n} (R_n/\xi(0)^{1/p})$.                         
      \end{proof}
                  
      Posons $T=S[\frac{1}{p}]=S_*[\frac{1}{p}]\in {\rm Perf}_K$ (proposition~\ref{banperf}). La proposition~\ref{pain} fournit un presque isomorphisme 
      $T\simeq C_{\infty}^0[\frac{1}{p}]$. Comme $\pi g$ est inversible dans $B_n$, cela induit un isomorphisme 
      $T\widehat{\otimes}_B B_n\simeq C_{\infty}^0[\frac{1}{p}]\widehat{\otimes}_B B_n$, d'où (corollaire~\ref{loc}) un isomorphisme 
      $T\widehat{\otimes}_B B_n\simeq C_n$. 
      Puisque
      $T, B, B_n$ sont dans ${\rm Perf}_K$, le morphisme naturel 
      $T^0\widehat{\otimes}_{B^0} B_n^0\to (T\widehat{\otimes}_B B_n)^0$ est un presque isomorphisme (th\'eor\`eme~\ref{coprod}). 
      On obtient donc des presque isomorphismes, compatibles avec la variation de $n$ et $d$
      $$T^0/\pi^d\otimes_{B^0/\pi^d} B_n^0/\pi^d\simeq C_n^0/\pi^d.$$
      
      Par le théorème de presque pureté~\ref{FalSch}, la $B_n^0/\pi^d$-algèbre 
       $C_n^0/\pi^d$ est presque finie étale (et fidèlement plate, si $C$ l'est sur $B[\frac{1}{g}]$).
       Les isomorphismes ci-dessus combinés avec le th\'eor\`eme~\ref{bhatt} et le        
        lemme~\ref{descent} ci-dessous montrent que 
       $T^0/\pi^d$ est presque finie étale (et fidèlement plate, si $C$ l'est sur $B[\frac{1}{g}]$) sur $B^0/\pi^d$. Mais 
       $T^0=S_*$ est presque isomorphe à $S$, qui est presque isomorphe à $C_{\infty}^0$ (proposition~\ref{pain}), qui s'identifie enfin à 
       $\tilde{C}^0$ par la proposition~\ref{trickyyy}. Cela finit la preuve du théorème~\ref{downtownabbey}.
       
       Le lemme suivant a jou\'e un r\^ole important ci-dessus, permettant de transf\'erer les propri\'et\'es des $B_n^0/\pi^d$-alg\`ebres $C_n^0/\pi^d$ (fournies par le th\'eor\`eme de presque puret\'e) \`a la $B^0/\pi^d$-alg\`ebre $T^0/\pi^d$. On trouve dans la section $14.2$ de \textcite{GR} des r\'esultats beaucoup plus pr\'ecis et g\'en\'eraux que l'\'enonc\'e ci-dessous.
       
      \begin{lemm}\label{descent}
      Soit $\mathcal{P}\in \{ \text{plat, fidèlement plat, projectif, de type fini}\}$. Soit $M$ un 
    $B^0/\pi^d$-module et $M_n\coloneqq M\otimes_{B^0/\pi^d} B_n^0/\pi^d$. 
    
    a) Si le $B_n^0/\pi^d$-module $M_n$ est
      presque $\mathcal{P}$ pour tout $n$, alors 
      $M$ est presque $\mathcal{P}$. 
      
      b) Si $M$ est une $B^0/\pi^d$-algèbre et si $M_n$ est une 
       $B_n^0/\pi^d$-algèbre
      presque finie étale pour tout $n$, alors $M$ est presque finie étale sur $B^0/\pi^d$ (idem avec presque finie étale et fidèlement plate). 
      \end{lemm}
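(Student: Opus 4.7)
La stratégie sera d'exploiter le théorème~\ref{bhatt}, qui affirme que la morphisme de systèmes projectifs $\{B^0/\pi^d\}\to \{B_n^0/\pi^d\}_{n\geq 1}$ est un presque isomorphisme dans le cadre $(\pi g)^{1/p^{\infty}}$. Cela signifie que dans la catégorie des presque $B^0/\pi^d$-modules, le changement de base au niveau fini devient, à la limite projective, indistinguable de l'identité. On s'attend donc à ce que les propriétés module-théoriques qui se testent à chaque niveau $n$ se transfèrent au niveau $B^0/\pi^d$. Concrètement, pour chaque $k\geq 0$, on disposera d'un indice $n$ tel que $(\pi g)^{1/p^k}$ annule le noyau et le conoyau de $B^0/\pi^d\to B_n^0/\pi^d$, et d'une propriété Mittag-Leffler presque stricte pour les transitions.

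Je traiterais chaque propriété $\mathcal{P}$ comme suit. Pour le type fini: fixer $k$, choisir $n$ adapté au presque isomorphisme, exploiter le fait que $M_n$ est presque de type fini sur $B_n^0/\pi^d$ pour trouver un sous-module de type fini $N_n\subset M_n$ contenant $(\pi g)^{1/p^{k+1}}M_n$, relever les générateurs à $M$, et montrer que le sous-module engendré $N\subset M$ contient $(\pi g)^{1/p^k} M$ modulo des constantes. Pour la platitude: vérifier la presque nullité des $\mathrm{Tor}_i^{B^0/\pi^d}(M,N)$ pour $i>0$; un argument préliminaire (via le théorème~\ref{coprod} ou la méthode de la preuve du théorème~\ref{fflemma}) montre que $B_n^0/\pi^d$ est presque plate sur $B^0/\pi^d$, ce qui fournit une suite spectrale identifiant presque $\mathrm{Tor}_i^{B^0/\pi^d}(M,N)\otimes_{B^0/\pi^d} B_n^0/\pi^d$ et $\mathrm{Tor}_i^{B_n^0/\pi^d}(M_n, N_n)$, ce dernier étant presque nul par hypothèse; le presque isomorphisme de pro-systèmes force alors la presque nullité de $\mathrm{Tor}_i^{B^0/\pi^d}(M,N)$. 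Pour la fidèle platitude: si $M\otimes_{B^0/\pi^d} N$ est presque nul, son changement de base $M_n\otimes_{B_n^0/\pi^d} N_n$ l'est aussi, d'où la presque nullité de $N_n$ par presque fidèle platitude de $M_n$, et finalement celle de $N$ via le presque isomorphisme de pro-systèmes. Pour la projectivité, on utilise le critère équivalent $(\pi g)^{1/p^k}\cdot \mathrm{Ext}^1_{B^0/\pi^d}(M,-)=0$ pour tout $k$, que l'on descend par la suite spectrale Ext analogue.

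Pour la partie (b), une algèbre presque finie étale est caractérisée comme étant presque projective et presque de présentation finie comme module, et presque projective comme module sur l'algèbre diagonale $M\otimes_{B^0/\pi^d} M$ (via la multiplication); ces propriétés descendent toutes par la partie (a), compte tenu de l'isomorphisme $(M\otimes_{B^0/\pi^d} M)\otimes_{B^0/\pi^d} B_n^0/\pi^d\simeq M_n\otimes_{B_n^0/\pi^d} M_n$, et la variante fidèlement plate en découle. L'obstacle principal sera la manipulation rigoureuse des suites spectrales Tor et Ext dans le cadre presque, car le changement de base $B^0/\pi^d\to B_n^0/\pi^d$ n'est que presque plat et non plat; le cadre systématique développé dans la section~14.2 de \textcite{GR} permettra, si besoin, de gérer proprement cette difficulté, mais un argument direct exploitant la presque Mittag-Leffler-ité du pro-système $\{B_n^0/\pi^d\}$ devrait suffire ici.
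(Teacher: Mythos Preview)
Your proposal is correct and follows essentially the same approach as the paper: both rely on théorème~\ref{bhatt} (the pro-system $\{B_n^0/\pi^d\}$ is presque constant of value $B^0/\pi^d$) as the key input, and both defer the detailed verification to section~14.2 of \textcite{GR}. The paper phrases the argument more tersely as a ``conséquence formelle'' via ``l'interprétation catégorique (à la Yoneda) de $\mathcal{P}$'', while you spell out a case-by-case outline (Tor/Ext suites spectrales, relèvement de générateurs), but these are two presentations of the same strategy.

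One small imprecision: you write that for each $k$ there is an $n$ such that $(\pi g)^{1/p^k}$ annihilates the noyau \emph{et le conoyau} of $B^0/\pi^d\to B_n^0/\pi^d$. The kernel statement is fine, but for the cokernel the presque nullité of the pro-système only gives that $(\pi g)^{1/p^k}$ annihilates the \emph{image} of $\mathrm{coker}_m$ dans $\mathrm{coker}_n$ for $m\gg n$, not $\mathrm{coker}_n$ itself. This does not affect the argument (the Mittag-Leffler property you invoke handles it), but it is worth stating precisely.
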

      
      \begin{proof}
      Il s'agit d'une conséquence formelle du fait que le système projectif $\{B_n^0/\pi^d\}$ est \og presque constant de valeur $B^0/\pi^d$\fg{} (th\'eor\`eme~\ref{bhatt}) et de l'interprétation catégorique (à la Yoneda) de $\mathcal{P}$. Voir le th\'eor\`eme $14.2.39$ de \textcite{GR} pour les détails.
      \end{proof}

\section{Existence d'algèbres de Cohen--Macaulay}

 \emph{On fixe par la suite de cette section, qui est le coeur de l'expos\'e, un nombre premier $p$ (d'où une notion d'anneau perfectoïde). On suppose que la caractéristique résiduelle de tous les anneaux locaux noethériens rencontrés ci-dessous est égale à $p$. Pour éviter des longues listes d'épithètes, introduisons: }

 \begin{defi}\label{symp}
 La cat\'egorie\footnote{ ${\rm CLI}$ est l'abr\'eviation de \og complet local int\`egre \fg{}, l'indice~$p$
 indique la caract\'eristique r\'esiduelle, ainsi que le caract\`ere parfait du corps r\'esiduel.}
 ${\rm CLI}_p$ a pour objets les anneaux locaux noeth\'eriens complets, int\`egres, d'in\'egale 
 caractéristique et de corps résiduel parfait (de caractéristique~$p$), et pour morphismes les morphismes locaux d'anneaux locaux.
 \end{defi}

  Le but de cette section est d'expliquer la preuve du théorème suivant. Combiné avec 
  les travaux de \textcite{HH1}, il implique le théorème~\ref{existbig} de l'introduction.
    
  \begin{theo}\label{CMexistent}
  Pour tout $A\in {\rm CLI}_p$ il existe une $A$-algèbre de Cohen--Macaulay.
  \end{theo}


Voir les th\'eor\`emes~\ref{Shimo} et~\ref{weak} pour des r\'esultats bien plus forts et pr\'ecis. 
\textcite{Ma} a remarqué qu'une construction d'\textcite{AndreJAMS}
fournit une preuve relativement directe\footnote{Modulo le lemme de platitude d'André, qui est tout sauf une platitude!} du théorème ci-dessus, n'utilisant pas le difficile 
 lemme d'Abhyankar 
perfectoïde (comme dans \cite{AndreDSC}): on traite d'abord le cas d'un anneau régulier, en construisant dans ce cas une $A$-algèbre de Cohen--Macaulay \emph{perfectoïde} $C$ (ceci est standard), puis on \'ecrit l'anneau $A\in {\rm CLI}_p$ comme un quotient d'un anneau régulier $A_0$ et on fabrique à partir 
de $C$ et du lemme de platitude d'André une $A$-algèbre \emph{presque perfectoïde et presque de Cohen--Macaulay} $C'$ (cf.\ proposition~\ref{alCMexist} pour l'énoncé précis, mais peu ragoûtant). Nous adaptons la m\'ethode de Ma pour obtenir aussi une forme faible de fonctorialit\'e de la construction, retrouvant ainsi l'un des r\'esultats principaux d'\textcite{AndreJAMS} (on trouvera cependant dans loc.cit. des r\'esultats bien plus forts que ceux expos\'es ici). 
 Une autre construction de $C'$ (celle d'\cite{AndreDSC}) utilise plutôt le fait que $A$ est une \emph{extension} finie d'un
anneau local régulier complet $A_0$, et le lemme d'Abhyankar perfectoïde combiné avec le lemme de platitude d'André pour construire $C'$. Les deux preuves produisent donc seulement une \og presque algèbre de Cohen--Macaulay (presque perfectoïde)\fg{}, mais il était connu, grâce aux travaux de  \textcite{H6}, que cela suffit pour conclure la preuve du théorème~\ref{CMexistent}. Gabber a découvert un autre moyen, plus canonique et direct, de passer d'une presque algèbre de Cohen--Macaulay à une vraie telle algèbre, il est exposé dans le paragraphe $17.5$ du livre de \textcite{GR}. On trouvera dans cette section une version encore plus simple et directe.


  \subsection{Algèbres perfectoïdes CM}

On fixe une fois pour toutes une suite compatible 
$(p^{1/p^n})_{n\geq 0}$ de racines de $p$ dans une clôture algébrique de $\mathbf{Q}_p$ et on définit le corps 
perfectoïde $$ K\coloneqq \widehat{\mathbf{Z}_p[p^{1/p^{\infty}}]}[\frac{1}{p}],$$
    pour lequel $K^0=\widehat{\mathbf{Z}_p[p^{1/p^{\infty}}]}$. Soit $p^{\flat}\coloneqq (p, p^{1/p}, p^{1/p^2},\ldots)\in K^{0,\flat}$ et 
    $\xi\coloneqq p-[p^{\flat}]$, un élément distingué de $W(K^{0,\flat})$.
    On note 
    ${\rm Perf}_{K^0}^{\rm tf}$ la catégorie des $K^0$-algèbres perfectoïdes \emph{sans $p$-torsion}.
    

         \begin{defi}\label{perfCM} Soit
         $A$ un anneau local noeth\'erien. Une \emph{$A$-algèbre CM} (resp.\  \emph{$A$-algèbre perfectoïde CM}) est une $A$-algèbre de Cohen--Macaulay
         (resp.\ une 
         $A$-algèbre de Cohen--Macaulay qui est aussi dans ${\rm Perf}_{K^0}^{\rm tf}$). 
         
         \end{defi}
         
        Nous aurons besoin des résultats suivants par la suite:

    \begin{lemm}\label{221}
    Soit 
    $(x_1,\ldots, x_d)$ une suite régulière dans une algèbre $C\in {\rm Perf}^{\rm tf}_{K^0}$, avec $x_1=p$. Le complété 
    $(x_1,\ldots, x_d)$-adique $\hat{C}$ de $C$ est dans ${\rm Perf}^{\rm tf}_{K^0}$.
    \end{lemm}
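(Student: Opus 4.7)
La stratégie consiste à vérifier les hypothèses de la proposition~\ref{310} pour $\hat C$, en prenant comme \og{}$\pi$\fg{} le scalaire $\pi\coloneqq p^{1/p} \in K^0 \subseteq \hat C$; cet élément vérifie $\pi^p = up$ avec $u \in (K^0)^\times$ (donc $\pi^p \mid p$), et reste non diviseur de zéro dans $C$. Le point clé sera d'identifier le complété $I$-adique $\hat C$ (où $I = (x_1, \ldots, x_d)$) avec le complété $J$-adique $C^{\wedge_J}$, où $J = (x_2, \ldots, x_d)$. Les inclusions élémentaires
\[
I^{m+n-1} \subseteq (p^m) + J^n \subseteq I^{\min(m,n)}
\]
montrent que les familles cofinales $\{I^N\}_N$ et $\{(p^m) + J^n\}_{m,n}$ engendrent la même topologie, d'où $\hat C = \varprojlim_{m,n} C/((p^m) + J^n) = \varprojlim_n (C/J^n)^{\wedge_p}$. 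En utilisant l'hypothèse de suite régulière pour montrer que chaque $C/J^n$ hérite du caractère $p$-complet de $C$ (par récurrence sur $n$ via les suites exactes $0 \to J^n/J^{n+1} \to C/J^{n+1} \to C/J^n \to 0$ et le fait que $J^n/J^{n+1}$ est une somme directe finie de copies du $p$-complet $C/J$), cette dernière limite vaut $C^{\wedge_J}$; comme limite projective d'anneaux $p$-complets, $\hat C$ est lui-même $p$-complet, donc $\pi$-complet puisque $\pi$ et $p$ engendrent la même topologie sur $\hat C$.

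Le complexe de Koszul pour la suite régulière $(x_1, \ldots, x_d)$ implique ensuite que $\hat C$ est plat sur $C$, que la suite reste régulière dans $\hat C$, et que $\hat C/I^n\hat C \cong C/I^n$. La platitude montre que $\hat C$ est sans $p$-torsion et que $\pi$ reste non diviseur de zéro dans $\hat C$. Pour la condition de Frobenius, on déduit en utilisant la platitude et l'identification ci-dessus
\[
\hat C/\pi\hat C \cong (C/\pi)^{\wedge_{\bar J}}, \qquad \hat C/p\hat C \cong (C/p)^{\wedge_{\bar J}},
\]
et le Frobenius $C/\pi \xrightarrow{\sim} C/p$ (isomorphisme puisque $C$ est perfectoïde) est $\bar J$-adiquement continu puisqu'il envoie $\bar x_i$ sur $\bar x_i^p \in \bar J^p \subseteq \bar J$; il induit donc un isomorphisme sur les complétés. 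La proposition~\ref{310} donne alors $\hat C \in {\rm Perf}$, et l'absence de $p$-torsion le place dans ${\rm Perf}^{\rm tf}_{K^0}$.

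L'obstacle principal sera de mener ces arguments dans le cadre non noethérien des algèbres perfectoïdes: l'identification $\hat C = C^{\wedge_J}$, la platitude de $\hat C$ sur $C$, et le caractère $p$-complet de chaque $C/J^n$ reposent de manière cruciale sur la structure de suite régulière de $(x_1,\ldots, x_d)$ et ne peuvent pas être obtenus à partir d'outils noethériens standards comme Artin--Rees. C'est précisément l'hypothèse que $x_1 = p$ et que $C$ est déjà $p$-complet qui permet de rabattre la complétion $I$-adique sur la complétion $J$-adique, et de réutiliser l'isomorphisme de Frobenius déjà disponible sur $C$.
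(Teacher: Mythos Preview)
Your overall strategy --- verify the hypotheses of proposition~\ref{310} by showing $\hat C/\pi\simeq\widehat{C/\pi}$ and $\hat C/p\simeq\widehat{C/p}$, exploiting that $C$ is already $p$-complete to reduce the $I$-adic completion to a $J$-adic one --- is sound and close in spirit to the paper's argument. There is, however, one genuine gap and two smaller omissions.

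\textbf{The flatness claim is unjustified and unnecessary.} You assert that ``le complexe de Koszul pour la suite régulière $(x_1,\ldots,x_d)$ implique que $\hat C$ est plat sur $C$''. In the non-noetherian setting this does not follow: flatness of the completion is essentially an Artin--Rees statement, which (as you yourself note at the end) is unavailable here, and the Koszul complex by itself does not supply it. Fortunately you do not need flatness. Your inductive argument already shows that each $C/J^n$ is $p$-sans torsion (hence $\pi$-sans torsion): once you know $(x_2,\ldots,x_d)$ is régulière in $C$ --- which you must check, since the hypothesis only gives $(p,x_2,\ldots,x_d)$ régulière; it follows because $C$ is $p$-complet and $p$-sans torsion --- the short exact sequences $0\to J^n/J^{n+1}\to C/J^{n+1}\to C/J^n\to 0$ have $p$-torsion-free outer terms, hence $p$-torsion-free middle term. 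Then $\hat C=\varprojlim_n C/J^n$ is $p$-sans torsion, and the exact sequences $0\to C/J^n\xrightarrow{\pi} C/J^n\to C/(\pi,J^n)\to 0$ (transitions surjectives, donc $R^1\varprojlim=0$) give $\hat C/\pi\simeq\varprojlim_n C/(\pi,J^n)=(C/\pi)^{\wedge_{\bar J}}$ directly, without any flatness.

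\textbf{Continuity of the inverse Frobenius.} You check that $\varphi$ is $\bar J$-adiquement continue, but to conclude that the induced map on complétés is an isomorphism you also need $\varphi^{-1}$ continue. This holds because $\varphi(\bar J_{C/\pi}^n)=(x_2^p,\ldots,x_d^p)^n$ and $\bar J_{C/p}^{\,p(d-1)n}\subset (x_2^{pn},\ldots,x_d^{pn})\subset (x_2^p,\ldots,x_d^p)^n$, but it should be said.

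\textbf{Comparison with the paper.} The paper avoids the decomposition $I=(p)+J$ altogether and proves a single Artin--Rees-type estimate by hand: if $pz\in I^{nd}$ then $z\in I^{n-1}$, using that $I^{nd}\subset (p^n,x_2^n,\ldots,x_d^n)$ and the regularity of $(p,x_2^n,\ldots,x_d^n)$. This immediately gives that $\hat C$ is $p$-sans torsion and that $pC$, $p^{1/p}C$ are $I$-adiquement fermés (donc $\hat C/p\simeq\widehat{C/p}$, etc.). Your route, once repaired, gives the same conclusions with slightly more bookkeeping; the paper's is shorter and sidesteps the need to check that $(x_2,\ldots,x_d)$ is régulière in $C$.
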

    
    \begin{proof} Voir la proposition 2.2.1 d'\textcite{AndreJAMS} pour les détails. Puisque $I\coloneqq (x_1,\ldots, x_d)$ est de type fini, $\widehat{C}$ est 
    $I$-complet (donc aussi $p$-complet). 
    Si $z\in C$ vérifie $pz\in I^{nd}C$, alors
     $z\in I^{n-1}C$. En effet, on a 
     $I^{nd}\subset (p^n, x_2^n,\ldots, x_d^n)$, donc il existe $y\in C$ tel que $p(z-p^{n-1}y)\in (x_2^n,\ldots, x_d^n)C$, puis par régularité de $(p, x_2^n,\ldots, x_d^n)$ on obtient  $z-p^{n-1}y\in (x_2^n,\ldots, x_d^n)C$ et $z\in I^{n-1}C$.
     On en déduit immédiatement que 
     $\widehat{C}$ est sans $p$-torsion et que les id\'eaux $p^{1/p}\widehat{C}$ et $p\widehat{C}$ sont fermés 
     dans $\widehat{C}$ pour la topologie $I$-adique, donc 
    $\widehat{C}/p\simeq \widehat{C/p}$, $\hat{C}/p^{1/p}\simeq \widehat{C/p^{1/p}}$. On conclut en utilisant la proposition~\ref{310}.
    \end{proof}

 \begin{lemm}\label{CMcover}
 Soit $(A, \mathfrak{m})\in {\rm CLI}_p$ (d\'ef.~\ref{symp}) et soit $C$ une $A$-algèbre perfectoïde {\rm CM}, $\mathfrak{m}$-complète.
 Pour tous $z_1,\ldots, z_n\in A$ il existe une $C$-algèbre perfectoïde {\rm CM}, $\mathfrak{m}$-complète $C'$ dans laquelle 
 $z_1,\ldots, z_n$ sont $p$-puissants (déf.~\ref{ppuis}).  
 \end{lemm}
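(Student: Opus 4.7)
Par récurrence sur $n$, il suffit de traiter le cas $n = 1$ : on se donne $z \in A$ et on cherche à adjoindre un système compatible de racines $p^n$-ièmes de $z$ à $C$. L'outil central est le lemme de platitude d'André (théorème~\ref{fflemma}) : appliqué à l'algèbre perfectoïde $C[\tfrac{1}{p}] \in {\rm Perf}_K$ (dont le sous-anneau des éléments à puissances bornées s'identifie à $C$) et à $z \in C$, il fournit
$$D \coloneqq C\langle z^{1/p^{\infty}}\rangle^0,$$
un anneau perfectoïde sans $p$-torsion (en tant que sous-anneau borné d'une $K$-algèbre), donc dans ${\rm Perf}^{\rm tf}_{K^0}$, contenant par construction un système compatible de racines $z^{1/p^n}$, et tel que $D/\pi$ soit presque fidèlement plate sur $C/\pi$ dans le cadre $\pi^{1/p^{\infty}}$.

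La partie cruciale est que $D$ est déjà Cohen--Macaulay sur $A$. Soit $(x_1, \ldots, x_d)$ un système de paramètres de $A$ avec $x_1 = p$; il est régulier sur $C$ par hypothèse. Pour $p$ sur $D$, c'est immédiat. Pour la régularité de $(x_2, \ldots, x_d)$ sur $D/p$, on exploite la description interne issue de la preuve du théorème~\ref{fflemma} : à un presque isomorphisme près, $D/\pi$ se présente comme colimite filtrante d'algèbres $C_j$ dont les réductions modulo $\pi^{1/p^j}$ sont des colimites filtrantes de $C$-modules libres de type fini, donc \emph{véritablement} (et non pas seulement presque) fidèlement plates. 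Combiné avec le caractère réduit (proposition~\ref{reduced}) et l'absence de $p$-torsion de $D$, ceci permet d'upgrader la \emph{presque} régularité produite par presque platitude fidèle en une \emph{vraie} régularité de la suite sur $D/p$.

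On pose enfin $C' \coloneqq \widehat{D}^{\mathfrak{m}}$, complétion $\mathfrak{m}$-adique de $D$. Comme $\mathfrak{m}$ et $(x_1, \ldots, x_d)$ définissent la même topologie et que $(x_1, \ldots, x_d)$ est régulière dans $D \in {\rm Perf}^{\rm tf}_{K^0}$, le lemme~\ref{221} donne $C' \in {\rm Perf}^{\rm tf}_{K^0}$, avec persistance de cette suite régulière. Les racines $z^{1/p^n}$ vivent encore dans $C'$, d'où l'existence cherchée.

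\textbf{Principal obstacle.} Le point délicat est l'amélioration de la presque fidèle platitude fournie directement par le théorème~\ref{fflemma} en une vraie régularité de $(x_2, \ldots, x_d)$ sur $D/p$, requise par la propriété CM. Cela nécessite de descendre à la structure interne (colimites de modules libres de type fini) présente dans la preuve du lemme de platitude d'André, puis de remonter au niveau de $D$ grâce aux propriétés perfectoïdes adéquates (réduction, absence de $p$-torsion).
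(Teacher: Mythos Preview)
Your overall architecture — adjoin $p$-power roots to $C$ one element at a time, then complete $\mathfrak{m}$-adically and invoke Lemma~\ref{221} — is exactly the paper's, but you have invoked the wrong version of André's flatness lemma, and this creates a genuine gap.

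The paper's proof applies Theorem~\ref{nopresque} (not Theorem~\ref{fflemma}). The crucial difference is that Theorem~\ref{nopresque} yields a $C$-algebra $P \in {\rm Perf}_{K^0}^{\rm tf}$ whose reduction modulo $\pi$ is \emph{truly} faithfully flat over $C/\pi$ (hence, by dévissage, modulo $p$ as well). The regular sequence $(p, x_2, \ldots, x_d)$ in $C$ then remains regular in $P$ immediately, and the completion step via Proposition~\ref{CMstandard} and Lemma~\ref{221} goes through as you describe.

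Your attempt to upgrade the \emph{almost} faithful flatness furnished by Theorem~\ref{fflemma} to honest regularity of $(x_2, \ldots, x_d)$ on $D/p$ is where the argument breaks. An almost isomorphism between $D/\pi$ and a genuinely faithfully flat algebra does not transfer honest regularity of a sequence back to $D/\pi$. Reducedness and absence of $p$-torsion of $D$ do let you rescue regularity of $x_2$ on $D/p$ (using $D_* = D$ and $\pi$-torsion-freeness one checks $\bigcap_n \pi^{-1/p^n}(pD) = pD$), but the analogous statement for $(p, x_2, \ldots, x_i)D$ with $i \geq 2$ — namely that $\bigcap_n \pi^{-1/p^n}\bigl((p, x_2, \ldots, x_i)D\bigr) = (p, x_2, \ldots, x_i)D$ — is precisely the kind of ``almost to honest'' passage that does \emph{not} follow from the perfectoid axioms alone, and which the paper resolves elsewhere only through the Gabber construction (Lemmas~\ref{Galalisom}--\ref{GabalCM}). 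For the present lemma that detour is unnecessary: simply replace Theorem~\ref{fflemma} by Theorem~\ref{nopresque}, and the difficulty disappears.
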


\begin{proof}
Par le théorème~\ref{nopresque} il existe une $C$-algèbre $P\in {\rm Perf}_{K^0}^{\rm tf}$ fidèlement plate sur 
$C$ modulo $p$ et dans laquelle $z_1,\ldots, z_n$ sont $p$-puissants. Si $(x_1,\ldots, x_d)$ est un système de paramètres de 
$A$ avec $x_1=p$, alors $(x_1,\ldots, x_d)$ est une suite régulière dans $C$, et elle le reste dans $P$ par fidèle platitude modulo $p$ de celui-ci sur $C$. Par la proposition~\ref{CMstandard} le complété $(x_1,\ldots, x_d)$-adique (qui est aussi le complété 
$\mathfrak{m}$-adique) $C'$ de $P$ est une $A$-algèbre CM $\mathfrak{m}$-complète, qui est 
dans ${\rm Perf}_{K^0}^{\rm tf}$ par le lemme~\ref{221}. 
\end{proof}

\subsection{Algèbres perfectoïdes CM sur un anneau local régulier}

Les théorèmes~\ref{perfcover} et~\ref{kunz} ci-dessous sont des analogues en inégale caractéristique du célèbre résultat de  \textcite{Kunz}: pour un anneau noethérien $A$ de caractéristique $p$ chacune des assertions suivantes est équivalente à la régularité de $A$:

$\bullet$ le Frobenius $\varphi\colon  A\to A$ de $A$ est un morphisme plat.

$\bullet$ il existe une $A$-algèbre parfaite 
et fidèlement plate.

 Rappelons (proposition~\ref{CMstandard}) que pour  
  un anneau local régulier~$A$ une $A$-algèbre est CM si et seulement si elle est fidèlement plate sur~$A$. En utilisant la partie facile du théorème de Kunz et le résultat suivant, on
  en déduit facilement que pour tout anneau régulier~$A$ avec 
    $p\in {\rm Rad}(A)$ il existe une $A$-algèbre perfectoïde fidèlement plate.

   \begin{theo}\label{perfcover}
Pour tout anneau local régulier $(A, \mathfrak{m})$ d'inégale caractéristique il existe une $A$-algèbre perfectoïde {\rm CM}, $\mathfrak{m}$-complète. 
   \end{theo}
   
     Dans les applications nous aurons uniquement
besoin du théorème pour un anneau de la forme $W(k)[[T_1,\ldots, T_d]]$, $k$ étant un corps parfait de caractéristique $p$, auquel cas la preuve est parfaitement élémentaire.

   \begin{proof} On peut supposer que $A$ est complet, car le complété $\hat{A}$ de $A$ est fidèlement plat sur $A$.  
 Par le théorème de structure de Cohen et la régularité de $A$ il existe un anneau de valuation $V$, complet et absolument non ramifié\footnote{
   Cela veut dire que l'idéal maximal de $V$ est $pV$.}, ainsi qu'un élément 
   $f\in V[[X_1,\ldots, X_n]]$, que l'on peut choisir de la forme 
$f=X_1$ si $p\in \mathfrak{m}\setminus \mathfrak{m}^2$
 et dans l'idéal $(p,X_1,\ldots, X_n)^2$ sinon, tels que
 $$A\simeq V[[X_1,\ldots, X_n]]/(p-f).$$

   Supposons pour commencer que $k$ est parfait. Posons  
       $$A_j=V[[X_1^{1/p^j},\ldots, X_n^{1/p^j}]]/(p-f),\,\, A_{\infty}=\varinjlim_{j} A_j$$
et montrons que 
  le complété $p$-adique $\widehat{A}_{\infty}$ de $A_{\infty}$ est une $A$-algèbre perfectoïde fidèlement plate.
  Les morphismes $A\to A_j$ sont locaux, injectifs, finis et plats, 
   donc fidèlement plats. Ainsi
  $A_{\infty}$ est une $A$-algèbre fidèlement plate, et le lemme ci-dessous montre que 
  $\widehat{A}_{\infty}$ reste fidèlement plate sur $A$.
    Il est évident que $\widehat{A}_{\infty}$ est sans $p$-torsion et que le Frobenius sur 
$A_{\infty}/p$ est surjectif. Montrons qu'il existe $\pi\in \widehat{A}_{\infty}$ tel que $(\pi^p)=(p)$. Cela est évident si $f=X_1$, supposons donc que $f\in (p,X_1,\ldots, X_n)^2$. Puisque 
$A_{\infty}=A_{\infty}^p+pA_{\infty}$ (donc $\mathfrak{m}_{A_{\infty}}=\mathfrak{m}_{A_{\infty}}^p+pA_{\infty}$) et $p\in \mathfrak{m}_{A_{\infty}}^2$, on obtient 
$p=\pi^p+py$ pour certains $\pi,y\in \mathfrak{m}_{A_{\infty}}$, donc $(p)=(\pi^p)$. 
On conclut alors en utilisant la proposition~\ref{310} et en remarquant que le Frobenius $A_{\infty}/\pi\to A_{\infty}/\pi^p$ est injectif, puisque 
$A_{\infty}$ est normal (tous les $A_j$ sont réguliers, donc normaux), donc $p$-clos dans $A_{\infty}[\frac{1}{\pi}]$.

      Dans le cas général soit~$\bar{k}$ une clôture algébrique de~$k$ et soit
   $V\to W$ une extension fidèlement plate 
     d'anneaux de valuation absolument non ramifiés, telle que le corps résiduel de~$W$ soit~$\bar{k}$. Puisque $W[[X_1,\ldots, X_n]]$ est une 
    $V[[X_1,\ldots, X_n]]$-algèbre fidèlement plate (par exemple par le lemme ci-dessous), 
    $W[[X_1,\ldots, X_n]]/(p-f)$ est une extension fidèlement plate de~$A$, avec un corps résiduel algébriquement clos. Mais $W[[X_1,\ldots, X_n]]/(p-f)$ possède une algèbre perfectoïde fidèlement plate~$S$ d'après ce qui précède, et elle répond à l'appel.
    
    Ce qui précède montre qu'il existe une $A$-algèbre perfectoïde {\rm CM}. Son complété 
    $\mathfrak{m}$-adique répond à l'appel par les lemmes~\ref{221} et~\ref{compflat}. 
    \end{proof}

Nous avons utilisé le résultat suivant (cf.\ lemme 1.1.1 d'\textcite{AndreDSC}): 

\begin{lemm}\label{compflat}
Soit $I$ un idéal d'un anneau noethérien $A$ et soit $B$ une $A$-algèbre plate. On note $\hat{X}=\varprojlim_{n} X/I^nX$ pour tout 
$A$-module $X$.

a) Pour tout $A$-module de type fini $M$ le morphisme naturel $M\otimes_A \hat{B}\to \widehat{M\otimes_A B}$ est un isomorphisme.

b) La $A$-algèbre $\hat{B}$ est plate, et même fidèlement plate si
$I\subset {\rm Rad}(A)$.
\end{lemm}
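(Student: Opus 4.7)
Pour (a), le plan est de montrer que les foncteurs $F\colon M \mapsto M \otimes_A \hat{B}$ et $G\colon M \mapsto \widehat{M \otimes_A B}$, de la catégorie des $A$-modules de type fini vers celle des $\hat{B}$-modules, sont additifs, exacts à droite, et coïncident sur $A$ (donc sur tout module libre de type fini $A^n$). Puisque $A$ est noethérien, tout $M$ de type fini admet une présentation finie $A^m \to A^n \to M \to 0$, et le diagramme à lignes exactes associé, dans lequel les deux premières flèches verticales sont les identités $\hat{B}^m \to \hat{B}^m$ et $\hat{B}^n \to \hat{B}^n$, fournit la conclusion par comparaison des conoyaux.

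L'exactitude à droite de $F$ est triviale. Celle de $G$ s'obtient comme suit: partant d'une suite exacte $M' \to M \to M'' \to 0$ de $A$-modules de type fini, la platitude de $B$ donne une suite exacte $M' \otimes B \to M \otimes B \to M'' \otimes B \to 0$; en notant $N$ l'image de $M' \otimes B$ dans $M \otimes B$, on obtient pour tout $k\geq 1$ une suite exacte courte
\[0 \to N/(N \cap I^k(M \otimes B)) \to (M \otimes B)/I^k \to (M'' \otimes B)/I^k \to 0\]
dont les transitions à gauche sont manifestement surjectives; le $\varprojlim^1$ correspondant est donc nul et le passage à la limite donne la surjectivité de $G(M) \to G(M'')$. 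C'est le point le plus délicat de l'argument, car il exige de manier la condition de Mittag--Leffler en l'absence de toute hypothèse noethérienne sur $B$.

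Pour la platitude dans (b), j'utilise le critère classique: $\hat{B}$ est $A$-plat si et seulement si $J \otimes_A \hat{B} \to \hat{B}$ est injectif pour tout idéal (de type fini) $J$ de $A$. Par (a), cette flèche s'identifie à $\widehat{JB} \to \hat{B}$. Le lemme d'Artin--Rees appliqué à l'inclusion $J \subset A$, combiné à la platitude de $B$ (qui permet de préserver intersections et produits tensoriels avec les idéaux), fournit une constante $c$ telle que $JB \cap I^{k+c} B \subset I^k \cdot JB$ pour tout $k$; ainsi la topologie induite par $B$ sur $JB$ coïncide avec la topologie $I$-adique intrinsèque de $JB$, d'où l'injectivité de $\widehat{JB} \hookrightarrow \hat{B}$.

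Enfin, pour la fidèle platitude sous l'hypothèse $I \subset {\rm Rad}(A)$ (en supposant tacitement $B$ fidèlement plate sur $A$, faute de quoi l'énoncé est manifestement en défaut: prendre $A = \mathbf{Z}_p$, $I=(p)$, $B = \mathbf{Q}_p$, pour lequel $\hat{B}=0$), il suffit de vérifier que $\hat{B}/\mathfrak{m}\hat{B}$ est non nul pour tout idéal maximal $\mathfrak{m}$ de $A$. Par (a), ce quotient s'identifie à $\widehat{B/\mathfrak{m}B}$; comme $I \subset \mathfrak{m}$, l'action de $I$ sur $B/\mathfrak{m}B$ est nulle, donc la complétion $I$-adique coïncide avec $B/\mathfrak{m}B$ lui-même, qui est non nul par fidèle platitude de $B$.
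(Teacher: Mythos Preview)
Il y a une petite lacune dans ta preuve de (a). Pour la \og comparaison des conoyaux\fg{} à partir d'une présentation $A^m \to A^n \to M \to 0$, il ne suffit pas que $G(A^n) \to G(M)$ soit surjectif : il faut que la suite $G(A^m) \to G(A^n) \to G(M) \to 0$ soit exacte en $G(A^n)$. Ton argument Mittag--Leffler identifie bien $\ker\bigl(\hat{B}^n \to G(M)\bigr)$ à $\varprojlim_k N/(N \cap I^k B^n)$ avec $N$ l'image de $B^m$ dans $B^n$, mais tu n'as pas montré que ceci coïncide avec l'image de $\hat{B}^m \to \hat{B}^n$. Pour cela il faut savoir que le système des noyaux $f^{-1}(I^k B^n)/I^k B^m$ est Mittag--Leffler, et c'est précisément là qu'intervient Artin--Rees (appliqué dans $A$ à l'inclusion de l'image de $A^m$ dans $A^n$, puis tensorisé par $B$ plat) --- que tu utilises de toute façon en (b). L'article procède d'ailleurs directement ainsi : Artin--Rees donne l'exactitude (pleine) de $M \mapsto \widehat{M \otimes_A B}$ sur les $A$-modules de type fini, ce qui règle (a) et donne la platitude de $\hat{B}$ en (b) gratuitement, sans repasser par le critère des idéaux.

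Ton argument pour la fidèle platitude est correct et coïncide en substance avec celui de l'article ; tu as aussi bien vu que l'hypothèse \og $B$ fidèlement plate\fg{} manque dans l'énoncé et est tacitement rétablie dans la démonstration de l'article.
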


\begin{proof}
 a) Une application directe du lemme d'Artin--Rees montre que le foncteur $\mathcal{F}\colon  M\mapsto \widehat{M\otimes_A B}$ est exact sur les 
 $A$-modules de type fini. Puisque le morphisme $f_M\colon  M\otimes_A \hat{B}\to \widehat{M\otimes_A B}$ est un isomorphisme si 
 $M$ est libre de type fini sur $A$, il l'est pour tout $M$ de type fini sur $A$
 par un argument standard.
 
 b) La platitude découle directement du point a) et de sa preuve. Supposons que $I\subset {\rm Rad}(A)$ et que 
 $B$ est fidèlement plat sur $A$. Soit $M$ un $A$-module de type fini tel que $M\otimes_A \hat{B}=0$. Alors 
 $M\otimes_A B/IB=0$ et donc $M/IM\otimes_A B=0$, puis $M/IM=0$ et (lemme de Nakayama) $M=0$.
 \end{proof}

    La preuve du résultat suivant est nettement plus délicate et nous renvoyons le lecteur à \textcite{BIMa} car nous n'en ferons pas usage.

\begin{theo}\label{kunz} 
 Soit $A$ un anneau noethérien tel que 
 $p\in {\rm Rad}(A)$. S'il existe une 
 $A$-algèbre perfectoïde et fidèlement plate, alors $A$ est régulier. 
\end{theo}

 \subsection{Un résultat technique crucial}
 
     \emph{On fixe dans ce paragraphe un anneau $(A,\mathfrak{m})\in {\rm CLI}_p$ (d\'ef.~\ref{symp}) et $\wp\in {\rm Spec} A[\frac{1}{p}]$, de hauteur $c\geq 1$. On a donc $A/\wp\in {\rm CLI}_p$.}
 
   \begin{prop}\label{choosesop}
    Il existe 
   $g\in A \setminus\wp$ multiple de $p$, ainsi qu'un système de paramètres $(f_1,\ldots, f_c, x_1,\ldots, x_d)$ de $A$ avec 
   $x_1=p$ et 
   $$g\wp\subset \sqrt{(f_1,\ldots, f_c)}\subset \wp.$$
   La suite $(x_1,\ldots, x_d)$ est un système de paramètres de $A/\wp$.
   \end{prop}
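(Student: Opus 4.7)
Le plan est de construire successivement $x_2, \ldots, x_d$, puis $f_1, \ldots, f_c$, puis $g$ par des applications répétées d'évitement des premiers. Notons d'abord que, comme $A \in {\rm CLI}_p$ est caténaire et équidimensionnel, on a $\dim A = c + d$ avec $d \coloneqq \dim A/\wp$. Puisque $\wp \in {\rm Spec}\, A[\frac{1}{p}]$ on a $p \notin \wp$, donc son image $\bar p \in A/\wp$ est un élément non nul de l'idéal maximal. On choisit $x_2, \ldots, x_d \in \mathfrak{m}$ par évitement des premiers, à l'étape $i$ en imposant que $x_i$ évite à la fois les premiers minimaux de $(p, x_2, \ldots, x_{i-1})$ dans $A$ de hauteur $i-1$ et ceux de l'image de ce même idéal dans $A/\wp$ de hauteur $i-1$; aucun d'entre eux n'étant $\mathfrak{m}$ (respectivement $\mathfrak{m}/\wp$), l'évitement est possible dans $\mathfrak{m}$. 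Au terme du processus, $(p, x_2, \ldots, x_d)$ est un système de paramètres de $A/\wp$, donc $\wp + (p, x_2, \ldots, x_d)$ est $\mathfrak{m}$-primaire dans $A$, et l'on a $\dim A/(p, x_2, \ldots, x_d) = c$.

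On construit ensuite $f_1, \ldots, f_c \in \wp$ par récurrence, en exigeant à l'étape $i$ que $f_i$ évite simultanément: (a) les premiers minimaux de $(f_1, \ldots, f_{i-1})$ dans $A_\wp$, qui sont de hauteur $i-1 < c$ dans $A$ et donc strictement contenus dans $\wp$; (b) les premiers minimaux $\mathfrak{q}$ de $(f_1, \ldots, f_{i-1}, p, x_2, \ldots, x_d)$ dans $A$ de hauteur $d+i-1 < \dim A$. L'observation clef est qu'aucun $\mathfrak{q}$ de la classe (b) ne contient $\wp$: sinon $\mathfrak{q} \supset \wp + (p, x_2, \ldots, x_d)$, lequel est $\mathfrak{m}$-primaire, ce qui forcerait $\mathfrak{q} = \mathfrak{m}$, contredisant ${\rm ht}(\mathfrak{q}) < \dim A$. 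Ainsi $\wp$ n'est contenu dans aucun des premiers à éviter, et l'évitement des premiers à l'intérieur de $\wp$ fournit $f_i \in \wp$ adéquat. Après $c$ étapes, $(f_1, \ldots, f_c) A_\wp$ est $\wp A_\wp$-primaire et $(f_1, \ldots, f_c, p, x_2, \ldots, x_d)$ engendre un idéal $\mathfrak{m}$-primaire de $A$; comme ces éléments sont au nombre de $\dim A$, ils forment bien un système de paramètres.

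Enfin, les idéaux premiers minimaux de $(f_1, \ldots, f_c)$ dans $A$ sont $\wp = \mathfrak{q}_1, \mathfrak{q}_2, \ldots, \mathfrak{q}_k$, avec $\mathfrak{q}_j \not\subset \wp$ pour $j \geq 2$ par incomparabilité des premiers minimaux sur un même idéal. Par évitement des premiers (avec la convention: si $k = 1$, on prend $g' = 1$), on trouve $g' \in \bigcap_{j \geq 2} \mathfrak{q}_j \setminus \wp$. En posant $g = pg'$, on a $g \notin \wp$, $g$ est multiple de $p$, et $g\wp \subset \wp \cap \bigcap_{j \geq 2} \mathfrak{q}_j = \sqrt{(f_1, \ldots, f_c)} \subset \wp$, ce qui conclut. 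L'étape la plus délicate est la deuxième, où l'on doit orchestrer simultanément deux conditions de saut de dimension sur chaque $f_i$: l'argument repose crucialement sur le caractère $\mathfrak{m}$-primaire de $\wp + (p, x_2, \ldots, x_d)$, garantissant que les « mauvais » premiers à éviter ne contiennent pas $\wp$, ce qui rend l'évitement praticable à l'intérieur de $\wp$.
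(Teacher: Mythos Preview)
Your proof is correct, but it takes a more laborious route than the paper's. The paper reverses your order: it first chooses $f_1,\ldots,f_c\in\wp$ so that ${\rm ht}(p,f_1,\ldots,f_c)=c+1$ (a single prime-avoidance induction), then extends $(p,f_1,\ldots,f_c)$ to a system of parameters, and finally observes that $\wp$ is a minimal prime of $A/\sqrt{(f_1,\ldots,f_c)}$---this last point following because any strictly smaller minimal prime $\mathfrak{q}\subsetneq\wp$ would satisfy $p\notin\mathfrak{q}$, and a minimal prime of $\mathfrak{q}+(p)$ in the catenary domain $A$ would then have height $\leq {\rm ht}(\mathfrak{q})+1\leq c$ while containing $(p,f_1,\ldots,f_c)$, contradicting the height $c+1$. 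The fact that $(x_1,\ldots,x_d)$ is a system of parameters of $A/\wp$ is then automatic, since $f_i\in\wp$ and the full sequence is $\mathfrak{m}$-primary. Your approach instead builds the $x_i$ first with a double constraint (part of a s.o.p.\ of both $A$ and $A/\wp$), then the $f_i$ with a double constraint (primary in $A_\wp$ and completing the s.o.p.), which is why you need the key observation about $\wp+(p,x_2,\ldots,x_d)$ being $\mathfrak{m}$-primary to make the second avoidance feasible. Both arguments for $g$ are essentially the same (minimal primes of the radical). Your route works fine; the paper's is just shorter because it front-loads the $f_i$'s and lets the rest fall out.
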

   
   \begin{proof}
   Par le théorème de l'idéal principal de Krull  (et un argument standard d'évitement des idéaux premiers) il existe $f_1,\ldots, f_c\in \wp$ tels que ${\rm ht}(p, f_1,\ldots, f_c)=c+1$. Ainsi 
   $(p,f_1,\ldots, f_c)$ s'étend en un système de paramètres $(f_1,\ldots, f_c, x_1,\ldots, x_d)$ de $A$ avec 
   $x_1=p$. Puisque $\wp$~est un idéal premier minimal de l'anneau réduit $A'\coloneqq A/\sqrt{(f_1,\ldots, f_c)}$, tout élément de~$\wp$ est un diviseur de zéro dans~$A'$, d'où l'existence de~$g$ (que l'on peut choisir multiple de~$p$ car $p\notin \wp$).
   \end{proof}
        
                \begin{defi}\label{prreg}
  Soit $R$ un anneau  
       muni d'un élément $p$-puissant (déf.~\ref{ppuis}) $g$, et soit $S$ une $R$-algèbre. 
            Une suite $x=(x_1,\ldots, x_d)$ d'éléments de $S$ est dite \emph{presque régulière (dans le cadre $g^{1/p^{\infty}}$)} si 
     l'idéal $(g^{p^{-\infty}})$ annule\footnote{Si $I,J$ sont deux idéaux d'un anneau $S$, on note $I: J=\{s\in S|\, sJ\subset I\}$.}
       $\frac{(x_1,\ldots, x_i): (x_{i+1})}{(x_1,\ldots, x_i)}$ pour tout $i$, mais n'annule pas 
     $S/(x_1,\ldots, x_d)$ (en particulier $g\ne 0$).
     
        \end{defi}
        
        On fera attention au fait qu'une suite régulière dans $S$ n'a aucune raison d'être presque régulière (!): même si $S/(x_1,\ldots, x_d)$ est non nul, il pourrait très bien être annulé par 
        $(g^{p^{-\infty}})$. Le résultat d\'elicat suivant (tiré et adapté d'\textcite{AndreJAMS}) montre que ce genre de canular ne se produit pas dans notre situation. 
             
     \begin{prop}\label{presentquot}
     Soit 
     $(f_1,\ldots, f_c, x_1,\ldots, x_d)$ un système de paramètres de $A$, avec $x_1=p$.
     Soit $C$ une $A$-algèbre {\rm CM}, $\mathfrak{m}$-complète, dans laquelle $p, f_1,\ldots, f_c$ sont $p$-puissants (déf.~\ref{ppuis}). Posons 
    $$\bar{C}\coloneqq C/(f_1^{p^{-\infty}},\ldots, f_c^{p^{-\infty}})C,\,\, \widehat{\overline{C}}=\varprojlim_{n} \overline{C}/p^n.$$  
       
     a) La suite $(x_1,\ldots, x_d)$ est régulière dans $\bar{C}$ et dans $\widehat{\overline{C}}$.

     b) Si $C\in {\rm Perf}_{K^0}^{\rm tf}$, il en est de m\^eme de $\widehat{\overline{C}}$.
     
      c) Soit $\wp\in V(f_1,\ldots, f_c)$ et soit
     $g\in A\setminus \wp$ un élément qui devient $p$-puissant dans $C$. La suite $(x_1,\ldots, x_d)$ est presque régulière (dans le cadre $g^{1/p^{\infty}}$, déf.~\ref{prreg}) dans $\overline{C}$ et $\widehat{\overline{C}}$.

                    \end{prop}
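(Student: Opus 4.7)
The plan is to establish (a), (b), (c) in sequence; the main obstacle will be the non-annihilation condition in (c).

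For (a), the strategy is to first prove that $(f_1^{1/p^n}, \ldots, f_c^{1/p^n}, x_1, \ldots, x_d)$ is a regular sequence in $C$ for every $n \geq 0$. The base case $n=0$ is the CM hypothesis on $C$. The inductive step rests on the following elementary fact: if $(a^k, b_1, \ldots, b_s)$ is regular on a module $M$, then so is $(a, b_1, \ldots, b_s)$ -- injectivity of $a$ follows from that of $a^k$; for the later step, if $b_i \bar m \in (a)$ modulo $(b_1,\ldots,b_{i-1})$, multiplying by $a^{k-1}$ lands in $(a^k)$ and regularity of the original sequence combined with injectivity of $a^{k-1}$ gives $\bar m \in (a)$. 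Replacing each $f_i^{1/p^n} = (f_i^{1/p^{n+1}})^p$ in turn yields the result for $n+1$. Hence $(x_1, \ldots, x_d)$ is regular in each $\bar{C}_n \coloneqq C/(f_1^{1/p^n}, \ldots, f_c^{1/p^n})$. Passing to the filtered colimit $\bar{C} = \varinjlim_n \bar{C}_n$ preserves injectivity (any failing relation would live at some finite stage) and the non-vanishing of $\bar{C}/(x_1, \ldots, x_d)\bar{C}$ follows from that of each $\bar{C}_n/(x_j)\bar{C}_n$. For $\widehat{\bar C}$: since $p = x_1$ is NZD on $\bar{C}$, the module is $p$-torsion-free, which propagates to $\widehat{\bar{C}}$ being $p$-torsion-free with $\widehat{\bar{C}}/p \simeq \bar{C}/p$, whence the regularity of $(x_1,\dots,x_d)$ in $\widehat{\bar{C}}$ follows from its regularity in $\bar{C}$.

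For (b), assuming $C \in \mathrm{Perf}_{K^0}^{\mathrm{tf}}$, the ideal $(f_i^{p^{-\infty}})$ is generated by elements whose full systems of $p$-th roots lie in $C$, so at the tilted level $C^{\flat}/((f_i^{\flat})^{p^{-\infty}})$ remains perfect; by the tilting equivalence (théorème~\ref{tilt}) this corresponds to a perfectoid quotient of $C$ modulo $p^\infty$-torsion. Combined with the regularity of $(x_1, \ldots, x_d)$ in $\bar{C}$ established in (a), an application of Lemma~\ref{221} (or a straightforward variant of its proof, exploiting that the $(x_j)$-adic and $p$-adic completions agree here due to the vanishing of the other generators $f_i$ in $\bar{C}$) yields $\widehat{\bar{C}} \in \mathrm{Perf}_{K^0}^{\mathrm{tf}}$.

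For (c), the annihilation of each $(x_1, \ldots, x_i) : x_{i+1}/(x_1, \ldots, x_i)$ by $(g^{p^{-\infty}})$ is immediate from (a), since these modules already vanish. The genuine content is the non-annihilation of $\bar{C}/(x_1, \ldots, x_d)\bar{C}$ by $(g^{p^{-\infty}})$, that is, the existence of some $n$ with $g^{1/p^n} \notin (x_1, \ldots, x_d)\bar{C}$. The difficulty is that $g$ itself, being a multiple of $p = x_1$ by Proposition~\ref{choosesop}, maps to zero in this quotient, so the non-triviality must come from higher roots. The plan is to exploit $g \notin \wp$ together with $(f_1, \ldots, f_c) \subset \wp$ and $p \notin \wp$: map $\bar{C}/(x_j)\bar{C}$ to a suitable further quotient landing over $A/\wp \in \mathrm{CLI}_p$ (for which $(x_1, \ldots, x_d)$ is itself a system of parameters), and use the $p$-puissant structure of $g$ in $C$ to show that some iterated root $g^{1/p^n}$ survives in this quotient. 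Carefully tracking how these roots interact with the ideals $\wp$, $(f_i^{p^{-\infty}})$ and the regular sequence $(x_j)$ is the principal obstacle of the proof. The statement for $\widehat{\bar{C}}$ then follows by the same $p$-adic completion argument as in (a), since the non-vanishing of an element modulo $(x_1, \ldots, x_d)$ passes to the $p$-adic completion when the module is $p$-torsion-free.
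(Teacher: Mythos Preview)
Your treatment of (a) is essentially the paper's argument: reduce to each $C/(f_1^{1/p^n},\ldots,f_c^{1/p^n})$ via the standard fact relating regularity of a sequence to that of its powers, then pass to the colimit; since $x_1=p$, regularity in $\widehat{\overline{C}}$ follows from that in $\overline{C}$.

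For (b) your tilting sketch is vaguer than necessary. The paper simply checks directly that the Frobenius $\overline{C}/p^{1/p}\to \overline{C}/p$ is an isomorphism (immediate from the corresponding fact for $C$ and stability of the ideal $(f_i^{p^{-\infty}})$ under $p$-th roots), then invokes Proposition~\ref{310}. Your route via the tilt could be made to work but is unnecessarily indirect.

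Part (c) is where your proposal has a genuine gap. You correctly isolate the non-annihilation of $\overline{C}/(x_1,\ldots,x_d)$ by $(g^{p^{-\infty}})$ as the crux, but your plan---map to a quotient over $A/\wp$ and track the roots $g^{1/p^n}$---is not an argument, and no naive tracking seems to work here: the roots $g^{1/p^n}$ live only in $C$, not in $A$, so there is no direct comparison with $\wp$ or $\mathfrak{m}$ available. (Incidentally, the hypothesis of (c) does not assume $g$ is a multiple of $p$; that condition appears only in Proposition~\ref{choosesop}.)

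The paper's argument is quite different in spirit and rests on a nontrivial auxiliary lemma. Suppose $(g^{p^{-\infty}})$ did annihilate $\overline{C}/(x_1,\ldots,x_d)$; then $g\in\mathfrak{m}^n C+\sqrt{\wp C}$ for every $n$. One shows that any $A/\wp$-linear map $f\colon C/\wp C\to A/\wp$ must vanish: writing $(g-\alpha)^N\in\wp C$ with $\alpha\in\mathfrak{m}^n C$ and applying $f$ produces, for each $n$, an integral equation for $f(1)\bar g\in A/\wp$ with coefficients in $\mathfrak{m}_{A/\wp}^{n\bullet}$; passing to discrete valuations on the normalization of $A/\wp$ forces $f(1)\bar g=0$, hence $f(1)=0$ since $g\notin\wp$, hence $f=0$. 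This contradicts the lemma---proved separately via Matlis duality over a regular Noether subring $A_0\subset A$---that $\mathrm{Hom}_{A/\wp}(C/\wp C,\,A/\wp)\ne 0$ for any CM $A$-algebra $C$. The key ingredient you are missing is this dual-module nonvanishing: it is precisely here that the CM hypothesis on $C$ does real work beyond part (a).
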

          
          \begin{proof} 
          
          a) Comme $x_1=p$, il suffit de voir que $(x_1,\ldots, x_d)$ est régulière dans $\bar{C}=\varinjlim_{n} C/(f_1^{1/p^n},\ldots, f_c^{1/p^n})$, ou encore que $(x_1,\ldots, x_d)$ est régulière dans $C/(f_1^{1/p^n},\ldots, f_c^{1/p^n})$ pour tout $n$, or cela est clair puisque la suite 
          $(f_1^{1/p^n},\ldots, f_c^{1/p^n}, x_1,\ldots, x_d)$ est régulière\footnote{Rappelons que 
          si $M$ est un module sur un anneau $R$ et $x_1,\ldots, x_n\in R$, $e_1,\ldots, e_n\in \mathbf{Z}_{>0}$, alors 
          la suite $(x_1,\ldots, x_n)$ est régulière dans $M$ si et seulement si la suite $(x_1^{e_1},\ldots, x_n^{e_n})$ l'est.}
           dans $C$.

      
      b) Par a) $\widehat{\overline{C}}$ n'a pas de $p$-torsion. En utilisant la proposition~\ref{310}, il suffit de vérifier que le Frobenius $\overline{C}/p^{1/p}\to \overline{C}/p$ est un isomorphisme, or cela découle directement des définitions et de l'isomorphisme $C/p^{1/p}\simeq C/p$ induit par le Frobenius.
                  
                  c) Ceci co\^ute nettement plus cher.
                      Par a) il suffit de vérifier que 
      $\bar{C}/(x_1,\ldots, x_d)$ n'est pas annulé par $(g^{p^{-\infty}})$. Supposons que ce n'est pas le cas, donc
      $$(g^{p^{-\infty}})C\subset (f_1^{p^{-\infty}},\ldots, f_c^{p^{-\infty}})C+(x_1,\ldots, x_d)C,$$
  en particulier $g\in \mathfrak{m}^n C+\sqrt{\wp C}$ pour tout $n$.

  
  

  

  
  Soit $\bar{C}=C/\wp C$, $\bar{A}=A/\wp$ et notons 
  $\bar{x}$ l'image de $x$ dans 
 $\bar{C}$ (resp. $\bar{A}$) si $x\in C$ (resp. $x\in A$). 
  Soit $f\colon  \bar{C}\to \bar{A}$ un morphisme de $\bar{A}$-modules. Pour tout $n\geq 1$ il existe 
$\alpha\in \mathfrak{m}^nC$ et $N\geq 1$ tel que $(g-\alpha)^N\in \wp C$, donc $(\bar{g}-\overline{\alpha})^N=0$ dans $\bar{C}$. En appliquant $f$ on obtient 
$\bar{g}^N f(1)-\binom{N}{1}\bar{g}^{N-1} f(\overline{\alpha})+\cdots+(-1)^N
f(\overline{\alpha}^N)=0$, avec $f(\overline{\alpha}^k)\in
\mathfrak{m}_{\bar{C}}^{nk}$. Ainsi $x\coloneqq f(1)\overline{g}\in \bar{A}$ 
vérifie une équation de la forme $x^N+a_{1}x^{N-1}+\cdots+a_N=0$ avec 
$a_i\in \mathfrak{m}_{\bar{A}}^{ni}$ (dépendant de $n$, bien entendu). 

Montrons que cela force $x=0$. La clôture intégrale 
de $\bar{A}$ dans son corps des fractions est un anneau noethérien normal et intègre (théorème de Nagata), donc une intersection d'anneaux de valuation discrète $(V_i)_{i\in I}$ (les localisés de cette clôture intégrale en ses idéaux premiers de hauteur $1$) tels que le morphisme $\bar{A}\to V_i$ soit local. L'image 
$y$ de $x$ dans un tel $V_i$ vérifie des équations du type $y^N+b_1y^{N-1}+\cdots+b_N=0$ avec $b_j\in \mathfrak{m}_{V_i}^{jn}$ (dépendant de $n$). Cela force
$v(y^N)\geq \min_{1\leq j\leq N} (v(b_j)+(N-j)v(y))$, puis $v(y)\geq n$ pour tout $n$, et enfin $y=0$ et $x=0$.

  On a donc $f(1)\bar{g}=0$ dans $\bar{A}=A/\wp$, puis $f(1)=0$ car $g\notin \wp$. En appliquant ceci au morphisme $c\mapsto f(tc)$ pour $t\in \bar{C}$ quelconque, on voit que $f=0$, autrement dit 
  ${\rm Hom}_{{\rm Mod}_{A/\wp}}(C/\wp C, A/\wp)=\{0\}$, contredisant le lemme ci-dessous.
\end{proof}

\begin{lemm}
Soit $A$ un anneau local noethérien complet et intègre et soit $\wp$ un idéal premier de $A$. Si 
$C$ est une $A$-algèbre ${\rm CM}$, alors ${\rm Hom}_{{\rm Mod}_A}(C, A)$ et ${\rm Hom}_{{\rm Mod}_{A/\wp}}(C/\wp C, A/\wp)$ sont non nuls.
\end{lemm}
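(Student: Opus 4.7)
L'idée est de déduire les deux énoncés d'un principe unique: une $A$-algèbre de Cohen--Macaulay sur un anneau local noethérien complet $A$ est \emph{pure} sur $A$, et la pureté sur une base locale complète entraîne déjà l'existence d'une rétraction, comme dans la démonstration de la proposition~\ref{pure}.

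Rappelons d'abord le mécanisme. Soit $R$ un anneau local noethérien complet de corps résiduel $k$, soit $E := E_R(k)$ l'enveloppe injective, et soit $R \to S$ un morphisme pur d'anneaux. Par pureté, l'application canonique $E \to E \otimes_R S$ est injective; puisque $E$ est un $R$-module injectif, l'identité de $E$ s'étend en un morphisme $R$-linéaire $u\colon E \otimes_R S \to E$. L'application $s \mapsto (e \mapsto u(e \otimes s))$ est alors un morphisme $R$-linéaire $S \to \mathrm{End}_R(E) \simeq R$, le dernier isomorphisme étant la dualité de Matlis (valable car $R$ est local complet). Un calcul direct (cf.\ la preuve de la proposition~\ref{pure}) montre que la composée $R \to S \to R$ vaut l'identité de $R$; par conséquent, $S \to R$ est une rétraction de $R \to S$, et en particulier $\mathrm{Hom}_R(S, R) \neq 0$. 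Cet argument n'utilise aucune hypothèse de finitude sur $S$, seulement la pureté de $R \to S$ et la complétude de $R$.

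On applique ensuite ce principe à $(R, S) = (A, C)$ puis à $(R, S) = (A/\wp, C/\wp C)$. La première application suppose que $A \to C$ soit pur, ce qui est un théorème classique de Hochster, valable pour toute $A$-algèbre de Cohen--Macaulay sur un anneau local noethérien complet; sa démonstration utilise la dualité de Matlis pour se ramener à l'injectivité de $E \to E \otimes_A C$, qui est la traduction duale de la régularité du système de paramètres sur $C$. La seconde application repose sur la stabilité formelle de la pureté par changement de base: si $A \to C$ est pur et $A \to B$ est un morphisme quelconque d'anneaux, alors $B \to C \otimes_A B$ est pur (pour tout $B$-module $M$ vu comme $A$-module, $M \otimes_B (C \otimes_A B) = M \otimes_A C$, et $M \to M \otimes_A C$ est injective par pureté de $A \to C$). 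En prenant $B = A/\wp$ on obtient la pureté de $A/\wp \to C/\wp C$, et puisque $A/\wp$ reste un anneau local noethérien complet, l'argument de rétraction fournit $\mathrm{Hom}_{A/\wp}(C/\wp C, A/\wp) \neq 0$.

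L'obstacle principal est l'invocation du théorème de pureté de Hochster. Une fois ce fait admis, le reste est une reprise formelle de la preuve de la proposition~\ref{pure} dans un cadre non-fini, combinée avec la stabilité formelle de la pureté par changement de base.
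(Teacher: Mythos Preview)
Il y a une véritable lacune: le \og théorème classique de Hochster\fg{} que vous invoquez, à savoir que $A\to C$ est pur dès que $C$ est une $A$-algèbre de Cohen--Macaulay sur un anneau local noethérien complet $A$, est \emph{faux} en général. En effet, votre argument à la Matlis (correct, et qui n'utilise effectivement aucune finitude) montre que pureté $+$ complétude de $A$ donne une rétraction $A$-linéaire $r\colon C\to A$, donc une décomposition $C\simeq A\oplus \ker r$ en $A$-modules. Mais alors tout système de paramètres $(x_1,\ldots,x_d)$ de $A$, régulier sur $C$, l'est sur chaque facteur direct, en particulier sur $A$: ainsi $A$ serait de Cohen--Macaulay. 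Or le lemme ne suppose nullement $A$ de Cohen--Macaulay, et c'est justement pour des anneaux non Cohen--Macaulay qu'on l'applique dans la proposition~\ref{presentquot}. Autrement dit, votre stratégie, si elle aboutissait, démontrerait trop.

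Ce qui est vrai (et qui est précisément l'énoncé du lemme) est que $C$ est \emph{solide} sur $A$, i.e.\ $\mathrm{Hom}_A(C,A)\ne 0$; c'est strictement plus faible que la pureté, et c'est ce que l'article établit. La preuve de l'article contourne l'obstacle en descendant à une normalisation de Noether régulière $A_0\hookrightarrow A$ (via Cohen): sur $A_0$, l'algèbre $C$ est fidèlement plate (proposition~\ref{CMstandard}), donc $A_0\to C$ est pur et l'argument de Matlis donne $\mathrm{Hom}_{A_0}(C,A_0)\ne 0$. Le passage délicat de $\mathrm{Hom}_{A_0}(C,A_0)$ à $\mathrm{Hom}_A(C,A)$ utilise la finitude de $A$ sur $A_0$ et l'intégrité de $A$: un morphisme $A_0$-linéaire non nul $f\colon C\to A_0$ induit $F\colon C\to \mathrm{Hom}_{A_0}(A,A_0)$, $c\mapsto (a\mapsto f(ac))$, qui est $A$-linéaire et non nulle; comme $\mathrm{Hom}_{A_0}(A,A_0)$ est de type fini sans torsion sur $A$ (ici l'intégrité sert), il se plonge dans un $A^j$ libre, et une projection convenable fournit un élément non nul de $\mathrm{Hom}_A(C,A)$. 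Le même schéma traite le quotient $A/\wp$.
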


\begin{proof} Le très joli argument suivant est dû à  \textcite{H5}. Soit $A_0\to A$ une extension finie, avec $A_0$ local régulier et complet, de corps résiduel $k$, et soit 
$q=A_0\cap \wp$. Si
$E$ (resp.\ $E'$) est une enveloppe injective du $A_0$-module (resp.\ $A_0/q$-module) $k$, 
par dualité de Matlis on a ${\rm Hom}_{{\rm Mod}_{A_0}}(C, A_0)\simeq {\rm Hom}_{{\rm Mod}_{A_0}} (C\otimes_{A_0} E, E)$ et ${\rm Hom}_{{\rm Mod}_{A_0/q}}(C/q C, A_0/q)\simeq {\rm Hom}_{{\rm Mod}_{A_0/q}}(C/q C\otimes_{A_0/q} E', E')$. Ces modules sont non nuls car $C\otimes_{A_0} E$ et $C/q C\otimes_{A_0/q} E$ le sont, puisque 
$C$ (resp.\ $C/q C$) est fidèlement plat sur $A_0$ (resp.\ $A_0/q$), en tant que $A$-algèbre CM (proposition~\ref{CMstandard}). 

  Pour conclure, il faut passer d'un $A_0$-dual à un $A$-dual (l'argument donné ci-dessous pour ${\rm Hom}_{{\rm Mod}_A}(C, A)$ est identique pour ${\rm Hom}_{{\rm Mod}_{A/\wp}}(C/\wp C, A/\wp)$). Soit $f\colon  C\to A_0$ un morphisme $A_0$-linéaire non nul. Pour tout $c\in C$ on dispose d'un morphisme $A_0$-linéaire 
  $\varphi_c\colon  A\to A_0, a\mapsto f(ac)$. L'application $F\colon  C\to X\coloneqq {\rm Hom}_{{\rm Mod}_{A_0}}(A, A_0), c\mapsto \varphi_c$ est non nulle et $A$-linéaire. Puisque le morphisme $A_0\to A$ est fini, le 
  $A$-module $X$ est de type fini sur $A$ et sans torsion\footnote{Si $a\in A\setminus \{0\}$ et $f\in X$ vérifient $a.f=0$ alors 
  $f(P(a)x)=0$ pour tout $P\in XA_0[X]$ et tout $x\in A$; il suffit de prendre $P$ tel que $P(a)\in A_0\setminus \{0\}$ pour conclure que $f(x)=0$.}, donc se plonge  
  dans un $A$-module libre de type fini $A^j$. En projetant sur l'un des facteurs de $A^j$ et en composant avec $F$ on obtient 
  un élément non nul de ${\rm Hom}_{{\rm Mod}_A}(C, A)$.
\end{proof}

\subsection{Construction d'une algèbre presque perfectoïde et presque CM}

 Soit $P\in {\rm Perf}_{K^0}^{\rm tf}$ et soit $g\in P\setminus \{0\}$ un élément $p$-puissant (d\'efinition~\ref{ppuis}) et multiple de $p$. On ne suppose \emph{pas} que 
       $P$ est sans $g$-torsion.
        Soit $\iota\colon  P\to P[\frac{1}{g}]$ le morphisme canonique et considérons la $P$-algèbre
     $$Q\coloneqq g^{-1/p^{\infty}} P=\bigcap_{n\geq 0}\Bigl\{x\in P\bigl[\frac{1}{g}\bigr]|\,\, g^{1/p^n}x\in \iota(P)\Bigr\}\subset P\bigl[\frac{1}{g}\bigr].$$
   L'image de $g$ dans $Q$ est $p$-puissante, multiple de $p$ et \emph{non diviseur de zéro}. Le morphisme $\iota\colon  P\to P[\frac{1}{g}]$ induit un presque isomorphisme $\iota\colon  P\to Q$ (dans le cadre $g^{1/p^{\infty}}$), car 
     $(g^{p^{-\infty}})$ annule $P[g^{\infty}]$ (lemme~\ref{reduced}). Si $P,g$ sont comme ci-dessus, on dira que 
     $Q$ est une \emph{algèbre presque perfectoïde}. 

   Le résultat peu ragoûtant suivant s'obtient en combinant ceux des paragraphes ci-dessus. 
   
  
  \begin{prop}\label{UGLY}
   Soit $(A, \mathfrak{m})\in {\rm CLI}_p$ (d\'ef.~\ref{symp}), $\wp\in {\rm Spec} A[\frac{1}{p}]$ et soient
   $g\in pA\setminus \wp$ et $(f_1,\ldots, f_c, x_1,\ldots, x_d)$ comme dans la proposition~\ref{choosesop}. Pour toute
   $A$-algèbre perfectoïde {\rm CM} $\mathfrak{m}$-complète $C$ il existe 
une $C$-algèbre $P\in {\rm Perf}_{K^0}^{\rm tf}$ dans laquelle 
$g$ devient $p$-puissant (déf.~\ref{ppuis}) et non nul, s'insérant dans
un diagramme commutatif    
     $$\xymatrix@R=.6cm@C=.8cm{A\ar[r]^-{}\ar[d]^{}&
A/\wp \ar[d]\\
P\ar[r]^-{}& Q\coloneqq g^{-1/p^{\infty}}P}$$
et telle que l'image de 
    $(x_1,\ldots, x_d)$  
   dans $P$ soit une suite régulière et presque régulière (dans le cadre $g^{1/p^{\infty}}$, déf.~\ref{prreg}) d'éléments $p$-puissants. 
  \end{prop}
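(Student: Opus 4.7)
L'approche consiste à grossir $C$ en une $C$-algèbre perfectoïde CM dans laquelle tous les éléments pertinents deviennent $p$-puissants, puis à quotienter par $(f_1^{p^{-\infty}},\ldots,f_c^{p^{-\infty}})$ et à compléter $p$-adiquement : c'est précisément ce que prépare la proposition~\ref{presentquot}.

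Concrètement, je commencerais par appliquer le lemme~\ref{CMcover} à $C$ avec la famille $(g, f_1,\ldots,f_c, x_2,\ldots,x_d)$ pour obtenir une $C$-algèbre perfectoïde CM $\mathfrak{m}$-complète $C'$ dans laquelle tous ces éléments sont $p$-puissants (l'élément $x_1=p$ l'est automatiquement dans $K^0$). Je poserais ensuite
$$\overline{C'}\coloneqq C'/(f_1^{p^{-\infty}},\ldots,f_c^{p^{-\infty}})C', \qquad P\coloneqq \widehat{\overline{C'}}.$$
La proposition~\ref{presentquot} appliquée à $C'$ fournit alors la régularité de $(x_1,\ldots,x_d)$ dans $P$ (point~a) et le caractère perfectoïde sans $p$-torsion de $P$ (point~b); son point~c), applicable puisque $f_i\in\wp$ par la proposition~\ref{choosesop}, donne de plus la presque régularité de $(x_1,\ldots,x_d)$ dans le cadre $g^{1/p^{\infty}}$. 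Les $x_i$ restent $p$-puissants dans $P$ comme images d'éléments $p$-puissants, et $g$ y est non nul car sinon $(g^{p^{-\infty}})$ annulerait $P/(x_1,\ldots,x_d)$, contredisant la presque régularité.

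Pour produire la flèche $A/\wp\to Q$ complétant le carré, j'exploiterais la relation $g\wp\subset \sqrt{(f_1,\ldots,f_c)}$ : pour $y\in\wp$, elle fournit un entier $n\geq 1$ tel que $(gy)^n\in (f_1,\ldots,f_c)\subset(f_1^{p^{-\infty}},\ldots,f_c^{p^{-\infty}})C'$, donc $(gy)^n=0$ dans $\overline{C'}$, et a fortiori dans $P$. Comme $P$ est perfectoïde donc réduit (proposition~\ref{reduced}), son localisé $P[\frac{1}{g}]$ est lui aussi réduit, et l'identité $g^ny^n=0$ avec $g$ inversible dans $P[\frac{1}{g}]$ force $y=0$ dans $P[\frac{1}{g}]$, donc dans $Q\subset P[\frac{1}{g}]$. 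Le composé $A\to P\to Q$ se factorise ainsi par $A/\wp$, fournissant le carré commutatif cherché.

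Le seul point véritablement délicat de ce plan est entièrement caché dans la preuve du point~c) de la proposition~\ref{presentquot}, où l'hypothèse cruciale $g\wp\subset\sqrt{(f_1,\ldots,f_c)}$ de la proposition~\ref{choosesop} est exploitée via le théorème de Nagata et un argument de dualité à la Matlis; tout le reste se réduit à un assemblage formel des résultats déjà établis.
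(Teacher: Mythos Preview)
Votre preuve est correcte et suit essentiellement la même démarche que celle du papier : appliquer le lemme~\ref{CMcover} pour rendre $g, f_i, x_i$ $p$-puissants dans une $C$-algèbre perfectoïde CM $C'$, poser $P=\widehat{\overline{C'}}$ avec $\overline{C'}=C'/(f_1^{p^{-\infty}},\ldots,f_c^{p^{-\infty}})$, invoquer la proposition~\ref{presentquot} pour la (presque) régularité et le caractère perfectoïde, et utiliser $g\wp\subset\sqrt{(f_1,\ldots,f_c)}$ combiné à la réduction de $P$ pour factoriser par $A/\wp$. Les différences sont purement cosmétiques (vous passez par $P[\frac{1}{g}]$ réduit là où le papier passe par $P/P[g^{\infty}]$, ce qui revient au même).
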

  
  \begin{proof} 
   Par le lemme~\ref{CMcover} on peut trouver une $A$-algèbre perfectoïde CM, $\mathfrak{m}$-complète $C_1$ qui est une 
   $C$-algèbre dans laquelle $g,f_1,\ldots, f_c, x_1,\ldots, x_d$ deviennent $p$-puissants. 
       Soit 
   $\tilde{C}_1=C_1/(f_1^{p^{-\infty}},\ldots, f_c^{p^{-\infty}})C_1$. La proposition~\ref{presentquot} montre que 
   $P\coloneqq \varprojlim_{n}\tilde{C}_1/p^n\in {\rm Perf}_{K^0}^{\rm tf}$ et 
  que $(x_1,\ldots, x_d)$ est une suite régulière et 
   presque régulière (dans le cadre $g^{1/p^{\infty}}$) dans $P$. En particulier $g\ne 0$ dans $P$, sinon $g^{1/p^n}=0$ pour tout~$n$ (car $P$~est réduit, cf.\ proposition~\ref{reduced}) et donc $(g^{p^{-\infty}})$ annule $P/(x_1,\ldots, x_d)$, une contradiction. 
   
   
   Il reste à vérifier que le morphisme $A\to P$ se factorise $A/\wp\to g^{-1/p^{\infty}}P$. Si 
   $f\in \wp$ alors $gf\in \sqrt{(f_1,\ldots, f_c)}$, donc $gf$ devient nilpotent dans $\tilde{C}_1$ et donc aussi dans 
   $P$. Comme $P$ est réduit (proposition~\ref{reduced}), l'image de $f$ dans $P$ arrive dans $P[g]$ et 
   $A\to P$ se factorise $A/\wp\to P/P[g^{\infty}]\to g^{-1/p^{\infty}}P$.   
  \end{proof}

   \begin{prop}\label{alCMexist}
   Pour tout $A\in {\rm CLI}_p$ (d\'ef.~\ref{symp}) il existe un système de paramètres 
   $(x_1,\ldots, x_d)$ de $A$ avec $x_1=p$, un élément $g\in A$ et un objet $P\in {\rm Perf}_{K^0}^{\rm tf}$ tels que 
   $g$ soit $p$-puissant (déf.~\ref{ppuis}) et non nul dans $P$
   et que $Q\coloneqq g^{-1/p^{\infty}}P$ soit une $A$-algèbre dans laquelle 
  $(x_1,\ldots, x_d)$ est une 
   suite presque régulière (dans le cadre $g^{1/p^{\infty}}$, déf.~\ref{prreg}).
            \end{prop}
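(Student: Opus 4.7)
The strategy is to reduce the general case $A \in {\rm CLI}_p$ to that of a regular complete local ring via Cohen's structure theorem, and then to invoke the technical Proposition~\ref{UGLY}. Since $A$ has perfect residue field $k$ and mixed characteristic, Cohen's theorem presents $A$ as a quotient $A_0/\wp$, where $A_0 = W(k)[[X_1,\ldots, X_n]]$ is regular complete local (and so belongs to ${\rm CLI}_p$) and $\wp$ is a prime of $A_0$ not containing $p$, so that $\wp \in {\rm Spec}(A_0[1/p])$. After, if necessary, replacing $n$ by $n+1$ and $\wp$ by $\wp + (X_{n+1})$, we may assume $c \coloneqq {\rm ht}(\wp) \geq 1$, which is the hypothesis under which Propositions~\ref{choosesop} and~\ref{UGLY} are stated.

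Next we assemble the inputs of Proposition~\ref{UGLY}, playing the role of the ``$A$'' there with our $A_0$. Theorem~\ref{perfcover} supplies a perfectoïde CM $\mathfrak{m}_{A_0}$-complète $A_0$-algèbre $C$. Proposition~\ref{choosesop} applied to $(A_0, \wp)$ yields an element $g \in p A_0 \setminus \wp$ together with a system of parameters $(f_1,\ldots, f_c, x_1,\ldots, x_d)$ of $A_0$ with $x_1 = p$ and $g\wp \subset \sqrt{(f_1,\ldots, f_c)} \subset \wp$, the tail $(x_1,\ldots, x_d)$ descending to a system of parameters of $A = A_0/\wp$ with $x_1=p$. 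Feeding these data into Proposition~\ref{UGLY} produces an algebra $P \in {\rm Perf}_{K^0}^{\rm tf}$ in which $g$ becomes $p$-puissant and nonzero, equipped with a commutative square that makes $Q \coloneqq g^{-1/p^\infty} P$ an $A$-algebra, and in which the image of $(x_1,\ldots, x_d)$ in $P$ is a regular and almost regular sequence (in the $g^{1/p^\infty}$ framework) of $p$-puissant elements.

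All that remains is to transfer almost regularity of $(x_1,\ldots, x_d)$ from $P$ to $Q$. The canonical morphism $\iota \colon P \to Q$ is an almost isomorphism in the $g^{1/p^\infty}$ framework: its kernel $P[g^\infty]$ is annihilated by $(g^{p^{-\infty}})$ by Proposition~\ref{reduced}, and its cokernel is annihilated by $(g^{p^{-\infty}})$ by the very definition of $Q = g^{-1/p^\infty} P$. Both clauses defining almost regularity, namely the almost vanishing of the modules $\bigl((x_1,\ldots,x_i):x_{i+1}\bigr)/(x_1,\ldots,x_i)$ for $i<d$ and the non-almost-vanishing of the quotient by the full sequence $(x_1,\ldots,x_d)$, are intrinsic to the almost category and are manifestly preserved under an almost isomorphism of base algebras. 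Hence the almost regularity in $P$ carries over to $Q$ immediately, and the proof is complete. The real conceptual work has been carried by Theorem~\ref{perfcover}, Proposition~\ref{choosesop} and above all Proposition~\ref{UGLY} (the latter resting on André's flatness lemma); the main obstacle in the present proposition is simply bookkeeping, specifically ensuring that the Cohen presentation is set up so as to satisfy $c \geq 1$ and that the almost isomorphism transfer is treated cleanly.
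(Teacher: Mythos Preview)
Your proof is correct and follows essentially the same approach as the paper: present $A$ as $A_0/\wp$ via Cohen's theorem, invoke Theorem~\ref{perfcover} to obtain a perfectoïde CM $A_0$-algebra, apply Proposition~\ref{UGLY}, and transfer almost regularity along the almost isomorphism $P\to Q$. You are in fact slightly more careful than the paper in ensuring the hypothesis $c\geq 1$ of Propositions~\ref{choosesop} and~\ref{UGLY} by adding a variable if necessary, and in spelling out why the almost isomorphism $P\to Q$ preserves both clauses of Definition~\ref{prreg}.
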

         
             \begin{proof}
        Soit $k$ le corps résiduel de $A$. Il existe $n$ et une surjection $A_0\coloneqq W(k)[[T_1,\ldots, T_n]]\to A$. Soit $\wp=\ker(A_0\to A)$, donc $A\simeq A_0/\wp$. Le théorème~\ref{perfcover} fournit une $A_0$-algèbre CM perfectoïde et $\mathfrak{m}_{A_0}$-complète $C$. On conclut en appliquant la proposition ci-dessus à ces données (avec $A_0$ à la place de $A$), et en remarquant que $P$ et $Q$ sont presque isomorphes dans le cadre 
        $g^{1/p^{\infty}}$.     
         \end{proof}

\subsection{La construction de Gabber}

         Pour finir la preuve du théorème~\ref{CMexistent} il faut passer d'une presque algèbre de Cohen--Macaulay à une vraie. Voir la proposition $4.1.2$ d'\textcite{AndreDSC} 
         pour la méthode des \emph{modifications partielles} de Hochster, nous allons présenter la construction de Gabber (un peu modifi\'ee), qui est miraculeusement \'el\'ementaire et directe. Voir aussi la section $17.5$ du livre de \textcite{GR} pour des compl\'ements.

  Soit $A$ un anneau et soit $g\in A$ un \'el\'ement $p$-puissant (d\'ef.~\ref{ppuis}) non nul.
  Soit $$S=\{(g^{a_n})_{n\geq 0}\in A^{\mathbf{N}}|\,\, a_n\in \mathbf{N}[\frac{1}{p}],\, \lim_{n\to\infty} a_n=0\}.$$
  Il est \'evident que $S$ est une partie multiplicative de $A^{\mathbf{N}}$,
  qui ne contient pas $0\coloneqq (0,0,\ldots)$. Si $M$ est un $A$-module, alors $M^{\mathbf{N}}$ est un $A^{\mathbf{N}}$-module, et on d\'efinit 
$$\mathcal{G}(M)\coloneqq S^{-1}M^{\mathbf{N}}.$$

\begin{lemm}\label{Galalisom}
 Si $f\colon M\to N$ est un presque isomorphisme de $A$-modules (dans le cadre $g^{1/p^{\infty}}$), alors 
 le morphisme induit $\mathcal{G}(f)\colon \mathcal{G}(M)\to \mathcal{G}(N)$ est un isomorphisme.
\end{lemm}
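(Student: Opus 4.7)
Le plan consiste à établir séparément la surjectivité et l'injectivité de $\mathcal{G}(f)$, en exploitant le fait que la partie multiplicative $S$ contient les suites $(g^{a_n})_{n\geq 0}$ dont les exposants \emph{tendent vers~$0$}, ce qui permet d'absorber des coefficients de presque nullité \emph{différents selon la coordonn\'ee}.

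Pour la surjectivité, \'etant donn\'e $(n_i)/t \in \mathcal{G}(N)$ avec $t\in S$, la presque nullit\'e de ${\rm coker}(f)$ fournit, pour chaque $i \geq 0$, un \'el\'ement $m_i \in M$ tel que $f(m_i) = g^{1/p^i} n_i$. La suite $s \coloneqq (g^{1/p^i})_{i\geq 0}$ appartient \`a $S$ (car $1/p^i \in \mathbf{N}[\frac{1}{p}]$ et $1/p^i \to 0$), et par construction on a $s\cdot (n_i) = f((m_i))$ dans $N^{\mathbf{N}}$. On en d\'eduit dans $\mathcal{G}(N)$ l'\'egalit\'e $(n_i)/t = f((m_i))/(st) = \mathcal{G}(f)((m_i)/(st))$, ce qui \'etablit la surjectivit\'e.

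Pour l'injectivit\'e, soit $(m_i)/s \in \mathcal{G}(M)$ tel que $\mathcal{G}(f)((m_i)/s)=0$. Il existe alors $u = (g^{b_i})_{i\geq 0} \in S$ tel que $u\cdot f((m_i))=0$ dans $N^{\mathbf{N}}$, i.e.\ $g^{b_i}m_i \in \ker(f)$ pour tout $i$. Comme $\ker(f)$ est annul\'e par $g^{1/p^k}$ pour tout $k$, on en d\'eduit $g^{b_i+1/p^i}m_i=0$ pour tout $i$. La suite $v \coloneqq (g^{b_i+1/p^i})_{i\geq 0}$ reste dans $S$ (puisque $b_i+1/p^i \to 0$), donc $v\cdot (m_i)=0$ dans $M^{\mathbf{N}}$, et $(m_i)/s=0$ dans $\mathcal{G}(M)$.

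Il n'y a pas d'obstacle vraiment s\'erieux; le seul point subtil est de remarquer que la presque nullit\'e fournit pour chaque coordonn\'ee~$i$ une annulation par $g^{1/p^i}$, et non par une puissance uniforme de~$g$, ce qui est pr\'ecis\'ement compens\'e par la libert\'e offerte par $S$ d'autoriser des exposants d\'ependant de~$i$, pourvu qu'ils tendent vers~$0$. C'est ce qui distingue la construction de Gabber d'une localisation banale par une puissance fixe de~$g$, et c'est aussi ce qui permet qu'elle transforme les presque isomorphismes en v\'eritables isomorphismes.
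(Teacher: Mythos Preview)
Ta preuve est correcte et suit exactement la m\^eme d\'emarche que celle du papier: m\^emes choix de suites $(g^{1/p^i})_{i\geq 0}$ pour absorber les d\'efauts de surjectivit\'e et d'injectivit\'e, m\^eme passage de $g^{b_i}m_i\in\ker(f)$ \`a $g^{b_i+1/p^i}m_i=0$. La seule diff\'erence est l'ordre dans lequel tu traites les deux sens.
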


\begin{proof} \'Ecrivons simplement $(x_n)$ au lieu de $(x_n)_{n\geq 0}$. 
 Si $\mathcal{G}(f)(\frac{(m_n)}{s})=0$ alors $\frac{(f(m_n))}{s}=0$ dans 
 $S^{-1}N^{\mathbf{N}}$, donc il existe $s'=(g^{a'_n})\in S$ tel que $s'(f(m_n))=0$, autrement dit 
 $g^{a'_n} f(m_n)=0$ pour tout $n$. Puisque 
 $(g^{p^{-\infty}})$ annule 
 $\ker f$, on obtient $g^{a'_n+1/p^n}m_n=0$. Ainsi $s''\coloneqq (g^{a'_n+1/p^n})\in S$ et $s''(m_n)=0$ dans $M^{\mathbf{N}}$, donc $\frac{(m_n)}{s}=0$.
 
  Ensuite, soit $\frac{(k_n)}{s}\in \mathcal{G}(N)$. Puisque $(g^{p^{-\infty}})$ annule ${\rm coker}(f)$, pour tout $n$ il existe 
  $m_n\in M$ tel que $g^{1/p^n} k_n=f(m_n)$. Posons $s'=s\cdot (g^{1/p^n})\in S$, alors 
  $\frac{(k_n)}{s}=\mathcal{G}(f)(\frac{(m_n)}{s'})$, ce qui permet de conclure.
\end{proof}
  
    Si $C$ est une $A$-alg\`ebre, alors $\mathcal{G}(C)$ est aussi une $A$-alg\`ebre, via le morphisme diagonal 
    $A\to A^{\mathbf{N}}$.

\begin{lemm}\label{GabalCM}
 Soit $C$ une $A$-alg\`ebre et soit $x=(x_1,\ldots, x_d)$ une suite dans $A$. Si $x$ est presque r\'eguli\`ere (dans le cadre 
 $g^{1/p^{\infty}}$) dans $C$, alors $x$ devient une suite r\'eguli\`ere dans $\mathcal{G}(C)$.
\end{lemm}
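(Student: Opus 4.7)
Le plan est de ramener l'énoncé à deux vérifications via l'exactitude du foncteur $\mathcal{G}$. Puisque $M \mapsto M^{\mathbf{N}}$ (produit dénombrable) et la localisation en $S$ sont exacts sur les $A$-modules, le foncteur $\mathcal{G}$ est exact. Posons $M_i \coloneqq C/(x_1,\ldots,x_i)C$ pour $0\leq i\leq d$. Un petit calcul avec représentants montre que $\mathcal{G}((x_1,\ldots,x_i)C) = (x_1,\ldots,x_i)\mathcal{G}(C)$, et donc $\mathcal{G}(M_i) \simeq \mathcal{G}(C)/(x_1,\ldots,x_i)\mathcal{G}(C)$. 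Montrer que $(x_1,\ldots,x_d)$ est régulière dans $\mathcal{G}(C)$ revient alors à établir: (i) pour tout $0\leq i<d$, la multiplication par $x_{i+1}$ est injective sur $\mathcal{G}(M_i)$, et (ii) $\mathcal{G}(M_d)\neq 0$.

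Pour (i), notons $N_i\subset M_i$ le noyau de la multiplication par $x_{i+1}$. La définition de presque régularité (déf.~\ref{prreg}) donne exactement que $N_i$ est annulé par $(g^{p^{-\infty}})$, autrement dit que le morphisme $N_i \to 0$ est un presque isomorphisme dans le cadre $g^{1/p^{\infty}}$. Le lemme~\ref{Galalisom} fournit donc $\mathcal{G}(N_i)=0$, et l'exactitude de $\mathcal{G}$ appliquée à $0\to N_i \to M_i \to M_i$ (où la seconde flèche est la multiplication par $x_{i+1}$) garantit l'injectivité voulue. On peut d'ailleurs donner la même preuve à la main: tout élément de $\mathcal{G}(N_i)$ s'écrit $[(n_k)_k/s]$ et la suite $s' \coloneqq (g^{1/p^k})_{k\geq 0}$ est dans $S$ (puisque $1/p^k\to 0$) et tue chaque $n_k$ par hypothèse.

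Pour (ii), la non-nullité dans la définition de presque régularité fournit $m\in M_d$ et $n_0\geq 0$ tels que $g^{1/p^{n_0}}m\neq 0$. L'idée est d'exhiber un relevé non nul via la suite constante: on considère $[(m,m,m,\ldots)/1]\in\mathcal{G}(M_d)$. Si cet élément était nul, il existerait $s=(g^{a_k})_{k\geq 0}\in S$, avec $a_k\in\mathbf{N}[\frac{1}{p}]$ et $a_k\to 0$, tel que $g^{a_k}m=0$ pour tout $k$. Or $a_k\to 0$ entraîne qu'il existe $k$ avec $0\leq a_k\leq 1/p^{n_0}$; un calcul élémentaire montre que $1/p^{n_0}-a_k\in\mathbf{N}[\frac{1}{p}]$, d'où $g^{1/p^{n_0}}m = g^{1/p^{n_0}-a_k}\cdot g^{a_k}m=0$, contradiction. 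L'unique subtilité — et l'obstacle le plus sérieux, quoique modeste — consiste à manipuler correctement les exposants fractionnaires dans $\mathbf{N}[\frac{1}{p}]$ en s'appuyant sur le système compatible de racines $p$-puissantes de $g$ et sur la relation $g^r g^s=g^{r+s}$; ceci une fois admis, l'argument est essentiellement formel.
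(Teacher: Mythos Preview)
Your proof is correct and follows essentially the same approach as the paper's. The paper argues both points by hand with explicit representatives in $C^{\mathbf{N}}$, while you package step (i) via the exactness of $\mathcal{G}$ together with lemme~\ref{Galalisom}; for step (ii) the paper shows $1\notin (x_1,\ldots,x_d)\mathcal{G}(C)$ by contradiction (deducing $(g^{p^{-\infty}})\subset (x_1,\ldots,x_d)C$), whereas you exhibit a specific nonzero element via the constant sequence --- but the underlying mechanism (multiplying by a suitable $g^{1/p^n}$ to pass from ``almost'' to ``actual'') is identical in both arguments.
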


\begin{proof}
 Montrons d'abord que $\mathcal{G}(C)/(x_1,\ldots, x_d)\mathcal{G}(C)\ne \{0\}$. 
 Sinon, il existe $\alpha_i\in S^{-1}C^{\mathbf{N}}$ tels que 
 $1=\sum_{i=1}^d x_i \alpha_i$, donc $1=\sum_{i=1}^d x_i \frac{y_i}{s}$ pour certains $s\in S$ et 
 $y_i\in C^{\mathbf{N}}$. Il existe $s'\in S$ tel que $s'(s-\sum_{i=1}^d x_i y_i)=0$ dans $C^{\mathbf{N}}$. Si 
 $s=(g^{a_n})_{n\geq 0}$ et $s'=(g^{a'_n})_{n\geq 0}$, on en d\'eduit (projeter sur la $n$-i\`eme composante) que $g^{a_n+a'_n}\in (x_1,\ldots, x_d)C$ pour tout~$n$. Puisque $a_n+a'_n\to 0$, il s'ensuit que $(g^{p^{-\infty}})$ annule $C/(x_1,\ldots, x_d)$, contredisant le fait que $x$ est presque r\'eguli\`ere dans $C$.
 
  Il nous reste \`a v\'erifier que si $a\in \mathcal{G}(C)$ v\'erifie $ax_{i+1}\in (x_1,\ldots, x_i)\mathcal{G}(C)$, alors 
  $a\in (x_1,\ldots, x_i)\mathcal{G}(C)$. Il existe $s\in S$ et $b, z_k\in C^{\mathbf{N}}$
  tels que $a=\frac{b}{s}$ et 
  $ax_{i+1}=\sum_{k=1}^i x_k \frac{z_k}{s}$, et il existe 
  $s'\in S$ tel que $s'(bx_{i+1}-\sum_{k=1}^i x_k z_k)=0$. Si  
  $s'=(g^{a'_n})$, alors (par projection sur la $n$-i\`eme composante) 
  $g^{a'_n}b_n x_{i+1}\in (x_1,\ldots, x_i)C$. Comme 
  $(g^{p^{-\infty}})$ annule $\frac{(x_1,\ldots, x_i)C:(x_{i+1})C}{(x_1,\ldots, x_i)C}$, 
  il s'ensuit que $g^{a'_n+1/p^n}b_n\in (x_1,\ldots, x_i)C$ pour tout $n$. On peut donc \'ecrire 
  $$g^{a_n'+1/p^n}b_n=\sum_{k=1}^i x_k u_{n,k}$$
  pour certains $u_{n,k}\in C$. Si l'on pose $u_k=(u_{n,k})_{n\geq 0}\in C^{\mathbf{N}}$ et 
  $s''=(g^{1/p^n})_{n\geq 0}$, cela s'\'ecrit
  $s's'' b=\sum_{k=1}^i x_k u_k$
  dans $C^{\mathbf{N}}$, donc $a=\frac{b}{s}=\sum_{k=1}^i x_k \frac{u_k}{ss's''}\in (x_1,\ldots, x_i)\mathcal{G}(C)$. 
 \end{proof}
 
 \begin{lemm}\label{Gabperf}
 Si $C$ est un anneau perfectoïde, alors le compl\'et\'e $p$-adique de 
 $\mathcal{G}(C)$ est un anneau perfectoïde. 
 \end{lemm}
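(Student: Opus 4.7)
Le plan est d'appliquer la proposition~\ref{310} au compl\'et\'e $p$-adique $T$ de $\mathcal{G}(C)$. Par l'exemple~\ref{experf}(3) on fixe $\pi\in C$ $p$-puissant tel que $(\pi^p)=(p)$ et que le Frobenius $C/\pi\to C/\pi^p$ soit un isomorphisme. On supposera $C$ sans $p$-torsion, quitte \`a le remplacer par $C/C[p^{\infty}]$ (perfecto\"ide d'apr\`es la proposition~\ref{perfide}), de sorte que $\pi$ soit non diviseur de z\'ero dans $C$. On notera encore $\pi$ son image dans $D\coloneqq C^{\mathbf{N}}$ via la diagonale, dans $\mathcal{G}(C)=S^{-1}D$ et dans $T$.

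Les conditions \og formelles\fg{} de la proposition~\ref{310} se v\'erifient sans peine. D'abord $\pi^p\mid p$ est h\'erit\'e de $C$. Ensuite, comme $(\pi^p)=(p)$ dans $\mathcal{G}(C)$, les topologies $\pi$-adique et $p$-adique co\"incident, donc $T$ est $\pi$-complet. Enfin, $\pi$ est non diviseur de z\'ero dans $C$, donc dans $D$ puis dans $\mathcal{G}(C)$ par localisation, et cette propri\'et\'e passe au compl\'et\'e $T$ par un argument standard de coh\'erence des syst\`emes projectifs, qui fournit par la m\^eme occasion $T/\pi^n\simeq \mathcal{G}(C)/\pi^n$ pour tout $n\geq 1$.

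L'\'etape cruciale, et la principale difficult\'e, sera de montrer que le Frobenius $\varphi$ induit un isomorphisme $\mathcal{G}(C)/\pi\simeq \mathcal{G}(C)/\pi^p$. On commencera par noter que $S$ est stable sous $\varphi$, puisque $(g^{a_n})^p=(g^{pa_n})\in S$. La surjectivit\'e s'obtient directement: pour $y/s\in \mathcal{G}(C)/\pi^p$ avec $s=(g^{a_n})\in S$, on pose $t=(g^{a_n/p})\in S$, de sorte que $t^p=s$, et on rel\`eve $y$ modulo $\pi^p$ par la surjectivit\'e (composante par composante) du Frobenius $C/\pi\to C/\pi^p$. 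L'injectivit\'e, coeur de l'argument, exploitera le fait que $C$ est $p$-clos dans $C[\frac{1}{\pi}]$ (proposition~\ref{Andrepclos}(b), applicable car le Frobenius sur $C/\pi$ est injectif): si $\varphi(y/s)=0$, il existe $s'=(g^{a'_n})\in S$ avec $g^{a'_n}y_n^p\in \pi^p C$ pour tout $n$, soit $(g^{a'_n/p}y_n/\pi)^p\in C$; le $p$-clos donne alors $g^{a'_n/p}y_n\in \pi C$, donc $s''y\in \pi D$ pour $s''=(g^{a'_n/p})\in S$, et finalement $y/s=0$ dans $\mathcal{G}(C)/\pi$. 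La proposition~\ref{310} permettra alors de conclure.
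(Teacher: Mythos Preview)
Your approach differs from the paper's. The paper gives a two-line argument: $C^{\mathbf{N}}$ is perfectoid as a product of perfectoid rings (exemple~\ref{experf}), and then one invokes an external result (cor.~2.1.6 de \textcite{CS}) stating that the $p$-adic completion of any localization of a perfectoid ring remains perfectoid. Your proof is more self-contained, verifying the hypotheses of Proposition~\ref{310} directly; in particular you make explicit the key structural feature of $S$, namely its stability under extraction of $p$-th roots, which is what drives both the surjectivity and the injectivity of the Frobenius on $\mathcal{G}(C)/\pi$.

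There is, however, a genuine gap in your reduction step. Replacing $C$ by $C'=C/C[p^{\infty}]$ replaces $\mathcal{G}(C)$ by $\mathcal{G}(C')$, and these need not have the same $p$-adic completion: the kernel of $\mathcal{G}(C)\to\mathcal{G}(C')$ is $S^{-1}(C[p]^{\mathbf{N}})$, annihilated by $p$ but in general contributing nontrivially to $\varprojlim_n \mathcal{G}(C)/p^n$. So as written you only establish the lemma for $C$ without $p$-torsion. This is harmless for the sole application in the paper (th\'eor\`eme~\ref{Shimo}, where $C=P\in{\rm Perf}_{K^0}^{\rm tf}$), but does not reach the stated generality. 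Without that reduction your argument cannot conclude, since Proposition~\ref{310} genuinely requires $\pi$ to be a non-zerodivisor, and the passage $T/\pi^n\simeq\mathcal{G}(C)/\pi^n$ likewise fails. Incidentally, your injectivity step can be phrased without the detour through $p$-closure in $C[\tfrac{1}{\pi}]$ (which already presupposes $\pi$ non-zerodivisor): from $(g^{a'_n/p}y_n)^p\in\pi^pC$ one gets $g^{a'_n/p}y_n\in\pi C$ directly by the injectivity of the Frobenius $C/\pi\to C/\pi^p$.
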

 
 \begin{proof}
  $C^{\mathbf{N}}$ est un produit d'anneaux perfectoïdes, donc un anneau perfectoïde. Il suffit donc de montrer que le compl\'et\'e $p$-adique d'un localis\'e d'un anneau perfectoïde est encore perfectoïde, cf.\ cor. $2.1.6$ de \cite{CS}.
   \end{proof}
   
   \subsection{Construction d'alg\`ebres perfectoïdes de Cohen--Macaulay, fonctorialit\'e faible}
   
   Nous pouvons maintenant mettre ensemble les r\'esultats ci-dessus et obtenir une preuve de l'existence de $A$-alg\`ebres de Cohen--Macaulay pour tout 
   $A\in {\rm CLI}_p$. Ceci a \'et\'e d\'emontr\'e pour la premi\`ere fois dans l'article d'\textcite{AndreDSC}, et raffin\'e ensuite dans celui de \textcite{Shimo}
   sous la forme suivante: 
      
\begin{theo}\label{Shimo}
 Pour tout $(A, \mathfrak{m})\in {\rm CLI}_p$ (def.~\ref{symp})
il existe une $A$-algèbre perfectoïde ${\rm CM}$ (d\'ef.~\ref{perfCM}) et $\mathfrak{m}$-complète. 
\end{theo}

\begin{proof}
 Soient $g,x_1,\ldots, x_d, P$ et $Q$ comme dans la proposition~\ref{alCMexist}.
 Le presque isomorphisme 
 $P\to Q$ induit un isomorphisme $\mathcal{G}(P)\simeq \mathcal{G}(Q)$ (lemme~\ref{Galalisom}), donc $\mathcal{G}(Q)\simeq \mathcal{G}(P)$
 devient un anneau perfectoïde apr\`es compl\'etion $p$-adique (lemme~\ref{Gabperf}). La suite $(x_1,\ldots, x_d)$ devient r\'eguli\`ere dans $\mathcal{G}(Q)$ (lemme~\ref{GabalCM}), en particulier 
 $\mathcal{G}(Q)\in {\rm Perf}_{K^0}^{\rm tf}$. Le lemme~\ref{221} permet de conclure que le compl\'et\'e 
 $\mathfrak{m}$-adique de $\mathcal{G}(Q)$ est une $A$-alg\`ebre perfectoïde ${\rm CM}$ et $\mathfrak{m}$-complète. 
\end{proof}
                
               \begin{rema}
              Il n'est pas difficile d'en d\'eduire que tout anneau local noeth\'erien complet 
              $(A, \mathfrak{m})$ d'in\'egale caract\'eristique $(0,p)$ admet une $A$-algèbre perfectoïde ${\rm CM}$ (d\'ef.~\ref{perfCM}) $\mathfrak{m}$-complète. En effet, on voit facilement (utiliser le th\'eor\`eme de Cohen) qu'il existe une $A$-alg\`ebre locale noeth\'erienne compl\`ete $\tilde{A}$, dont le corps r\'esiduel est alg\'ebriquement clos, et fid\`element plate sur
              $A$. Soit $\wp$ un id\'eal premier minimal de $\tilde{A}$ tel que $\dim \tilde{A}/\wp=\dim \tilde{A}$.   Alors 
              $\tilde{A}/\wp\in {\rm CLI}_p$, donc il existe une $\tilde{A}/\wp$-alg\`ebre $C$ perfectoïde {\rm CM}, qui est
              automatiquement une $\tilde{A}$-alg\`ebre perfectoïde ${\rm CM}$. Puisque $A\to \tilde{A}$ est fid\`element plat, tout syst\`eme de param\`etres de 
              $A$ s'\'etend en un syst\`eme de param\`etres de $\tilde{A}$, et donc devient une suite r\'eguli\`ere dans $C$. Ainsi $C$ est une $A$-alg\`ebre perfectoïde {\rm CM}. Il suffit de compl\'eter $C$ pour la topologie $\mathfrak{m}$-adique et d'appliquer le lemme~\ref{221} pour conclure.
                   \end{rema}
                   
                   La m\'ethode employ\'ee ci-dessus est assez souple pour retrouver l'un des r\'esultats fondamentaux 
                   de l'article d'\textcite{AndreJAMS}: 
                   
                   \begin{theo}\label{weak} Soit $f\colon  A\to A'$ un morphisme surjectif dans ${\rm CLI}_p$ (d\'ef.~\ref{symp}).
Pour toute $A$-algèbre perfectoïde {\rm CM} et $\mathfrak{m}_{A}$-complète $C$
 il existe une $A'$-algèbre perfectoïde {\rm CM} et $\mathfrak{m}_{A'}$-complète $C'$ s'insérant dans un diagramme commutatif 
  $$\xymatrix@R=.6cm@C=.8cm{A\ar[r]^-{}\ar[d]^{}&
A' \ar[d]\\
C\ar[r]^-{}& C'}$$
\end{theo}

\begin{proof} On peut supposer que $f$ est la projection canonique $A\to A/\wp$ pour un $\wp\in {\rm Spec} A[\frac{1}{p}]$.
Il suffit de remplacer l'usage de la proposition~\ref{alCMexist} dans la preuve ci-dessus par celui de la proposition~\ref{UGLY}.
En effet, les arguments utilis\'es dans la preuve du th\'eor\`eme~\ref{Shimo} montrent que le compl\'et\'e 
$\mathfrak{m}$-adique $C'$ de $\mathcal{G}(Q)$ (avec $Q$ comme dans la proposition~\ref{UGLY}) r\'epond \`a l'appel: c'est une 
$A'$-algèbre perfectoïde CM et $\mathfrak{m}_{A'}$-complète, et on dispose d'un morphisme $P\to C'$, donc aussi d'un morphisme $C\to C'$, qui s'ins\`ere dans un diagramme comme dans le th\'eor\`eme par construction. 
\end{proof}

\begin{rema}
\begin{enumerate}

\item On trouve dans le th\'eor\`eme $4.1.1$ d'\textcite{AndreJAMS} et dans le livre de  \textcite{GR} des
         formes
plus raffinées et générales, mais le théorème ci-dessus contient déjà bon nombre de difficultés essentielles.

\item  En utilisant des factorisations de Cohen, on peut en déduire
que tout morphisme $f\colon  A\to A'$ dans ${\rm CLI}_p$ s'insère dans un diagramme comme ci-dessus, pour une 
$A$-algèbre CM $C$ et une $A'$-algèbre CM $C'$. Ceci a des multiples
applications, voir par exemple l'article de \textcite{HH2}. Mentionnons simplement une conséquence frappante: si $A$ est un anneau local régulier, extension scindée d'un sous-anneau 
$A'$, alors $A'$ est un anneau de Cohen--Macaulay\footnote{Ceci avait été obtenu en inégale caractéristique par  \textcite{HMa}, avant l'article d'\textcite{AndreJAMS}.}. 


\end{enumerate}
\end{rema}

         

         \section{ Alg\`ebres de Cohen--Macaulay via le lemme d'Abhyankar perfectoïde}
         
         Dans cette derni\`ere section nous pr\'esentons l'approche initiale d'\textcite{AndreDSC} pour construire des alg\`ebres de Cohen--Macaulay, en utilisant le lemme d'Abhyankar perfectoïde. Il est possible de pousser cette m\'ethode pour obtenir une preuve du th\'eor\`eme~\ref{weak} (c'est ce qui est fait dans l'article d'\textcite{AndreJAMS}), mais on ne le fera pas ici.

     Pour toute $K^0$-algèbre $R$ notons 
    $R^{\natural}=W(R^{\flat})/(p-[p^{\flat}])$.
   Si $P\in {\rm Perf}_{K^0}^{\rm tf}$ alors $P^{\natural}\simeq P$ via $\theta_P$, donc 
   si $R$ est une $P$-algèbre, alors $R^{\natural}$ est une $P\simeq P^{\natural}$-algèbre. 
    Si 
   $R$ est $p$-complète,  
  tout morphisme 
   $P\to R$ se factorise canoniquement $P\to R^{\natural}\to R$.

         \subsection{La construction fondamentale}
                  
         \emph{Motivés par la proposition~\ref{UGLY}, considérons le contexte suivant. On se donne}
                  
         $\bullet$ un morphisme $A'\to A$ dans ${\rm CLI}_p$ (d\'ef.~\ref{symp}) et un élément $g\in pA'$.
     
     $\bullet$ une algèbre $P\in {\rm Perf}_{K^0}^{\rm tf}$ dans laquelle $g$ devient $p$-puissant et non nul et s'insérant
     dans un diagramme commutatif 
    $$\xymatrix@R=.6cm@C=.8cm{A'\ar[r]^-{}\ar[d]^{}&
A \ar[d]\\
P\ar[r]^-{}& Q\coloneqq g^{-1/p^{\infty}}P }$$
Notons que l'image de $g$ dans $Q$ est non nulle, car $P$ est r\'eduit.


  Soit $A\to A_1$ une extension finie, étale après inversion de $g$, et posons\footnote{Rappelons que 
    ${\rm fi}(A,B)$ désigne la clôture intégrale de $A$ and $B$.}
 $$\mathcal{F}(A_1)={\rm fi}(Q, A_1\otimes_A Q[\frac{1}{g}]).$$
Puisque
    $A\to A_1$ est entier, le morphisme naturel $A_1\to A_1\otimes_A Q[\frac{1}{g}]$ se factorise à travers $\mathcal{F}(A_1)$, induisant ainsi un diagramme commutatif
      $$\xymatrix@R=.6cm@C=.8cm{A\ar[r]^-{}\ar[d]^{}&
A_1 \ar[d]\\
Q\ar[r]^-{}& \mathcal{F}(A_1)}$$ 
Consid\'erons la $P$-alg\`ebre
   $$ \mathcal{P}(A_1)\coloneqq \mathcal{F}(A_1)^{\natural}.$$
      Il n'est pas clair que $\mathcal{F}(A_1)$ soit $p$-complète, mais la preuve du résultat ci-dessus montrera que c'est bien le cas, donc 
      le morphisme $P\to Q\to \mathcal{F}(A_1)$ se factorise $P\to \mathcal{P}(A_1)\to \mathcal{F}(A_1)$.

     \begin{prop}\label{HARD}
     Soit $A\to A_1$ une extension finie, étale après inversion de 
     $g$. La $P$-algèbre $\mathcal{P}(A_1)$ est presque fidèlement plate (dans le cadre $g^{1/p^{\infty}}$) modulo $p$ sur $P$, sans $g$-torsion et $g^{-1/p^{\infty}}\mathcal{P}(A_1)=\mathcal{F}(A_1)$.       \end{prop}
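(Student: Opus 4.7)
My plan is to derive this proposition as a direct consequence of the perfectoïde Abhyankar lemma (Theorem~\ref{downtownabbey}), once the setup has been translated into its Banach-algebraic framework. The key point is that $\mathcal{F}(A_1)$ should coincide, up to almost isomorphism in the framework $g^{1/p^{\infty}}$, with the object $\tilde C^0$ of that lemma, and $\mathcal{P}(A_1) = \mathcal{F}(A_1)^{\natural}$ is by construction precisely the object $W((\tilde C^0)^{\flat})/\xi$ whose injection into $\tilde C^0$ is the central statement of the lemma.

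First I would set $B \coloneqq P[\frac{1}{p}] \in {\rm Perf}_K$, whose unit ball $B^0 = P_*$ is almost equal to $P$ in the $p^{1/p^{\infty}}$ framework; since $p \mid g$ in $A'$, almost-zero modules in the $p^{1/p^{\infty}}$ framework remain harmless after inverting a $g^{1/p^{\infty}}$-saturation operation. The element $g \in P$ is $p$-puissant and nonzero; its $g^{\infty}$-torsion in $P$ is killed by $(g^{p^{-\infty}})$ by Proposition~\ref{reduced}, so one may treat $g$ as a non-zero-divisor. As $A \to A_1$ is injective and finite étale after inverting $g$, the base change $C \coloneqq A_1 \otimes_A B[\frac{1}{g}]$ is finite étale and faithfully flat over $B[\frac{1}{g}]$, so Theorem~\ref{downtownabbey} applies and yields $\tilde C^0 \coloneqq {\rm fi}(g^{-1/p^{\infty}} B^0, C)$ satisfying: (a) $\theta\colon W((\tilde C^0)^{\flat})/\xi \hookrightarrow \tilde C^0$ is injective and an almost isomorphism in the $(\pi g)^{1/p^{\infty}}$ framework, and (b) $\tilde C^0/p^n$ is almost finite étale and almost faithfully flat over $B^0/p^n$ for every $n$.

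Next I would identify $\mathcal{F}(A_1) = {\rm fi}(g^{-1/p^{\infty}} P, C)$ with $\tilde C^0$ up to almost isomorphism in the $g^{1/p^{\infty}}$ framework, using that $g^{-1/p^{\infty}} B^0 / g^{-1/p^{\infty}} P$ is annihilated by $p^{1/p^{\infty}}$ and therefore, after saturating by $g^{1/p^{\infty}}$ (and using $p \mid g$), contributes only an almost-zero perturbation. Granted this, $\mathcal{P}(A_1) = W(\mathcal{F}(A_1)^{\flat})/\xi \simeq W(\tilde C^0{}^{\flat})/\xi$, and statement~(a) transports to the injectivity (and almost-isomorphism status) of the canonical map $\mathcal{P}(A_1) \hookrightarrow \mathcal{F}(A_1)$. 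Since $\mathcal{F}(A_1) \subset A_1 \otimes_A P[\frac{1}{g}]$ has $g$ invertible, its subring $\mathcal{P}(A_1)$ is automatically $g$-torsion-free. The identity $g^{-1/p^{\infty}} \mathcal{P}(A_1) = \mathcal{F}(A_1)$ then splits into two inclusions: the forward one is just the statement that ${\rm coker}(\mathcal{P}(A_1) \hookrightarrow \mathcal{F}(A_1))$ is annihilated by $g^{1/p^{\infty}}$, which is (a); the backward one follows from the fact that $\mathcal{F}(A_1)$ is already closed under the $g^{1/p^{\infty}}$-saturation, being the integral closure of the $g^{1/p^{\infty}}$-saturated ring $Q = g^{-1/p^{\infty}} P$ in the $g$-inverted algebra $A_1 \otimes_A P[\frac{1}{g}]$. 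Finally, the almost faithful flatness of $\mathcal{P}(A_1)/p$ over $P/p$ is obtained by combining (b) with the almost isomorphisms $\mathcal{P}(A_1) \simeq \mathcal{F}(A_1) \simeq \tilde C^0$ and $P/p \simeq B^0/p$.

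The main obstacle I anticipate is the meticulous bookkeeping between the three almost-frameworks in play ($p^{1/p^{\infty}}$, $g^{1/p^{\infty}}$ and $(\pi g)^{1/p^{\infty}}$): Theorem~\ref{downtownabbey} formulates its conclusions in $(\pi g)^{1/p^{\infty}}$ while the target is a statement in $g^{1/p^{\infty}}$, and the passage between the two must be synchronised with the identification $\mathcal{F}(A_1) \simeq \tilde C^0$ so that no information is lost. In particular, the verification that the deviation between $g^{-1/p^{\infty}} P$ and $g^{-1/p^{\infty}} B^0 = g^{-1/p^{\infty}} P_*$ is genuinely harmless after $g^{1/p^{\infty}}$-saturation relies crucially on the hypothesis $g \in pA'$ and on the Frobenius structure of $P$ encapsulated by Proposition~\ref{310}.
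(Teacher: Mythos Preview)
Your overall strategy—reducing to the perfectoïde Abhyankar lemma—is exactly the paper's, and your identification of $\mathcal{P}(A_1)$ with the object $W((\tilde C^0)^{\flat})/\xi$ is the right one. But there is a concrete gap in your choice of Banach algebra $B$.

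You set $B = P[\frac{1}{p}]$, with $B^0 = P_*$. The Abhyankar lemma (Theorem~\ref{downtownabbey}) and the Riemann extension theorem underlying it require $g$ to be a \emph{non-zero-divisor} in $B^0$. The setup of this section explicitly does \emph{not} assume that $P$ is $g$-torsion-free, and since $P$ is $p$-torsion-free its $g$-torsion survives into $P[\frac{1}{p}]$ and into $P_*$. Knowing (Proposition~\ref{reduced}) that $P[g^{\infty}]$ is killed by $(g^{p^{-\infty}})$ lets you say this torsion is almost zero, but it does not let you ``treat $g$ as a non-zero-divisor'': the hypothesis of the theorem simply fails. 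This is not a bookkeeping issue between almost-frameworks; it is a missing hypothesis.

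The paper's remedy is to replace $P$ by a perfectoïde, $g$-torsion-free approximation before invoking Abhyankar: one takes $B = Q^{\natural}[\frac{1}{p}]$, where $Q^{\natural} = W(Q^{\flat})/\xi$ is perfectoïde by construction. Three preliminary lemmas then establish that $Q$ and $Q^{\natural}$ are $p$-complete and $p$-closed in their $p$-inversions, that $\theta_Q\colon Q^{\natural}\hookrightarrow Q$ is injective with cokernel killed by $(g^{p^{-\infty}})$ (so $Q^{\natural}$ is $g$-torsion-free and $g^{-1/p^{\infty}}Q^{\natural} = Q$), and finally that $g^{-1/p^{\infty}} B^0 = Q$. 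With this choice one has $\tilde C^0 = {\rm fi}(g^{-1/p^{\infty}}B^0, C) = {\rm fi}(Q, C) = \mathcal{F}(A_1)$ \emph{on the nose}, so the almost-isomorphism bookkeeping you anticipated between $\mathcal{F}(A_1)$ and $\tilde C^0$ evaporates entirely. The remaining conclusions then follow exactly as you outlined. In short: the obstacle is not juggling three almost-frameworks, it is producing a genuinely $g$-torsion-free perfectoïde replacement for $P$, and $Q^{\natural}$ is how the paper does it.
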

                     
        Ce résultat est une application du lemme d'Abhyankar perfectoïde, mais la vérification des hypothèses demande quelques 
        préliminaires.

\begin{lemm} \label{alperf}
Si $S\in \{Q, Q^{\natural}\}$ alors l'algèbre
$S$ est $p$-complète, sans $g$-torsion, 
et $p$-close dans $S[\frac{1}{p}]$. 
\end{lemm}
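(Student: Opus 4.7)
La strat\'egie consiste \`a traiter s\'epar\'ement les deux cas $S = Q$ et $S = Q^{\natural}$, en exploitant la description explicite $Q = \bigcap_{n \geq 0} g^{-1/p^n} P \subseteq P[\frac{1}{g}]$ pour le premier et le caract\`ere perfectoïde tautologique du second.

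Pour $Q$: l'absence de $g$-torsion est imm\'ediate puisque $Q$ se plonge dans $P[\frac{1}{g}]$. La $p$-compl\'etude se v\'erifie par un argument de suites de Cauchy: si $(q_k)$ est $p$-Cauchy dans $Q$, alors pour chaque $n \geq 0$ la suite $(g^{1/p^n} q_k)_k$ est $p$-Cauchy dans $P$ (car $g^{1/p^n}(p^N Q) \subseteq p^N P$) et converge vers $y_n \in P$; les compatibilit\'es $g y_n = g^{1/p^n} y_0$ (issues de la continuit\'e) d\'efinissent, via l'absence de $p$-torsion de $P$, une limite $q = y_0/g \in P[\frac{1}{g}]$ satisfaisant $g^{1/p^n} q = y_n \in P$, donc $q \in Q$. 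Pour la $p$-cl\^oture dans $Q[\frac{1}{p}]$, la relation $p \mid g$ donne $Q[\frac{1}{p}] \subseteq P[\frac{1}{g}]$, et l'injectivit\'e du Frobenius $Q/p \to Q/p^p$ se montre ainsi: pour $a \in Q$ avec $a^p = p^p b$, $b \in Q$, on a $(g^{1/p^n} a)^p = p^p g^{1/p^{n-1}} b \in p^p P$ pour $n \geq 1$, donc par injectivit\'e du Frobenius $P/p \to P/p^p$ (cons\'equence du caract\`ere perfectoïde de $P$, qui entra\^ine la $p$-cl\^oture de $P$ dans $P[\frac{1}{\pi}] = P[\frac{1}{p}]$ par proposition~\ref{banperf}, et donc l'injectivit\'e modulo $p = \pi^p \cdot \mathrm{unit\acute{e}}$ via proposition~\ref{Andrepclos}) on obtient $g^{1/p^n} a \in pP$; le cas $n = 0$ s'en d\'eduit par $ga = (g^{1/p})^{p-1} \cdot g^{1/p} a \in pP$, d'o\`u $a/p \in Q$ et $a \in pQ$.

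Pour $Q^{\natural} = W(Q^{\flat})/(p - [p^{\flat}])$: la $p$-compl\'etude d\'ecoule du caract\`ere perfectoïde (proposition~\ref{pcomplet}). L'absence de $g$-torsion passe par le basculement: l'\'el\'ement $g \in Q$ correspond \`a $g^{\flat} \in Q^{\flat}$, et une analyse directe (utilisant que $g$ est non diviseur de z\'ero dans $Q$ et la structure parfaite de $Q^{\flat}$) montre que $[g^{\flat}]$ est non diviseur de z\'ero dans $Q^{\natural}$. La $p$-cl\^oture dans $Q^{\natural}[\frac{1}{p}]$ s'obtient par la correspondance perfectoïde-Banach uniforme (section~\ref{dico}, proposition~\ref{banperf}): $Q^{\natural}[\frac{1}{p}]$ est naturellement une $K$-alg\`ebre de Banach perfectoïde dont la boule unit\'e (apr\`es \'elimination de la $p$-torsion \'eventuelle) est $p$-close par proposition~\ref{unif}. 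L'obstacle principal r\'eside dans ces \'etapes de $p$-cl\^oture, car $g$ n'est pas l'\'el\'ement perfectoïde canonique $\pi$; il faut exploiter simultan\'ement la structure $p$-puissante de $g$ (qui fournit $(g^{1/p})^{p-1} \in P$) et l'injectivit\'e du Frobenius modulo $p$ sur $P$, h\'erit\'ee de la perfectoïdicit\'e.
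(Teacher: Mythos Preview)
Your overall strategy is sound and close to the paper's, but there is a real gap in the arguments for $Q$: you systematically write ``$g^{1/p^n} q \in P$'' when in fact $g^{1/p^n} q \in \iota(P)$, the image of $P$ in $P[\tfrac{1}{g}]$. Since $P$ is only assumed $p$-torsion-free (the text explicitly warns that $P$ may have $g$-torsion), the map $\iota\colon P\to P[\tfrac{1}{g}]$ need not be injective, and the $p$-closure of $P$ in $P[\tfrac{1}{p}]$ does not immediately transfer to $\iota(P)$. Concretely, in your $p$-closure step you obtain $(g^{1/p^n}a)^p \in p^p\,\iota(P)$, but concluding $g^{1/p^n}a \in p\,\iota(P)$ would require Frobenius injectivity for $\iota(P)$, which is not what proposition~\ref{banperf} gives. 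The paper handles this by introducing a \emph{second} element of $(g^{p^{-\infty}})$: writing $\alpha x = \iota(a)$ and $(\alpha x)^p = p\,\iota(b)$ for lifts $a,b\in P$, one has $a^p-pb\in\ker\iota$, hence $\beta^p(a^p-pb)=0$ for any $\beta\in (g^{p^{-\infty}})$, and then $(\beta a)^p\in pP$ holds \emph{in $P$ itself}, where $p$-closure applies. This two-element trick is exactly what your argument is missing. Your Cauchy-sequence approach to $p$-completeness has the same conflation; the paper instead exhibits an isomorphism $Q\simeq \varprojlim_{\cdot g_n} P$ (transitions by multiplication by $g_n=g^{1/p^n-1/p^{n+1}}$), which absorbs the $g$-torsion cleanly since $(g^{p^{-\infty}})$ kills $\ker\iota$.

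For $Q^{\natural}$, your treatment of $p$-completeness and $p$-closure is correct and matches the paper (propositions~\ref{pcomplet} and~\ref{banperf}). However, your ``analyse directe'' for $[g^{\flat}]$ being a non-zero-divisor is not a proof; in fact the paper only establishes \emph{$p$}-torsion-freeness of $Q^{\natural}$ here (via $p^{\flat}$ being a non-zero-divisor in $Q^{\flat}$), and defers the absence of $g$-torsion to the next lemma, where it falls out of the injectivity of $\theta_Q\colon Q^{\natural}\to Q$.
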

            
        \begin{proof} Posons $R=Q^{\natural}$. Comme $Q$ est sans $g$-torsion et $p\mid g$, l'alg\`ebre
        $Q$ est sans $p$-torsion. Il en est de même de 
        $R$ puisque $p^{\flat}$ n'est pas un diviseur de zéro dans $Q^{\flat}$: si $x=(x_n)_{n\geq 0}\in Q^{\flat}$ vérifie 
  $p^{\flat}x=0$, et si $a_n\in Q$ est un relèvement de $x_n\in Q/p$, alors $a_n\in p^{1-1/p^n}Q$ et $a_n\equiv a_{n+1}^p\equiv 0\pmod {pQ}$ pour tout $n$, 
 donc $x=0$.
  
   L'anneau $R$ est $p$-complet car 
  perfectoïde (proposition~\ref{pcomplet}). Montrons que $Q$ est $p$-complet. 
Soit $g_n=g^{\frac{1}{p^n}-\frac{1}{p^{n+1}}}$. Le morphisme $\iota\colon  P\to P[\frac{1}{g}]$ induit un isomorphisme\footnote{L'injectivité est claire,
et si $(x_n)_{n\geq 0}\in \varprojlim_{\cdot g_n} \iota(P)$ alors la suite $(g^{-1/p^n}x_n)_{n\geq 0}$ est constante dans 
     $P[\frac{1}{g}]$, sa valeur $x$ est dans $Q$ par définition et $\alpha(x)=(x_n)_{n\geq 0}$.}
$$\alpha\colon  Q\to \varprojlim_{\cdot g_n} \iota(P),\,\, x\mapsto (g^{1/p^n}x)_{n\geq 0}.$$  
 Puisque $I\coloneqq (g^{p^{-\infty}})$ 
annule le noyau de $\iota\colon P\to \iota(P)$  
et contient les $g_n$, le morphisme $\iota$ induit un isomorphisme de pro-systèmes $\{P, (\cdot g_n)\}\to \{\iota(P), (\cdot g_n)\}$,
d'où 
$$Q\simeq \varprojlim_{\cdot g_n} \iota(P)\simeq \varprojlim_{\cdot g_n} P.$$
Comme $P$ est $p$-complet et sans $p$-torsion, on en déduit facilement que $Q$ est $p$-complet. 
        
        Montrons que $Q$ (resp.\ $R$) est $p$-clos dans $Q[\frac{1}{p}]$ (resp.\ $R[\frac{1}{p}]$).  
        La proposition~\ref{banperf} montre que $P$ (resp.\ $R$) est $p$-clos dans
  $P[\frac{1}{p}]$ (resp.\ $R[\frac{1}{p}]$). 
  Pour conclure il suffit de montrer que si 
  $x\in Q$ vérifie $x^p\in pQ$, alors $\frac{x}{p^{1/p}}\in P[\frac{1}{g}]$ est dans $Q$,
  ou encore que $\alpha \beta \frac{x}{p^{1/p}}\in \iota(P)$
 pour tous 
  $\alpha,\beta\in I$. Comme $x\in Q$ et $x^p\in pQ$, 
  il existe $a,b\in P$ tels que $\alpha x=\iota(a)$ et $(\alpha x)^p=p\iota(b)$.
  Alors $\beta^p(a^p-pb)=0$ (car $a^p-pb\in \ker(\iota)$ et $I$ annule $\ker(\iota)$), donc $(\beta a)^p\in pP$. Comme $P$ est $p$-clos dans $P[\frac{1}{p}]$, cela force
    $\beta a\in p^{1/p}P$ et  
 $\alpha\beta \frac{x}{p^{1/p}}=\iota(\frac{\beta a}{p^{1/p}})\in \iota(P)$, ce qui permet de conclure.
          \end{proof}
        
        \begin{lemm}\label{technical2}
                Le morphisme
$\theta_Q\colon  Q^{\natural}\to Q$ est injectif et $(g^{p^{-\infty}})$ annule son conoyau. On a donc 
$Q=g^{-1/p^{\infty}}Q^{\natural}\subset Q^{\natural}[\frac{1}{g}]$ et $Q^{\natural}$ est sans $g$-torsion.
\end{lemm}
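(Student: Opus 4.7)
The plan is to transfer the perfectoid isomorphism $\theta_P \colon P^{\natural} \simeq P$ (Théorème~\ref{tilt}, since $P$ is perfectoïde) to the desired statement for $\theta_Q$ via the presque isomorphism $\iota \colon P \to Q$ in the $g^{1/p^{\infty}}$-cadre. First I would check that $\iota$ induces a presque isomorphism $P^{\flat} \to Q^{\flat}$ in the $(g^{\flat})^{p^{-\infty}}$-cadre, where $g^{\flat} \in P^{\flat}$ corresponds to the compatible system $(g^{1/p^n})_n$: this reduces to the analogous assertion for the reduction $P/p \to Q/p$, which follows from $Q = g^{-1/p^{\infty}} P$ together with the $p$-torsion-freeness of $P$ and $Q$ (Lemme~\ref{alperf}), and the Frobenius compatibilities $x_{n-1} = x_n^p$ propagate almost-vanishing uniformly through the inverse limit. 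Applying $W(-)$ and reducing modulo the distinguished element $\xi = p - [p^{\flat}]$ then produces a presque isomorphism $\phi \colon P^{\natural} \to Q^{\natural}$.

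Inserting $\phi$ into the commutative square
$$\xymatrix@R=.6cm@C=.8cm{
P^{\natural} \ar[r]^{\phi} \ar[d]_{\theta_P}^{\wr} & Q^{\natural} \ar[d]^{\theta_Q} \\
P \ar[r]^{\iota} & Q
}$$
a formal diagram chase (using that $\theta_P$ is an isomorphism and that $\iota, \phi$ are presque isos) shows that the cokernel of $\theta_Q$ is annihilated by $(g^{p^{-\infty}})$. Moreover, any $w \in W(Q^{\flat})$ with $\theta_Q(w) = 0$ satisfies $(g^{\flat})^{1/p^n} w \in \xi W(Q^{\flat})$ for every $n$: lifting $(g^{\flat})^{1/p^n} w \bmod \xi$ through $\phi$ to an element $y \in P^{\natural}$, commutativity together with the injectivity of $\iota \circ \theta_P$ forces $y = 0$.

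The main obstacle is upgrading this almost injectivity to genuine injectivity, which I would tackle by an iterative $p$-adic argument. Writing $w = \sum_{n \geq 0} [a_n] p^n$, the equation $\sum a_n^{\sharp} p^n = 0$ in the $p$-torsion-free, $p$-complete ring $Q$ forces $a_0^{\sharp} \in pQ$, so the first component of $a_0 \in Q^{\flat} = \varprojlim_{x \mapsto x^p} Q/p$ vanishes in $Q/p$. The key technical step is to show $a_0 \in p^{\flat} Q^{\flat}$: a compatible sequence $(x_n)_n$ with $x_0 = 0$ satisfies $x_n^{p^n} = 0$ in $Q/p$, and lifting $x_n$ to $\tilde x_n \in Q$ yields $(\tilde x_n/p^{1/p^n})^{p^n} = \tilde x_n^{p^n}/p \in Q$ hence $\tilde x_n/p^{1/p^n} \in Q$ by iterated application of the $p$-closedness of $Q$ in $Q[\frac{1}{p}]$ (Lemme~\ref{alperf}); the perfectness of $Q^{\flat}$ combined with $p^{\flat}$ being a non-zero-divisor in $Q^{\flat}$ (also from Lemme~\ref{alperf}) lets one assemble these roots into a compatible $b \in Q^{\flat}$ with $a_0 = p^{\flat} b$. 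Writing $[p^{\flat}] = p - \xi$ then gives $w = -\xi [b] + p w'$ with $w' \in \ker \theta_Q$; iterating, the $p$-adic completeness of $W(Q^{\flat})$ together with the $p$-adic closedness of $\xi W(Q^{\flat})$ (exactly as in the proof of Proposition~\ref{pcomplet}) yields $w \in \xi W(Q^{\flat})$, i.e.\ genuine injectivity.

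The remaining assertions then follow immediately: since $\theta_Q$ is injective with cokernel killed by $(g^{p^{-\infty}})$, every $q \in Q$ satisfies $g^{1/p^n} q \in Q^{\natural}$ for all $n$, giving $Q \subset g^{-1/p^{\infty}} Q^{\natural}$ inside $Q^{\natural}[\frac{1}{g}] \subset Q[\frac{1}{g}]$; the reverse inclusion $g^{-1/p^{\infty}} Q^{\natural} \subset g^{-1/p^{\infty}} Q = Q$ holds by the very construction $Q = g^{-1/p^{\infty}} P$. Finally, $g$-torsion-freeness of $Q^{\natural}$ is inherited from its embedding into the $g$-torsion-free $Q$.
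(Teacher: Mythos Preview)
Your proof is correct and follows essentially the same route as the paper. Both arguments use the commutative square linking $\theta_P$ (an isomorphism) and $\theta_Q$ via the presque isomorphism $\iota^{\flat}\colon P^{\flat}\to Q^{\flat}$ to control the cokernel, and both deduce genuine injectivity from the $p$-closedness of $Q$ in $Q[\frac{1}{p}]$ established in Lemme~\ref{alperf}. The paper simply packages the injectivity step more tersely: since $R=Q^{\natural}$ and $Q$ are $p$-torsion-free and $p$-complete, it suffices to check injectivity of $R/p\simeq Q^{\flat}/p^{\flat}\to Q/p$, which the paper says ``se déduit'' from the injectivity of the Frobenius $Q/p^{1/p}\to Q/p$; your explicit $p$-adic iteration is precisely the unwinding of that sentence. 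One small point worth tightening: your phrase ``assemble these roots into a compatible $b$'' is the crux and deserves a line of justification --- the lifts $\tilde y_n=\tilde x_n/p^{1/p^n}$ satisfy only $\tilde y_{n+1}^p\equiv \tilde y_n\pmod{p^{1-1/p^n}}$, not modulo $p$, so one should either pass to $b_n\coloneqq \tilde y_{n+1}^p\bmod p$ (which \emph{is} compatible, as $(\tilde y_{n+2}^p-\tilde y_{n+1})^{p}\in pQ$) or invoke the $p$-closedness of the perfect ring $Q^{\flat}$ inside $Q^{\flat}[\frac{1}{p^{\flat}}]$.
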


\begin{proof} Notons encore $R=Q^{\natural}$.
  Montrons que 
  $\theta_Q\colon  R\to Q$ est injective, en particulier $g$ n'est pas un diviseur de zéro dans $R$. 
  Comme $R$ et $Q$ sont sans $p$-torsion et $p$-complets par la proposition ci-dessus, il suffit de montrer l'injectivité 
  de $R/p\simeq Q^{\flat}/p^{\flat}\to Q/p$, qui se déduit de celle du Frobenius 
  $Q/p^{1/p}\to Q/p$, cf.\ lemme.~\ref{alperf}.
  Puisque $\iota\colon  P\to Q$ est un presque isomorphisme dans le cadre $g^{1/p^{\infty}}$ et se factorise 
  $P\to R\to Q$, pour conclure il suffit de montrer que $P\to R$ est un presque isomorphisme, ou encore qu'il en est de même de 
 $\iota^{\flat}\colon  P^{\flat}\to Q^{\flat}$, ce qui est clair.       
\end{proof}

\begin{lemm}\label{appliab}
La $K$-algèbre $B\coloneqq Q^{\natural}[\frac{1}{p}]$ possède une structure de 
     $K$-algèbre de Banach perfectoïde sans $g$-torsion, telle que $g^{-1/p^{\infty}}B^0=Q$, en particulier 
     $B[\frac{1}{g}]=Q[\frac{1}{g}]\simeq P[\frac{1}{g}]$. 
\end{lemm}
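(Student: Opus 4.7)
The plan is to exploit the perfectoid $K^0$-algebra structure of $Q^{\natural}$ established in the previous lemmas, and use the dictionary of \S\ref{dico} to transfer this into a Banach algebra structure on $B=Q^{\natural}[\frac{1}{p}]$.

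First, I would gather the relevant properties of $Q^{\natural}$: it is perfectoid by definition, $p$-complete, $p$-torsion-free, and $p$-closed in $Q^{\natural}[\frac{1}{p}]$ (Lemma~\ref{alperf}), and $g$-torsion-free (Lemma~\ref{technical2}). Since $\pi^p$ and $p$ generate the same ideal in $K^0$, these properties also hold $\pi$-adically. Proposition~\ref{banperf} therefore yields that $(Q^{\natural})_{*}$ (in the $\pi^{1/p^{\infty}}$-framework of \S\ref{alpresque}) is perfectoid and belongs to $\mathcal{C}$; through the equivalence $\mathcal{C}\simeq \mathrm{Ban}_K^u$ of \S\ref{dico}, this endows $B$ with a canonical perfectoid Banach $K$-algebra structure with $B^0=(Q^{\natural})_*$. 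The $g$-torsion-freeness of $B$ follows directly from that of $Q^{\natural}$, since $B$ is a localization.

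The main content is the identity $g^{-1/p^{\infty}}B^0=Q$. The inclusion $Q=g^{-1/p^{\infty}}Q^{\natural}\subset g^{-1/p^{\infty}}(Q^{\natural})_*$ follows from Lemma~\ref{technical2} and $Q^{\natural}\subset (Q^{\natural})_*$. For the converse, I would first observe that since $p\mid g$ in $Q^{\natural}$ (as $g\in pA'$), the element $p$, and hence $\pi$, becomes invertible in $Q^{\natural}[\frac{1}{g}]$; consequently $(Q^{\natural})_*[\frac{1}{g}]=Q^{\natural}[\frac{1}{g}]$, and so any $f\in g^{-1/p^{\infty}}(Q^{\natural})_*$ already lives in $Q^{\natural}[\frac{1}{g}]$. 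For each $n\geq 0$, set $y_n=g^{1/p^n}f \in (Q^{\natural})_*\cap Q^{\natural}[\frac{1}{g}]$; the task is to show $y_n\in Q^{\natural}$. To do so, I would combine the defining bound $\pi^{1/p^m}y_n\in Q^{\natural}$ for all $m$ with the Frobenius isomorphism $Q^{\natural}/\pi\simeq Q^{\natural}/\pi^p$ and the $p$-closedness of $Q^{\natural}$ in $Q^{\natural}[\frac{1}{p}]$, propagating the divisibility through Frobenius to collapse the $\pi$-almost closure onto $Q^{\natural}$ itself.

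For the ``in particular'' assertion, inverting $g$ in $B$ gives $B[\frac{1}{g}]=Q^{\natural}[\frac{1}{p}][\frac{1}{g}]=Q^{\natural}[\frac{1}{g}]$ (since $p$ is already inverted once $g$ is), and this equals $Q[\frac{1}{g}]$ because the cokernel of $\theta_Q\colon Q^{\natural}\to Q$ is annihilated by $(g^{p^{-\infty}})$ and therefore disappears after inverting $g$; finally $Q[\frac{1}{g}]=P[\frac{1}{g}]$ by the very definition $Q=g^{-1/p^{\infty}}P$. The principal obstacle throughout is the delicate Step of showing that the $g^{1/p^{\infty}}$-almost closure is blind to the difference between $Q^{\natural}$ and $(Q^{\natural})_*$, which requires a careful interplay between the two almost-mathematical frameworks (in $g$ and in $\pi$) that have been running in parallel throughout Section~\ref{Aby}.
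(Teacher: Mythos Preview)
Your overall strategy coincides with the paper's: identify $B^0$ with $(Q^{\natural})_*$ via Proposition~\ref{banperf} and the dictionary of \S\ref{dico}, then prove $g^{-1/p^{\infty}}(Q^{\natural})_*=g^{-1/p^{\infty}}Q^{\natural}$, which equals $Q$ by Lemma~\ref{technical2}. Your treatment of the ``in particular'' clause is correct.

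There is however a genuine gap in the hard inclusion. You fix $n$, set $y_n=g^{1/p^n}f\in(Q^{\natural})_*$, and propose to deduce $y_n\in Q^{\natural}$ from the bounds $\pi^{1/p^m}y_n\in Q^{\natural}$ (all $m$) together with the Frobenius isomorphism and $p$-closedness of $Q^{\natural}$. But those hypotheses say nothing more than $y_n\in(Q^{\natural})_*$, and one does \emph{not} have $(Q^{\natural})_*=Q^{\natural}$ for a general perfectoid $K^0$-algebra; the perfectoid axioms alone cannot collapse the $\pi$-almost closure. What you are not using is the specific shape $y_n=g^{1/p^n}f$ together with the hypothesis $p\mid g$.

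The paper's one-line justification ``Comme $p\mid g$'' unpacks as follows. Since $g\in pQ^{\natural}$ and $(p)=(\pi^p)$, one has $(g^{1/p}/\pi)^p=g/\pi^p\in Q^{\natural}$; by $p$-closedness of $Q^{\natural}$ in $Q^{\natural}[\tfrac{1}{p}]$ this gives $\pi\mid g^{1/p}$, and iterating yields $\pi^{1/p^{k-1}}\mid g^{1/p^{k}}$ in $Q^{\natural}$ for every $k\geq 1$. Consequently $g^{1/p^{k}}\cdot(Q^{\natural})_*\subset \pi^{1/p^{k-1}}(Q^{\natural})_*\subset Q^{\natural}$ for all $k$, i.e.\ the ideal $(g^{p^{-\infty}})$ already lies in the conductor of $Q^{\natural}$ inside $(Q^{\natural})_*$. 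This immediately gives $g^{-1/p^{\infty}}(Q^{\natural})_*=g^{-1/p^{\infty}}Q^{\natural}$. So the ingredients you list ($p$-closedness, Frobenius) are the right ones, but they must be applied to $g$ to produce the divisibility $\pi^{1/p^{k-1}}\mid g^{1/p^k}$, rather than to the individual $y_n$.
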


\begin{proof}
Par la proposition~\ref{banperf} l'algèbre $Q^{\natural}_*\coloneqq 
  p^{-1/p^{\infty}}Q^{\natural}$ est perfectoïde, donc $B\coloneqq Q^{\natural}[\frac{1}{p}]$ possède une structure de 
     $K$-algèbre de Banach perfectoïde sans $g$-torsion, telle que $B^0=Q^{\natural}_*$.
     Comme 
    $p\mid g$ et $Q=g^{-1/p^{\infty}}Q^{\natural}$ (lemme~\ref{technical2}), on a 
    $$g^{-1/p^{\infty}}B^0=g^{-1/p^{\infty}}Q^{\natural}_*=g^{-1/p^{\infty}}Q^{\natural}=Q,$$
ce qui permet de conclure.
\end{proof}

 Revenons maintenant à la preuve de la proposition~\ref{HARD}.
Par le lemme~\ref{appliab} on peut munir 
  $B\coloneqq Q^{\natural}[\frac{1}{p}]$ d'une structure de 
     $K$-algèbre de Banach perfectoïde sans $g$-torsion, telle que $g^{-1/p^{\infty}}B^0=Q$, 
    en particulier $B[\frac{1}{g}]=Q[\frac{1}{g}]\simeq P[\frac{1}{g}]$.
           Puisque $A\to A_1$ est étale après inversion de $g$, 
     l'algèbre 
 $C=A_1\otimes_A B[\frac{1}{g}]=A_1\otimes_A Q[\frac{1}{g}]$ est finie, étale et fidèlement plate sur $B[\frac{1}{g}]$. Le lemme d'Abhyankar perfectoïde\footnote{Noter que, compte tenu de la discussion ci-dessus on a $\mathcal{F}(A')=\tilde{C}^0$ dans les notations du th\'eor\`eme~\ref{downtownabbey}.} 
 montre que $\mathcal{F}(A')$ 
 est presque fidèlement plat sur $B^0$ modulo $p$, donc aussi sur $P$ modulo $p$ (car $B^0$ et $P$ sont presque isomorphes dans le cadre 
 $g^{1/p^{\infty}}$), et le morphisme 
 $\mathcal{P}(A_1)= \mathcal{F}(A_1)^{\natural}\to \mathcal{F}(A_1)$ est injectif, de conoyau tué par $(g^{p^{-\infty}})$. 
               Cela permet de conclure.

                                
            \subsection{Nouvelle preuve de la proposition~\ref{alCMexist}}
            
          La proposition~\ref{HARD} et le lemme de platitude d'André fournissent une nouvelle preuve\footnote{Il s'agit en fait de la première preuve de ce résultat, due à  \textcite{AndreDSC}.} de la proposition~\ref{alCMexist}. Comme expliqu\'e dans la section pr\'ec\'edente, ceci implique le th\'eor\`eme~\ref{Shimo} via la construction de Gabber.
          
          Soit
               $A\in {\rm CLI}_p$ et soit $(x_1,\ldots, x_d)$ un système de paramètres, avec $x_1=p$. Le morphisme de $W(k)$-algèbres $A_0\coloneqq W(k)[[T_2,\ldots, T_d]]\to A, T_i\mapsto x_i$
            est fini et injectif. Soit $g\in pA_0\setminus \{0\}$ tel que 
            $A_0[\frac{1}{g}]\to A[\frac{1}{g}]$ soit étale, et soit $P\in {\rm Perf}_{K^0}^{\rm tf}$ une $A_0$-algèbre fidèlement plate modulo $p$ sur $A_0$, dans laquelle $g$
               devient $p$-puissant (cf.\ th\'eor\`eme~\ref{perfcover} et~\ref{nopresque}; le th\'eor\`eme~\ref{fflemma} ferait aussi l'affaire). 
                
                \begin{lemm}
                La suite $(x_1,\ldots, x_d)$ est régulière et presque régulière dans $P$ (dans le cadre $g^{1/p^{\infty}}$) et l'image de $g$ dans $P$ n'est pas nulle.
                \end{lemm}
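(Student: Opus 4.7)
Le plan consiste à renforcer l'hypothèse de fidèle platitude modulo~$p$ en une fidèle platitude pleine de $P$ sur $A_0$, puis à exploiter le caractère noethérien de $A_0$ (via le théorème d'intersection de Krull) pour ramener toutes les questions à des vérifications dans $A_0$ lui-même.

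J'établirais d'abord que $P$ est fidèlement plat sur $A_0$. Puisque $A_0$ est noethérien, que $P$ est $p$-adiquement complet (donc $p$-adiquement séparé) et sans $p$-torsion, et que $P/p$ est plat sur $A_0/p$ par hypothèse, le critère local de platitude s'applique avec $I=(p)$: le seul ${\rm Tor}$ à annuler est ${\rm Tor}_1^{A_0}(A_0/p,P)=P[p]=0$, d'où la platitude de $P$ sur $A_0$. Comme $P/\mathfrak{m}_{A_0}P=(P/p)\otimes_{A_0/p}k$ est non nul par fidèle platitude modulo~$p$, on obtient la fidèle platitude pleine, d'où l'injectivité de $A_0\to P$ et la relation $A_0\cap IP=I$ pour tout idéal $I$ de $A_0$. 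La régularité de la suite $(p,T_2,\ldots,T_d)$ dans l'anneau local régulier $A_0$, conjuguée à la platitude de $P$ sur $A_0$, donne alors la régularité de $(x_1,\ldots,x_d)$ dans $P$ (la non-nullité du quotient $P/(x_1,\ldots,x_d)P$ provenant de la fidèle platitude), et $g\neq 0$ dans $P$ puisque $g\neq 0$ dans $A_0$ par hypothèse.

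Le point principal et la seule difficulté sérieuse est la presque régularité. La suite étant déjà régulière au sens strict, la condition sur les idéaux de transporteurs dans la définition~\ref{prreg} est automatique: il reste à montrer que $(g^{p^{-\infty}})$ n'annule pas $P/(x_1,\ldots,x_d)P=P/\mathfrak{m}_{A_0}P$. Je raisonnerais par l'absurde: si $g^{1/p^n}\in\mathfrak{m}_{A_0}P$ pour tout $n\geq 0$, en élevant à la puissance $p^n$ on obtient $g\in\mathfrak{m}_{A_0}^{p^n}P$ pour tout $n$; la fidèle platitude fournissant $A_0\cap\mathfrak{m}_{A_0}^{p^n}P=\mathfrak{m}_{A_0}^{p^n}$, il s'ensuit que $g\in\bigcap_{n\geq 0}\mathfrak{m}_{A_0}^{p^n}$ dans $A_0$. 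Le théorème d'intersection de Krull (valable puisque $A_0$ est noethérien local) impose alors $g=0$ dans $A_0$, ce qui contredit l'hypothèse. L'argument repose crucialement sur le renforcement préalable en fidèle platitude pleine (et non pas seulement modulo~$p$), qui permet de transférer la condition de presque régularité dans le monde perfectoïde $P$ à une intersection d'idéaux dans l'anneau noethérien régulier $A_0$.
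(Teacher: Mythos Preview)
Your proof is correct and follows essentially the same strategy as the paper's: reduce the non-vanishing conditions to an intersection of powers of $\mathfrak{m}_{A_0}$ in the noetherian regular ring $A_0$, then invoke Krull's intersection theorem. The only difference is a technical one in how you transfer information from $P$ back to $A_0$.

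You first upgrade the hypothesis to \emph{full} faithful flatness of $P$ over $A_0$, via the local flatness criterion; this is correct because $P$, being $p$-torsion-free and $p$-adically complete, is derived $p$-complete over the noetherian ring $A_0$, so the criterion applies (your mention of ``$p$-adiquement séparé'' is not quite the right hypothesis to cite, but the conclusion stands). Once you have full faithful flatness, everything follows cleanly from $A_0\cap IP=I$. The paper instead stays with the more elementary observation that faithful flatness of $P/p$ over $A_0/p$ propagates by dévissage to faithful flatness of $P/p^n$ over $A_0/p^n$ for every $n$ (using that $p$ is a non-zerodivisor in both rings), and then uses this family of contractions modulo $p^n$ together with the closedness of ideals in the complete local ring $A_0$. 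Your route is conceptually a bit cleaner (one upgrade, then everything is immediate); the paper's route avoids invoking the local flatness criterion and is correspondingly more self-contained.
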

    
    \begin{proof} Si l'image de $g$ dans $P$ était nulle, elle le serait modulo $p^n$ pour tout $n$, or 
    $A_0/p^n\to P/p^n$ est fidèlement plat, donc $g\in \cap_{n\geq 1} p^n A_0=\{0\}$, une contradiction avec $g\in A_0\setminus \{0\}$. Ensuite,
    comme $P$ est fidèlement plat modulo $p$ sur $A_0$ et $x_1=p$, la suite est régulière dans 
    $P$. Il reste à expliquer pourquoi l'inclusion $(g^{p^{-\infty}})P\subset (x_1,\ldots, x_d)P$ est impossible. Si elle avait lieu, on aurait $g\in (x_1,\ldots, x_d)^k P$ pour tout $k$, et comme 
    $P/p^n$ est fidèlement plat sur $A_0/p^n$ pour tout $n$, cela forcerait $g\in \bigcap_{k} (x_1,\ldots, x_d)^k$, puis $g=0$ par le théorème d'intersection de Krull.
    \end{proof}

                En appliquant la proposition~\ref{HARD} au morphisme identité $A_0\to A_0$, avec  $A_1=A$ on obtient une 
                $P$-algèbre perfectoïde $\mathcal{P}(A)$ presque fidèlement plate modulo $p$ sur $P$, dans laquelle $g$ est non diviseur de zéro et qui s'insère dans un diagramme commutatif
                $$\xymatrix@R=.6cm@C=.8cm{A_0\ar[r]^-{}\ar[d]^{}&
A \ar[d]\\
P\ar[r]^-{}& g^{-1/p^{\infty}}\mathcal{P}(A)}$$ 
                 L'algèbre $\mathcal{P}(A)$ répond à l'appel lancé par la proposition~\ref{alCMexist} grâce au lemme ci-dessous:
    
     \begin{lemm}\label{alright} Soit $R$ un anneau sans $p$-torsion, $g\in R$ un élément 
     $p$-puissant et 
     $x=(x_1,\ldots, x_d)$ une suite dans~$R$, avec $x_1=p$. Si $x$~est presque régulière dans~$R$ (dans le cadre $g^{1/p^{\infty}}$), elle le reste 
   dans toute $R$-algèbre~$S$ qui est 
     presque fidèlement plate sur~$R$ modulo~$p$.
           
     \end{lemm}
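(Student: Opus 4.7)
La preuve s'articule autour des deux volets de la définition~\ref{prreg} de la presque régularité: la condition sur les noyaux des multiplications successives, et la non-annulation de $S/(x_1,\ldots,x_d)S$ par $(g^{p^{-\infty}})$. Pour cette dernière, puisque $x_1=p$, on a $S/(x)S\simeq R/(x)\otimes_{R/p} S/p$; la presque fidèle platitude de $\bar{S}\coloneqq S/p$ sur $\bar{R}\coloneqq R/p$ transporte alors la non-presque-nullité de $R/(x)$ (fournie par l'hypothèse sur $R$) en celle de $S/(x)S$.

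Pour la condition au rang $i\geq 1$, je me placerais modulo $p$. Comme $R$ est sans $p$-torsion, on a $R/(x_1,\ldots,x_j)=\bar{R}/(y_1,\ldots,y_{j-1})$ pour $j\geq 1$ en posant $y_j\coloneqq \bar{x}_{j+1}$, et la presque régularité de $x$ dans $R$ entraîne donc la presque nullité de chaque $(\bar{R}/(y_1,\ldots,y_{j-1}))[y_j]$. Une récurrence sur la longueur $n$, utilisant les suites exactes courtes
\[0\to H_j(K_{n-1})/y_n\to H_j(K_n)\to H_{j-1}(K_{n-1})[y_n]\to 0\]
issues de la construction inductive du complexe de Koszul $K_n=K(y_1,\ldots,y_n;\bar{R})$, montre alors que $H_j(K_n)$ est presque nul pour tous $n\leq d-1$ et $j\geq 1$.

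Le complexe de Koszul étant constitué de $\bar{R}$-modules libres, on a $K_n\otimes^L_{\bar{R}}\bar{S}=K_n\otimes_{\bar{R}}\bar{S}=K(y_1,\ldots,y_n;\bar{S})$, et la suite spectrale d'hyperhomologie
\[E^2_{p,q}=\mathrm{Tor}^{\bar{R}}_p(H_q(K_n),\bar{S})\Rightarrow H_{p+q}(K(y_1,\ldots,y_n;\bar{S}))\]
a tous ses termes presque nuls hors de la position $(0,0)$: pour $p\geq 1$ par presque platitude de $\bar{S}$ sur $\bar{R}$, et pour $q\geq 1$ par presque nullité des $H_q(K_n)$. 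On en déduit la presque nullité de $H_1(K(y_1,\ldots,y_i;\bar{S}))$ pour tout $i\geq 1$, et la suite exacte précédente appliquée dans $\bar{S}$ livre alors la presque nullité de $(\bar{S}/(y_1,\ldots,y_{i-1})\bar{S})[y_i]=(S/(x_1,\ldots,x_i)S)[x_{i+1}]$, ce qui est la condition au rang $i$.

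Le principal obstacle reste le cas $i=0$, soit la presque nullité de $S[p]$, qui ne semble pas découler formellement de la seule hypothèse de presque fidèle platitude modulo $p$. Dans l'application visée où $S=\mathcal{P}(A)$ et $g\in pR$, la proposition~\ref{HARD} garantit que $S$ est sans $g$-torsion, donc sans $p$-torsion, rendant la condition triviale.
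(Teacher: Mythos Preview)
Ton argument est correct et couvre la même stratégie que le papier : on traite la non-presque-nullité de $S/(x)S$ par fidèle platitude modulo~$p$, puis on travaille entièrement dans $\bar R=R/p$ et $\bar S=S/p$ pour les conditions en rang $i\geq 1$. La différence est que le papier procède de façon bien plus directe : au lieu de monter la machinerie Koszul et une suite spectrale, il tensorise simplement la suite exacte
\[
0\to (x_2,\ldots,x_i)\bar R:x_{i+1}\bar R\to \bar R\xrightarrow{\ \cdot x_{i+1}\ } \bar R/(x_2,\ldots,x_i)
\]
par $\bar S$, ce qui donne par presque platitude un presque isomorphisme $(x_2,\ldots,x_i)\bar S:x_{i+1}\bar S\simeq \bigl((x_2,\ldots,x_i)\bar R:x_{i+1}\bar R\bigr)\bar S$, et l'on conclut puisque $I=(g^{p^{-\infty}})$ envoie le membre de droite dans $(x_2,\ldots,x_i)\bar S$. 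Ton argument par homologie de Koszul est correct mais inutilement lourd ; l'approche du papier évite toute récurrence et toute suite spectrale.

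Ton observation sur le cas $i=0$ est parfaitement justifiée, et tu es en fait plus rigoureux que le papier : la preuve du lemme dans l'article affirme simplement \og on peut supposer que $i>0$ car $S$ est sans $p$-torsion \fg{}, alors que cette hypothèse n'apparaît pas dans l'énoncé du lemme (seul $R$ est supposé sans $p$-torsion). Comme tu le remarques, rien dans la presque fidèle platitude de $S/p$ sur $R/p$ ne contrôle $S[p]$ : par exemple $S=R\times R/p$ satisfait l'hypothèse mais $S[p]=R/p$ n'a aucune raison d'être presque nul. Il s'agit donc d'une imprécision de l'énoncé, inoffensive dans l'application puisque $\mathcal P(A)$ est sans $g$-torsion (proposition~\ref{HARD}) avec $p\mid g$, donc sans $p$-torsion.
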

     
     \begin{proof} Soit $I=(g^{p^{-\infty}})\subset R$. 
      Puisque $S/(x_1,\ldots, x_d)$ est presque fidèlement plat sur $R/(x_1,\ldots, x_d)$ (car $x_1=p$) le morphisme 
      $R/(x_1,\ldots, x_d)\to S/(x_1,\ldots, x_d)$ est presque injectif, donc $S/(x_1,\ldots, x_d)$ n'est pas presque nul. Pour montrer que $I$ annule $\frac{(x_1,\ldots, x_i)S:x_{i+1}S}{(x_1,\ldots, x_i)S}$ on peut supposer que $i>0$ car $S$ est sans $p$-torsion. Notons 
      $\bar{R}\coloneqq R/p$, $\bar{S}\coloneqq S/p$. 
     Il suffit de montrer que $I$ annule 
      $\frac{(x_2,\ldots, x_i)\bar{S}: x_{i+1}\bar{S}}{(x_2,\ldots, x_i)\bar{S}}$.
      Par presque platitude de $\bar{S}$ sur $\bar{R}$ on a un presque isomorphisme\footnote{Il suffit de tensoriser avec $\bar{S}$
       la suite exacte
     $$0\to (x_2,\ldots, x_i): x_{i+1}\bar{R}\to \bar{R}\to \bar{R}/(x_2,\ldots, x_i).$$
}
     $$(x_2,\ldots, x_i)\bar{S}: x_{i+1}\bar{S}\simeq ((x_2,\ldots, x_i)\bar{R}: x_{i+1}\bar{R}) \bar{S},$$ ce qui permet de conclure.     
     \end{proof}

\printbibliography

\end{document}
